\documentclass[3p]{article}
\usepackage{graphicx} 
\usepackage{amssymb}
\usepackage{amsthm}
\usepackage{amsmath}
\usepackage{pgfplots}
\usepackage{CJKutf8}
\usepackage{setspace}
\usepackage{titlesec}
\usepackage[
    bookmarks=true,
    bookmarksopen=true,
    bookmarksnumbered=true,
    bookmarksdepth=2
]{hyperref}
\numberwithin{equation}{section}
\allowdisplaybreaks[4]
\date{}

\theoremstyle{plain}

\newtheorem{thm}{Theorem}[section]
\newtheorem{lem}[thm]{Lemma}
\newtheorem{cor}[thm]{Corollary}
\newtheorem{prop}[thm]{Proposition}

\newtheorem*{rem}{Remark}

\newtheorem{defi}[thm]{Definition}

\begin{document}

\title{Sparse domination for $q$-variation associated with functional calculus}
\author{Yanhan Chen \footnote{Department of Mathematics, Graduate School of Science, Kyoto University, Kyoto 606-8502, Japan. \newline e-mail: \texttt{chen.yanhan.67s@st.kyoto-u.ac.jp}}}
\maketitle
\renewcommand{\abstractname}{Abstract}
\begin{abstract}
 We establish sparse domination for the $q$-variation of functional calculus families $\{\varphi(tL)\}_{t>0}$ on spaces of homogeneous type. Under generalized Gaussian estimates, we consider holomorphic multipliers of injective accretive operators, and Sobolev-type spectral multipliers of non-negative self-adjoint operators. For the Sobolev-type multipliers, we show that our regularity assumption is sharp in general except the endpoint. As corollaries, we obtain weighted $q$-variational estimates for Bochner-Riesz means and regularized Schr\"{o}dinger groups. Finally, we give several examples of operators to which our results apply.
\end{abstract}

\textbf{Keywords: Sparse domination, Variation operator, Generalized Gaussian estimates.} 
\tableofcontents
\section{Introduction}
\subsection{Background}
Let $2\leq q<\infty$ and $a:(0,\infty)\rightarrow \mathbb{C}$. Define
$$\|a\|_{{V}^q}
:=
\sup_{t_0<t_1<\cdots}\left\{
\left(
|a(t_0)|^q
+
\sum_{k=1}^{\infty}|a(t_k)-a(t_{k-1})|^q
\right)^{{1}/{q}}\right\},$$
where the supremum is taken over all increasing sequences in $\mathbb{R}_+$. We also define the homogeneous $q$-variation by
$$\|a\|_{\widetilde{V}^q}
:=
\sup_{t_1>t_2>\cdots}\left\{
\left(
\sum_{k=1}^{\infty}|a(t_k)-a(t_{k+1})|^q
\right)^{{1}/{q}}\right\}.$$ These two norms are related by $\|a\|_{{V}^q}\sim\|a\|_{\widetilde{V}^q}+\sup_{t>0}|a(t)|$. For a family of operators $\{T_t\}_{t>0}$, we write the $q$-variation operator
$$V^q(T_t:t>0)f(x):=\|\{T_tf(x)\}_{t>0}\|_{V^q},$$
and defines $\widetilde{V}^q$ similarly. The $q$-variation controls the corresponding maximal operator. Finite $q$-variation implies the existence of limits as $t\to0$ and $t\to\infty$. Thus, variational bounds give pointwise convergence directly, while maximal estimates are usually combined with convergence on a dense subclass. Moreover, variation estimate provide quantitative information about convergence, for example through jump estimates (see \cite{DK24,LMX12} for instance). Variation inequalities have been studied extensively in probability, ergodic theory and harmonic analysis. L\'{e}pingle \cite{L76} obtained $q$-variational inequalities for martingales with $q>2$. Bourgain \cite{B89} developed variational methods in the study of pointwise ergodic theorems. In harmonic analysis, Jones, Seeger and Wright \cite{JSW08} obtained variational and jump inequalities for several classes of operators, including singular integrals and spherical means. Further results for Euclidean averaging operators were obtained by Bourgain, Mirek, Stein and Wr\'{o}bel \cite{BMSW18}. For analytic semigroups, Le Merdy and Xu \cite{LMX12} established $q$-variational estimates under contractive regularity assumptions.

The main goal of this paper is to study the $q$-variation $V^q(\varphi(tL):t>0)$ of functional calculus families. For the classical Laplacian $L=-\Delta$ on $\mathbb{R}^n$, the spectral multiplier is given by Fourier multiplier. In this case, an important approach to maximal and variational multiplier estimates is based on square functions. Stein \cite{S76} introduced square functions associated with Bochner-Riesz means and spherical means to study maximal estimates and pointwise convergence. Later, Lee, Rogers and Seeger \cite{LRS12} improved the $L^p$ estimates for Stein's square functions and obtained corresponding maximal Bochner-Riesz estimates. They also observed that their result imply local $2$-variational estimates for Bochner-Riesz means. In their subsequent work \cite{LRS14} they extended such estimates to more general radial Fourier multipliers.

For more general $L$, we suppose $-L$ generates an analytic semigroup $\{e^{-tL}\}_{t>0}$ on $L^2(X)$. A standard assumption in spectral multiplier theory is that the semigroup admits a kernel $p_{t}(x,y)$ satisfying the Gaussian estimate
\begin{equation}
|p_t(x,y)|
\lesssim
\frac{1}{|B(x,t^{1/m})|}
\exp\left(
-c\left(\frac{d(x,y)}{t^{1/m}}\right)^{m/(m-1)}
\right)\tag{GE$_m$}
\end{equation}
for some $m\geq2$ (see for e.g. \cite{AM06,GSC05,O05}). However, pointwise Gaussian estimate \eqref{GE} may fail for some operators (see for e.g. \cite{D97,LSV02,SV94}). An important extension is provided by generalized Gaussian estimates
\begin{equation}
    \|\chi_{B_2} e^{-tL} \chi_{B_1}\|_{p_0\rightarrow q_0}\leq C|B_{1}|^{-1/p_0}|B_2|^{1/q_0}\,\textnormal{exp}\left(-c\left(\frac{d(B_1,B_2)}{t^{1/m}}\right)^{{m}/{(m-1)}}\right).\tag{GGE$_{p_0,q_0,m}$}
\end{equation}
for every $t>0$, and balls $B_1,B_2$ of radius $t^{1/m}$ (see Section 2.3 for the precise definition). Spectral multiplier theories under \eqref{GGE} have been studied by Blunck \cite{B03} and Sikora, Yan and Yao \cite{SYY14}, among others. Deleaval and Kriegler \cite{DK23} obtained maximal estimates for a Sobolev-type multiplier families based on H\"{o}rmander functional calculus. They later extended the result to $q$-variational estimates \cite{DK24}. More recently, Chen, Lin, Wu and Yan \cite{CLWY26} studied maximal functions associated with H\"{o}rmander-type spectral multipliers under \eqref{GE} and other assumptions involving a Doob transform. We look for a stronger form of control through sparse domination, which provides quantitative weighted estimates for $q$-variation operators.

Sparse domination has played an important role in weighted harmonic analysis. Following Hyt\"{o}nen's resolution of the $A_2$ conjecture \cite{H12}, Lerner \cite{L13} developed a proof based on the local oscillation decomposition. This provides control of Calder\'on-Zygmund operators by positive dyadic operator with the form
$$|Tf(x)|\lesssim \sum_{Q\in\mathcal{S}}\langle f\rangle_{1,Q}\chi_{Q}(x),$$
where $\mathcal{S}$ is a sparse family. Such sparse structure provides sharp quantitative weighted estimates for the Calder\'on-Zygmund operators that
$$\|T\|_{L^p(w)\to L^p(w)}
\lesssim
[w]_{A_p}^{\max\{1,1/(p-1)\}}
$$
for $1<p<\infty$. The sparse domination method has also been extended beyond classical Calder\'{o}n-Zygmund theory. For operators without suitable kernel
regularity estimates, pointwise sparse
domination may be difficult to establish. A natural generalization is to consider the bilinear sparse bounds 
$$|\langle Tf,g\rangle|
\lesssim
\sum_{Q\in\mathcal S}
|Q|\langle f\rangle_{p_0,Q}
\langle g\rangle_{q_0',Q}.$$
We write $T\in S(p_0,q_0)$ if $T$ admits a bilinear sparse bound of this form. The precise definitions of sparse families and sparse bounds are given in Section 2.4. The advantage of this bilinear version of sparse domination is that it applies to operators whose $L^p-$bounedness only holds for $p_0<p<q_0$. Such sparse bounds also holds for various operators, such as Bochner-Riesz multipliers \cite{BBL17} and spherical maximal operators \cite{L17}. Bernicot, Frey and Petermichl \cite{BFP16} established such sparse bounds for non-integral operators under suitable off-diagonal estimates. They also obtained quantitative weighted estimates with $w\in A_{p/p_0}\cap RH_{(q_0/p)^{\prime}}$ for operators admitting such a bilinear sparse bound. Recently, Beltran, Roos and Seeger \cite{BRS24} developed a multi-scale sparse domination framework based on bilinear forms. They applied this framework to Fourier multipliers, maximal operators, square functions and variation operators. Their results also provide further examples of sparse domination beyond classical Calder\'{o}n-Zygmund theory.
\subsection{Main results}
We now turn to the $q$-variation associated with functional calculus. We refer to Section 2.4 for the precise definitions of sparse domination and the class $S(p_0,q_0)$. Our first result concerns the holomorphic functional calculus for injective accretive operators.
\begin{thm}\label{thm1}
    Let $L$ be an injective $\omega$-accretive operator on $L^2(X)$ with $0\leq\omega<\pi/2$. Assume that the semigroup $\{e^{-tL}\}_{t>0}$ satisfies \eqref{DGGE} for some $m\geq 2$ and $1\leq p_0<2<q_0\leq\infty$. Suppose that for some $\omega<\mu<\pi$, 
    $$\varphi(z):=\varphi(0)e^{-z}+\varphi_0(z),\quad z\in \Sigma_{\mu},$$
    where $\varphi_0\in H^{\infty}_0(\Sigma_{\mu}).$
    Then for $q>2$, the $q$-variation $V^q(\varphi(tL):{t>0})$ belongs to $S(p_0,q_0)$.
\end{thm}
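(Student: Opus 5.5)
The plan is to invoke the abstract sparse domination principle of Bernicot--Frey--Petermichl type, which the paper presumably records in Section 2.4. For a sublinear operator $T$ it gives $T\in S(p_0,q_0)$ once three things hold. First, $T$ is bounded on $L^2$ (or on some $L^p$ with $p_0<p<q_0$). Second, there is an approximation family $A_r$, here $A_r=I-(I-e^{-r^mL})^N$ for large $N$, satisfying $L^{p_0}\to L^{q_0}$ off-diagonal estimates; these follow from \eqref{DGGE}. Third, there are the two localization conditions
\[
\Big(\tfrac{1}{|B|}\int_B |T(I-A_{r_B})f|^{q_0'\prime}\Big)^{1/\ldots}\lesssim \sum_{j\ge0}\gamma_j \langle f\rangle_{p_0,2^jB},\qquad
\langle T A_{r_B} f\rangle_{q_0,B}\lesssim \sum_j\gamma_j\langle f\rangle_{p_0,2^jB}+\inf_B \mathcal M_{p_0}f,
\]
with $\sum_j\gamma_j 2^{jD}<\infty$. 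In practice the cleanest form is: the maximally truncated operator $T^\#$ is weak type $(p_0,p_0)$, together with off-diagonal control of $T(I-A_r)$ from $L^{p_0}$ to $L^{p_0}$ and of $TA_r$ from $L^{p_0}$ to $L^{q_0}$. I would first reduce to the two pieces of $\varphi$. We have $V^q(\varphi(tL))\le |\varphi(0)|V^q(e^{-tL})+V^q(\varphi_0(tL))$, and $S(p_0,q_0)$ is closed under sums, so each piece can be treated separately.

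\textbf{Step 1: $L^2$ bounds.} For the semigroup piece, $V^q(e^{-tL})$ is bounded on $L^2$ for $q>2$ by Le Merdy--Xu \cite{LMX12}, since an $\omega$-accretive operator generates a contractive analytic semigroup on $L^2$. For $\varphi_0\in H^\infty_0(\Sigma_\mu)$, I would write the $\widetilde V^q$ part as follows, with $\mu$ a measure on $(0,\infty)$:
\[
\widetilde V^q(\varphi_0(tL)f)\le \|\varphi_0(tL)f\|_{\widetilde V^1}\le\int_0^\infty |tL\varphi_0'(tL)f|\frac{dt}{t}\le \Big(\int_0^\infty|\psi(tL)f|^2\frac{dt}{t}\Big)^{1/2}\Big(\int\ldots\Big)^{1/2}.
\]
This direct route fails, since $\widetilde V^1$ is too big. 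So instead I would use the standard bound $\widetilde V^q\lesssim\widetilde V^2\lesssim$ a square function. That is, by the fundamental theorem of calculus and Cauchy--Schwarz on dyadic blocks,
\[
V^2(a)\lesssim \Big(\int_0^\infty |t a'(t)|^2\frac{dt}{t}\Big)^{1/2}+\Big(\int_0^\infty|a(t)|^2\frac{dt}{t}\Big)^{1/2}\ \text{-type}.
\]
Alternatively, I would invoke the Jones--Seeger--Wright splitting into a long variation over $t=2^k$ plus short variations on $[2^k,2^{k+1}]$. The short variations are controlled by $(\sum_k\int_{2^k}^{2^{k+1}}|tL\varphi_0'(tL)f|^2dt/t)^{1/2}$. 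The long variation at dyadic points is controlled by $\sum_k|\varphi_0(2^{k+1}L)f-\varphi_0(2^kL)f|^2$, which is a discrete square function of an $H^\infty_0$ function. Both are bounded on $L^2$ by McIntosh's bounded $H^\infty$ calculus of accretive operators, since $z\varphi_0'(z)\in H_0^\infty$. The $\sup_t|\varphi_0(tL)f|$ term is dominated by $|\varphi_0(t_0L)f|$ plus the variation, and so is handled the same way.

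\textbf{Step 2: off-diagonal estimates.} Using the Cauchy integral $\varphi_0(tL)=\frac{1}{2\pi i}\int_{\partial\Sigma_\nu}\varphi_0(z)(z-tL)^{-1}dz$ and writing resolvents via the semigroup on rotated rays, \eqref{DGGE} gives $L^{p_0}\to L^{q_0}$ off-diagonal bounds for $\varphi_0(tL)$ and for $tL\varphi_0'(tL)$. These bounds decay polynomially in $d/t^{1/m}$, and the decay rate is governed by the decay of $\varphi_0$ at $0$ and $\infty$. Multiplying by $(I-e^{-r^mL})^N$ improves the decay for $t\ll r^m$, at the cost of a factor $(t/r^m)^{\min(N,\cdot)}$-type gain. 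This is what produces the summable coefficients $\gamma_j$ for $V^q(T_t)(I-A_r)$ on $2^{j}$-annuli. For $V^q(T_t)A_r$, the operator $A_r$ maps $L^{p_0}$ into $L^{q_0}$ locally at scale $r$. Combined with the $L^{q_0}$-boundedness of the vector-valued family, obtained by Step 1 plus extrapolation along the same lines, this gives the second condition.

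\textbf{Main obstacle.} The main difficulty is that $V^q$ is a nonlinear, vector-valued (in $V^q$ space) operator. The off-diagonal decay must therefore be proved uniformly in the vector-valued sense, controlling $\|\chi_{B_2}\{T_t\chi_{B_1}f\}_t\|_{L^{p}(V^q)}$ rather than a single $T_t$. I would handle this with the bound $\|a\|_{V^q}\le\|a\|_{V^1}\le\int|\partial_t a|dt$ on the off-diagonal pieces. Away from the diagonal the decay is strong enough to absorb the loss from $V^1$: one integrates the off-diagonal bound of $tL\varphi_0'(tL)$ in $dt/t$, and the integral splits into $t^{1/m}\le d$, where exponential or high polynomial decay applies, and $t^{1/m}>d$, where the factor $(I-e^{-r^mL})^N$ or the $|B|^{-1/p_0}|B|^{1/q_0}$ normalization gives the gain. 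Only the local part requires the genuine $q>2$ variational $L^2$ bound from Step 1. The case $q_0=\infty$ just uses $L^\infty$ averages in place of $L^{q_0}$ averages.
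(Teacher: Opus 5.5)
\textbf{The gap: the Cotlar-type condition for $TA_r$.} The gap is in your second localization condition, i.e.\ the control of $TA_r$ (or $Te^{-r^mL}$) on a ball $B$ of radius $r$, where $T$ is the $q$-variation. You formulate it with only $f$ on the right-hand side: an $L^{p_0}\to L^{q_0}$ off-diagonal bound for $TA_r$ plus $\inf_B M_{p_0}f$. In that form the condition fails for $q$-variations, because the variation of $t\mapsto\varphi(tL)A_rf(x)$ over $t\gg r^m$ depends on $f$ at all larger scales, with no decay. For instance, take the heat semigroup on $\mathbb{R}$ and $f=\sum_{k\in\mathbb{Z}}(-1)^k\chi_{\{4^k\le|x|<4^{k+1}\}}$. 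Then $e^{16t\Delta}f(0)=-e^{t\Delta}f(0)\not\equiv0$. After truncating $f$ to $B(0,4^K)$, the left-hand side on a fixed ball is $\gtrsim K^{1/q}$, while your right-hand side stays $\lesssim\|f\|_\infty$. This is why condition (ii) of Lemma \ref{spDo} has $\inf_B M_{p_1}(Tf)$ on the right.

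Your proposed verification, local $L^{p_0}\to L^{q_0}$ bounds for $A_r$ combined with ``$L^{q_0}$-boundedness of the vector-valued family, obtained by Step 1 plus extrapolation'', does not repair this, for three reasons. First, at that stage you only have $L^2$ bounds; $L^{q_0}$-boundedness is not available, and it is false for $q_0=\infty$ by the same example. Second, the standard routes above $L^2$ (Auscher's good-$\lambda$ criterion, or the sparse criterion itself) require exactly a Cotlar inequality of this type, so the step is circular. Third, a global $L^{q_0}$ bound applied to $A_rf$ only gives $|B|^{-1/q_0}\|A_rf\|_{L^{q_0}(X)}$, which is not localized to $B$.

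\textbf{The missing idea: domination by $R_t$.} What is missing is a way to pass $e^{-r^mL}$ through the nonlinear $q$-variation. This is exactly where the domination in \textnormal{(DGGE$_{p_0,q_0,m}$)} is used, and your proposal never uses the dominating family $R_t$. Note also that the $L^2$ bound of \cite{LMX12} for the semigroup part needs contractive regularity, which again comes from this domination and not from accretivity alone. The argument runs as follows. We have $\varphi(sL)e^{-r^mL}=e^{-r^mL}\varphi(sL)$ and $|e^{-r^mL}h|\le R_{r^m}|h|$, and $R_{r^m}$ is positive. Write the $\ell^q$-norm as a supremum over countably many rational vectors in the unit ball of $\ell^{q'}$. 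This gives, for every finite decreasing sequence $(s_j)$,
\begin{align*}
\Big(\sum_j\big|e^{-r^mL}\big(\varphi(s_jL)f-\varphi(s_{j+1}L)f\big)\big|^q\Big)^{1/q}
&\le\Big(\sum_j\Big[R_{r^m}\big(|\varphi(s_jL)f-\varphi(s_{j+1}L)f|\big)\Big]^q\Big)^{1/q}\\
&\le R_{r^m}\Big[\Big(\sum_j|\varphi(s_jL)f-\varphi(s_{j+1}L)f|^q\Big)^{1/q}\Big].
\end{align*}
Passing to the supremum over rational sequences, using continuous representatives, yields $\widetilde V^q(\varphi(tL):t>0)e^{-r^mL}f\le R_{r^m}\big(\widetilde V^q(\varphi(tL):t>0)f\big)$ a.e. Set $g=\widetilde V^q(\varphi(tL):t>0)f$. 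The generalized Gaussian estimate for $R_{r^m}$ and an annular decomposition then give $\langle R_{r^m}g\rangle_{q_0,B}\lesssim\inf_B M_{p_0}g$. This is condition (ii) of Lemma \ref{spDo}; see Lemmas \ref{CI1} and \ref{CI2}.

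\textbf{What already matches the paper.} The point you flagged as the main obstacle, the vector-valued off-diagonal decay, is in fact the routine part. Your $V^1$-domination by $\int_0^\infty|L\varphi'(sL)(tL)^Ne^{-tL}f|\,ds$ is what Lemmas \ref{ODE1} and \ref{ODE2} do for $\widetilde V^q(\varphi(tL):t>0)(tL)^Ne^{-tL}$. Your $L^2$ argument via $\Lambda^1\hookrightarrow V^2$ and quadratic estimates matches \eqref{4.15}.
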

A $0$-accretive operator is precisely a non-negative self-adjoint operator. In this case, the spectral theorem allows us to consider more general multiplier classes. Following \cite{DK23,DK24}, we introduce a Sobolev-type spectral multiplier class 
$$\Lambda^{\beta}:=\{\varphi:(0,\infty)\to\mathbb{C}:
\varphi\circ\exp\in W^{\beta,2}(\mathbb{R})\}.$$ 
See Section 2.2 for details. Our second main result is as follows.
\begin{thm}\label{thm2}
    Let $L$ be a non-negative self-adjoint operator on $L^2(X)$, and the semigroup $\{e^{-tL}\}_{t>0}$ satisfies \eqref{DGGE} for some $m\geq 2$ and $1\leq p_0<2<q_0\leq\infty$. Let
    $$\varphi(\lambda):=\varphi(0)e^{-\lambda}+\varphi_{0}(\lambda),\quad \lambda>0,$$
    where $\varphi_0\in \Lambda^{\beta}$ for some $\beta>\nu(1/p_0-1/q_0)+1/2$.  Then for $q>2$, the $q$-variation operator $V^q(\varphi(tL):{t>0})$ belongs to $S(p_0,q_0)$. In particular, for every pair of bounded, compactly supported functions $f,g$, there exist sparse families $\mathcal{S}_b\subset\mathcal{D}^b$, $b=1,...,K_D$, such that
    $$|\langle V^q(\varphi(tL):t>0)f,g\rangle|\lesssim(|\varphi(0)|+\|\varphi_0\|_{\Lambda^{\beta}})\sum_{b=1}^{K_D}\Lambda_{\mathcal{S}_b;p_0,q_0}(f,g).$$
    If $\varphi(0)=0$, the assumption
\textnormal{(DGGE$_{p_0,q_0,m}$)} can be replaced by
\textnormal{(GGE$_{p_0,q_0,m}$)}.
\end{thm}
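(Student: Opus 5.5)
The plan is to split $\varphi(tL)=\varphi(0)e^{-tL}+\varphi_0(tL)$ and use
$$V^q(\varphi(tL):t>0)\le |\varphi(0)|\,V^q(e^{-tL}:t>0)+V^q(\varphi_0(tL):t>0).$$
Since $S(p_0,q_0)$ is closed under finite sums of sublinear operators, with the number of dyadic systems $K_D$ fixed, it suffices to treat each piece separately.

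\textbf{Semigroup piece.} Here I would invoke the result the paper uses for Theorem~\ref{thm1}: sparse domination for $V^q(e^{-tL}:t>0)$ under \eqref{DGGE}. The time-derivative estimates in \eqref{DGGE} are exactly what control the increments $e^{-t_kL}-e^{-t_{k-1}L}$. This is also why \eqref{GGE} alone suffices when $\varphi(0)=0$.

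\textbf{Sobolev piece.} Set $\psi=\varphi_0\circ\exp\in W^{\beta,2}(\mathbb{R})$. I would use a Littlewood--Paley decomposition of $\psi$, writing $\psi=\sum_{j\ge0}\psi_j$ with $\widehat{\psi_j}$ supported where $|\xi|\sim 2^j$. Equivalently, use the Mellin representation
$$\varphi_0(t\lambda)=\int_{\mathbb{R}}\widehat{\psi}(s)\,t^{is}\lambda^{is}\,ds.$$
In either form, $\varphi_0(tL)$ is built from the imaginary powers $L^{is}$, localized in $|s|\sim 2^j$.

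For each piece $\varphi_{0,j}(tL)$ I would prove a single-scale sparse bound
$$|\langle V^q(\varphi_{0,j}(tL))f,g\rangle|\lesssim 2^{j(\nu(1/p_0-1/q_0)+\epsilon)}\,2^{j/2}\,\|\psi_j\|_{L^2}\sum_b\Lambda_{\mathcal S_b;p_0,q_0}(f,g).$$
Since $\beta>\nu(1/p_0-1/q_0)+1/2$, Cauchy--Schwarz in $j$ then sums to $\|\varphi_0\|_{\Lambda^\beta}$.

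To get the single-scale bound I would use the Bernicot--Frey--Petermichl criterion from Section 2.4. It requires:
\begin{enumerate}
\item $L^2$ boundedness of $V^q(\varphi_{0,j}(tL))$, with norm at most $2^{j/2}\|\psi_j\|_2$. I would get this by controlling the variation by the $W^{1/2+,2}$ norm in $\log t$ of $\lambda\mapsto\varphi_{0,j}(t\lambda)$ and applying the spectral theorem. The standard estimate $\|a\|_{V^q}\lesssim\|a\|_{L^\infty}+\|a'\|_{L^1}$ is not adequate here; instead I would use the Sobolev embedding $W^{1/2+,2}\subset V^2$ from \cite{DK24}.
\item Off-diagonal $L^{p_0}\to L^{q_0}$ estimates for the localized pieces, with constant $2^{j\nu(1/p_0-1/q_0)}$ times a decay factor. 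I would derive these from \eqref{GGE} via the standard $p_0\to q_0$ spectral multiplier bounds of Blunck \cite{B03} and Sikora--Yan--Yao \cite{SYY14}. The polynomial growth of $\|L^{is}\|$ in $|s|$ there accounts for the factor $\nu(1/p_0-1/q_0)$.
\item Control of the tail of the variation at large $t$ relative to the cube size, through a truncation $t\ge\ell(Q)^m$ versus $t<\ell(Q)^m$. The small-$t$ local part is handled by off-diagonal decay; the large-$t$ part is handled by comparing with averages over the enlarged cube.
\end{enumerate}

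\textbf{Main obstacle.} The hard step is the localized off-diagonal estimate for the \emph{variation operator} uniformly in $t$, rather than for a single $\varphi_{0,j}(tL)$. The variation is not linear, so I would linearize it by duality with $\ell^q$-valued test sequences. I would then handle the $\ell^q$-valued operator $f\mapsto\{\varphi_{0,j}(t_kL)f-\varphi_{0,j}(t_{k-1}L)f\}_k$ via a vector-valued version of the Bernicot--Frey--Petermichl sparse theorem. The loss from making the $j$-th piece only finitely smooth in $\log t$ must be kept at $2^{j/2}$, and this is exactly where $1/2$ enters the threshold on $\beta$.
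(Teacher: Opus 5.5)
Your top-level plan is the paper's: split off $\varphi(0)e^{-\lambda}$, decompose $\varphi_0\circ\exp$ into Littlewood--Paley pieces $g_j$, prove a single-scale bound with constant $2^{j(\nu(1/p_0-1/q_0)+\varepsilon)}\|g_j\|_{W^{\eta,2}}$ for some $\eta>1/2$, and sum by Cauchy--Schwarz. Your target bound is also the right one. What is missing are the two ingredients that actually produce that bound.

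\textbf{The scale.} Since $g_j$ has frequency $\sim 2^j$ in $\log\lambda$, the operators $\varphi_{0,j}(sL)(tL)^Ne^{-tL}$ decay off-diagonally only in $d(E,F)/(2^jt^{1/m})$. Converting this into the decay $(1+d(B_1,B_2)/t^{1/m})^{-(\nu+1)}$ required by Lemma~\ref{spDo}(i) costs a factor $2^{j(\nu+1)}$. Moreover, Lemma~\ref{spDo} does not record how the sparse constant depends on the input constants. Used as a black box it yields at best a threshold like $\beta>\nu+\nu/p_0+2$ (cf.\ the remark following the proof of Theorem~\ref{thm2}).

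The paper instead proves the rescaled quantitative criterion Theorem~\ref{sdc}:
\begin{itemize}
\item the sharp function is built from $P^N_{r_Q^m/R^m}$, so the cut is at $(r_Q/R)^m$, not at $\ell(Q)^m$ as in your item 3;
\item the hypothesis \eqref{OD} asks for decay in $d(E,F)/(Rt^{1/m})$, with weights $v_{Rt^{1/m}}$, for balls and annuli;
\item the output constant is $D+AR^{\nu(1/p_0-1/q_0)+\varepsilon}$.
\end{itemize}
With $R=2^j$, \eqref{OD} is obtained by taking the minimum of two bounds, via $\min\{1,Y\}\le Y^{\sigma/M}$ as in \eqref{5.44}. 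One is the Fourier-side bound \eqref{5.42}: decay of any order $M$ in $d/t^{1/m}$, from the complex-time estimates of Lemma~\ref{ext}(ii) on arbitrary sets, at cost $2^{j(M+\kappa/2)}$. The other is the decay-free Hilbert-valued bound \eqref{5.43}. The power $2^{j\nu(1/p_0-1/q_0)}$ then comes from replacing $v_{t^{1/m}}$ by $v_{2^jt^{1/m}}$.

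\textbf{A substitute for the Cotlar inequality, Lemma~\ref{spDo}(ii).} Here your account of \eqref{DGGE} is off. The increments are controlled by \eqref{GGE} alone, via Lemma~\ref{deri} and $e^{-aL}-e^{-bL}=-\int_b^aLe^{-sL}\,ds$. What \eqref{DGGE} adds is positivity. This gives contractive regularity, hence the Le Merdy--Xu $L^2$ bound. It also gives the domination $\widetilde V^q(e^{-tL}:t>0)e^{-r^mL}f\le R_{r^m}\widetilde V^q(e^{-tL}:t>0)f$ behind Lemma~\ref{CI1}, which is needed because the sublinear operator $V^q$ does not commute with the semigroup.

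The same issue arises for $V^q(\varphi_{0,j}(tL):t>0)$. Under \eqref{DGGE} one could argue as in Lemma~\ref{CI2}, but your claim that \eqref{GGE} suffices when $\varphi(0)=0$ is unsupported. The $\ell^q$ linearization does not fix it:
\begin{itemize}
\item an $x$-dependent near-extremal sequence $(t_k(x))$ destroys commutation with $e^{-rL}\otimes I$;
\item a fixed sequence commutes, but bounds uniform in the sequence do not control $\int\sup_{(t_k)}(\cdots)\,|g|$;
\item viewing $f\mapsto(\varphi_{0,j}(tL)f)_{t>0}$ as $V^q$-valued, the off-diagonal bounds of the semigroup need not extend to such Banach-valued functions without positivity.
\end{itemize}

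The paper's device is to take $H=\Lambda^\eta$ and the linear operator $Tf=(\varphi_j(sL)f)_{s>0}\in L^2(X;\Lambda^\eta)$. It satisfies $T\Pi_0=0$ and commutes with $e^{-tL}\otimes I_H$; this commutation replaces the Cotlar inequality inside Theorem~\ref{sdc}. Every scalar $L^{p_0}\to L^{q_0}$ bound extends to $H$-valued functions by Minkowski, because $p_0<2<q_0$ (see \eqref{he}). The embedding $\Lambda^\eta\hookrightarrow V^q$ of Lemma~\ref{lambdaemb} is applied pointwise only at the very end. You invoke that embedding only for the $L^2$ bound, whereas it has to dictate the target space of the whole argument.
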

The regularity condition $\beta>\nu(1/p_0-1/q_0)+1/2$ is sharp in general except the endpoint, even without variation. We will give a counterexample on $\mathbb{R}$ that satisfies \textnormal{(DGGE$_{p_0,q_0,m}$)} in section 5. The corresponding $(p_0,q_0)$ sparse bound can fail for a single spectral multiplier $\varphi(L)$ for every $0\leq\beta<1/p_0-1/q_0+1/2$.

In both theorems, $\varphi_0$ vanishes at zero. The term $\varphi(0)e^{-\lambda}$ allows us to consider multipliers with a non-zero limit at zero. This includes regularized Sch\"{o}dinger group and Bochner-Riesz means (which will be discusses below). Our proofs are based on general sparse domination argument under off-diagonal estimates. For holomorphic multipliers in Theorem \ref{thm1}, we apply the sparse domination criterion of Bernicot, Frey and Petermichl \cite{BFP16}. For Sobolev-type multipliers in Theorem \ref{thm2}, we establish a quantitative sparse domination criterion for Hilbert space-valued operators. The Hilbert space-valued multiplier families $\{\varphi(tL)\}_{t>0}$ commute with the semigroup. This allows us to prove the sparse bound for the $\varphi_0$ part without additional domination assumption. Then, We use a dyadic Fourier decomposition $\varphi=\sum_{j\geq0}\varphi_j$ and apply this criterion to each $\varphi_j$. Combining the resulting sparse bounds with the embedding $\Lambda^\eta\hookrightarrow V^q$ for $\eta>1/2$ (see Lemma \ref{lambdaemb}), we obtain the regularity condition in Theorem \ref{thm2}.

As an immediate consequence of Theorem \ref{thm1}, \ref{thm2} and Lemma \ref{wei-lem}, we obtain the following weighted estimates.

\begin{cor}\label{wei-cor}
    Under the assumptions of either Theorem \ref{thm1} or \ref{thm2}.

    \textnormal{(i)} For $p_0<p<q_0$ and $w\in A_{p/p_0}\cap RH_{(q_0/p)^{\prime}}$, we have that
    $$\|V^q(\varphi(tL):t>0)\|_{L^{p}(w)\rightarrow L^p(w)}\lesssim\left([w]_{A_{p/p_0}}[w]_{RH_{(q_0/p)^{\prime}}}\right)^{\alpha},$$
    where $\alpha=\text{max}\{1/(p-p_0),(q_0-1)/(q_0-p)\}.$
    
     \textnormal{(ii)} Further suppose that $(X,d)$ is a connected metric space. For $w\in A_1\cap RH_{(q_0/p_0)^{\prime}}$, there is a function $\psi(w):=\psi([w]_{A_1},[w]_{A_{\infty}},[w]_{RH_{(q_0/p_0)^{\prime}}})$ \textnormal{(}see \textnormal{\cite{FN19}} for a precise representation\textnormal{)} so that
    $$\|V^q(\varphi(tL):t>0)\|_{L^{p_0}(w)\rightarrow L^{p_0,\infty}(w)}\lesssim\psi(w).$$
\end{cor}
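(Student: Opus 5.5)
The corollary follows by combining the sparse bounds of Theorems \ref{thm1} and \ref{thm2} with the general principle, recorded in Lemma \ref{wei-lem}, that membership in $S(p_0,q_0)$ yields quantitative weighted bounds. Under either theorem, $T:=V^q(\varphi(tL):t>0)$ belongs to $S(p_0,q_0)$. Concretely, for bounded compactly supported $f,g$ there are sparse families $\mathcal{S}_b\subset\mathcal{D}^b$, $b=1,\dots,K_D$, with
$$|\langle Tf,g\rangle|\lesssim\sum_{b=1}^{K_D}\Lambda_{\mathcal{S}_b;p_0,q_0}(f,g).$$
The implicit constant is independent of $f,g$; in Theorem \ref{thm2} it is $|\varphi(0)|+\|\varphi_0\|_{\Lambda^\beta}$. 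The number $K_D$ of adjacent dyadic systems is fixed by the space. So it suffices to bound each sparse form $\Lambda_{\mathcal{S};p_0,q_0}$ uniformly in the sparse family and then sum the finitely many pieces.

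\textbf{Part (i).} Fix $p_0<p<q_0$ and $w\in A_{p/p_0}\cap RH_{(q_0/p)'}$. Lemma \ref{wei-lem} (the weighted estimate of Bernicot--Frey--Petermichl \cite{BFP16} for sparse forms) gives
$$\Lambda_{\mathcal{S};p_0,q_0}(f,g)\lesssim\left([w]_{A_{p/p_0}}[w]_{RH_{(q_0/p)'}}\right)^{\alpha}\|f\|_{L^p(w)}\|g\|_{L^{p'}(w^{1-p'})}.$$
By duality between $L^p(w)$ and $L^{p'}(w^{1-p'})$ with respect to the unweighted pairing, this yields the norm bound for $T$ on bounded compactly supported $f$.

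One technical point remains: $T$ is sublinear rather than linear. However, $Tf=\|\{\varphi(tL)f\}\|_{V^q}\ge0$, so $\|Tf\|_{L^p(w)}=\sup\langle Tf,g\rangle$ over nonnegative $g$ in the dual unit ball. This is exactly the quantity the sparse bound controls, so no linearization is needed. We then extend to all of $L^p(w)$ by density. For this, note that $T$ is pointwise dominated by sublinear expressions that are continuous, since $|Tf-Th|\le T(f-h)$.

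\textbf{Part (ii).} The endpoint is handled the same way. We invoke the weak-type endpoint part of Lemma \ref{wei-lem}, which is the Frey--Nieraeth result \cite{FN19}. It asserts that, on a connected space, a sparse form $\Lambda_{\mathcal{S};p_0,q_0}$ implies $L^{p_0}(w)\to L^{p_0,\infty}(w)$ bounds for $w\in A_1\cap RH_{(q_0/p_0)'}$ with constant $\psi(w)$. Connectedness of $(X,d)$ is exactly the hypothesis needed there, since it ensures the dyadic cubes and doubling behave well enough for their argument.

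The only genuine obstacle is checking that the hypotheses of Lemma \ref{wei-lem} match the form of the sparse bound produced by the theorems: several dyadic systems, a sublinear operator, and test functions restricted to bounded compactly supported ones. Each is routine: sum over $b$, use positivity of $Tf$, and use density together with Fatou's lemma for the weak-type estimate.
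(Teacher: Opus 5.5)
Your proposal is correct and matches the paper's argument: Theorems \ref{thm1} and \ref{thm2} put $V^q(\varphi(tL):t>0)$ in $S(p_0,q_0)$, and Lemma \ref{wei-lem} (that is, \cite[Proposition 6.4]{BFP16} for (i) and \cite{FN19} for (ii)) turns this into the stated weighted bounds. Your remarks on the positivity of $V^q$, summing over the $K_D$ grids, and the density and Fatou extension are routine details that the paper leaves implicit when it calls the corollary an immediate consequence.
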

Under the assumptions of Corollary \ref{wei-cor} \textnormal{(i)}, a.e. finite $q$-variation implies that $\varphi(tL)f(x)$ has limits as $t\rightarrow0$ and $t\rightarrow\infty$ for a.e. $x\in X$. More precisely, for every $f\in L^p(w)$,
\begin{equation}\label{1.1}
\lim_{t\rightarrow0}\varphi(tL)f(x)=\varphi(0)f(x),\quad\lim_{t\rightarrow\infty}\varphi(tL)f(x)=\varphi(0)Pf(x)\quad\text{a.e. } x\in X.
\end{equation}
Here $P$ denotes the bounded extension to $L^p(w)$ of the orthogonal projection from $L^2(X)$ onto $\ker L$. This extension follows from 
$$\|Pf\|_{L^p(w)}\leq\left\|\sup_{t>0}|e^{-tL}f|\right\|_{L^p(w)}\leq\|V^q(e^{-tL}:t>0)f\|_{L^p(w)}\lesssim\|f\|_{L^p(w)}$$
for every $f\in L^2\cap L^p(w)$. The first inequality holds since $e^{-tL}f\rightarrow Pf$ in $L^2(X)$ as $t\rightarrow\infty$. In particular, $P=0$ in Theorem \ref{thm1} since $L$ is injective. For $f\in L^2(X)$, the funtional calculus gives $\varphi(tL)f\rightarrow \varphi(0)f$ as $t\rightarrow0$ and $\varphi(tL)f\rightarrow\varphi(0)Pf$ as $t\rightarrow\infty$ in $L^2(X)$. The existence of the pointwise limitation is given by finite variation. For $f\in L^p(w)$, choose $f_n\in L^2\cap L^p(w)$ such that $f_n\rightarrow f$ in $L^p(w)$ with $\sum_n\|f_{n+1}-f_n\|_{L^p(w)}<\infty$. The variation estimates implies that $\varphi(tL)f_n(x)$ converges uniformly in $t>0$ for a.e. $x\in X$.

The $q$-variation also provides quantitative information about the convergence \eqref{1.1} through jump estimates, we cite \cite{DK24} for this argument. For $a:\mathbb{R}^+\rightarrow\mathbb{C}$ a function and $\lambda>0$, we define the jump quantity $N_\lambda(a)$ to be the supremum of all integers $N\geq 0$ such that there is an increasing sequence 
$$0<s_1<t_1\leq s_2<t_2\leq...\leq s_N<t_N$$
so that $|a(t_k)-a(s_k)|>\lambda$ for $k=1,...,N$. It is clear from the definition that $\lambda^q N_\lambda(a)\leq\|a\|_{V^q}^q$. Thus, we have the following jump estimate.
\begin{cor}
    Under the assumptions of Corollary \ref{wei-cor}, for $2<q<\infty$ and any spectral multiplier $\varphi$ as there, we have
    $$\|N_{\lambda}(\varphi(tL)f:t>0)^{1/q}\|_{L^p(w)}\lesssim\left([w]_{A_{p/p_0}}[w]_{RH_{(q_0/p)^{\prime}}}\right)^{\alpha}\frac{\|f\|_{L^p(w)}}{\lambda},$$
    where $\alpha$ is as in Corollary \ref{wei-cor}.
\end{cor}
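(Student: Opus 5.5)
The jump estimate will be derived pointwise from the variation, and then Corollary \ref{wei-cor} (i) is applied. Fix $\lambda>0$ and $f\in L^p(w)$. It is convenient to first take $f\in L^2\cap L^p(w)$, so that $t\mapsto \varphi(tL)f(x)$ is a well-defined function of $t$ for a.e. $x$.

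\textbf{Pointwise bound.} Set $a(t)=\varphi(tL)f(x)$. Suppose $0<s_1<t_1\leq s_2<\cdots\leq s_N<t_N$ satisfies $|a(t_k)-a(s_k)|>\lambda$ for every $k$. Form the increasing sequence $s_1<t_1\leq s_2<t_2\leq\cdots$, deleting any repeated points. In this sequence each pair $(s_k,t_k)$ appears as a consecutive pair. Testing the definition of $\|a\|_{V^q}$ on this sequence and keeping only the differences $a(t_k)-a(s_k)$ gives
$$N\lambda^q<\sum_{k=1}^N|a(t_k)-a(s_k)|^q\leq\|a\|_{V^q}^q.$$
Taking the supremum over $N$ yields
$$N_\lambda(a)^{1/q}\leq \lambda^{-1}V^q(\varphi(tL):t>0)f(x)\quad\text{for a.e. } x.$$

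\textbf{Weighted estimate.} Take the $L^p(w)$ norm of both sides and apply Corollary \ref{wei-cor} (i). This gives the stated bound with constant $\left([w]_{A_{p/p_0}}[w]_{RH_{(q_0/p)'}}\right)^{\alpha}$, for $f\in L^2\cap L^p(w)$.

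\textbf{Extension to all of $L^p(w)$.} The function $N_\lambda(\cdot)^{1/q}$ is not sublinear, so density cannot be used in the standard way. We first need to say what $\varphi(tL)f$ means for general $f$. As in the discussion after Corollary \ref{wei-cor}, choose $f_n\in L^2\cap L^p(w)$ with $f_n\to f$ in $L^p(w)$ and $\sum_n\|f_{n+1}-f_n\|_{L^p(w)}<\infty$. Then $\varphi(tL)f_n(x)$ converges uniformly in $t$ for a.e. $x$, and $\varphi(tL)f(x)$ is defined as the limit. The same argument shows that
$$\|\varphi(tL)f_n(x)-\varphi(tL)f(x)\|_{V^q}\to0\quad\text{a.e.}$$
Since $V^q$ is a norm, $V^q(\varphi(tL)f_n)\to V^q(\varphi(tL)f)$ a.e. Hence the pointwise bound $N_\lambda\le\lambda^{-q}\|a\|_{V^q}^q$, which needs no approximation, holds directly for the limit function. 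We then conclude by Fatou's lemma, or simply by the norm bound on $V^q(\varphi(tL)f)$ for general $f$.

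The only step requiring any care is this extension to general $f$. The combinatorial inequality itself is elementary.
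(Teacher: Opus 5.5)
Your proposal is correct and follows the paper's argument: the paper derives the corollary directly from the pointwise inequality $\lambda^q N_\lambda(a)\leq\|a\|_{V^q}^q$ together with Corollary \ref{wei-cor}~(i). Your extension to general $f\in L^p(w)$, via the approximation scheme described after Corollary \ref{wei-cor} and Fatou's lemma, is a sound elaboration of a step the paper leaves implicit.
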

We next comment on the range $p_0<p<q_0$. For analytic semigroups, Le Merdy and Xu \cite{LMX12} established unweighted $q$-variational estimates for all $1<p<\infty$. Their argument assumes contractive regularity but did not require Gaussian estimates. Our sparse domination uses generalized Gaussian estimates and give weighted bounds in the range $p_0<p<q_0$. This range is natural for operators whose
semigroups do not satisfy pointwise Gaussian
bounds; see, for example, \cite{AM06,B03,SYY14}.

When we consider $L^p$-boundedness for a fixed $p_0<p<q_0$, the regularity condition in Theorem \ref{thm2} can be improved. Indeed, by H\"{o}lder's inequality we have
    $${(\textnormal{GGE}_{p_0,q_0,m})}\Longrightarrow{(\textnormal{GGE}_{p_1,q_1,m})}\quad\text{for every}\quad1\leq p_0\leq p_1<2<q_1\leq q_0.$$ 
By choosing $p_1$ sufficiently close to $\min\{p,2\}$ and $q_1$ sufficiently close to $\max\{p,2\}$, we obtain the following observation.
\begin{rem}
   Under the assumptions of Theorem \ref{thm2}, for every fixed $p_0<p<q_0$, the
$q$-variation operator
$V^q(\varphi(tL):t>0)$ is bounded on $L^p(X)$ when $\beta>\nu|1/2-1/p|+1/2$. The corresponding weighted estimates are
established in Section 6.
\end{rem}

We next consider two applications of Theorem \ref{thm2}: Bochner-Riesz means and the regularized Schr\"{o}dinger group. For the Euclidean Laplacian, the Bocher-Riesz means are defined by 
$$\mathcal{F}(B_{R}^{\delta}f)(\xi)=\left(1-\frac{|\xi|^2}{R^2}\right)^{\delta}_+\,\mathcal{F}f(\xi),\quad R>0.$$
The classical Bochner-Riesz conjecture predicts that for $1<p<\infty$ and $\delta>\max\{n|1/p-1/2|-1/2,0\}$, the operator $\|B_{R}^{\delta}\|_{L^p\rightarrow L^p}<\infty$ uniformly on $R$. The corresponding maximal and variational estimates have also been widely studied; see \cite{L04,T02} and the references therein. Lee, Rogers and Seeger \cite{LRS12} proved square-function estimates and noted that this gives the local $2$-variational estimates for Bochner-Riesz means. They established that for compact intervals $I\subset (0,\infty)$, $\|V^2(B_{R}^{\delta}:R\in I)\|_{L^p\rightarrow L^p}<\infty$ when $2(n+2)/n\leq p<\infty$ and $\delta>n(1/2-1/p)-1/2$. For general non-negative self-adjoint operators, Chen, Ouhabaz, Sikora and Yan \cite{COSY16} studied the $L^p$-boundedness for spectral multipliers, including Bochner-Riesz estimates, based on restriction estimates. Later, Chen, Lee, Sikora and Yan \cite{CLSY20} obtained corresponding maximal estimates. 

We next turn to regularized Schr\"{o}dinger groups. For the classial Laplacian, Miyachi \cite{M80,M81} established the sharp smoothing condition $s=n|1/p-1/2|$ for fixed-time $L^p$ estimates of $(I-\Delta)^{-s}e^{-it\Delta}$. Chen, Duong, Li and Yan \cite{CDLY20} extended these estimates to general self-adjoint operators under generalized Gaussian estimates. Under \textnormal{(GGE$_{r,r,m}$)}, they proved that
$$\|(I+L)^{-s}e^{itL}f\|_{L^p}\lesssim(1+|t|)^s\|f\|_{L^p},\quad t\in\mathbb{R},$$
for $r<p<r^{\prime}$ and $s\geq \nu|1/p-1/2|$. The smoothing condition is sharp in their settings. Maximal estimates based on H\"{o}rmander functional calculus were studied by Deleaval and Kriegler \cite{DK23}. In their subsequent work, Deleaval and Kriegler \cite{DK24} also established the corresponding $q$-variational estimates $V^q$ ($q>2$) for some Sobolev-type spectral multipliers.

As a corollary of Theorem \ref{thm2}, we have the following weighted $q$-variational estimates for Bochner-Riesz means and regularized Schr\"{o}dinger groups.

\begin{cor}\label{application}
   Let $L$ be a non-negative self-adjoint a operator on $L^2(X)$. Suppose the semigroup $\{e^{-tL}\}_{t>0}$ satisfies \eqref{DGGE} for some $m\geq 2$ and $1\leq p_0<2<q_0\leq\infty$. Fix $p_0<p<q_0$ and $q>2$. Assume that $\delta>\nu|1/p-1/2|$ and $s>\nu|1/p-1/2|+1/2$. Then there exists $\varepsilon_0>0$, such that for every $w\in A_{\max\{1,p/2\}+\varepsilon_0}\cap RH_{1/((1-p/2)_++\varepsilon_0)}$, we have
    $$\|V^q((I-tL)_+^{\delta}:t>0)f\|_{L^p(w)}\leq C_{\delta}\left([w]_{A_{\max\{1,p/2\}+\varepsilon_0}}[w]_{RH_{1/((1-p/2)_++\varepsilon_0)}}\right)^{\gamma_{p,\varepsilon_0}}\|f\|_{L^p(w)}$$
    and
    $$\|V^q((I+tL)^{-s}e^{itL}:t>0)f\|_{L^p(w)}\leq C_{s}\left([w]_{A_{\max\{1,p/2\}+\varepsilon_0}}[w]_{RH_{1/((1-p/2)_++\varepsilon_0)}}\right)^{\gamma_{p,\varepsilon_0}}\|f\|_{L^p(w)},$$
    where  $\gamma_{p,\varepsilon_0}=(\max\{1,p-1\}+\varepsilon_0)/(p\varepsilon_0)$.
\end{cor}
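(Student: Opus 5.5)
We will deduce Corollary \ref{application} from Theorem \ref{thm2}, the Remark following Corollary \ref{wei-cor} (reduction of $(p_0,q_0)$ to $(p_1,q_1)$ close to $(\min\{p,2\},\max\{p,2\})$), and the weighted estimate of Lemma \ref{wei-lem}.

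\textbf{Step 1: multiplier regularity.} Write $\varphi^{\delta}(\lambda)=(1-\lambda)_+^{\delta}$ and $\psi^{s}(\lambda)=(1+\lambda)^{-s}e^{i\lambda}$. Both have $\varphi(0)=1$, so we set $\varphi_0=\varphi-e^{-\lambda}$ and must check $\varphi_0\in\Lambda^{\beta}$, i.e.\ $\varphi_0\circ\exp\in W^{\beta,2}(\mathbb{R})$. For $u\to-\infty$, $\varphi_0(e^u)=O(e^{u})$ together with all derivatives, so there is no problem at the origin. For Bochner--Riesz, $\varphi_0\circ\exp$ is smooth away from $u=0$ and decays exponentially at $+\infty$ (since $e^{-e^u}$ does); near $u=0$ it behaves like $(-u)_+^{\delta}$, which lies in $W^{\beta,2}_{\mathrm{loc}}$ exactly when $\beta<\delta+1/2$. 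For Schr\"odinger, $\psi^s(e^u)=(1+e^u)^{-s}e^{ie^u}$: each $u$-derivative brings a factor $e^u$, so the $k$-th derivative is $O(e^{u(k-s)})$. This is in $L^2$ at $+\infty$ iff $k<s$, and interpolation gives $\psi^s_0\in\Lambda^{\beta}$ for all $\beta<s$.

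\textbf{Step 2: choice of exponents.} Fix $p$ and take $p_0\le p_1<\min\{p,2\}$ and $\max\{p,2\}<q_1\le q_0$ with $p_1,q_1$ close to $\min\{p,2\}$, $\max\{p,2\}$. Then $\nu(1/p_1-1/q_1)+1/2$ is as close as we like to $\nu|1/p-1/2|+1/2$. Since $\delta+1/2>\nu|1/p-1/2|+1/2$ and $s>\nu|1/p-1/2|+1/2$, Step 1 yields $\beta$ with $\nu(1/p_1-1/q_1)+1/2<\beta$ and $\varphi_0\in\Lambda^\beta$. By H\"older, (DGGE$_{p_0,q_0,m}$) implies (DGGE$_{p_1,q_1,m}$). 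Theorem \ref{thm2} then gives $V^q(\varphi(tL):t>0)\in S(p_1,q_1)$ with constant $\lesssim 1+\|\varphi_0\|_{\Lambda^\beta}$, which is the origin of $C_\delta$ and $C_s$.

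\textbf{Step 3: weights.} Apply Corollary \ref{wei-cor}(i) (via Lemma \ref{wei-lem}) with $(p_1,q_1)$. We parametrize $p_1,q_1$ by $\varepsilon_0$ so that
$$A_{p/p_1}\supseteq A_{\max\{1,p/2\}+\varepsilon_0},\qquad RH_{(q_1/p)'}\supseteq RH_{1/((1-p/2)_++\varepsilon_0)}.$$
For example, for $p\ge2$ take $p_1$ with $p/p_1=p/2+\varepsilon_0$ and $q_1$ with $1/(q_1/p)'=1-p/q_1=\varepsilon_0$; treat $p<2$ symmetrically. Then $p-p_1\approx p\varepsilon_0/(\max\{1,p/2\}+\varepsilon_0)$ and $q_0-p$ (here $q_1-p$) is comparable to $p\varepsilon_0$. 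This gives $\alpha\le(\max\{1,p-1\}+\varepsilon_0)/(p\varepsilon_0)$ after monotonicity of $[w]$ in the classes. Choosing $\varepsilon_0$ small enough also ensures $p_1\ge p_0$, $q_1\le q_0$ and that the regularity margin from Step 2 still holds.

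\textbf{Main obstacle.} We expect the main difficulty to be the bookkeeping in Step 3: matching the exact exponent $\gamma_{p,\varepsilon_0}$. This means tracking how $\alpha=\max\{1/(p-p_1),(q_1-1)/(q_1-p)\}$ depends on $\varepsilon_0$ in the cases $p\ge2$ and $p<2$. A secondary point is verifying the fractional Sobolev membership at the singularity $u=0$ in Step 1. This can be handled by Fourier transform asymptotics $|\widehat{(-u)_+^\delta\chi}(\xi)|\sim|\xi|^{-1-\delta}$.
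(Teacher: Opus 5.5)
Your proposal is correct and follows the paper's route: Lemma \ref{Application} gives $B^{\delta}\in\Lambda^{\beta}$ for $\beta<\delta+1/2$ and $S^{s}\in\Lambda^{\beta}$ for $\beta<s$, and Proposition \ref{weightedes} makes exactly your choice $1/p_1=\max\{1/p,1/2\}+\varepsilon_0/p$, $1/q_1=\min\{1/p,1/2\}-\varepsilon_0/p$ before applying Theorem \ref{thm2} and Lemma \ref{wei-lem}. The paper proves the regularity by complex interpolation of the analytic families $e^{(z-\theta)^2}B^{\delta-\theta+z}$ and $e^{(z-\theta)^2}S^{s-\theta+z}$ rather than by your singularity/decay analysis; for $S^s$ your interpolation has to be done on unit-scale pieces in $u$, since $S^s\circ\exp\notin W^{\lceil\beta\rceil,2}$ when $\lceil\beta\rceil\ge s$, and in the bookkeeping note that $p-p_1\sim\varepsilon_0$ only for $p\le 2$ and $q_1-p\sim\varepsilon_0$ only for $p\ge 2$, with that term alone giving $\alpha=\gamma_{p,\varepsilon_0}$ exactly in each case.
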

The preceding approximation argument \eqref{1.1} gives that for $f\in L^p(w)$ here,
\begin{equation}\label{1.2}
\lim_{t\rightarrow0}(I-tL)_+^\delta f(x)=\lim_{t\rightarrow0}(I+tL)^{-s}e^{itL}f(x)=f(x)
\end{equation}
and 
\begin{equation}\label{1.3}
\lim_{t\rightarrow\infty}(I-tL)_+^\delta f(x)=\lim_{t\rightarrow\infty}(I+tL)^{-s}e^{itL}f(x)=Pf(x)
\end{equation}
for a.e. $x\in X$. The corresponding jump estimates also hold with the same weight factors as above.

The paper is organized as follows. In Section 2, we recall some preliminary results. We prove Theorem \ref{thm1} in Section 3 and Theorem \ref{thm2} in Section 4. In Section 5, we construct a counterexample to show that the regularity condition in Theorem \ref{thm2} cannot be lowered in general. Finally in Section 6, we establish weighted estimates, prove Corollary \ref{application}, and give examples of $L$ satisfying our assumptions.

We conclude the introduction with some notation. We use $a\lesssim b$ to say that there exists a constant $C$, which is independent of the important parameters, such that $a\leq Cb$. Moreover, we write $a\sim b$ if $a\lesssim b$ and $b\lesssim a$. For a measurable set $E$, we denote the characteristic function by $\chi_E$. We write $C_{0}(\mathbb{R})$ for the class of continuous function on $\mathbb{R}$ that vanish at $\pm\infty$, and $C_c^{\infty}(\mathbb{R})$ for the class of smooth function with compact support.

\section{Preliminary}
\subsection{Spaces of homogeneous type}
Let $(X,d)$ be a metric space, and $\mu$ be a non-negative Borel measure on $X$. For a measurable set $E\subset X$, we write $|E|=\mu(E)$. We assume that $\mu$ satisfies the doubling property: there exists a constant $C>0$ such that
\begin{equation*} 
    |B(x,2r)|\leq C\ |B(x,r)|,\quad \forall x\in X,\ r>0,
\end{equation*}
where $B(x,r):=\left\{y\in X:\ d(x,y)<r \right\}$. It follows that there exists $\nu>0$ such that
\begin{equation}\label{doub}
    |B(x,r)|\leq C\left(\frac{r}{s}\right)^{\nu} |B(x,s)|,\quad \forall x\in X,\ r\geq s>0,
\end{equation}
which implies that there also exists $C^{\prime}>0$ that
\begin{equation}
    |B(x,r)|\leq C^{\prime} \left(\frac{d(x,y)+r}{s}\right)^{\nu}|B(y,s)|,\quad \forall x,y\in X,\ r\geq s>0.
\end{equation}
Two intersecting balls with comparable radius have comparable measures. For more details on spaces of homogeneous type, we refer to \cite{CW71}.

We recall the definition and construction of dyadic systems on spaces of homogeneous type (see \cite{C90,HK12} for more details).
\begin{defi}
    Let $0<c_{ 0 }\leq C_{0}<\infty$ and $0<\delta<1$. Assume that for every $k\in\mathbb{Z}$, there is a pairwise disjoint collection $\mathcal{D}_{k} = (Q_{j}^{k})_{j\in J_{k}}$ in $X$ and points $(z_{j}^{k})_{j\in J_{k}}\in X$. Then we will call $(\mathcal{D}_{k})_{k\in\mathbb{Z}}$ a dyadic system in $X$, if it satisfies the following properties:\\
    \textnormal{(i)} For all $k\in\mathbb{Z}$, we have
    $$X = \bigcup_{j\in J_{k}}Q_{j}^{k};$$
\textnormal{(ii)} Let $l\geq k$, if $Q\in{\mathcal{D}}_{l}$ and $Q^{\prime}\in\mathcal {D}_{k}$, we have either $Q\cap Q^{\prime} =\varnothing\ \text{or}\ Q\subset Q^{\prime};$\\
\textnormal{(iii)} For each $k\in\mathbb{Z}$ and $j\in J_{k}$, we have
$$B(z_{j}^{k},c_{0}\delta^{k})\subset Q_{j}^{k}\subset B(z_{j}^{k},C_{0}\delta^{k});$$
\textnormal{(iv)} For $l\geq k$, if $Q_{j^{\prime}}^{l}\subset Q_{j}^{k}$, then $B(z_ {j^{\prime}}^{l},C_{0}\delta^k)\subset B(z_j^{k},C_{0}\delta^{k})$.
\end{defi}
These elements of a dyadic system are called `cubes'. For a detailed discussion on the construction of dyadic systems and for the following theorem, we refer the reader to \cite{HK12} and references therein.
\begin{thm} \label{Dyadic}
    There exist constants $c_{0},C_{0},\delta>0$, $K_D=K_D(c_{0}, C_{0},\delta)\in\mathbb{N}_+$ and $\rho=\rho(c_{0},C_{0},\delta )$ as well as a finite collection of families $\mathcal{D}^{b}$ for $b=1,...,K_D$, where each $\mathcal {D}^{b}$ is a dyadic system with the property that for every ball $B=B(x,r)\subset X$, there exists $b\in\{1,...,K_D\}$ and ${Q}\in\mathcal{D}^{b}$ with $B\subset Q$ and $\textnormal{diam}(Q)\leq\rho r$.
\end{thm}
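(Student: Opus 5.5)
The statement is Theorem~\ref{Dyadic}: finitely many adjacent dyadic systems such that every ball lies in a cube of comparable diameter. I would follow the Hytönen--Kairema construction \cite{HK12}. The proof has three steps: fix the parameters and the point sets, build the cubes, and then show the ball covering property, which is the main obstacle.

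\textbf{Step 1: points.} Fix $\delta$ small; for a metric space $\delta\le 1/1000$ is enough. For each $k\in\mathbb{Z}$, choose a maximal $\delta^k$-separated set $\{z_j^k\}_{j\in J_k}$, nested across generations. Then build the cubes by the standard Christ-type procedure.
\begin{itemize}
\item Define a parent relation: each point $z^{k+1}_{j'}$ is assigned to a nearest point $z^k_j$ of the coarser generation, which lies within distance $\delta^k$.
\item Take the closure of the descendant tree of $z_j^k$ to obtain a preliminary cube.
\item Remove overlaps by a fixed well-ordering of $J_k$ to get the disjoint cubes $Q_j^k$.
\end{itemize}
Properties (i) and (ii) follow from the tree structure. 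Property (iii) follows from geometric series estimates: descendants stay within $\sum_{l\ge k}\delta^l\le 2\delta^k$, which gives $C_0=2$. Separation gives $c_0=1/3$ for the inner ball, with a careful check near boundaries using the small $\delta$. Property (iv) follows from the same geometric series estimate.

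\textbf{Step 2: adjacent systems.} Construct $K_D$ systems $\mathcal{D}^b$ by a random or "shifted" choice of the point sets. For each $k$, the new centres are chosen among points of a fixed finer net, with labels $b$ chosen so that for every point $x$ and scale $k$ there is some $b$ for which $x$ is deep inside a cube of generation $k$ of $\mathcal{D}^b$. Concretely, I would aim for
\[
d\big(x, X\setminus Q\big)\ge \varepsilon\delta^k, \qquad Q\in\mathcal{D}^b_k,\ x\in Q .
\]
The number of systems needed is controlled by the doubling constant. Doubling bounds the number of net points in a ball of radius comparable to $\delta^k$ at a finer scale, so finitely many labels $b$ suffice. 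Note that no $\nu$-dependence enters $K_D$ beyond the doubling constant.

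\textbf{Step 3: the covering property.} Given $B=B(x,r)$, choose $k$ with $\delta^{k+1}< r/\varepsilon\le\delta^k$. Take $b$ such that the cube $Q\in\mathcal{D}^b_k$ containing $x$ satisfies $d(x,X\setminus Q)\ge \varepsilon\delta^k\ge r$. Then $B\subset Q$, and
\[
\operatorname{diam} Q\le 2C_0\delta^k\le 2C_0\,\delta^{-1}\varepsilon^{-1}\, r ,
\]
so we may take $\rho=2C_0/(\delta\varepsilon)$.

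\textbf{Main obstacle.} The hard step is producing the inner-ball property of Step 2 uniformly in $x$ and $k$ with finitely many systems. Cubes have no regular boundary, so a thin neighbourhood of the boundary must be avoided by at least one system. I would handle this as in \cite{HK12}. One estimates that the "bad" set where $x$ is near the boundary of its generation-$k$ cube in system $b$ is determined by the choice of labels among the finitely many nearby net points. A pigeonhole argument over the $K_D$ labelings then shows that at least one labeling places $x$ at distance $\ge\varepsilon\delta^k$ from the complement of its cube.
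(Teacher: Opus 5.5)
The paper does not prove this statement; it quotes it from \cite{HK12}, so the comparison is with the Hyt\"onen--Kairema argument. Your architecture matches theirs: a Christ-type system, finitely many adjacent systems built from labelings of a finer net, then the covering step. Your Step~3 is correct \emph{given} Step~2. However, the step you identify as the main obstacle is not established, and the mechanism you propose for it does not work as described. Whether $x$ lies within $\varepsilon\delta^k$ of the boundary of its generation-$k$ cube in $\mathcal{D}^b$ depends on the centre sets and parent assignments of $\mathcal{D}^b$ at generations $k,k+1,\dots$ down to scale $\varepsilon\delta^k$, that is, on the whole Step~1 construction near $x$. A pigeonhole over the $K_D$ labelings gives nothing unless you show the bad event cannot occur in all $K_D$ systems simultaneously. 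That non-simultaneity is the entire content of the step, and no argument for it is given.

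The missing idea is that you need not analyse boundaries at all. Your own property (iii) already gives depth around centres, so it suffices that every $x$ be close to a generation-$k$ centre in \emph{some} system.

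Construction: let $\mathcal{X}^{k+1}$ be a maximal $\delta^{k+1}$-separated set. Colour it with $K_D$ colours so that equally coloured points are at distance $\geq\delta^k$. This is possible with $K_D$ independent of $k$, since by doubling a ball of radius $\delta^k$ contains $\lesssim\delta^{-\nu}$ points of $\mathcal{X}^{k+1}$. In $\mathcal{D}^b$, take as generation-$k$ centres the points of colour $b$, together with those points of a maximal $\delta^k$-separated set lying at distance $\geq\delta^k/2$ from them. This gives separation $\delta^k/2$ and covering $3\delta^k/2$.

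Covering: given $B(x,r)$ with $r\leq\varepsilon\delta^k$, pick $z\in\mathcal{X}^{k+1}$ with $d(x,z)<\delta^{k+1}$ and let $b$ be its colour. If $Q\in\mathcal{D}^b$ is the generation-$k$ cube with centre $z$, then $B(x,r)\subset B(z,c_0\delta^k)\subset Q$ as soon as $\varepsilon+\delta\leq c_0$. This is exactly your Step~2.

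The price is that the centre sets of $\mathcal{D}^b$ are no longer nested across generations. So Step~1 must be carried out for arbitrary reference sets with separation $c\delta^k$ and covering $C\delta^k$, $\delta\ll c/C$ \cite[Theorem 2.2]{HK12}. The inner ball survives because a generation-$(k+1)$ centre within $c\delta^k/2$ of a generation-$k$ centre is forced to be its child.

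Two smaller points in Step~1. First, removing overlaps by independent well-orderings of the $J_k$ does not preserve the nesting (ii), so the half-open cubes have to be built compatibly across generations. Second, your geometric-series argument gives (iv) in the form $B(z^l_{j'},C_0\delta^l)\subset B(z^k_j,C_0\delta^k)$, which is the form in \cite{HK12}. With $\delta^k$ on the left, as printed in the paper, (iv) fails already in $\mathbb{R}$.
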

\noindent We denote
$$\mathcal{D}:=\bigcup_{b=1}^{K_D}\mathcal{D}^{b},$$
and call a cube dyadic cube whenever $Q\in\mathcal{D}$. For every dyadic cube $Q\in\mathcal{D}$, we denote $r_Q=\delta^k$ where the integer $k$ is determined by $\delta^{k+1}\leq\text{diam}(Q)<\delta^k$. 

\subsection{Holomorphic functional calculus and $\Lambda^{\beta}_{2,2}(\mathbb{R_+})$}
\paragraph{Accretive operators and holomorphic functional calculus.}
We first recall some basic facts concerning the holomorphic functional calculus for accretive operators. We refer to \cite{ADM96,H06,M86} and the references therein for further details.

For $0<\sigma<\pi$, let
$$\Sigma_{\sigma}:=\{z\in\mathbb{C}: z\neq0,|\arg z|<\sigma \}$$
and set $\Sigma_0=(0,\infty)$. We write $\overline{\Sigma_{\sigma}}$ for the closure of $\Sigma_{\sigma}$ in $\mathbb{C}$. Let $H(\Sigma_{\sigma})$ denote the space of holomorphic functions on $\Sigma_{\sigma}$. Set
$$H^{\infty}(\Sigma_{\sigma}):=\{\varphi\in H(\Sigma_{\sigma}): \|\varphi\|_{\infty,\sigma}:=\text{sup}_{z\in\Sigma_{\sigma}}|\varphi(z)|<\infty\}.$$
For $\alpha,\beta>0$, define
$$\Psi_{\alpha,\beta}(\Sigma_{\sigma}):=\{\varphi\in H(\Sigma_{\sigma}): \exists\ C:|\varphi(z)|\leq C|z|^{\alpha} (1+|z|^{\alpha+\beta})^{-1},\ z\in\Sigma_{\sigma}\}.$$
We denote the infimum of such constant $C$ as $\|\varphi\|_{\Psi_{\alpha,\beta}(\Sigma_{\sigma})}$. We define
$${H^{\infty}_0(\Sigma_{\sigma}):=\bigcup_{\alpha,\beta>0}\Psi_{\alpha,\beta} (\Sigma_{\sigma})}.$$
Then we have the following elementary consequence of Cauchy's integral formula for $H^{\infty}_0$.
\begin{lem}\label{derivative}
Let $0<\nu<\mu<\pi$, let $\alpha,\beta>0$, and let $k\in\mathbb N$. If $\varphi\in H^{\infty}_0(\Sigma_{\mu})$, then $z^k \varphi^{(k)}(z)$ belongs to $H^{\infty}_0(\Sigma_{\nu})$. More precisely, for $\varphi\in\Psi_{\alpha,\beta}(\Sigma_\mu)$, \[ \|z^k \varphi^{(k)}(z)\|_{\Psi_{\alpha,\beta}(\Sigma_\nu)} \lesssim\|\varphi\|_{\Psi_{\alpha,\beta}(\Sigma_\mu)}. \]
\end{lem}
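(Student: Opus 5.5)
The plan is to estimate $z^k\varphi^{(k)}(z)$ pointwise on $\Sigma_\nu$ with Cauchy's integral formula, integrating over a circle centred at $z$ whose radius is proportional to $|z|$, so that the circle stays inside $\Sigma_\mu$. Fix $z\in\Sigma_\nu$. Since $\nu<\mu$, there is a constant $c=c(\nu,\mu)\in(0,1/2)$ such that the closed disc $\overline{D(z,c|z|)}$ is contained in $\Sigma_\mu$. One may take, for instance, $c=\tfrac12\sin(\min\{\mu-\nu,\pi/2\})$: the distance from $z$ to the boundary of $\Sigma_\mu$ is at least $|z|\sin(\mu-\nu)$ when $\mu-\nu\le\pi/2$, and at least $|z|$ otherwise. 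Cauchy's formula for derivatives then gives
$$|\varphi^{(k)}(z)|\leq \frac{k!}{(c|z|)^k}\sup_{|w-z|=c|z|}|\varphi(w)|,$$
and therefore
$$|z^k\varphi^{(k)}(z)|\leq k!\,c^{-k}\sup_{|w-z|=c|z|}|\varphi(w)|.$$
Holomorphy of $z^k\varphi^{(k)}$ on $\Sigma_\nu$ is immediate.

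Next I insert the bound $|\varphi(w)|\leq \|\varphi\|_{\Psi_{\alpha,\beta}(\Sigma_\mu)}|w|^\alpha(1+|w|^{\alpha+\beta})^{-1}$. On the circle we have $(1-c)|z|\le |w|\le(1+c)|z|$, so $|w|\sim|z|$ with constants depending only on $c$. The function $h(r)=r^\alpha(1+r^{\alpha+\beta})^{-1}$ satisfies $h(\lambda r)\le C_\lambda h(r)$ for $\lambda$ in a compact subset of $(0,\infty)$. Indeed,
$$h(\lambda r)=\lambda^\alpha r^\alpha(1+\lambda^{\alpha+\beta}r^{\alpha+\beta})^{-1}\leq \lambda^\alpha\max\{1,\lambda^{-(\alpha+\beta)}\}\,h(r).$$
Applying this with $\lambda=|w|/|z|\in[1-c,1+c]$ yields
$$|z^k\varphi^{(k)}(z)|\lesssim_{k,c,\alpha,\beta}\|\varphi\|_{\Psi_{\alpha,\beta}(\Sigma_\mu)}\frac{|z|^\alpha}{1+|z|^{\alpha+\beta}}.$$
This is exactly the claimed estimate with the same $\alpha,\beta$. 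Taking the union over $\alpha,\beta>0$ shows that $z^k\varphi^{(k)}\in H^\infty_0(\Sigma_\nu)$.

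The only step needing any care is the geometric choice of $c$ uniformly in $z$. It relies on the scale invariance of sectors: the distance from $z$ to $\partial\Sigma_\mu$ is a fixed multiple of $|z|$ on $\Sigma_\nu$. Everything else is a routine comparison of $h(|w|)$ and $h(|z|)$.
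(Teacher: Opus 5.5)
Your proof is correct and takes essentially the same route as the paper. You apply Cauchy's formula on the circle $|w-z|=c|z|$, which lies in $\Sigma_\mu$ by the scale invariance of sectors, and then use the comparability $|w|\sim|z|$ to transfer the $\Psi_{\alpha,\beta}$ bound with the same $\alpha,\beta$. You are slightly more explicit than the paper: you give a concrete $c$ and the inequality $h(\lambda r)\le\lambda^\alpha\max\{1,\lambda^{-(\alpha+\beta)}\}h(r)$ (your $c$ equals $1/2$ rather than lying in $(0,1/2)$ when $\mu-\nu\ge\pi/2$, which is harmless).
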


\begin{proof} Choose $\delta\in(0,1/2)$, such that $ B(z,\delta|z|)\subset\Sigma_\mu$ for all $z\in\Sigma_\nu.$  For $|w-z|=\delta|z|$, one has $ |w|\sim|z|$, thus 
$$ |w|^{\alpha}(1+|w|^{\alpha+\beta})^{-1}\sim |z|^{\alpha}(1+|z|^{\alpha+\beta})^{-1}.$$
By Cauchy integral formula, 
$$ \varphi^{(k)}(z) = \frac{k!}{2\pi i}\int_{|w-z|=\delta|z|}\frac{\varphi(w)}{(w-z)^{k+1}}\,dw. $$
It follows that 
$$|z^k \varphi^{(k)}(z)| \lesssim \frac{|z|^{\alpha}}{1+|z|^{\alpha+\beta}}\|\varphi\|_{\Psi_{\alpha,\beta}(\Sigma_\mu)}.$$
This completes the proof. 
\end{proof}

Denote $L^2(X)$ by $\mathcal{H}$, and we let $0\leq\omega<\pi$. A closed operator $L$ on $\mathcal{H}$ is called $\omega$-sectorial, if the spectrum $\sigma(L)$ is contained in $\overline{\Sigma_{\omega}}$, and, for each $\theta>\omega$, there exists a $C_{\theta}>0$ such that 
$$\|(zI-L)^{-1}\|_{\mathcal{B}(\mathcal{H})}\leq C_{\theta}|z|^{-1}\quad \text{for } z\notin \overline{\Sigma_{\theta}}.$$
Further for $0\leq \omega<\pi/2$, a closed operator $L$ on $\mathcal{H}$ is called $\omega$-accretive if $\sigma(L)\subset \overline{\Sigma_{\omega}}$ and $\langle Lu,u\rangle_{\mathcal{H}}\in \overline{\Sigma_{\omega}}$ for all $u\in D(L)$. It is easy to see $\|(zI-L)^{-1}\|_{\mathcal{B}(H)}\leq(\text{dist}(z,\overline{\Sigma_{w}}))^{-1}$, so that $L$ is $w$-sectorial. We remark that $w$-accretive operators necessarily have dense domain, and are self-adjoint if and only if $\omega=0$. Let $\omega<\mu<\pi$ and $\varphi\in H^{\infty}_{0}(\Sigma_{\mu})$, the holomorphic functional calculus $\varphi(L)$ is defined by 
\begin{equation}\label{Hi}
\varphi(L):=\frac{1}{2\pi i}\int_{\gamma}\varphi(\lambda)(\lambda I-L)^{-1}\,d\lambda,
\end{equation}
where $\gamma:=\{\rho e^{i\nu}:\ \rho:\infty\rightarrow0\}\ \cup\ \{\rho e^{-i\nu}:\ \rho:0\rightarrow\infty\}$, $\omega<\nu<\mu$. The integral converges absolutely in $\mathcal{B}(\mathcal{H})$, and its value is independent of the choice of $\nu$ by Cauchy's theorem. We say that $L$ admits a bounded $H^\infty(\Sigma_\mu)$ functional calculus on $\mathcal{H}$ if there exists a constant $C_\mu>0$ such that
$$\|\varphi(L)\|_{\mathcal B(\mathcal{H})} \leq C_\mu\|\varphi\|_{\infty,\mu}$$ 
for all $\varphi\in H^{\infty}_0(\Sigma_\mu)$. If $L$ is injective, then its range is dense in $\mathcal{H}$ and the calculus extends uniquely to a bounded unital algebra homomorphism from $H^\infty(\Sigma_\mu)$ into $\mathcal B(\mathcal{H})$ (see e.g. \cite{M86}). Notice that (see e.g. \cite[p. 103]{ADM96}) an injective $\omega$-accretive operator has bounded $H^{\infty}(\Sigma_{\mu})$ functional calculus for every $\mu>\omega$. The bounded $H^{\infty}$ functional calculus gives the following standard quadratic estimate. 
\begin{lem}\label{quad}
\textnormal{(\cite[Theorem 7.3.1]{H06})} Let $L$ be an injective $\omega$-sectorial operator on the Hilbert space $H$, which admits a bounded $H^{\infty}(\Sigma_{\theta})$ functional calculus for a $\omega<\theta<\pi$. Then for every $\omega<\mu<\pi$ and $0\neq \varphi\in H^{\infty}_0(\Sigma_{\mu})$, it holds that
\begin{equation}\label{qua}
    \int_{0}^{\infty}\|\varphi(tL)x\|_{H}^2\frac{dt}{t}\sim\|x\|_{H}^2\notag
\end{equation}
for all $x\in H$, where the implicit constant depend on $\varphi$.
\end{lem}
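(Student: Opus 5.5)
The statement is Lemma~\ref{quad}, the standard quadratic estimate $\int_0^\infty\|\varphi(tL)x\|_H^2\,dt/t\sim\|x\|_H^2$. The plan is to obtain the upper bound from the bounded $H^\infty$ calculus by a randomization argument. The lower bound will then follow from the upper bound, applied to the adjoint, through a Calder\'on reproducing formula.

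\textbf{Upper bound.} We first reduce to a discrete square function. Write
\[
\int_0^\infty\|\varphi(tL)x\|^2\frac{dt}{t}=\int_1^2\sum_{k\in\mathbb Z}\|\varphi(2^k sL)x\|^2\,\frac{ds}{s}.
\]
It therefore suffices to bound $\sum_k\|\varphi(2^ksL)x\|^2$ uniformly in $s\in[1,2]$.

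Since $H$ is a Hilbert space, for independent Rademacher signs $\epsilon_k$ and any finite set $|k|\le N$ we have
\[
\sum_{|k|\le N}\|\varphi(2^ksL)x\|^2=\mathbb E\Big\|\sum_{|k|\le N}\epsilon_k\varphi(2^ksL)x\Big\|^2
\le \sup_{\pm}\Big\|\sum_{|k|\le N}\pm\varphi(2^ksL)\Big\|_{\mathcal B(H)}^2\|x\|^2 .
\]
Because $\varphi\in\Psi_{\alpha,\beta}(\Sigma_\mu)$, the function $\sum_k\pm\varphi(2^ksz)$ is bounded on $\Sigma_\theta$ for $\theta<\mu$, uniformly in $N$, in the signs and in $s$. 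Indeed, $\sum_k\min\{|2^kz|^\alpha,|2^kz|^{-\beta}\}\lesssim1$. Applying the bounded $H^\infty(\Sigma_\theta)$ calculus gives $\sum_k\|\varphi(2^ksL)x\|^2\lesssim\|x\|^2$.

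The calculus is assumed on one fixed sector $\Sigma_\theta$, while $\varphi$ is holomorphic on $\Sigma_\mu$ with $\mu$ possibly smaller than $\theta$. We handle this with the standard fact (McIntosh) that a bounded $H^\infty(\Sigma_\theta)$ calculus for an injective sectorial operator implies bounded $H^\infty(\Sigma_{\mu'})$ calculus for every $\mu'>\omega$. Alternatively, we can restrict to $\varphi$ holomorphic on a larger sector and cite that fact.

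\textbf{Lower bound.} Take $\psi(z)=\overline{\varphi(\bar z)}$, so that $\psi(tL^*)=\varphi(tL)^*$. The adjoint $L^*$ is injective and sectorial on $H$, since $L$ has dense range. It also has bounded $H^\infty$ calculus, so the upper bound holds for $L^*$ with any nonzero $H^\infty_0$ function.

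Choose $\eta\in H^\infty_0$ with $\int_0^\infty\eta(t)\varphi(t)\,dt/t=1$; for example, a suitable multiple of $\overline{\varphi(\bar z)}$ works. Then the Calder\'on reproducing formula
\[
x=\int_0^\infty\eta(tL)\varphi(tL)x\,\frac{dt}{t}
\]
holds, with the integral converging in $H$. This follows from the convergence lemma for $H^\infty$ calculus: truncating to $\int_\varepsilon^R$ gives functions uniformly bounded in $H^\infty$ that converge pointwise to $1$, and $L$ is injective.

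Pairing with $y\in H$ and using Cauchy--Schwarz,
\[
|\langle x,y\rangle|\le\Big(\int\|\varphi(tL)x\|^2\tfrac{dt}{t}\Big)^{1/2}\Big(\int\|\eta(tL)^*y\|^2\tfrac{dt}{t}\Big)^{1/2}.
\]
The upper bound for $L^*$ controls the last factor by $\|y\|$. Taking the supremum over $\|y\|\le1$ gives $\|x\|^2\lesssim\int_0^\infty\|\varphi(tL)x\|^2\,dt/t$.

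\textbf{Main obstacle.} The main difficulty is justifying the reproducing formula, that is, strong convergence of the truncated integrals to the identity. This is exactly where injectivity is used. It relies on the convergence lemma for $H^\infty$ calculus: uniformly bounded functions converging pointwise to $1$ give strong convergence on $\overline{R(L)}\cap\overline{D(L)}=H$.
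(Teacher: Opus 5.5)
The paper gives no proof of this lemma. It quotes it from \cite[Theorem 7.3.1]{H06} and applies it only to injective $\omega$-accretive $L$, whose calculus is bounded on every $\Sigma_\mu$ with $\mu>\omega$. Your argument is the classical McIntosh proof, and its steps are sound:
\begin{itemize}
\item The uniform bound $\sum_k\min\{|2^ksz|^{\alpha},|2^ksz|^{-\beta}\}\lesssim 1$, combined with the Rademacher identity in $H$, gives the upper estimate.
\item The relation $\varphi(tL)^*=\psi(tL^*)$ with $\psi(z)=\overline{\varphi(\bar z)}$ transfers the upper estimate to $L^*$, which is injective, sectorial and has a bounded calculus.
\item The choice $\eta=c\,\overline{\varphi(\bar z)}$ with $c^{-1}=\int_0^\infty|\varphi(t)|^2\,dt/t>0$ works, because $z\mapsto\int_0^\infty\eta(tz)\varphi(tz)\,dt/t$ is constant on the sector.
\end{itemize}
One small correction: the convergence lemma needs density of $D(L)\cap R(L)$, not $\overline{R(L)}\cap\overline{D(L)}=H$. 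This density holds because $L$ is densely defined on a Hilbert space and has dense range by injectivity.

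The one soft spot is the case $\mu<\theta$. There you cite McIntosh's theorem that the calculus is bounded on every $\Sigma_{\mu'}$ with $\mu'>\omega$. That theorem is true, but its standard proof goes through quadratic estimates of exactly this kind. So the step is circular in spirit. Your alternative of restricting to $\varphi$ holomorphic on a larger sector would only prove a weaker statement.

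You can make the argument self-contained as follows.
\begin{itemize}
\item Run the randomization only for the fixed function $\psi_0(z)=z(1+z)^{-2}$. It is holomorphic on every sector, so the $H^\infty(\Sigma_\theta)$ calculus applies to it directly. The same calculus also gives the reproducing formula $x=c_0\int_0^\infty\psi_0(sL)^2x\,ds/s$.
\item For general $\varphi\in\Psi_{\alpha,\beta}(\Sigma_\mu)$, write $\varphi(tL)x=c_0\int_0^\infty\varphi(tL)\psi_0(sL)\,\psi_0(sL)x\,ds/s$.
\item The Dunford integral over $\partial\Sigma_\nu$ with $\omega<\nu<\mu$ uses only sectoriality and gives $\|\varphi(tL)\psi_0(sL)\|\lesssim\min\{t/s,s/t\}^{\epsilon}$ for $0<\epsilon<\min\{\alpha,\beta,1\}$.
\item Schur's test then yields the upper bound for $\varphi$, and the same argument applies to $L^*$.
\item In your lower-bound argument, the uniform boundedness of the truncations $\int_\varepsilon^R\eta(tL)\varphi(tL)\,dt/t$ follows from Cauchy--Schwarz and these upper bounds for $L$ and $L^*$.
\end{itemize}
With these changes the calculus on $\Sigma_\mu$ is never needed.
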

Finally, since $\omega<\pi/2$, it follows from \cite[Section 7]{H06} that for an $\omega$-accretive operator $L$ (or generally, $\omega$-sectorial operator), $-L$ generates an analytic semigroup $\{e^{-zL}\}_{z\in \Sigma_{\pi/2-\omega}}$ on $\Sigma_{\pi/2-\omega}$. We also have the following representation of $\varphi(L)$.
\begin{equation}\label{H2}
    \varphi(L)=\int_{\Gamma_{+}} e^{-zL}\eta_{+}(z)\,dz+\int_{\Gamma_{-}}e^{-zL}\eta_{-}(z)\,dz,
\end{equation}
where
\begin{equation}\label{H3}
    \eta_{\pm}(z):=\frac{1}{2\pi i}\int_{\gamma_{\pm}}e^{z\lambda}\varphi(\lambda)\,d\lambda,\quad z\in\Gamma_{\pm},
\end{equation}
$\gamma_{\pm}$ denotes the curve $\gamma$ defined above, corresponding respectively to the angle $\nu$ and $-\nu$, and $\Gamma_{\pm}:=\{re^{\pm i\phi}:\ r:0\rightarrow\infty\}$ for any $\phi\in(\pi/2-\nu,\pi/2-\omega)$. 
\paragraph{The space $\Lambda^{\beta}_{2,2}(\mathbb{R}_+)$.} We next specialize to the case where $L$ is a non-negative self-adjoint operator on $\mathcal{H}$. For every Borel function $\varphi:[0,\infty)\rightarrow\mathbb{C}$, one defines
\begin{equation}\label{spectralthm}
    \varphi(L):=\int_{0}^{\infty}\varphi(\lambda)\,dE_{L}(\lambda),
\end{equation}
where $E_{L}$ denotes the associated spectral measure.
\begin{defi}
    We define
    $$\Lambda^{\beta}:=\Lambda_{2,2}^{\beta}(\mathbb{R}_+):=\{\varphi:(0,\infty)\rightarrow\mathbb{C}:\ \varphi\circ\text{exp}\in W^{\beta,2}(\mathbb{R})\},$$
    where $W^{\beta,2}$ denotes the usual Sobolev space. We write $\|\varphi\|_{\Lambda^{\beta}}:=\|\varphi\circ\text{exp}\|_{W^{\beta,2}}$. The space $\Lambda^{\beta}$ is a separable Hilbert space, and embeds into $C_0(\mathbb{R}_+)$ for $\beta>1/2$. This follows from the Sobolev embedding $W^{\beta,2}(\mathbb{R})\hookrightarrow C_{0}(\mathbb{R})$ for $\beta>1/2$. We define $\varphi(0)=0$ and use this in \eqref{spectralthm}. Moreover, by Lemma \ref{derivative}, it follows that $H^{\infty}_0(\Sigma_{\mu})|_{\mathbb{R}_+}\hookrightarrow \Lambda^{\beta}$ for every $0<\mu<\pi$ and $\beta\geq0$. 
\end{defi}
\begin{rem}
    The $\Lambda^{\beta}$ norm is invariant under dilations that $\|\varphi(t\cdot)\|_{\Lambda^{\beta}}=\|\varphi\|_{\Lambda^{\beta}}$ for $t>0$. Suppose $\varphi:\mathbb{R}_+\rightarrow\mathbb{C}$ is measurable and has compact support in $\mathbb{R}_+$, then $\|\varphi\|_{\Lambda^{\beta}}\sim\|\varphi\|_{W^{\beta,2}(\mathbb{R})}$, where the equivalence constants depend on the compact support.
    
    For comparison with dyadic Sobolev conditions in \textnormal{\cite{CLWY26,DK23,DK24}}, let $\phi_0\in C_{c}^{\infty}(\mathbb{R}_+)$ satisfy supp$(\phi_0)\subset[1/2,2]$ and $\sum_{k\in\mathbb{Z}}\phi_0(2^{-k}t)=1$ for $t>0$. Then 
    \begin{equation}
    \|\varphi\|_{\Lambda^{\beta}}\sim\left(\sum_{k\in\mathbb{Z}}\|\phi_0\varphi(2^{k}\cdot)\|_{W^{\beta,2}}^2\right)^{1/2}.
    \end{equation}
   This follows from Sovolev localization applied to $\varphi\circ\textnormal{exp}$, \textnormal{see e.g. \cite[Theorem 1.3]{M88}}.
\end{rem}
 The following simple embedding will be used later.
\begin{lem}\label{lambdaemb}
    \textnormal{(\cite[Lemma 3.2]{DK24})} Let $2\leq q<\infty$ and $\beta>1/2$, then $\Lambda^{\beta}\hookrightarrow{V}^q$. Here we use the unique continuous representative of each $a\in\Lambda^\beta$.
\end{lem}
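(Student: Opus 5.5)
Write $a\in\Lambda^\beta$, $\beta>1/2$, and set $b:=a\circ\exp\in W^{\beta,2}(\mathbb{R})$. The plan is to show that $\|a\|_{V^q}\lesssim\|b\|_{W^{\beta,2}}$. Since the $V^q$ norm only looks at the values $a(t_k)$ along increasing sequences $t_0<t_1<\cdots$ in $(0,\infty)$, and $\exp$ is an increasing bijection $\mathbb{R}\to(0,\infty)$, we have $\|a\|_{V^q}=\|b\|_{V^q(\mathbb{R})}$. Here $V^q(\mathbb{R})$ is the analogous norm with sequences $s_0<s_1<\cdots$ in $\mathbb{R}$. So the problem reduces to the embedding $W^{\beta,2}(\mathbb{R})\hookrightarrow V^q(\mathbb{R})$, where $b$ is taken as its continuous representative; this representative exists by the Sobolev embedding into $C_0(\mathbb{R})$ mentioned above. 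Moreover, the term $|b(s_0)|$ is bounded by $\|b\|_\infty\lesssim\|b\|_{W^{\beta,2}}$, so it remains to control the homogeneous part $\sum_k|b(s_k)-b(s_{k-1})|^q$.

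For the increments, I would first reduce to $q=2$. Since $q\ge 2$, we have $\ell^2\subset\ell^q$, so the $q$-sums are dominated by the $2$-sums, and it suffices to treat $q=2$. For the $2$-variation I would use a Littlewood–Paley decomposition $b=\sum_{j\ge0}b_j$, where $\widehat{b_j}$ is supported in $|\xi|\sim2^j$ for $j\geq1$ and $\widehat{b_0}$ in $|\xi|\lesssim1$. Each $b_j$ is smooth, with $\|b_j\|_{L^2}\lesssim 2^{-j\beta}\|b\|_{W^{\beta,2}}$ and $\|b_j'\|_{L^2}\lesssim 2^{j(1-\beta)}\|b\|_{W^{\beta,2}}$. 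For a smooth function the $1$-variation equals $\|b_j'\|_{L^1}$, but this is not square-integrable globally on $\mathbb{R}$, so I would instead interpolate between two bounds.

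The first bound is the trivial estimate $\|b_j\|_{V^2}\le\|b_j\|_{V^1}$, combined with local Cauchy–Schwarz on unit intervals. The second uses Bernstein's inequality $\|b_j\|_\infty\lesssim 2^{j/2}\|b_j\|_{L^2}$. A cleaner route, which I would try first, is to avoid $L^1$ altogether and use the Besov-type characterization $\|f\|_{V^2}\lesssim\|f\|_{B^{1/2}_{2,1}}$. This embedding is standard; it follows, for example, by splitting each increment $|f(s_k)-f(s_{k-1})|$ according to whether $s_k-s_{k-1}$ is bigger or smaller than $2^{-j}$, and using the Bernstein-type maximal bound $\sup_{|h|\le2^{-j}}|f_j(x+h)|$ in $L^2$. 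Since $\beta>1/2$, we have $W^{\beta,2}=B^\beta_{2,2}\hookrightarrow B^{1/2}_{2,1}$ by Cauchy–Schwarz in $j$, because $\sum_j2^{j/2}\|b_j\|_2\le(\sum_j2^{-2j(\beta-1/2)})^{1/2}\|b\|_{B^\beta_{2,2}}$. The whole argument therefore reduces to proving, for each $j$, the bound $\|b_j\|_{V^2}\lesssim 2^{j/2}\|b_j\|_{L^2}$, and then summing over $j$.

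The main obstacle is this frequency-localized bound. To prove it, given points $s_0<s_1<\cdots$, I would split the pairs into long gaps and short gaps.

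\emph{Long gaps} are those with $s_k-s_{k-1}\ge2^{-j}$. For these, bound $|b_j(s_k)-b_j(s_{k-1})|^2\le 2(|b_j(s_k)|^2+|b_j(s_{k-1})|^2)$. Each point then lies in its own interval $I$ of length $2^{-j}$, essentially disjoint from the others, with at most two points per interval. Use $|b_j(s)|^2\le\sup_I|b_j|^2\lesssim 2^{j}\int |b_j|^2 w_I$, where $w_I$ is a Schwartz-tail weight adapted to $I$; this is a Bernstein/Plancherel–Pólya estimate. Summing over the intervals gives $\lesssim 2^j\|b_j\|_2^2$.

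\emph{Short gaps} are those with $s_k-s_{k-1}<2^{-j}$. The intervals $J_k=[s_{k-1},s_k]$ are disjoint, and by Cauchy–Schwarz
\[
|b_j(s_k)-b_j(s_{k-1})|^2\le|J_k|\int_{J_k}|b_j'|^2\le 2^{-j}\int_{J_k}|b_j'|^2.
\]
Summing over $k$ gives at most $2^{-j}\|b_j'\|_2^2\lesssim 2^{j}\|b_j\|_2^2$.

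Combining the two cases yields $\|b_j\|_{\widetilde V^2}\lesssim 2^{j/2}\|b_j\|_2$. Summing over $j$ with the geometric factor $2^{-j(\beta-1/2)}$ gives the lemma. The last step is to pass from the sum $\sum_j b_j$ to $b$ itself; this uses the triangle inequality in $V^2$ together with uniform convergence of the series to the continuous representative.
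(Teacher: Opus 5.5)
Your proof is correct. The paper gives no proof of this lemma and simply imports it from \cite[Lemma 3.2]{DK24}, so your argument is a self-contained replacement rather than a variant of something in the text.

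What you prove is in effect the classical chain $W^{\beta,2}(\mathbb{R})=B^{\beta}_{2,2}\hookrightarrow B^{1/2}_{2,1}\hookrightarrow V^2\hookrightarrow V^q$, with the middle embedding done by hand on each Littlewood--Paley piece:
\begin{itemize}
\item short gaps are handled by Cauchy--Schwarz together with Bernstein's inequality $\|b_j'\|_2\lesssim 2^j\|b_j\|_2$;
\item long gaps are handled by the Plancherel--P\'olya bound $\sum_I\sup_I|b_j|^2\lesssim 2^j\|b_j\|_2^2$ over a grid of intervals of length $2^{-j}$;
\item the pieces are summed through $\sum_j 2^{j(1/2-\beta)}\|b_j\|_{W^{\beta,2}}$, with uniform convergence identifying the limit with the continuous representative.
\end{itemize}

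Compared with the bare citation, this makes the mechanism visible. The bound is really a $V^2$ bound, hence uniform in $q\ge 2$, with constant $\lesssim(\beta-1/2)^{-1/2}$. It also shows that the threshold $\beta>1/2$ is exactly the summability of $2^{j(1/2-\beta)}$, which is the source of the extra $1/2$ in Theorem~\ref{thm2}.

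Two points should be tightened when you write it up.
\begin{itemize}
\item In the long-gap case, the endpoints of long gaps need not be $2^{-j}$-separated: two of them can straddle a short gap. The correct reason for ``at most two points per interval'' is as follows. For a fixed grid of intervals of length $2^{-j}$, the sample points in one interval form a consecutive block whose internal gaps are all short. So only the first and last points of the block can be endpoints of long gaps, and each point lies in at most two pairs.
\item The interpolation and $V^1$ discussion in your second paragraph is unnecessary. The short/long gap argument alone gives $\|b_j\|_{\widetilde V^2}\lesssim 2^{j/2}\|b_j\|_2$.
\end{itemize}
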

\subsection{Generalized Gaussian estimates}
\begin{defi}
Let $L$ be an $\omega$-accretive operator on $L^2(X)$ for $0\leq\omega<\pi/2$, generating the semigroup $\{e^{-tL}\}_{t>0}$. We say that $\{e^{-tL}\}_{t>0}$ satisfies the generalized Gaussian \eqref{GGE} for $m\ge 2$ and $1\leq p_0<2<q_0\leq \infty$, if there exist constant $C,c>0$ such that
\begin{equation}\label{GGE}
    \|\chi_{B_2} e^{-tL} \chi_{B_1}\|_{p_0\rightarrow q_0}\leq C|B_{1}|^{-1/p_0}|B_2|^{1/q_0}\,\textnormal{exp}\left(-c\left(\frac{d(B_1,B_2)}{t^{1/m}}\right)^{{m}/{(m-1)}}\right).\tag{GGE$_{p_0,q_0,m}$}
\end{equation}
for every $t>0$, and all balls $B_1,B_2$ of radius $t^{1/m}$. Similarly, a family $\{R_t\}_{t>0}\subset \mathcal{B}(L^2(X))$ is said to satisfy \textnormal{(GGE$_{p_0,q_0,m}$)} if the same estimate holds.
\end{defi}
In the special case $p_0=1$, $q_0=\infty$, the condition (GGE$_{p_0,q_0,m}$) is equivalent to $m$-th order Gaussian estimates (see e.g. \cite{BK03}). This means that the semigroup $e^{-tL}$ has integral kernels $p_t(x,y)$ satisfying the Gaussian upper bound
\begin{equation}\label{GE}
    |p_{t}(x,y)|\leq \frac{C}{|B(x,t^{1/m})|}\,\textnormal{exp}\left(-c\left(\frac{d(B_1,B_2)}{t^{1/m}}\right)^{{m}/{(m-1)}}\right).\tag{GE$_{m}$}
\end{equation}

We also introduce a domination condition on the semigroup. Recall that a linear operator $R$ on $L^2(X)$ is called positive if $Rf\geq0$ a.e. whenever $f\geq 0$ a.e.
\begin{defi}
Let $\{e^{-tL}\}_{t>0}$ be a semigroup acting on $L^2(X)$, we say that $\{e^{-tL}\}_{t>0}$ satisfies \eqref{DGGE} if there exists a family of positive linear operators $\{R_t\}_{t>0}$ on $L^2(x)$ such that for every $t>0$,
\begin{equation}\label{DGGE}
    \|R_t\|_{\mathcal{B}(L^2(X))}\leq 1,\quad |e^{-tL}f|\leq R_t(|f|)\ \text{a.e. on }X,\tag{DGGE$_{p_0,q_0,m}$}
\end{equation}
and $\{R_t\}_{t>0}$ satisfies \eqref{GGE}. Notice that \textnormal{(DGGE$_{p_0,q_0,m}$)} implies both the contractive regularity of $\{e^{-tL}\}_{t>0}$ \textnormal{(see e.g. \cite{LMX12})} and the usual \textnormal{(GGE$_{p_0,q_0,m}$)}.
\end{defi}

We note that the generalized Gaussian estimates can be extended from positive real $t$ to complex values.

\begin{lem}\label{ext}
For $m\geq 2$ and $1\leq p_0<2<q_0\leq \infty$, we denote $\delta_{p_0}:=1/p_{0}-1/2$, $\delta_{q_0}:=1/2-1/q_0$. For $r>0$, we write $v_r(x):=|B(x,r)|$.

\textnormal{(i)} Let $L$ be an $\omega$-sectorial operator on $L^2(X)$ with $0\leq\omega<\pi/2$. Suppose $\{e^{-tL}\}_{t>0}$ satisfies \textnormal{(GGE$_{p_0,q_0,m}$)}. Then for every $0<\theta<\pi/2-\omega$, there exist constant $C_{\theta},c_{\theta}>0$, such that for any balls $B_1,B_2$ and $z\in \Sigma_{\theta}$, we have that
\begin{equation}
\|\chi_{B_2}v_{\textnormal{Re}(z)^{1/m}}^{1/p_0-1/q_0}e^{-zL}\chi_{B_1}\|_{p_0\rightarrow q_{0}}\leq C_{\theta}\,\textnormal{exp}\left(-c_\theta \left(\frac{d(B_1,B_2)}{|z|^{1/m}}\right)^{{m}/{(m-1)}}\right).\notag
\end{equation}

\textnormal{(ii)} Let $L$ be a non-negative self-adjoint operator on $L^2(X)$. Suppose $\{e^{-tL}\}_{t>0}$ satisfies \textnormal{(GGE$_{p_0,q_0,m}$)}. Then there exists constant $C^{\prime},c^{\prime}>0$, such that for any measurable set $E,F$ and $z\in \mathbb{C}_+$, we have that
\begin{equation}
    \|\chi_{F}v_{\textnormal{Re}(z)^{1/m}}^{\delta_{q_0}}e^{-zL}v_{\textnormal{Re}(z)^{1/m}}^{\delta_{p_0}}\chi_{E}\|_{p_0\rightarrow q_0}\leq C^{\prime}\,\textnormal{exp}\left(-c_\theta \left(\frac{d(E,F)}{|z|\textnormal{Re}(z)^{(1/m)-1}}\right)^{{m}/{(m-1)}}\right).\notag
\end{equation}
\end{lem}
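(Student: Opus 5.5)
The plan is the standard extrapolation of off-diagonal bounds from the real axis to a complex sector, in the spirit of Davies' argument and its versions for generalized Gaussian estimates (Blunck–Kunstmann, and Chen–Duong–Li–Yan for part (ii)).

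\textbf{Step 1: reformulation with weights.} First I rewrite (GGE$_{p_0,q_0,m}$) for real $t$ in weighted form. By doubling, for $x\in B_1$ and $y\in B_2$ (balls of radius $t^{1/m}$) one has $v_{t^{1/m}}(x)\sim|B_1|$ and $v_{t^{1/m}}(y)\sim|B_2|$. Covering $X$ by a bounded-overlap family of balls of radius $t^{1/m}$ and summing the exponential factors with the polynomial growth \eqref{doub} then gives
\[
\|\chi_F v_{t^{1/m}}^{\delta_{q_0}}e^{-tL}v_{t^{1/m}}^{\delta_{p_0}}\chi_E\|_{p_0\to q_0}\lesssim e^{-c(d(E,F)/t^{1/m})^{m/(m-1)}}
\]
for arbitrary sets $E,F$, after passing through $L^2$ and splitting the exponents. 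The point is that $|B_1|^{-1/p_0}|B_2|^{1/q_0}$ is absorbed into the weights $v^{\delta_{p_0}}$ and $v^{\delta_{q_0}}$, which are comparable to $|B|^{-\delta_{p_0}}$, $|B|^{\delta_{q_0}}$ up to the normalization by the $L^2$ factor $|B|^{1/2}$. In this form the estimate factors as $L^{p_0}\to L^2$ and $L^2\to L^{q_0}$ off-diagonal bounds at time $t/2$, using the semigroup property.

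\textbf{Step 2: complex times.} For $z$ complex, write $e^{-zL}=e^{-\frac{s}{2}L}e^{-(z-s)L}e^{-\frac{s}{2}L}$ with $s=\epsilon\,\mathrm{Re}(z)$. The middle factor is a bounded operator on $L^2$: this uses analyticity in the sector in (i), and unitarity plus contractivity in (ii). The outer factors carry the $p_0\to 2$ and $2\to q_0$ bounds with weights $v_{s^{1/m}}\sim v_{\mathrm{Re}(z)^{1/m}}$. This already gives the bound without the exponential decay.

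To recover the decay, I apply the Phragmén–Lindelöf / Davies trick to the analytic function $z\mapsto\langle e^{-zL}v^{\delta_{p_0}}\chi_E f,\chi_F v^{\delta_{q_0}}g\rangle$. This function is bounded on the sector (or half-plane) and has Gaussian decay on the real axis by Step 1. The relevant lemma is the complex interpolation lemma for functions $F$ on $\Sigma_{\pi/2}$ with $|F(z)|\le 1$ and $|F(t)|\le e^{-r^{m/(m-1)}/t^{1/(m-1)}}$ for real $t$, which gives $|F(z)|\le e^{-c\,r^{m/(m-1)}\mathrm{Re}(z^{-1/(m-1)})}$. Restricting to $\Sigma_\theta$ with $\theta<\pi/2-\omega$ makes $\mathrm{Re}(z^{-1/(m-1)})\sim|z|^{-1/(m-1)}$, which yields (i). In (ii), working on the whole half-plane, the quantity $\mathrm{Re}(z^{-1/(m-1)})$ behaves like $|z|^{-m/(m-1)}\mathrm{Re}(z)^{...}$. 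This gives the exponent with $d/(|z|\mathrm{Re}(z)^{1/m-1})$.

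\textbf{Main obstacle.} The main difficulty is that the weights depend on $\mathrm{Re}(z)$, which is not holomorphic. So the weights must be fixed first, at a scale $r=\mathrm{Re}(z_0)^{1/m}$, before running the analytic argument. Then on the real axis I need bounds at all $t$ with weights of scale $r$ rather than $t^{1/m}$, which costs factors $(1+t/r^m)^{\nu\cdot}$ or $(r^m/t)$ via \eqref{doub}.

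These polynomial factors have to be absorbed into the exponential, at the price of a smaller constant $c$. Near $t\to0$ they have to be handled by first composing with $e^{-sL}$, $s\sim r^m$. I would follow the approach of Chen–Duong–Li–Yan for (ii) and Blunck–Kunstmann's extrapolation for (i). I expect this careful bookkeeping of the non-analytic weights in the Phragmén–Lindelöf step to be the technical core.
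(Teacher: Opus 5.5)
Your overall route is the paper's: a weighted real-time estimate for arbitrary sets (the paper takes it from \cite[Theorem 1.2]{BK05}), the factorization $e^{-\omega L}=e^{-\mathrm{Re}(\omega)L/2}e^{-i\,\mathrm{Im}(\omega)L}e^{-\mathrm{Re}(\omega)L/2}$ through $L^2$ with the weights frozen at a scale $s$, then Phragm\'en--Lindel\"of, and finally $s=\mathrm{Re}(z)^{1/m}$. For (i) the paper simply cites \cite[Theorem 2.1]{B07}.

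\textbf{The gap.} The step you defer as the ``technical core'' does not go through as written. With frozen weights, the factorization only yields $\|\chi_F v_s^{\delta_{q_0}}e^{-\omega L}v_s^{\delta_{p_0}}\chi_E\|_{p_0\to q_0}\lesssim (1+s/\mathrm{Re}(\omega)^{1/m})^{\nu(1/p_0-1/q_0)}$. This degenerates along the whole imaginary axis, not only as $t\to0$. So boundedness on $\mathbb{C}_+$, the hypothesis $|F|\le1$ of the lemma you quote, is not available. Absorbing polynomial factors into the exponential cannot help off the real axis, where no decay is known yet.

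\textbf{The missing device.} The paper multiplies by the holomorphic factor $e^{-\omega/\mathrm{Re}(z)}(\omega/s^m)^{\gamma}$, with $\gamma=(\nu/m)(1/p_0-1/q_0)$.
\begin{itemize}
\item The power cancels the $(s^m/t)^{\gamma}$ loss on the real axis exactly.
\item It turns the half-plane bound into $\cos(\arg\omega)^{-\gamma}(1+\mathrm{Re}(z)/s^m)^{\gamma}$, which is homogeneous of degree zero in $\omega$.
\item The exponential controls the growth in $\mathrm{Re}\,\omega$.
\end{itemize}
A Phragm\'en--Lindel\"of lemma that tolerates growth $(|\omega|/\mathrm{Re}\,\omega)^{\gamma}$ \cite[Lemma 3.3]{B07} then applies. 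At $\omega=z$, $s=\mathrm{Re}(z)^{1/m}$, the factor $|z|^{\gamma}$ from the compensator cancels $(|z|/\mathrm{Re}\,z)^{\gamma}$.

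\textbf{Your shift idea.} Composing with $e^{-sL}$ can be made into a valid alternative. Take $s=\epsilon\,\mathrm{Re}(z)$ with $\epsilon\le1/2$, and weights frozen at scale $\mathrm{Re}(z)^{1/m}$. Then $\omega\mapsto\langle\chi_F v^{\delta_{q_0}}e^{-(\omega+s)L}v^{\delta_{p_0}}\chi_E f,g\rangle$ is bounded on $\overline{\mathbb{C}_+}$. However, its real-axis bound is $e^{-c(d(E,F)/(t+s)^{1/m})^{m/(m-1)}}$, which saturates as $t\to0$, so the lemma you quote again does not apply. You would have to run the harmonic-measure argument by hand. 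Put $\rho=|z-s|$; then $s\le\rho\le|z|$. Use only $t\in[\rho/2,2\rho]$, where $t+s\le 3t$. This segment has harmonic measure $\gtrsim\cos\arg(z-s)$ at $z-s$ in the quarter plane, which gives the required exponent.

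\textbf{The quoted lemma is false for $m>2$.} You state the conclusion $e^{-c\,r^{m/(m-1)}\mathrm{Re}(z^{-1/(m-1)})}$ on all of $\Sigma_{\pi/2}$. Take $F(z)=e^{-a(z+1/z)}$ with $a=r^{m/(m-1)}$. It satisfies both hypotheses, since $t+1/t\ge t^{-1/(m-1)}$. Yet $|F(\epsilon+i)|\to1$ as $\epsilon\to0$, while $\mathrm{Re}(i^{-1/(m-1)})=\cos(\frac{\pi}{2(m-1)})>0$. The correct conclusion carries a factor $\cos\arg z$, namely $\exp(-c\,r^{m/(m-1)}\mathrm{Re}(z)|z|^{-m/(m-1)})$. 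This is exactly the exponent $(d/(|z|\mathrm{Re}(z)^{1/m-1}))^{m/(m-1)}$ in (ii). It is comparable to $\mathrm{Re}(z^{-1/(m-1)})$ only for $m=2$, or inside a proper subsector as in (i).
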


\begin{proof}
    The proof of \textnormal{(i)} follows directly from the strategy in \cite[Theorem 2.1]{B07}; see also the remarks in Section 2 therein. We therefore only prove \textnormal{(ii)} here. The proof is based on the Phragm\'{e}n-Lindel\"{o}f theorem. Fix measurable sets $E,F$, $z\in \mathbb{C}_+$ and $s>0$. We consider the analytic function $G:\mathbb{C}_+\rightarrow \mathcal{B}(L^{p_0},L^{q_0})$ defined by
    \begin{equation}
        G(\omega):=e^{-\omega(\textnormal{Re}(z))^{-1}}\left(\frac{\omega}{s^m}\right)^{(\nu/m)(1/p_0-1/q_0)}\chi_{F}v_s^{\delta_{q_0}}e^{-\omega L}v_s^{\delta_{p_0}}\chi_E.
    \end{equation}
    First recall the basic volume estimate 
    \begin{equation}
        v_s(x)\lesssim\left(1+\frac{s}{r}\right)^{\nu}v_r(x)\quad\text{for all } x\in X,\ r,s>0.
    \end{equation}
    It follows from \cite[Proposition 3.1]{B07} that
   $$\|v_{t^{1/m}}^{\delta_{q_0}}e^{-tL}\|_{2\rightarrow q_0},\ \|e^{-tL}v_{t^{1/m}}^{\delta_{p_0}}\|_{p_0\rightarrow 2}\leq C\quad\text{for all }t>0.$$
   Hence, for all $\omega\in\mathbb{C}_+$ and $s>0$, we write $e^{-\omega L}=e^{-\textnormal{Re}(w)L/2}e^{-i\textnormal{Im}(\omega)L}e^{-\textnormal{Re}(w)L/2}$, it holds that
   \begin{align*}
       \|v_s^{\delta_{q_0}}e^{-\omega L}v_{s}^{\delta_{p_0}}\|_{p_0\rightarrow q_0}&\leq \|v_s^{\delta_{q_0}}e^{-\textnormal{Re}(\omega)L/2}\|_{2\rightarrow q_0}\|e^{-i\textnormal{Im}(\omega)L}\|_{2\rightarrow2}\|e^{-\textnormal{Re}(\omega)L/2}v_{s}^{\delta_{p_0}}\|_{p_0\rightarrow2}\\
       &\lesssim\left(1+\frac{s}{\textnormal{Re}(\omega)^{1/m}}\right)^{\nu\delta_{q_0}}\|v_{t^{1/m}}^{\delta_{q_0}}e^{-tL}\|_{2\rightarrow q_0}\left(1+\frac{s}{\textnormal{Re}(\omega)^{1/m}}\right)^{\nu\delta_{p_0}}\|e^{-tL}v_{t^{1/m}}^{\delta_{p_0}}\|_{p_0\rightarrow 2}\\
       &\lesssim\left(1+\frac{s}{\textnormal{Re}(\omega)^{1/m}}\right)^{\nu(1/p_0-1/q_0)}.
   \end{align*}
 Consequently,
 \begin{align}
     \|G(\omega)\|_{p_0\rightarrow q_0}&\lesssim e^{-\textnormal{Re}(\omega)(\textnormal{Re}(z))^{-1}}\left(\frac{|\omega|^{1/m}}{s}\right)^{\nu(1/p_0-1/q_0)}\left(1+\left(\frac{s^m}{\textnormal{Re}(\omega)}\right)^{(\nu/m)(1/p_0-1/q_0)}\right)\notag\\
     &=e^{-\textnormal{Re}(\omega)(\textnormal{Re}(z))^{-1}}\left(\frac{|\omega|}{\textnormal{Re}(\omega)}\right)^{(\nu/m)(1/p_0-1/q_0)}\left(1+\left(\frac{\textnormal{Re}(\omega)}{s^m}\right)^{(\nu/m)(1/p_0-1/q_0)}\right)\notag\\
     &\lesssim \left(\frac{|\omega|}{\textnormal{Re}(\omega)}\right)^{(\nu/m)(1/p_0-1/q_0)}\left(1+\frac{\textnormal{Re}(z)}{s^m}\right)^{(\nu/m)(1/p_0-1/q_0)}\notag\\
     &=\cos(\textnormal{arg}\,\omega)^{-(\nu/m)(1/p_0-1/q_0)}\left(1+\frac{\textnormal{Re}(z)}{s^m}\right)^{(\nu/m)(1/p_0-1/q_0)}.\label{I}
 \end{align}
 The implicit constant is independent of $z$ and $s$. Here in the second-to-last inequality, we use the basic estimate $e^{-\lambda}(1+\lambda^{\gamma}A)\leq C_{\gamma}(1+A)$ for $\lambda,\gamma,A>0$. On the other hand, it follows from \cite[Theorem 1.2]{BK05} that
 $$\|\chi_F v_{t^{1/m}}^{\delta_{q_0}} e^{-tL}v_{t^{1/m}}^{\delta_{p_0}}\chi_{E}\|_{p_0\rightarrow q_0}\lesssim\textnormal{exp}\left(-c\left(\frac{d(E,F)}{t^{1/m}}\right)^{m/(m-1)}\right).$$
 By a similar calculation as above, for $t>0$, one have
 \begin{align}
     \|G(t)\|_{p_0\rightarrow q_0}&\lesssim e^{-t(\textnormal{Re}(z))^{-1}}\left(\frac{t}{s^m}\left(1+\frac{s^m}{t}\right)\right)^{(\nu/m)(1/p_0-1/q_0)}\textnormal{exp}\left(-c\left(\frac{d(E,F)}{t^{1/m}}\right)^{m/(m-1)}\right)\notag\\
     &\lesssim\left(1+\frac{\textnormal{Re}(z)}{s^m}\right)^{(\nu/m)(1/p_0-1/q_0)}\textnormal{exp}\left(-c\left(\frac{d(E,F)}{t^{1/m}}\right)^{m/(m-1)}\right).\label{II}
 \end{align}
 Now by applying the Phragm\'{e}n-Lindel\"{o}f theorem \cite[Lemma 3.3]{B07} to $G(\omega)$, we obtain that there exist $c^{\prime}>0$ such that for every $\omega\in \mathbb{C}_+$,
 \begin{equation}
     \|G(\omega)\|_{p_0\rightarrow q_0}\lesssim\left(\frac{|\omega|}{\textnormal{Re}(\omega)}\left(1+\frac{\textnormal{Re}(z)}{s^m}\right)\right)^{(\nu/m)(1/p_0-1/q_0)}\textnormal{exp}\left(-c^{\prime}\left(\frac{d(E,F)}{|\omega|\textnormal{Re}(\omega)^{(1/m)-1}}\right)^{m/(m-1)}\right).\notag
 \end{equation}
 By letting $\omega=z$, we obtain that
 \begin{equation}
     \|\chi_F v_s^{\delta_{q_0}}e^{-zL}v_s^{\delta_{p_0}}\chi_E\|_{p_0\rightarrow q_0}\lesssim\left(1+\frac{s^m}{\textnormal{Re}(z)}\right)^{(\nu/m)(1/p_0-1/q_0)}\textnormal{exp}\left(-c^{\prime}\left(\frac{d(E,F)}{|z|\textnormal{Re}(z)^{(1/m)-1}}\right)^{m/(m-1)}\right).\notag
 \end{equation}
 Finally, by taking $s=\textnormal{Re}(z)^{1/m}$, we obtain the desired result.
\end{proof}

The generalized Gaussian estimates for the semigroup also imply the same estimates for its derivatives $(tL)^N e^{-tL}$.
\begin{lem}\label{deri}
    Let $L$ be an $\omega$-sectorial operator on $L^2(X)$ with $0\leq\omega<\pi/2$. Suppose $\{e^{-tL}\}_{t>0}$ satisfies \textnormal{(GGE$_{p_0,q_0,m}$)}. Then for every $N\in \mathbb{N}$, $(tL)^N e^{-tL}$ also satisfies \textnormal{(GGE$_{p_0,q_0,m}$)}.
\end{lem}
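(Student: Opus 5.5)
The plan is to write $(tL)^Ne^{-tL}$ as a Cauchy integral of the complex semigroup and then apply the complex-time off-diagonal bound of Lemma \ref{ext}(i). Fix $0<\theta<\pi/2-\omega$ and choose $\delta\in(0,1)$ so small that the closed disc $\{z:|z-t|\le \delta t\}$ lies in $\Sigma_\theta$ for every $t>0$; e.g.\ $\delta=\sin\theta/2$ works. By analyticity of $z\mapsto e^{-zL}$ on $\Sigma_{\pi/2-\omega}$, Cauchy's formula for derivatives gives
\begin{equation*}
(tL)^Ne^{-tL}=(-t)^N\frac{d^N}{dt^N}e^{-tL}=\frac{(-1)^N N!\,t^N}{2\pi i}\int_{|z-t|=\delta t}\frac{e^{-zL}}{(z-t)^{N+1}}\,dz ,
\end{equation*}
the integral converging in $\mathcal B(L^2)$. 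Taking absolute values, we get the pointwise operator-norm bound
\begin{equation*}
\|\chi_{B_2}(tL)^Ne^{-tL}\chi_{B_1}\|_{p_0\to q_0}\le N!\,\delta^{-N}\sup_{|z-t|=\delta t}\|\chi_{B_2}e^{-zL}\chi_{B_1}\|_{p_0\to q_0},
\end{equation*}
where passing the $p_0\to q_0$ norm inside the integral is justified by testing against $f\in L^2\cap L^{p_0}$ supported in $B_1$ and $g\in L^2\cap L^{q_0'}$.

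Next we bound the integrand with Lemma \ref{ext}(i). On the circle we have $\textnormal{Re}(z)\sim|z|\sim t$ with constants depending only on $\delta$. Let $B_1,B_2$ be balls of radius $t^{1/m}$. By Lemma \ref{ext}(i),
\begin{equation*}
\|\chi_{B_2}v_{\textnormal{Re}(z)^{1/m}}^{1/p_0-1/q_0}e^{-zL}\chi_{B_1}\|_{p_0\to q_0}\le C_\theta\exp\!\left(-c_\theta\left(\frac{d(B_1,B_2)}{|z|^{1/m}}\right)^{m/(m-1)}\right).
\end{equation*}
Since $|z|\le(1+\delta)t$, the exponential is at most $\exp(-c(d(B_1,B_2)/t^{1/m})^{m/(m-1)})$ with a smaller constant $c$.

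The remaining step, which I expect to be the only real work, is to remove the weight $v_{\textnormal{Re}(z)^{1/m}}$ and replace it by the factor $|B_1|^{-1/p_0}|B_2|^{1/q_0}$. For $x\in B_2$ we have $v_{\textnormal{Re}(z)^{1/m}}(x)\sim v_{t^{1/m}}(x)\sim|B_2|$, using doubling (\ref{doub}) and the comparability of radii. Hence the weighted bound yields
\begin{equation*}
\|\chi_{B_2}e^{-zL}\chi_{B_1}\|_{p_0\to q_0}\lesssim|B_2|^{1/q_0-1/p_0}e^{-c(\cdot)}.
\end{equation*}
We then convert $|B_2|^{-1/p_0}$ into $|B_1|^{-1/p_0}$. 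Doubling gives
\begin{equation*}
|B_1|\le C\left(1+\frac{d(B_1,B_2)}{t^{1/m}}\right)^{\nu}|B_2|,
\end{equation*}
so the conversion costs a polynomial factor $(1+d(B_1,B_2)/t^{1/m})^{\nu/p_0}$. This factor is absorbed by the exponential after shrinking $c$ once more.

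Combining the three steps, uniformly in $t$ and in the balls, gives
\begin{equation*}
\|\chi_{B_2}(tL)^Ne^{-tL}\chi_{B_1}\|_{p_0\to q_0}\le C_N|B_1|^{-1/p_0}|B_2|^{1/q_0}\exp\!\left(-c\left(\frac{d(B_1,B_2)}{t^{1/m}}\right)^{m/(m-1)}\right),
\end{equation*}
which is exactly (GGE$_{p_0,q_0,m}$) for $(tL)^Ne^{-tL}$.
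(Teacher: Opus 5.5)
Your proposal is correct and follows the paper's proof step for step. You write the Cauchy integral over the circle $|z-t|=\delta t$, apply the complex-time bound of Lemma \ref{ext}(i), remove the weight using $v_{\textnormal{Re}(z)^{1/m}}\sim|B_2|$ on $B_2$, and convert $|B_2|^{-1/p_0}$ into $|B_1|^{-1/p_0}$ by doubling, absorbing the polynomial loss into the exponential. Your duality argument for moving the $p_0\to q_0$ norm inside the integral is a slightly more careful substitute for the paper's appeal to Minkowski's inequality. Your comparison $|B_1|\lesssim(1+d(B_1,B_2)/t^{1/m})^{\nu}|B_2|$ is exactly the direction needed at that step.
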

\begin{proof}
    Fix $t>0$ and $0<\theta<\pi/2-\omega$. Choose $\varepsilon>0$ sufficiently small that the circle $\gamma_{t}:=\{z\in\mathbb{C}_+:|z-t|=\varepsilon t\}$ is contained in $\Sigma_{\theta}$. By Cauchy's integral formula, we have 
    \begin{equation}\label{Cauchy}
        (tL)^Ne^{-tL}=\frac{(-1)^N N!}{2\pi i}\int_{\gamma_t}\frac{t^N e^{-z L}}{(z-t)^{N+1}}\,dz.
    \end{equation}
    Let $B_1,B_2$ be balls of radius $t^{1/m}$. For any $x\in B_2$, we have $B_2\subset B(x,2t^{1/m})$. Thus $|B_2|\leq|B(x,2t^{1/m})|\lesssim |B(x,t^{1/m})|$, and $v_{t^{1/m}}(x)\gtrsim|B_2|$. Since $\textnormal{Re}(z)\sim|z|\sim t$ for every $z\in \gamma_t$, it follows from Lemma \ref{ext} \textnormal{(i)} that
    \begin{align}
        \|\chi_{B_2}e^{-zL}\chi_{B_1}\|_{p_0\rightarrow q_{0}}&\lesssim\|\chi_{B_2}v_{\textnormal{Re}(z)^{1/m}}^{1/p_0-1/q_0}e^{-zL}\chi_{B_1}\|_{p_0\rightarrow q_{0}}\|\chi_{B_2}v_{t^{1/m}}^{1/q_0-1/p_0}\|_{\infty}\notag\\
        &\lesssim |B_2|^{1/q_0-1/p_0}\,\textnormal{exp}\left(-c_\theta^{\prime} \left(\frac{d(B_1,B_2)}{t^{1/m}}\right)^{{m}/{(m-1)}}\right).\label{2.13}
    \end{align}
    Hence, by Minkowski's inequality we have that
    \begin{align*}
        \|\chi_{B_2}(tL)^N e^{-tL}\chi_{B_1}\|_{p_0\rightarrow q_0}&\lesssim\int_{\gamma_t}\frac{t^N}{|z-t|^{N+1}}\|\chi_{B_2}e^{-zL}\chi_{B_1}\|_{p_0\rightarrow q_0}\,|dz|\\
        &\lesssim\frac{t^N}{t^{N+1}}2\pi t|B_2|^{1/q_0-1/p_0}\,\textnormal{exp}\left(-c_\theta^{\prime} \left(\frac{d(B_1,B_2)}{t^{1/m}}\right)^{{m}/{(m-1)}}\right)\\
        &\sim|B_{1}|^{-1/p_0}|B_2|^{1/q_0}\,\textnormal{exp}\left(-c^{\prime\prime}\left(\frac{d(B_1,B_2)}{t^{1/m}}\right)^{{m}/{(m-1)}}\right).
    \end{align*}
    Here in the last step, we use the estimate $|B_2|/|B_1|\lesssim(1+d(B_1,B_2)/t^{1/m})^\nu$, and absorb the polynomial factor into the exponential.
\end{proof}
\subsection{Sparse domination}
We first recall some basic notations. For a measurable set $E\subset X$ with $0<|E|<\infty$ and $1\leq p<\infty$, we denote $\langle f\rangle_{p,E}:=(|E|^{-1}\int_{E}|f|^p\,d\mu)^{1/p}$, and we set $\langle f\rangle_{\infty,E}:=\operatorname*{ess\,sup}_{x\in E}|f(x)|$. We denote by $M$ the Hardy-Littlewood maximal operator. For $1\leq p<\infty$, we denote $M_p$ the operator defined by $M_p(f):=[M(|f|^p)]^{1/p}$.

From the subsection 2.1 we know that we have several dyadic grids $\mathcal{D}^b$ for $b\in\{1,...,K_D\}$ and $\mathcal{D}:=\bigcup_{b=1}^{K_D} \mathcal{D}^b$ is the class of dyadic cubes. Let $0<\eta<1$, a collection of dyadic sets $\mathcal{S}:=(P)_{P\in\mathcal{S}}\subset \mathcal{D}$ is said to be $\eta$-sparse if there exist pairwise disjoint sets $E_Q\subset Q$, $Q\in\mathcal{S}$ such that $|E_{Q}|\geq\eta|Q|$. We simply call a family sparse if it is $\eta$-sparse for some $0<\eta<1$. For a dyadic cube $Q\in \mathcal{D}$, we denote by $5Q$ its neighborhood
$$5Q:=\{x\in X:\ d(x,Q)\leq 4r_Q\},$$
where the 'side length' $r_Q$ is determined in Section 2.1. We recall the sparse domination principle by Bernicot, Frey and Petermichl \cite{BFP16}.
\begin{lem}\label{spDo}
    Let $L$ be an injective $\omega$-accretive operator on $L^2(X)$ with $0\leq\omega<\pi/2$, and suppose that the semigroup $\{e^{-tL}\}_{t>0}$ satisfies \textnormal{(GGE$_{p_0,q_0,m}$)} for some $m\geq 2$ and $1\leq p_0<2<q_0\leq\infty$. Let $T$ be a \textnormal{(sub)linear} operator bounded on $L^2(X)$. Suppose further that the following conditions are satisfied:

\textnormal{(i)} There exists $N_0\in\mathbb{N}$ such that, for every $N\geq N_0$, $t>0$ and two balls $B_1,B_2$ of radius $t^{1/m}$,
\begin{equation}\label{BFP1}
\|\chi_{B_2} T(tL)^Ne^{-tL}\chi_{B_1}\|_{{p_0}\rightarrow {q_0}}
\lesssim
|B_1|^{-1/p_0}|B_2|^{1/q_0}
\left(
1+\frac{d(B_1,B_2)}{t^{1/m}}
\right)^{-(\nu+1)}.
\end{equation}

\textnormal{(ii)} There exists $p_0\leq p_1<2$ such that for every $x\in X$ and $r>0$,
\begin{equation}\label{BFP2}
\left(
\frac{1}{|B(x,r)|}
\int_{B(x,r)}
|Te^{-r^mL}f|^{q_0}\,d\mu
\right)^{1/q_0}
\lesssim
\inf_{y\in B(x,r)}M_{p_1}(Tf)(y)
+
\inf_{y\in B(x,r)}M_{p_1}f(y).
\end{equation}
Then, for every $p\in(p_0,q_0)$, every dyadic cube $Q_0\in\mathcal{D}$ and
$f\in L^p(X)$, $g\in L^{p'}(X)$ both supported in $5Q_0$, there exists a sparse
collection $\mathcal{S}\subset\mathcal{D}$, depending on $f$ and $g$, such that
\begin{equation}\label{sparseDomi}
\left|
\int_{Q_0}Tf\cdot g\,d\mu
\right|
\lesssim
\sum_{Q\in\mathcal{S}}
|Q|\,
\langle f\rangle_{p_0,5Q}
\langle g\rangle_{q_0',5Q}.
\end{equation}
\end{lem}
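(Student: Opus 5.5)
This is the sparse domination principle of Bernicot, Frey and Petermichl, and I will follow their scheme: a stopping-time construction on $Q_0$, with conditions (i) and (ii) used to control the local and the off-diagonal parts. Fix $p\in(p_0,q_0)$ and $f,g$ supported in $5Q_0$. First I reduce to a recursive claim. I look for a collection of pairwise disjoint dyadic subcubes $\{P_j\}\subset Q_0$ with $\sum_j|P_j|\le\frac12|Q_0|$, together with the estimate
\[
\Big|\int_{Q_0}Tf\,g\,d\mu\Big|\lesssim |Q_0|\langle f\rangle_{p_0,5Q_0}\langle g\rangle_{q_0',5Q_0}+\sum_j\Big|\int_{P_j}T(f\chi_{5P_j})\,g\,d\mu\Big|.
\]
Iterating this claim on each $P_j$ produces the sparse family, with $E_Q=Q\setminus\bigcup_j P_j$ and $\eta=1/2$.

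To build the $P_j$, I form the exceptional set
\[
E=\{x\in Q_0: M_{p_0}(f\chi_{5Q_0})>A\langle f\rangle_{p_0,5Q_0}\}\cup\{M_{q_0'}(g\chi_{5Q_0})>A\langle g\rangle_{q_0',5Q_0}\}\cup\{M_{p_1}(T(f\chi_{5Q_0}))>A\langle f\rangle_{p_0,5Q_0}\}.
\]
For the first two sets, the weak-type bounds of the maximal functions make the measure small once $A$ is large. The third set is the delicate one. It needs a weak $L^{p_0}\to L^{p_0,\infty}$ (or $L^{p_1}$-type) bound for $T$ localized to $Q_0$. I would obtain this from the Blunck–Kunstmann weak-type criterion: $L^2$ boundedness of $T$, together with (i) applied to $T(I-e^{-tL})^N$ (expanded through the $(tL)^Ne^{-tL}$ pieces), yields weak $(p_0,p_0)$. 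I then take the $P_j$ to be the maximal dyadic cubes of a Whitney-type decomposition of $E$ inside $Q_0$.

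Next I split
\[
Tf=Te^{-r_{P}^mL}f+T(I-e^{-r_P^mL})f
\]
on each $P_j$, and write $f=f\chi_{5P_j}+f\chi_{(5P_j)^c}$. On $Q_0\setminus E$, Lebesgue differentiation bounds $|Tf|$ and $|g|$ by the stopping averages, so that contribution is controlled by the first term. On each $P_j$, the piece $\int_{P_j}T(f\chi_{5P_j})g$ is kept as the recursive term. The remaining pieces are handled as follows.

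\textbf{(a) The term $Te^{-r^mL}f$ on $P_j$.} By (ii) its $L^{q_0}$ average over a ball around $P_j$ is bounded by $\inf M_{p_1}(Tf)+\inf M_{p_1}f$. Maximality of $P_j$ places a point of $E^c$ near $P_j$, so this is at most $A\langle f\rangle_{p_0,5Q_0}$. Pairing with $g$ through Hölder's inequality gives $|P_j|\langle f\rangle\langle g\rangle_{q_0',\tilde P_j}$, and this sums to the main term.

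\textbf{(b) The term $T(I-e^{-r^mL})(f\chi_{(5P_j)^c})$.} I write $I-e^{-r^mL}=\int_0^{r^m}Le^{-sL}ds$, or equivalently use a binomial expansion of $(I-e^{-r^mL})^N$ plus a reproducing-formula remainder. I then decompose $(5P_j)^c$ into annuli $2^{k+1}P_j\setminus 2^kP_j$ and apply (i) at scale $t\sim r_{P_j}^m$. The decay $(1+2^k)^{-(\nu+1)}$ beats the doubling growth $2^{k\nu}$. The averages of $f$ over $2^kP_j$ are at most $A\langle f\rangle_{p_0,5Q_0}$ as long as $2^kP_j$ stays within $5Q_0$; beyond that they are controlled by $f$ being supported in $5Q_0$. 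The sum in $k$ converges to $|P_j|\langle f\rangle\langle g\rangle$.

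The main obstacle is step (b), in particular passing from (i), which is stated for $(tL)^Ne^{-tL}$, to the operator $T(I-e^{-tL})^N$ that the splitting actually needs. My first attempt would be the identity
\[
(I-e^{-tL})^N=\sum_k c_k\int\cdots(sL)^Ne^{-sL}\frac{ds}{s}
\]
over $s\in[t,Nt]$, or equivalently writing $I-e^{-tL}$ as an average of $sLe^{-sL}$. A secondary obstacle is the weak-type control of $T$ needed to keep $|E|$ small. If the weak-type bound is unavailable, I would instead build the stopping on $M_{p_1}(Tf)$ using its $L^2$ boundedness. This requires care with the supports: $f\in L^p$ with $p$ near $p_0$ need not lie in $L^2$ locally, so I would first prove everything for $f\in L^2$ and then pass to general $f$ by density.
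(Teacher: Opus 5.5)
\textbf{Gap: the stopping set.} Your third exceptional set $\{M_{p_1}(T(f\chi_{5Q_0}))>A\langle f\rangle_{p_0,5Q_0}\}$ cannot in general be made to have measure $\le\frac14|Q_0|$, so you never get $\sum_j|P_j|\le\frac12|Q_0|$.

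\emph{A counterexample.} Take $T=I$ (say with $L=-\Delta$ on $\mathbb{R}^n$). It satisfies (i) and (ii) for every $p_1\in[p_0,2)$. If $p_1>p_0$ and $f=\chi_B$ for a small ball $B\subset Q_0$, then $M_{p_1}f\gtrsim(|B|/|Q_0|)^{1/p_1}$ on all of $Q_0$. This is much larger than $A\langle f\rangle_{p_0,5Q_0}\lesssim A(|B|/|Q_0|)^{1/p_0}$.

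\emph{The case $p_1=p_0$.} Here you would need a weak $(p_0,p_0)$ bound for $M_{p_0}\circ T$. The weak type of $T$ from the Blunck--Kunstmann/Auscher criterion does not give this, since $M_{p_0}$ is not bounded on $L^{p_0,\infty}$ (e.g.\ $|x|^{-n/p_0}$).

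\emph{The $L^2$ fallback.} It only gives $|E|\lesssim\|f\|_2^2/(A\langle f\rangle_{p_0,5Q_0})^2$, which yields sparse forms with $\langle f\rangle_{2,Q}$, not the $\langle f\rangle_{p_0,Q}$ averages claimed.

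\emph{Step (a).} The same defect appears here: (ii) produces $\inf M_{p_1}f$, which an $M_{p_0}$-stopping does not control when $p_1>p_0$.

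\emph{Bookkeeping.} Splitting $Tf=Te^{-r_P^mL}f+T(I-e^{-r_P^mL})(f\chi_{5P_j}+f\chi_{(5P_j)^c})$ leaves $T(I-e^{-r_P^mL})(f\chi_{5P_j})$, not $T(f\chi_{5P_j})$. Nothing in your stopping set controls the difference $Te^{-r_P^mL}(f\chi_{5P_j})$.

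\textbf{The missing idea.} This is the route behind the cited Bernicot--Frey--Petermichl result, which the paper reproduces in Theorem~\ref{sdc}.

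\emph{Stopping functional.} Stop on $M_{p_0}(f\chi_{5Q_0})$ and on the sharp maximal function $T^\#f(x)=\sup_{Q\ni x}\langle|TA_{r_Q}f|\rangle_{q_0,Q}$. Here $A_r$ is a semigroup approximation of the identity at scale $r$, e.g.\ $A_r=I-(I-e^{-r^mL})^N$, or $P^N_{r^m}$ as in Theorem~\ref{sdc}.

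\emph{Role of (ii).} Hypothesis (ii) is used only to get $T^\#f\lesssim M_{p_1}(Tf)+M_{p_1}f$, i.e.\ $L^2$-boundedness of $T^\#$. This is exactly why $p_1<2$ is assumed.

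\emph{Weak type of $T^\#$.} The weak type $(p_0,p_0)$ of $T^\#$ (not of $T$) then follows from Auscher's criterion, Lemma~\ref{weak}, applied to $T^\#$ with $F_r=A_r$. Hypothesis (i) gives the decay of $T^\#(I-A_{r(B)})f$ on annuli, and (GGE) gives that of $A_{r(B)}$; compare Proposition~\ref{Property}.

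\emph{Recursion.} Keep $T(I-A_{r_P})(f\chi_{5P})$ as the recursive object. On $P$, bound $TA_{r_P}f$ by maximality at the parent: a point of $P^a\setminus E$ controls $\langle|TA_{r_{P^a}}f|\rangle_{q_0,P^a}$. Add (i) applied to $T(A_{r_P}-A_{r_{P^a}})$. At the next level, $TA_{r_P}(f\chi_{5P})$ is absorbed by the weak-type bound for $T^\#$, as in \eqref{5.26}.

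\textbf{A correction to step (b).} $(I-e^{-tL})^N$ is an integral of $(uL)^Ne^{-uL}\,du/u$ over all $u\in(0,Nt]$, with a weight that does not vanish as $u\to0$. Ranges like $[t,Nt]$ only occur for differences $A_r-A_{r'}$. So (i) must be used at every scale $u\le Nt$, with a covering by balls of radius $u^{1/m}$. The covering costs $(2^kr_P/u^{1/m})^{\nu}$, not $2^{k\nu}$. The exponent $\nu+1$ then leaves a factor $u^{1/m}/(2^kr_P)$, which is integrable in $du/u$ and summable in $k$.
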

Conditions (i) and (ii) may be viewed as substitutes for the kernel regularity assumptions in the classical Calder\'{o}n-Zygmund theory (see e.g. \cite{A07}). The off-diagonal estimate (i) gives the cancellation of $T$ relative to the semigroup, and (ii) is a Cotlar-type inequality controlled by maximal operators.
\begin{proof}
    The case $m=2$ is proved in \cite[Theorem 5.7]{BFP16}. The same proof applies here with $t^{1/2}\rightarrow t^{1/m}$ and $r^2\rightarrow r^m$.
\end{proof}
\begin{rem}
The estimate \eqref{sparseDomi} can be formulated in terms of averages over dyadic cubes rather than their dilations. More precisely, \eqref{sparseDomi} yields the usual bilinear sparse domination
\begin{equation}\label{bsp}
    |\langle Tf,g\rangle|
\lesssim
\sum_{j=1}^{K_D}
\sum_{Q\in\mathcal{S}_j}
|Q|\,
\langle f\rangle_{p_0,Q}
\langle g\rangle_{q_0^{\prime},Q}=:\sum_{j=1}^{K_D}
\Lambda_{\mathcal{S}_j;p_0,q_0}(f,g),
\end{equation}
for compactly supported bounded functions $f,g$, where each
$\mathcal{S}_j\subset\mathcal{D}^{j}$ is sparse \textnormal{(}see \textnormal{\cite[Section 6]{BFP16}} for a detailed argument\textnormal{)}. 
\end{rem}
\begin{defi}
We say an operator $T$ admits a $(p_0,q_0)$ sparse domination if there exist sparse families $\mathcal{S}_j$ such that $|\langle Tf,g\rangle|$ is bounded by a finite sum of sparse forms as in \eqref{bsp} for every pair of compactly supported bounded function $f,g$. By the existence of a universal sparse family \textnormal{(}see for e.g. \textnormal{\cite{LM16}}\textnormal{)}, this is equivalent to \eqref{bsp}. We write $T\in S(p_0,q_0)$.
\end{defi}
Sparse domination naturally leads to weighted $L^p$ estimates. We next recall the relevant classes of weights. Let $w:X\rightarrow(0,\infty)$ be a locally integrable function. For
$1<p<\infty$, we say that $w$ is an $A_p$ weight if
\[
[w]_{A_p}
:=
\sup_B
\left(
\frac{1}{|B|}\int_Bw\,d\mu
\right)
\left(
\frac{1}{|B|}\int_Bw^{1-p'}\,d\mu
\right)^{p-1}
<\infty
\]
with $p^{\prime}$ the conjugate exponent $p^{\prime}=p/(p-1)$. For $p=1$, we define
$$
[w]_{A_1}
:=
\sup_B
\left(
\frac{1}{|B|}\int_B w\,d\mu
\right)
\left(
\operatorname*{ess\,inf}_{x\in B}w(x)
\right)^{-1}.
$$
For $1<q<\infty$, we say that $w\in RH_q$ if
$$
[w]_{RH_q}
:=
\sup_B
\left(
\frac{1}{|B|}\int_Bw^q\,d\mu
\right)^{1/q}
\left(
\frac{1}{|B|}\int_Bw\,d\mu
\right)^{-1}
<\infty.
$$
For $q=\infty$, we define
$$[w]_{RH_{\infty}}:=\sup_B \left(\operatorname*{ess\,sup}_{x\in B}w(x)\right)\left(\frac{1}{|B|}\int_{B}w\,d\mu\right)^{-1}.$$
By the definition of dyadic system in $X$, the balls in the definition above can be replaced by the dyadic cubes $Q\in\mathcal{D}$, and the corresponding $A_p$ and $RH_{S}$ constants are comparable to those defined with balls.

\begin{lem}\label{wei-lem}
    Let $1\leq p_0<q_0\leq\infty$ and $T\in S(p_0,q_0)$. Then the following assertions hold.

    \textnormal{(i)} For $p_0<p<q_0$ and $w\in A_{p/p_0}\cap RH_{(q_0/p)^{\prime}}$,
    $$\|T\|_{L^{p}(w)\rightarrow L^p(w)}\lesssim\left([w]_{A_{p/p_0}}[w]_{RH_{(q_0/p)^{\prime}}}\right)^{\alpha},$$
    where $\alpha=\text{max}\{1/(p-p_0),(q_0-1)/(q_0-p)\}.$

    \textnormal{(ii)} Further suppose that $(X,d)$ is a connected metric space. For $w\in A_1\cap RH_{(q_0/p_0)^{\prime}}$, there is a function $\psi(w):=\psi([w]_{A_1},[w]_{A_{\infty}},[w]_{RH_{(q_0/p_0)^{\prime}}})$ so that
    $$\|T\|_{L^{p_0}(w)\rightarrow L^{p_0,\infty}(w)}\lesssim\psi(w).$$
\end{lem}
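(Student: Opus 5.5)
This is the standard passage from bilinear sparse forms to weighted bounds, as in \cite{BFP16} and \cite{FN19}. We may assume $T\in S(p_0,q_0)$, so for bounded compactly supported $f,g$,
$$|\langle Tf,g\rangle|\lesssim \sum_{b=1}^{K_D}\Lambda_{\mathcal{S}_b;p_0,q_0}(f,g).$$
Since $K_D$ is finite, it suffices to bound a single form $\Lambda_{\mathcal{S};p_0,q_0}$ with $\mathcal{S}\subset\mathcal{D}^b$ sparse. Density of bounded compactly supported functions in $L^p(w)$ then finishes the argument, using duality $L^p(w)^*=L^{p'}(w^{1-p'})$ via the unweighted pairing. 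For sublinear $T$ the pairing is understood with $|Tf|$.

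\textbf{Part (i).} By duality, we must show
$$\Lambda_{\mathcal{S};p_0,q_0}(f,g)\lesssim \left([w]_{A_{p/p_0}}[w]_{RH_{(q_0/p)'}}\right)^{\alpha}\|f\|_{L^p(w)}\|g\|_{L^{p'}(w^{1-p'})}.$$
\begin{enumerate}
\item Write the form as $\sum_Q|Q|\langle |f|^{p_0}\rangle_Q^{1/p_0}\langle |g|^{q_0'}\rangle_Q^{1/q_0'}$. This is exactly the setting of the sharp weighted bound for $(p_0,q_0)$ sparse forms in \cite[Proposition 6.4]{BFP16}; on spaces of homogeneous type the same dyadic proof works inside each grid $\mathcal{D}^b$.
\item To reprove it, introduce $\sigma$ and $u$ adapted to $p/p_0$ and $(q_0/p)'$. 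Insert weighted averages, use the sparseness $|Q|\le\eta^{-1}|E_Q|$, and bound the resulting sums by weighted dyadic maximal functions $M^{\mathcal{D}}_{\sigma}$, which are bounded with constants independent of the weight.
\item The characteristic appearing is $[w]_{A_{p/p_0}}[w]_{RH_{(q_0/p)'}}$, which is comparable to $[w^{(q_0/p)'}]_{A_{\tau}}$ for a suitable $\tau$ (Johnson--Neugebauer). Optimizing the exponents in H\"older's inequality gives the power $\alpha=\max\{1/(p-p_0),(q_0-1)/(q_0-p)\}$.
\end{enumerate}

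\textbf{Part (ii).} We cite \cite{FN19}, whose endpoint weak-type result for $(p_0,q_0)$ sparse forms holds on spaces of homogeneous type. The connectedness hypothesis is used there for the Calder\'on--Zygmund decomposition and the reverse doubling it needs. The sketch is as follows.
\begin{enumerate}
\item Fix $\lambda$ and set $\Omega=\{M_{p_0}f>\lambda\}$, then perform a Calder\'on--Zygmund decomposition of $|f|^{p_0}$ at height $\lambda^{p_0}$.
\item Handle the good part with the strong bound from part (i) at some $p$ slightly above $p_0$.
\item For the bad part, test against $g=\chi_{F}$ with $F\subset\{|Tf|>\lambda\}\setminus\widetilde\Omega$, using the $RH$ condition to control $\langle g\rangle_{q_0',Q}$ in terms of $w(F)$. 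This yields the constant $\psi$.
\end{enumerate}

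The main obstacle is the exponent bookkeeping in part (i), namely matching the optimal $\alpha$. I would import it from \cite{BFP16} after checking that their Euclidean dyadic argument uses only the sparseness and the doubling of $\mu$.
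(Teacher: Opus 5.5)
Your proposal matches the paper, which proves (i) simply by citing \cite[Proposition 6.4]{BFP16} and (ii) by citing \cite{FN19}, exactly the two results you import. Your sketches are not needed for this citation-based proof. One slip in the weak-type sketch: to measure $w(F)$ you must pair $|Tb|$ with $w\chi_F$ rather than $\chi_F$, and that is where the reverse Hölder condition enters.
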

Part (i) follows from \cite[Proposition 6.4]{BFP16} and (ii) follows from \cite{FN19}. 

We conclude this subsection by recalling the following result on universal sparse families. It was proved in \cite{LM16} on $\mathbb{R}^n$. The same proof applies to spaces of homogeneous type.
\begin{lem}\label{universal}
    \textnormal{(\cite[Lemma 4.7]{LM16})} Let $r,s\in[1,\infty]$ and $0<\eta,\eta_0<1$. Given $f,g$, there exist $\eta$-sparse families $\mathcal{S}_{b}\subset \mathcal{D}^b$, $b=1,2,...K_D$, such that for any $\eta_0$-sparse family $\mathcal{S}_0$, it holds that
    $$\Lambda_{\mathcal{S}_0;r,s}(f,g)\lesssim(1-\eta)^{-(1/r+1/s^{\prime})}\eta_0^{-1}\sum_{b=1}^{K_D}\Lambda_{\mathcal{S}_b;r,s}(f,g).$$
\end{lem}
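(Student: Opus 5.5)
The proof of Lemma \ref{universal} follows the argument of \cite{LM16}, transplanted to the dyadic systems $\mathcal{D}^b$ of Theorem \ref{Dyadic}. The idea is to use a single canonical stopping-time family, built from $f$ and $g$ alone, that dominates every $\eta_0$-sparse form simultaneously. We may assume $\mathcal{S}_0$ is finite, since the general case follows by monotone convergence.

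\textbf{Step 1: a Carleson-type bound.} Since $\mathcal{S}_0$ is $\eta_0$-sparse, for every cube $R\in\mathcal{D}$ we have
$$\sum_{Q\in\mathcal{S}_0,\,Q\subset R}|Q|\le \eta_0^{-1}\sum_{Q\subset R}|E_Q|\le\eta_0^{-1}|R|.$$
This gives the factor $\eta_0^{-1}$ in the statement.

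\textbf{Step 2: transfer to a single grid.} A cube $Q\in\mathcal{S}_0$ may lie in any $\mathcal{D}^{b'}$. By the properties (iii) of a dyadic system, $Q$ is contained in a ball $B$ of comparable radius. Theorem \ref{Dyadic} then gives some $b$ and a cube $P(Q)\in\mathcal{D}^b$ with $Q\subset B\subset P(Q)$ and $|P(Q)|\lesssim|Q|$, the latter by doubling. Consequently
$$|Q|\langle f\rangle_{r,Q}\langle g\rangle_{s',Q}\lesssim |P(Q)|\langle f\rangle_{r,P(Q)}\langle g\rangle_{s',P(Q)}.$$
Each cube $P$ arises as $P(Q)$ only for $Q$ of comparable size contained in $P$. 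By Step 1, the family $\{P(Q)\}$ obtained after splitting by $b$ is again a Carleson family in each grid $\mathcal{D}^b$, with constant $\lesssim\eta_0^{-1}$.

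\textbf{Step 3: the universal family.} Fix $b$ and, by localizing, a top cube. Construct $\mathcal{S}_b$ by the standard stopping time. The children of a stopping cube $P$ are the maximal $P'\subset P$ such that
$$\langle f\rangle_{r,P'}>A\langle f\rangle_{r,P}\quad\text{or}\quad\langle g\rangle_{s',P'}>A\langle g\rangle_{s',P}.$$
The weak $(1,1)$ bound for the dyadic maximal function applied to $|f|^r$ and $|g|^{s'}$ shows that the children occupy measure at most $(A^{-r}+A^{-s'})|P|$. Choosing $A\sim(1-\eta)^{-(1/r+1/s')}$ makes this at most $(1-\eta)|P|$, so $\mathcal{S}_b$ is $\eta$-sparse.

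\textbf{Step 4: the comparison.} Each $P$ in the Carleson family has a minimal stopping ancestor $\pi(P)\in\mathcal{S}_b$, and by the stopping rule
$$\langle f\rangle_{r,P}\langle g\rangle_{s',P}\le A^2\langle f\rangle_{r,\pi(P)}\langle g\rangle_{s',\pi(P)}.$$
Grouping the cubes $P$ by their ancestor and applying the Carleson bound $\sum_{\pi(P)=S}|P|\lesssim\eta_0^{-1}|S|$ yields the claimed estimate with constant $A^2\eta_0^{-1}$.

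\textbf{Main obstacle.} The step requiring care is Step 4, because $A^2$ is worse than the stated $(1-\eta)^{-(1/r+1/s')}$. Following \cite{LM16}, I would instead use $A=(1-\eta)^{-1/r}$ for $f$ and $(1-\eta)^{-1/s'}$ for $g$, with the sharper stopping density obtained by splitting $1-\eta$ between the two conditions. The product of the two ancestor comparisons is then $(1-\eta)^{-(1/r+1/s')}$ up to constants.

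Besides this, only Step 2 uses the homogeneous-space structure, namely the adjacent-grid covering of Theorem \ref{Dyadic}. The rest is verbatim from \cite{LM16}. The cases $r=\infty$ or $s'=\infty$ are handled with $\operatorname{ess\,sup}$ averages, where the stopping condition for that function is simply never triggered.
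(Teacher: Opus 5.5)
Your argument is the Lacey--Mena Arias stopping-time proof that the paper cites. With the split thresholds $(2/(1-\eta))^{1/r}$ for $f$ and $(2/(1-\eta))^{1/s'}$ for $g$, Step 4 costs $2^{1/r+1/s'}(1-\eta)^{-(1/r+1/s')}\eta_0^{-1}$, which is the stated constant.

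Step 2 is unnecessary. Every cube of $\mathcal{S}_0$ already lies in some $\mathcal{D}^b$, and each piece $\mathcal{S}_0\cap\mathcal{D}^b$ is still $\eta_0$-sparse, so you can run Steps 3--4 grid by grid with $P(Q)=Q$. This also spares you the multiplicity bookkeeping of $Q\mapsto P(Q)$. If you keep Step 2, sum over $Q$ grouped by $\pi(P(Q))$, using $\sum_{Q:\,\pi(P(Q))=S}|P(Q)|\lesssim\sum_{Q\in\mathcal{S}_0,\,Q\subset S}|Q|\le\eta_0^{-1}|S|$, rather than over the distinct cubes $P(Q)$.
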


\section{Sparse domination for holomorphic multipliers}

In this section, we prove Theorem \ref{thm1} by considering separately the semigroup part and the holomorphic functional calculus part. In both cases, we use Lemma \ref{spDo} and establish the required estimates.

\subsection{The case $\varphi(\lambda)=e^{-\lambda}$}
We first prove the sparse domination for the semigroup part $V^q(e^{-tL}:t>0)$. In this part, we require $q>2$, and injectivity of $L$ is not needed.

\medskip
\noindent\textbf{Choice of representatives and reductions.} We begin by fixing a representative of $e^{-tL}f$ for $f\in L^2(X)$. For each $t>0$, $e^{-tL}f$ is defined as an element of $L^2(X)$, and hence only up to a set of measure zero. Thus, without fixing representatives for all $t>0$, the $q$-variation $V^q(e^{-tL}f:t>0)$ is not well defined, since the null set may depend on $t$. Indeed, it follows from \cite[Lemma p. 72]{S70} that for any $f\in L^2(X)$ and $m\in \mathbb{N}$, the function $t\mapsto t^m L^me^{-tL}f(x)$ is real-analytic for a.e. $x\in X$. We select such representatives for both $e^{-tL}$ and $Le^{-tL}f$. Now since the semigroup $\{e^{-tL}\}_{t>0}$ is analytic, for every $a,b>0$ and $f\in L^2(X)$, the identity
\begin{equation}
    e^{-aL}f-e^{-bL}f=-\int_{b}^a Le^{-sL}fds\notag
\end{equation}
holds at $L^2(X)$ level. Thus, for the chosen representatives, there exists null set $E\subset X$ such that, for $x\notin E$ and $a,b\in \mathbb{Q}_+$,
\begin{equation}\label{4.1}
    e^{-aL}f(x)-e^{-bL}f(x)=-\int_{b}^a Le^{-sL}f(x)ds.
\end{equation}
Moreover, by our choice of representatives, both sides of \eqref{4.1} are continuous in $a$ and $b$ outside another null set. Therefore,\eqref{4.1} holds for every $a,b>0$ and for a.e. $x\in X$ (the null set does not depend on $a,b$).

\medskip
We next reduce the sparse domination of $V^q$ to the homogeneous $q$-variation operators $\widetilde{V}^q$. Let 
$$\mathcal{M}(\varphi(tL):t>0)f:=\sup_{t>0}|\varphi(tL)f|.$$
Recall that 
\begin{equation}\label{4.2}
V^q(\varphi(tL):t>0)f\sim\mathcal{M}(\varphi(tL):t>0)f+\widetilde{V}^q(\varphi(tL):t>0)f.
\end{equation}
For $f\in L^2(X)$, it follows from the strong continuity ot the semigroup that $e^{-tL}f\rightarrow f$ in $L^2(X)$ when $t\rightarrow0$. Hence, there exists a sequence $t_n\rightarrow0$ such that $e^{-t_n L}f(x)\rightarrow f(x)$ a.e. $x\in X$. Therefore, for every $t>0$ and a.e. $x\in X$,
$$|e^{-tL}f(x)|\leq|e^{-t_n L}f(x)|+|e^{-tL}f(x)-e^{-t_n L}f(x)|\leq|e^{-t_n L}f(x)|+\widetilde{V}^q(e^{-tL}:t>0)f(x).$$
Letting $n\rightarrow\infty$ and then taking the supremum over $t>0$, we obtain that
\begin{equation}\label{4.3}
    \mathcal{M}(e^{-tL}:t>0)f\leq|f|+\widetilde{V}^q(e^{-tL}:t>0)f\quad \text{a.e. on }X.
\end{equation}
For a bounded function $f$ with compact support, the lebesgue differentiation theorem gives $|f(x)|\leq Mf(x)$ a.e. $x\in X$. It is well known that the classical maximal operator $M$ admits a pointwise sparse domination (see e.g. \cite[Section 2]{LM13} in the Euclidean setting, the same argument applies to spaces of homogeneous type). Hence, $Ab(f):=|f|$ also admits a $S(p_0,q_0)$ sparse bound for every $1\leq p_0<q_0\leq\infty$. Thus, it remains to prove the sparse domination of $\widetilde{V}^q$. To apply Lemma \ref{spDo}, we first establish the required off-diagonal estimate. For convenience, we set $Q_t ^N:=c_N^{-1}(tL)^Ne^{-tL}$ for $N\in \mathbb{N}_+$, where $c_N:=\int_0^{\infty}s^{N-1}e^{-s}ds$ is the normalization constant.
\begin{lem}\label{ODE1}
    Under the assumption of Theorem \ref{thm1}, there exists $N_0\in \mathbb{N}$ such that, for every $N\geq N_0$, $t>0$ and two balls $B_1,B_2$ of radius $t^{1/m}$,
\begin{equation}
\|\chi_{B_2} \widetilde{V}^q(e^{-tL}:t>0)(tL)^Ne^{-tL}\chi_{B_1}\|_{{p_0}\rightarrow {q_0}}
\lesssim
|B_1|^{-1/p_0}|B_2|^{1/q_0}
\left(
1+\frac{d(B_1,B_2)}{t^{1/m}}
\right)^{-(\nu+1)}.\notag
\end{equation}
\end{lem}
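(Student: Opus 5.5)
The plan is to dominate the $q$-variation by the total variation, express the latter through the time derivative of the semigroup, and then apply Minkowski's inequality together with the generalized Gaussian estimate for $(uL)^{N+1}e^{-uL}$ from Lemma \ref{deri}. To avoid a clash of notation, I write $s$ for the variation parameter and keep $t>0$ fixed, with $B_1,B_2$ balls of radius $t^{1/m}$.

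\medskip
\noindent\textbf{Step 1: pointwise bound by total variation.} Let $f$ be supported in $B_1$ and set $g:=(tL)^Ne^{-tL}f\in L^2(X)$. Then $e^{-sL}g=(tL)^Ne^{-(s+t)L}f$. Applying \eqref{4.1} to $g$, which is legitimate with the real-analytic representatives fixed above, gives for a.e.\ $x$ and all $s_1>s_2>\cdots$:
$$\sum_k|e^{-s_kL}g(x)-e^{-s_{k+1}L}g(x)|^q\le\Big(\sum_k|e^{-s_kL}g(x)-e^{-s_{k+1}L}g(x)|\Big)^q\le\Big(\int_0^\infty|Le^{-sL}g(x)|\,ds\Big)^q.$$
The first inequality uses $q\ge1$, so $\ell^1\subset\ell^q$. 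Substituting $u=s+t$, we get
$$\widetilde V^q(e^{-sL}:s>0)g(x)\le\int_t^\infty\frac{t^N}{u^{N+1}}\,\big|(uL)^{N+1}e^{-uL}f(x)\big|\,du .$$

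\medskip
\noindent\textbf{Step 2: Minkowski and enlarged balls.} Taking the $L^{q_0}(B_2)$ norm and using Minkowski's integral inequality gives
$$\|\chi_{B_2}\widetilde V^q(\cdots)(tL)^Ne^{-tL}\chi_{B_1}\|_{p_0\to q_0}\le\int_t^\infty\frac{t^N}{u^{N+1}}\|\chi_{B_2}(uL)^{N+1}e^{-uL}\chi_{B_1}\|_{p_0\to q_0}\,du.$$
For $u\ge t$, let $\widetilde B_i$ be the ball with the same center as $B_i$ and radius $u^{1/m}$, so that $B_i\subset\widetilde B_i$. Lemma \ref{deri} at time $u$ applied to $\widetilde B_1,\widetilde B_2$ gives
$$\|\chi_{B_2}(uL)^{N+1}e^{-uL}\chi_{B_1}\|_{p_0\to q_0}\lesssim|\widetilde B_1|^{-1/p_0}|\widetilde B_2|^{1/q_0}e^{-c(d(\widetilde B_1,\widetilde B_2)/u^{1/m})^{m/(m-1)}}.$$
Each factor is then compared with the original balls:
\begin{itemize}
\item Since $B_1\subset\widetilde B_1$, we have $|\widetilde B_1|^{-1/p_0}\le|B_1|^{-1/p_0}$.
\item By doubling \eqref{doub}, $|\widetilde B_2|^{1/q_0}\lesssim(u/t)^{\nu/(mq_0)}|B_2|^{1/q_0}$.
\item Since $d(\widetilde B_1,\widetilde B_2)\ge d(B_1,B_2)-2u^{1/m}$, the exponential factor is $\lesssim(1+d(B_1,B_2)/u^{1/m})^{-(\nu+1)}$.
\end{itemize}
The case $q_0=\infty$ is the same, with the exponent $1/q_0=0$.

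\medskip
\noindent\textbf{Step 3: the $u$-integral.} Put $D:=d(B_1,B_2)/t^{1/m}$ and $u=tr$ with $r\ge1$. Then
$$1+Dr^{-1/m}\ge r^{-1/m}(1+D),\qquad\text{so}\qquad(1+Dr^{-1/m})^{-(\nu+1)}\le r^{(\nu+1)/m}(1+D)^{-(\nu+1)}.$$
The whole expression is therefore bounded by
$$|B_1|^{-1/p_0}|B_2|^{1/q_0}(1+D)^{-(\nu+1)}\int_1^\infty r^{-N-1+\nu/(mq_0)+(\nu+1)/m}\,dr .$$
This integral is finite as soon as $N>\nu/(mq_0)+(\nu+1)/m$, which determines $N_0$ and proves the claim.

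\medskip
\noindent\textbf{Main obstacle.} I expect the delicate point to be Step 1. There one must make sure that the variation of $s\mapsto e^{-sL}g(x)$ is controlled by $\int_0^\infty|Le^{-sL}g(x)|\,ds$ outside a single null set, uniformly over all sequences $s_1>s_2>\cdots$. This is exactly what the extension of \eqref{4.1} to all $a,b>0$, through the choice of representatives, provides. The remaining steps only require routine bookkeeping of the volume factors through doubling.
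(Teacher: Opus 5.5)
Your proposal is correct and follows the paper's proof essentially step for step: the total-variation bound via \eqref{4.1} giving \eqref{4.4}, then Minkowski's inequality, then Lemma \ref{deri} on the enlarged balls $\widetilde B_i$ with the doubling loss $(u/t)^{\nu/(mq_0)}$, exactly as in \eqref{4.5}. The only difference is in evaluating the final $u$-integral: you first turn the exponential into polynomial decay and then use $1+Dr^{-1/m}\ge r^{-1/m}(1+D)$, whereas the paper uses a change of variables and a case split. Your version is slightly cleaner and loses nothing, since only the existence of some $N_0$ is needed.
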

\begin{proof}
   For every $N\in\mathbb{N}$, it follows from \eqref{4.1} and $q>2$ that for every decreasing $(s_j)_{j\in\mathbb{N}}\subset \mathbb{R_+}$,
    $$\left(\sum_{j\in\mathbb{N}}|e^{-s_j L}Q_t^N f-e^{-s_{j+1}L}Q_{t}^Nf|^q\right)^{1/q}\leq\sum_{j\in\mathbb{N}}\left|\int_{s_{j+1}}^{s_j}Le^{-sL}Q_t ^N f\,ds\right|\leq\int_{0}^\infty \left|Le^{-sL}Q_t^Nf\right|ds$$
    a.e. on $X$, and the null set does not depend on $(s_j)_{j\in\mathbb{N}}$. Since $Le^{-sL}Q_t^N=c_N^{-1}t^N L^{N+1} e^{-(t+s)L}$, by taking supremum over $(s_j)_{j\in\mathbb{N}}$, we obtain that for a.e. $x\in X$,
    \begin{equation}\label{4.4}
        \widetilde{V}^q(e^{-tL}:t>0)\,Q_t^Nf(x)\lesssim\int_{0}^{\infty} \frac{t^N}{(t+s)^{N+1}} |Q_{t+s}^{N+1}f(x)|ds=\int_{t}^\infty |Q_t^{N+1}f(x)|\left(\frac{t}{u}\right)^N\frac{du}{u}.
    \end{equation}
    For $B_i=B(x_i,t^{1/m})$, let $\widetilde{B}_i=B(x_i,u^{1/m})$ ($i=1,2$). By Lemma \ref{deri},
    $$\|\chi_{\widetilde{B}_2}Q_u^{N+1}\chi_{\widetilde{B}_1}\|_{p_0\rightarrow q_0}\lesssim|\widetilde{B}_1|^{-1/p_0}|\widetilde{B_2}|^{1/q_0}\textnormal{exp}\left(-c\left(\frac{d(\widetilde{B_1},\widetilde{B}_2)}{u^{1/m}}\right)^{m/(m-1)}\right).$$
    For $u\geq t$, we have $|\widetilde{B_1}|\geq|B_1|$, and by the doubling property,  $|\widetilde{B}_2|\lesssim(u/t)^{\nu/m}|B_2|$. If $d(x_1,x_2)>4u^{1/m}$, then $d(\widetilde{B}_1,\widetilde{B}_2)\sim d(B_1,B_2)\sim d(x_1,x_2)$. On the other hand, if $d(x_1,x_2)\leq 4u^{1/m}$, then $\textnormal{exp}(-c(d(B_1,B_2)/u^{1/m})^{m/(m-1)})\sim 1$. Consequently, we have that
    \begin{equation}\label{4.5}
        \|\chi_{B_2}Q_u^{N+1}\chi_{B_1}\|_{p_0\rightarrow q_0}\lesssim|{B}_1|^{-1/p_0}|{B_2}|^{1/q_0}\left(\frac{u}{t}\right)^{\nu/mq_0}\textnormal{exp}\left(-c^{\prime}\left(\frac{d({B_1},{B}_2)}{u^{1/m}}\right)^{m/(m-1)}\right).
    \end{equation}
    Thus, Minkowski's inequality gives that
    \begin{align}
        \|\chi_{B_2}\widetilde{V}^q(e^{-tL}:t>0)Q_t^{N+1}(f\chi_{B_1})\|_{L^{q_0}}&\leq\int_{t}^{\infty}\|Q_u^{N+1}(f\chi_{B_1})\|_{L^{q_0}}\left(\frac{t}{u}\right)^{N}\frac{du}{u}\notag\\
        &\lesssim|B_1|^{-1/p_0}|B_2|^{1/q_0}\|f\|_{L^{p_0}(B_1)}\int_t^{\infty}\left(\frac{t}{u}\right)^{N-(\nu/mq_0)}\textnormal{exp}\left(-c^{\prime}Du^{-1/(m-1)}\right)\frac{du}{u},\label{4.6}
    \end{align}
    where we denote $D=d(B_1,B_2)^{m/(m-1)}$. If $u\geq D^{m-1}$, then
    \begin{align} \int_t^{\infty}\left(\frac{t}{u}\right)^{N-(\nu/mq_0)}\textnormal{exp}\left(-c^{\prime}Du^{-1/(m-1)}\right)\frac{du}{u}\lesssim\int_{1}^{\infty}v^{-N-1+(m/q_0)}dv\sim1\notag
    \end{align}
    for $N>\nu/mq_0$. Otherwise for the case $u<D^{m-1}$, by the change of variable $v=Du^{-1/(m-1)}$, the integral in \eqref{4.6} can be written as
    $$\left(\frac{D}{t^{1/(m-1)}}\right)^{-N+(\nu/mq_0)}\int_{0}^{Dt^{-1/(m-1)}}v^{(m-1)(N-(\nu/mq_0))-1}e^{-c^{\prime}v}dv\lesssim\left(\frac{d(B_1,B_2)}{t^{1/m}}\right)^{m(-N+(\nu/mq_0))/(m-1)}$$
    for $N>\nu/mq_0$. Hence, if $N>((\nu+1)(m-1)/m)+(\nu/mq_0)$, the desired off diagonal estimate follows.
\end{proof}
We also have the following Cotlar-type inequality.
\begin{lem}\label{CI1}
    Under the assumption of Theorem \ref{thm1}, for every $x\in X$ and $r>0$, it holds that
    $$\left(
\frac{1}{|B(x,r)|}
\int_{B(x,r)}
|\widetilde{V}^q(e^{-tL}:t>0)e^{-r^mL}f|^{q_0}\,d\mu
\right)^{1/q_0}
\lesssim
\inf_{y\in B(x,r)}M_{p_0}(\widetilde{V}^q(e^{-tL}:t>0)f)(y).$$
\end{lem}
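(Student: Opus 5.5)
The plan is to reduce the left-hand side to a single positive operator applied to $\widetilde{V}^q(e^{-tL}:t>0)f$, and then use the generalized Gaussian bound for that positive operator. By \eqref{DGGE} there is a family $\{R_t\}_{t>0}$ of positive contractions on $L^2(X)$ with $|e^{-tL}h|\leq R_t(|h|)$ a.e. and $\{R_t\}$ satisfying \eqref{GGE}. Fix $r>0$ and write $R:=R_{r^m}$. The semigroup property gives
$$e^{-s_jL}e^{-r^mL}f-e^{-s_{j+1}L}e^{-r^mL}f=e^{-r^mL}h_j,\qquad h_j:=e^{-s_jL}f-e^{-s_{j+1}L}f.$$
We may assume $\widetilde{V}^q(e^{-tL}:t>0)f\in L^2(X)$; otherwise, by monotone convergence, we work with finite sequences and truncations.

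\textbf{Step 1 (vector-valued domination).} Positive linear operators extend to $\ell^q$-valued functions without loss. For a finite sequence $(g_j)$ and $(a_j)$ in the unit ball of $\ell^{q'}$ (with finitely many rational entries) we have
$$\sum_j a_j R g_j\leq R\Big(\sum_j|a_j||g_j|\Big)\leq R\Big(\big(\textstyle\sum_j|g_j|^q\big)^{1/q}\Big)$$
a.e. Taking the supremum over countably many such $(a_j)$ gives $(\sum_j|Rg_j|^q)^{1/q}\leq R((\sum_j|g_j|^q)^{1/q})$. Applying this with $g_j=|h_j|$ and $|e^{-r^mL}h_j|\leq R|h_j|$ yields
$$\Big(\sum_j|e^{-r^mL}h_j|^q\Big)^{1/q}\leq R\big(\widetilde{V}^q(e^{-tL}:t>0)f\big)\quad\text{a.e.},$$
using the positivity of $R$ and $(\sum_j|h_j|^q)^{1/q}\leq\widetilde{V}^q(e^{-tL}:t>0)f$.

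The supremum over all decreasing sequences is handled as in the choice of representatives above. By the a.e. continuity of $t\mapsto e^{-tL}e^{-r^mL}f(x)$, it suffices to take sequences in $\mathbb{Q}_+$, and there are countably many finite ones. Hence
$$\widetilde{V}^q(e^{-tL}:t>0)e^{-r^mL}f\leq R_{r^m}(G)\quad\text{a.e.},\qquad G:=\widetilde{V}^q(e^{-tL}:t>0)f.$$
I expect this step, with its measurability and null-set bookkeeping, to be the main point requiring care.

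\textbf{Step 2 (off-diagonal summation).} Let $B=B(x,r)$ and decompose $G=\sum_{k\geq0}G\chi_{C_k}$, where $C_0=2B$ and $C_k=2^{k+1}B\setminus 2^kB$ for $k\geq1$. Cover $C_k$ by $\lesssim 2^{k\nu}$ balls $B'$ of radius $r$; doubling bounds their number uniformly, and each satisfies $d(B,B')\gtrsim 2^kr$ for $k\geq2$. Applying \eqref{GGE} for $R_{r^m}$ with $t=r^m$ to each pair $(B',B)$ gives
$$\|\chi_B R(\chi_{B'}G)\|_{q_0}\lesssim|B'|^{-1/p_0}|B|^{1/q_0}e^{-c2^{km/(m-1)}}\|G\|_{L^{p_0}(B')}.$$
Using $|2^{k+1}B|\lesssim 2^{k\nu}|B'|$, we obtain
$$\Big(\frac{1}{|B|}\int_B|R(G\chi_{C_k})|^{q_0}\Big)^{1/q_0}\lesssim 2^{k\nu(1+1/p_0)}e^{-c2^{km/(m-1)}}\langle G\rangle_{p_0,2^{k+1}B}.$$
Each average satisfies $\langle G\rangle_{p_0,2^{k+1}B}\leq\inf_{y\in B}M_{p_0}G(y)$. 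Summing in $k$, where the exponential absorbs the polynomial factors, together with Step 1 gives the claim. For $q_0=\infty$ the same argument works with the $L^\infty$ average.
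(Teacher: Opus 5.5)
Your proposal is correct and follows the paper's proof essentially step by step: the rational-coefficient duality argument extends the positive operator $R_{r^m}$ to $\ell^q$-valued functions, giving $\widetilde{V}^q(e^{-tL}:t>0)e^{-r^mL}f\leq R_{r^m}\bigl(\widetilde{V}^q(e^{-tL}:t>0)f\bigr)$ a.e., and this is followed by the annular decomposition, the covering by radius-$r$ balls, and \textnormal{(GGE$_{p_0,q_0,m}$)} for $R_{r^m}$. The one slip is cosmetic: the covering balls $B'$ lie in $2^{k+2}B$ rather than $2^{k+1}B$, so the average should be taken over $2^{k+2}B$, which is still dominated by $\inf_{y\in B}M_{p_0}G(y)$.
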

\begin{proof}
    The proof is based on the positive linear domination of the semigroup, which provides a substitute for commutativity between the $q$-variation operators and semigroup. We first note that for $0\leq\{f_j\}_{j=0}^M\subset L^2(X)$, a positive linear operator $R$ on $L^2(X)$ and $q>1$, it holds for a.e. $x\in X$ that
    \begin{equation}
        \left(\sum_{j=0}^{M}|Rf_j(x)|^q\right)^{1/q}=\sup_{\substack{\sum_{j=0}^M b_j^{q^{\prime}}\leq1\\b_j\in\mathbb{Q}_+}}\sum_{j=0}^Mb_j Rf_j(x)=\sup_{\substack{\sum_{j=0}^Mb_j^{q^{\prime}}\leq1\\b_j\in \mathbb{Q}_+}} R\left(\sum_{j=0}^Mb_jf_j\right)(x)\leq R\left[\left(\sum_{j=0}^Mf_j^q\right)^{1/q}\right](x).
    \end{equation}
    We take $b_j\in \mathbb{Q}_+$, such sequences are dense in the unit ball of $\ell^{q^{\prime}}$, and there are only countably many such sequences, so the same null set can be chosen for all $(b_j)_{j\in\mathbb{N}}$. Hence, for every finite decreasing sequence $(t_j)_{j=0}^M\in\mathbb{R}_+$ and $f\in L^2(X)$, we have, for a.e. $x$ that
    \begin{align}
        \left(\sum_{j=0}^{M-1}|e^{-t_j L}e^{-r^m L}f(x)-e^{-t_{j+1}L}e^{-r^m L}f(x)|^q\right)^{1/q}&=\left(\sum_{j=0}^{M-1}|e^{-r^m L}(e^{-t_jL}f-e^{-t_{j+1}L}f)(x)|^q\right)^{1/q}\notag\\
        &\leq\left(\sum_{j=0}^{M-1}R_{r^{m}}(|e^{-t_jL}f-e^{-t_{j+1}L}f|)^q(x)\right)^{1/q}\notag\\
        &\leq R_{r^{m}}\left[\left(\sum_{j=0}^{M-1}|e^{-t_jL}f-e^{-t_{j+1}L}f|^q\right)^{1/q}\right](x).\notag
    \end{align}
    By the continuity of the representatives fixed at the beginning of this subsection, it suffices to take the supremum over finite decreasing sequence of positive rational numbers. Since there are only countably many such sequences, we can choose the same null set for all of them. Consequently, we have that
    \begin{equation}\label{4.8}
        \widetilde{V}^q(e^{-tL}:t>0)e^{-r^m L}f(x)\leq R_{r^{m}}(\widetilde{V}^q(e^{-tL}:t>0)f)(x)\quad \text{a.e. } x\in X.
    \end{equation}
    For convenience, we denote $g:=\widetilde {V}^q(e^{-tL}:t>0)f$ and $B:=B(x,r)$. For $k\in \mathbb{N}$, define $A_k=2^{k+1}B\setminus 2^kB$. First we cover $A_k$ with balls of radius $r$. Choose a maximal set $\{x_l\}_{l=1}^{N_k}\in A_k$ such that $d(x_i,x_j)>r$. By maximality, $A_k\subset \bigcup_l B(x_l,r)$. Moreover, the balls $\{B(x_l,r/2)\}_{l=1}^{N_k}$ are pairwise disjoint and are contained in $2^{k+2}B$. Since $x_l\in 2^{k+1}B$, the doubling property gives $|2^{k+2}B|\lesssim(d(x_B,x_l)+2^{k+2}r)^{\nu}/r^{\nu}|B(x_l,r/2)|\lesssim2^{k\nu}|B(x_l,r/2)|$. Hence,
    $$2^{-k\nu}\sum_{l=1}^{N_k}|2^{k+2}B|\lesssim\sum_{l=1}^{N_k}|B(x_l,r/2)|\leq|2^{k+2}B|,$$
    and $N_k\lesssim2^{k\nu}$. On the otherhand, suppose that $y\in B(x_k,r)$ for $k=1,...,M$, then pairwise disjoint balls $B(x_{l_k},r/2)$ are contained in $B(y,3/2r)$, and $\sum_{k=1}^{M}|B(x_{l_k},r/2)|\leq|B(y,3/2r)|$. Since the doubling property gives $|B(y,3/2r)|\lesssim|B(x_{l_k},r/2)|$, we have $M\lesssim1$. Therefore, there exist constant $C_0>0$, independent of $k$, such that $\sum_{l=1}^{N_k}\chi_{B(x_l,r)}(x)\leq C_0\quad\text{for all }x\in X$. We write $B_{k,l}:=B(x_l,r)$, $l=1,...,N_k$. Using the \textnormal{(GGE$_{p_0,q_0,m}$)} assumption for $R_t$, We obtain, for every $\gamma>0$ that
    \begin{align}
        |B|^{-1/q_0}\|\chi_{B} R_{r^m}(g\chi_{A_k})\|_{L^{q_0}}&\leq|B|^{-1/q_0}\sum_{l=1}^{N_k}\|\chi_{B} R_{r^m}(g\chi_{B_{k,l}})\|_{L^{q_0}}\notag\\
        &\lesssim(1+2^k)^{-\gamma}\sum_{l=1}^{N_k}|B_{k,l}|^{-1/p_0}\|g\|_{L^{p_0}(B_{k,l})}\notag\\
        &\lesssim(1+2^k)^{-\gamma}2^{k\nu/p_0}|2^{k+2}B|^{-1/p_0}N_k^{1/p_0^{\prime}}\left(\sum_{l=1}^{N_k} \|g\|_{L^{p_0}(B_{k,l})}^{p_0}\right)^{1/p_0}\notag\\
        &\lesssim2^{-k(\gamma-\nu)}\langle g\rangle_{p_0,2^{k+2}B}\leq2^{-k(\gamma-\nu)}\inf_{y\in B}M_{p_0}g(y).\label{4.9}
    \end{align}
    Hence, combining \eqref{4.8} with \eqref{4.9} and taking $\gamma>\nu$, we obtain that
    \begin{align}
        \langle\,\widetilde{V}^q(e^{-tL}:t>0)e^{-r^m L}f\,\rangle_{q_0,B}&\leq|B|^{-1/q_0}\sum_{k=0}^{\infty}\|\chi_{B}R_{r^m}(g\chi_{A_k})\|_{L^{q_0}}+|B|^{-1/q_0}\|\chi_{B}R_{r^{m}}(g\chi_{B})\|_{L^{q_0}}\notag\\
        &\lesssim\sum_{k=0}^{\infty}2^{-k(\gamma-\nu)}\inf_{y\in B}M_{p_0}g(y)+\langle g\rangle_{p_0,B}\lesssim\inf_{y\in B} M_{p_0}g(y).
    \end{align}
\end{proof}
\begin{proof}[Proof of Theorem \ref{thm1}: $\varphi=e^{-(\cdot)}$]
Le Merdy and Xu \cite{LMX12} proved the unweighted $L^2$ bounds $\|\widetilde{V}^q(e^{-tL}:t>0)\|_{2\rightarrow2}<\infty$ (actually even $V^q$) for contractive regular analytic semigroup $\{e^{-tL}\}_{t>0}$. Combining this with Lemma \ref{ODE1}, \ref{CI1} and \ref{spDo}, we obtain $\widetilde{V}^q(e^{-tL}:t>0)\in S(p_0,q_0)$. Therefore, by the reduction argument given at the beginning of this subsection, $V^q(e^{-tL}:t>0)\in S(p_0,q_0)$.
\end{proof}
\subsection{The case $\varphi\in H^{\infty}_0(\Sigma_{\mu})$}

Let $\varphi\in\Psi_{a,b}(\Sigma_{\mu})$ for some $a,b>0$. We next prove the sparse domination for the $H^{\infty}_0$ part. In this case, we only require $q\geq 2$.

\medskip
\noindent\textbf{Choice of representatives and reductions.} We fix a representative of $\varphi(tL)f$ for $f\in L^2(X)$. Fix an interval $I=(a,b)\subset \mathbb{R_+}$. The argument relies on properties of the vector valued Sobolev space $W^{1,2}(I;L^2(X))$. For further details, we refer to \cite[Section 2]{HNVW16}. Set $F(t):=\varphi(tL)f\in L^2(X)$. By Lemma \ref{derivative}, $\lambda\varphi^{\prime}(t\lambda)\in H^{\infty}_0(\Sigma_{\theta})$ for each $\theta<\mu$. Hence, \eqref{Hi} and the dominated convergence theorem gives that for every $t_0\in I$,
\begin{align}
\lim_{t\rightarrow0}\frac{F(t_0+t)-F(t_0)}{t}&=\lim_{t\rightarrow0}\frac{1}{2\pi i}\int_{\gamma}\frac{\varphi((t_0+t)\lambda)-\varphi(t_0\lambda)}{t}(\lambda I-L)^{-1}f\,d\lambda\notag\\
&=\frac{1}{2\pi i}\int_{\gamma}\lambda \varphi^{\prime}(t_0\lambda)(\lambda I-L)^{-1}f\,d\lambda=L\varphi^{\prime}(t_0L)f\notag
\end{align}
in $L^2(X)$, that is, $F^{\prime}(t)=L\varphi^{\prime}(tL)f\in L^2(X)$. Moreover, Lemma \ref{derivative} and \eqref{Hi} give that $\|F(t)\|_{L^2(X)}+\|tF^{\prime}(t)\|_{L^2(X)}\lesssim\|f\|_{L^2(X)}$. Thus, both $F$ and $F^{\prime}$ belong to $L^{2}(I; L^2(X))$. The identity
\begin{equation}\label{wd}
\int_{\mathbb{R}_+}F(t)\phi^{\prime}(t)dt=-\int_{\mathbb{R_+}}F^{\prime}(t)\phi(t)dt
\end{equation}
holds for every $\phi\in C_{c}^{\infty}(I)$ in $L^2(X)$ sense. Fix measurable representative of $F,F^{\prime}$ as $F(\cdot,x)$ and $F^{\prime}(\cdot,x)$. By Fubini's theorem, both $F(\cdot,x),F^{\prime}(\cdot,x)\in L^2(I)$. By testing \eqref{wd}, we see that $F^{\prime}(\cdot,x)$ is the weak derivative of $F(\cdot,x)$ a.e. $x\in X$. The null set obtained this way may depend on the test function $\phi$. Therefore, we first choose a countable dense subset of $C_c^{\infty}(I)$, establish the weak derivative identity for all test functions in this subset outside a single null set, and then extend the identity to every $\phi\in C_{c}^{\infty}(I)$ by approximation. Thus, $F(\cdot,x)\in W^{1,2}(I)$ a.e. $x\in X$. By the standard properties of Sobolev space on $I$, for a.e. $x\in X$, $F(\cdot,x)$ admits a locally absolutely continuous representative $\widetilde{F}(\cdot,x)$, such that
\begin{equation}\label{4.12}
    \widetilde{F}(t,x)-\widetilde{F}(s,x)=\int_{s}^t F^{\prime}(r,x)\,dr,\quad s,t\in I.
\end{equation}
Finally, we verify that $\widetilde{F}(t,x)$ is a representative of $F(t)$ for $t\in I$. By our choice of $\widetilde{F}$, we have $\widetilde{F}(t,\cdot)=F(t)$ in $L^2(X)$ a.e. $t\in I$ in $L^2(X)$. Moreover, by \eqref{4.12} and H\"{o}lder's inequality,
\begin{equation}
    \|\widetilde{F}(t_2,\cdot)-\widetilde{F}(t_1,\cdot)\|_{L^{2}(X)}\leq|t_2-t_1|^{1/2}\|F^{\prime}\|_{L^{2}(X;L^{2}(I))}\leq|t_1-t_2|^{1/2}\|F\|_{W^{1,2}(I;L^{2}(X))}.
\end{equation}
Hence, $\widetilde{F}(\cdot,x)$ is continuous in $L^{2}(X)$. On the other hand, $F(\cdot)$ is continuous (in fact, differentiable) in $L^2(X)$ as shown above. Therefore, $\widetilde{F}(t,\cdot)=F(t)$ for every $t\in I$, and provides an absolutely continuous representative. 

We apply this argument to $I_k=(1/k,k)$ for $k\geq2$. We have shown that the representatives are continuous and represent the same function outside a joint null set in $X$. Thus, the representative agree on the same intervals. For this representative, \eqref{4.12} holds for all $s,t>0$ and $\widetilde{F}(t,\cdot)=F(t)$ in $L^2(X)$ for every $t>0$.

\medskip
We then reduce the sparse domination for $V^q$ to that for $\widetilde{V}^q$, following the same argument as in the previous subsection. It suffices to show that for every $f\in L^2(X)$, $\lim_{t\rightarrow0} \varphi(tL)f=0$ in $L^{2}(X)$. Recall that $\varphi\in\Psi_{a,b}(\Sigma_{\nu})$, fix a $0<\delta<\min\{a,1\}$ and define $\psi(z):=z^{-\delta}\varphi(z)$. Then it is easy to see $\psi\in \Psi_{a-\delta,b}(\Sigma_{\mu})$. Moreover, since $L$ is injective, the fractional power $L^{\delta}$ is well-defined by the extended sectorial functional calculus (see e.g. \cite[Section 1.2, 3.1]{H06}) and corresponds to $z^{\delta}$. Hence, by the product rule for the extended functional calculus (see e.g. \cite[Theorem 1.3.2 (c)]{H06}), we have that for $f\in D(L^{\delta})$, $\varphi(tL)=t^{\delta}\psi(tL)L^{\delta}f$. It follows from \eqref{Hi} that
\begin{align}
    \|\psi(tL)\|_{2\rightarrow2}&\lesssim\int_{0}^{\infty}|\psi(t\rho e^{\pm i\theta})|\frac{d\rho}{\rho}\notag=\int_{0}^{\infty}|\psi(t\rho e^{\pm i\theta})|\frac{d\rho}{\rho}\notag\notag\\
    &\lesssim\int_{0}^{\infty}\frac{\rho^{a-\delta}}{1+\rho^{a+b}}\frac{d\rho}{\rho}<\infty.
\end{align}
Therefore, $\|\varphi(tL)f\|_{L^2}\lesssim t^{\delta}\|L^{\delta}f\|_{L^2}\rightarrow0$ as $t\rightarrow0$. Since $0<\delta<1$ and $D(L)$ is dense in $X$, it follows from \cite[Proposition 3.1.1]{H06} that $D(L^{\delta})$ is also dense in $X$. Hence, an approximation argument yields $\lim_{t\rightarrow0} \varphi(tL)f=0$ in $L^{2}(X)$ for every $f\in L^2(X)$. By a similar argument as in \eqref{4.3}, we obtain that for every $f\in L^2(X)$,
$$\mathcal{M}(\varphi(tL):t>0)f\leq\widetilde{V}^q(\varphi(tL):t>0)f\quad\text{a.e. on }X.$$
Thus, it remains to establish the sparse domination for $\widetilde{V}^q$. 

\medskip
The unweighted $L^2$ bounds follows from Lemma \ref{quad} and \ref{lambdaemb}. By the choice of representative, for a.e. $x\in X$, the function $\varphi(\cdot L)f(x)$ has weak derivative $L\varphi^{\prime}(\cdot L)f(x)$. Therefore,
\begin{align}
    \|\widetilde{V}^q(\varphi(tL):t>0)f\|_{L^2}^2&\lesssim\|\varphi(\cdot L)f\|_{L^2(X;\Lambda^{1})}^2=\|\varphi(e^{(\cdot)} L)f\|_{L^2(X;W^{1,2}(\mathbb{R}))}^2\notag\\
    &=\|\varphi(e^{(\cdot)} L)f\|_{L^2(X;\mathbb{R})}^2+\|e^{(\cdot)} L\varphi^{\prime}(e^{(\cdot)} L)f\|^2_{L^2(X;\mathbb{R})}\notag\\
    &=\int_{0}^{\infty}\|\varphi(sL)f\|_{L^2}^2\frac{ds}{s}+\int_{0}^{\infty}\|sL\varphi^{\prime}(sL)f\|_{L^2}^2\frac{ds}{s}\sim\|f\|_{L^2}^2.\label{4.15}
\end{align}
The last identity follows from a change of variables and Fubini's theorem. The last estimate follows from Lemma \ref{derivative} and \ref{quad}.

Next We establish the required off-diagonal estimate.
\begin{lem}\label{ODE2}
    Under the assumption of Theorem \ref{thm2}, there exists $N_0\in\mathbb{N}$ such that, for every $N\geq N_{0}$, $t>0$ and two balls $B_1,B_2$ of radius $t^{1/m}$,
    $$\|\chi_{B_2} \widetilde{V}^q(\varphi(tL):t>0)(tL)^Ne^{-tL}\chi_{B_1}\|_{{p_0}\rightarrow {q_0}}
\lesssim
|B_1|^{-1/p_0}|B_2|^{1/q_0}
\left(
1+\frac{d(B_1,B_2)}{t^{1/m}}
\right)^{-(\nu+1)}.\notag$$
\end{lem}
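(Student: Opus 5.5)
We follow the argument of Lemma \ref{ODE1}, replacing the semigroup by $\varphi(sL)$ with $\varphi\in\Psi_{a,b}(\Sigma_\mu)$ (the statement refers to Theorem \ref{thm2}, but in this subsection we work under the assumptions of Theorem \ref{thm1}). The first step is a pointwise bound by an integral. By \eqref{4.12}, applied to $F(s)=\varphi(sL)Q_t^Nf$, for a.e. $x$ and every decreasing sequence $(s_j)$ we have
$$\Big(\sum_j|F(s_j,x)-F(s_{j+1},x)|^q\Big)^{1/q}\le\sum_j\int_{s_{j+1}}^{s_j}|F'(r,x)|\,dr\le\int_0^\infty |L\varphi'(sL)Q_t^Nf(x)|\,ds .$$
Hence
$$\widetilde V^q(\varphi(sL):s>0)Q_t^Nf\le\int_0^\infty|\psi_{s,t}(L)f|\,\frac{ds}{s},\qquad \psi_{s,t}(z):=c_N^{-1}\,sz\varphi'(sz)\,(tz)^Ne^{-tz}.$$
By Lemma \ref{derivative}, $z\varphi'(z)\in\Psi_{a,b}(\Sigma_\theta)$ for each $\theta<\mu$.

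The second step is an off-diagonal bound for each $\psi_{s,t}(L)$, obtained from the representation \eqref{H2}. We write $\psi_{s,t}(L)=\int_{\Gamma_\pm}e^{-zL}\eta_{\pm}^{s,t}(z)\,dz$ and use Lemma \ref{ext}(i) in the form \eqref{2.13} at radius $|z|^{1/m}$, rescaled to balls of radius $t^{1/m}$ as in \eqref{4.5}. This gives
$$\|\chi_{B_2}e^{-zL}\chi_{B_1}\|_{p_0\to q_0}\lesssim|B_1|^{-1/p_0}|B_2|^{1/q_0}\Big(1+\tfrac{|z|}{t}\Big)^{\nu/m}\Big(1+\tfrac{t}{|z|}\Big)^{\nu/m}\exp\!\Big(-c\big(d(B_1,B_2)/|z|^{1/m}\big)^{m/(m-1)}\Big).$$
Next we need bounds on $|\eta_\pm^{s,t}(z)|$. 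Set $h(\lambda)=s\lambda\varphi'(s\lambda)$ and $g(\lambda)=(t\lambda)^Ne^{-t\lambda}$, so that $\psi_{s,t}=hg$. When $s\le t$, we use $|h(\lambda)|\lesssim (s|\lambda|)^a$ near the relevant scale $|\lambda|\sim 1/t$; when $s\ge t$, we use $|h(\lambda)|\lesssim (s|\lambda|)^{-b}$. Estimating \eqref{H3} directly on $\gamma_\pm$, this should give
$$|\eta_\pm^{s,t}(z)|\lesssim \min\{(s/t)^{a},(t/s)^{b}\}\,\frac{t^{N}}{(|z|+t)^{N+1}},$$
up to a harmless $\log$. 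The point is that $g$ has a zero of order $N$ at the origin and decays exponentially, while the factor $|\lambda|^{\pm}$ coming from $h$ is controlled by $N$ and the exponential.

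The third step assembles these bounds. Minkowski's inequality in $z$ and $s$, combined with $\int_0^\infty\min\{(s/t)^a,(t/s)^b\}\,\frac{ds}{s}<\infty$, reduces the claim to
$$\int_0^\infty\frac{t^N}{(r+t)^{N+1}}\Big(1+\tfrac rt\Big)^{\nu/m}\Big(1+\tfrac tr\Big)^{\nu/m}e^{-c(D/r^{1/m})^{m/(m-1)}}\,dr\lesssim\Big(1+\tfrac{d(B_1,B_2)}{t^{1/m}}\Big)^{-(\nu+1)},$$
where $D=d(B_1,B_2)$. We treat this exactly as the computation after \eqref{4.6}. On $r\ge t$, a large $N$ gives the decay. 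On $r<t$, the exponential factor dominates the polynomial blow-up $(t/r)^{\nu/m}$ once $D\gtrsim t^{1/m}$; if $D\lesssim t^{1/m}$, the factor $(t/r)^{\nu/m}$ is integrable near $0$ only after we use the extra smallness of $\eta$. We avoid this issue by writing $Q_t^N=e^{-tL/2}\widetilde Q^N_{t/2}$ up to constants. This shifts the contour so that $\operatorname{Re} z\gtrsim t$, i.e. we apply Lemma \ref{ext} to $e^{-(z+t/2)L}$ and the small-$|z|$ singularity disappears. The condition on $N_0$ then comes out as $N_0>(\nu+1)(m-1)/m+\nu/m$.

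The main obstacle is the second step: obtaining uniform kernel bounds for $\eta^{s,t}_\pm$ with the correct product decay in both $s/t$ and $|z|/t$, valid for a single contour angle. The shift $e^{-tL/2}$ from the third step should be built in from the start, and the estimates for $\eta$ should be carried out on the contour in \eqref{H3} with $|\lambda|$ split at $1/t$ and $1/s$.
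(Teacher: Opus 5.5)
Your route is the paper's. You use the pointwise bound \eqref{4.23} and the representation \eqref{H2} applied to $s\lambda\varphi'(s\lambda)(t\lambda)^Ne^{-t\lambda/2}$, with the remaining $e^{-tL/2}$ absorbed into $e^{-(z+t/2)L}$; this is exactly the paper's $\Phi_{s,t}$. You then use Lemma \ref{ext}(i) with $\operatorname{Re}(z+t/2)\sim|z|+t$ and the elementary integral \eqref{A}. You are also right that the lemma effectively runs under the assumptions of Theorem \ref{thm1}.

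One intermediate claim needs correcting. The kernel bound you expect, with the factor $\min\{(s/t)^a,(t/s)^b\}$ uniform in $z$, is false when $t<|z|<s$. For instance, if $\lambda\varphi'(\lambda)\approx -b\lambda^{-b}$ for large $\lambda$, then for $t\ll|z|\ll s$ one finds $|\eta^{s,t}_\pm(z)|\sim t^Ns^{-b}|z|^{-(N-b+1)}$. This exceeds your bound by a power $(|z|/t)^b$, not a logarithm.

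What the direct contour estimate on $\gamma_\pm$ actually gives is the paper's \eqref{4.18}:
\[
|\eta^{s,t}_\pm(re^{\pm i\phi})|\lesssim\frac{t^N}{(t+r)^{N+1}}\min\Big\{\Big(\frac{s}{t+r}\Big)^{a},\Big(\frac{t+r}{s}\Big)^{b}\Big\},\qquad N>b-1 .
\]
This does not damage your scheme. Since $\int_0^\infty\min\{(s/u)^a,(u/s)^b\}\,ds/s=1/a+1/b$ for every $u>0$, doing the $s$-integral first (Fubini after Minkowski) reduces everything to the same $r$-integral you wrote down. That integral is then handled by \eqref{A}.

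This ordering is in fact slightly more efficient than the paper's. The paper first proves the $s$-dependent estimate \eqref{4.22} and only then integrates in $s$, which costs $b$ in the decay exponent (it gets $-m(N-b)$) and forces $N>(\nu+\nu/p_0+1)/m+b$. The precise value of $N_0$ is immaterial for the lemma in any case.
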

\begin{proof}
Let $\varphi\in \Psi_{a,b}(\Sigma_\mu)$ for some $a,b>0$. We first estimate $\|\chi_{B_2}\varphi(sL)(tL)^Ne^{-tL} \chi_{B_1}\|_{p_0-q_0}$ for $s,t>0$. For $N\in\mathbb{N}$, define $\Phi_{s,t}(\lambda):=\varphi(s\lambda)(t\lambda)^Ne^{-(t/2) \lambda}$. Then $\Phi_{s,t}\in H^{\infty}_{0}(\Sigma_\mu)$. Hence, by \eqref{H2} and \eqref{H3},
\begin{equation}\label{Psi1}
    \Phi_{s,t}(L)=\int_{\Gamma_{+}} e^{-zL}K^{+}_{s,t}(z)\,dz+\int_{\Gamma_{-}}e^{-zL}K^{-}_{s,t}(z)\,dz=:\Phi_{s,t}^{+}(L)+\Phi_{s,t}^{-}(L),
\end{equation}
where
\begin{equation}\label{Psi2}
    K^{\pm}_{s,t}(z):=\frac{1}{2\pi i}\int_{\gamma_{\pm}}e^{z\lambda}\Phi_{s,t}(\lambda)\,d\lambda,\quad z\in\Gamma_{\pm}.
\end{equation}
Here $\gamma:=\{\rho e^{i\zeta}:\ \rho:\infty\rightarrow0\}\ \cup\ \{\rho e^{-i\zeta}:\ \rho:0\rightarrow\infty\}$ and $\Gamma_{\pm}:=\{re^{\pm i\phi}:\ r:0\rightarrow\infty\}$, for fixed $\omega<\zeta<\mu$ and $\phi\in(\pi/2-\zeta,\pi/2-\omega)$. We estimate $\Phi^{+}_{s,t}$, the same argument applies to $\Phi_{s,t}^{-}$. Let $\lambda=\rho e^{i\zeta}$ and $z=re^{i\phi}$. Since $\textnormal{Re}(e^{z\lambda})=r\rho\cos(\phi+\zeta)$ and $\textnormal{Re}(\lambda=\rho \cos\zeta)$, there exist $c>0$ that for $N>b-1$,
\begin{align}
    |K^{+}_{s,t}(z)|&\lesssim\int_{0}^{\infty}|m(s\rho e^{i\zeta})|(t\rho)^Ne^{-(t\rho\cos\zeta)/2}e^{r\rho\cos(\phi+\zeta)}\,d\rho\notag\\
    &\lesssim t^N\int_{0}^{\infty}{(s\rho)^{a}}{(1+s\rho)^{-a-b}}\rho^N e^{-c\rho(t+r)}\,d\rho\notag\\
    &\leq t^N\int_{0}^{\infty}\min\{(s\rho)^a,(s\rho)^{-b}\}\rho^N e^{-c\rho(t+r)}\,d\rho\notag\\
    &\lesssim\frac{t^N}{(t+r)^{N+1}}\min\left\{\left(\frac{s}{t+r}\right)^a,\left(\frac{s}{t+r}\right)^{-b}\right\}.\label{4.18}
\end{align}
It holds that
\begin{equation}\label{4.19}
   \Phi_{s,t}^{+}(L)e^{-tL/2}=\int_{\Gamma_{+}}K^{+}_{s,t}(z)e^{-(z+(t/2))L}\,dz=\int_{0}^{\infty}K^{+}_{s,t}(re^{i\phi})e^{-(re^{i\phi}+(t/2))L}\,dr.
\end{equation}
For $\omega=re^{i\phi}+(t/2)$, we have $\textnormal{Re}(\omega)=(t/2)+r\cos\phi\sim t+r$, and $|\omega|=[(r\cos\phi+(t/2))^2+(r\sin\phi)^2]^{1/2}\sim t+r$. Therefore, Lemma \ref{ext} \textnormal{(i)} gives that
$$\|\chi_{B_2}v_{(t+r)^{1/m}}^{1/p_0-1/q_0}e^{-\omega L}\chi_{B_1}\|_{p_0\rightarrow q_0}\lesssim\textnormal{exp}\left(-c^{\prime} {d(B_1,B_2)^{m/(m-1)}}{(t+r)^{-1/(m-1)}}\right).$$
Denote $D:=d(B_1,B_2)^{m/(m-1)}$, then by an arguing as in \eqref{2.13}, we obtain that
\begin{equation}
    |B_2|^{1/p_0-1/q_0}\|\chi_{B_2}e^{-\omega L}\chi_{B_1}\|_{p_0\rightarrow q_0}\lesssim e^{-c^{\prime}D(t+r)^{-1/(m-1)}}.\notag
\end{equation}
Hence, by Minkowski's inequality and \eqref{4.19}, we have that
\begin{equation}\label{*}
    |B_2|^{1/p_0-1/q_0}\|\chi_{B_2}\Phi_{s,t}^{+}(L)e^{-tL/2}\chi_{B_1}\|_{p_0\rightarrow q_0}\lesssim\int_{0}^{\infty}\frac{t^N}{(t+r)^{N+1}}\min\left\{\left(\frac{s}{t+r}\right)^a,\left(\frac{s}{t+r}\right)^{-b}\right\}e^{-c^{\prime}D(t+r)^{-1/(m-1)}}dr.\tag{$\ast$}
\end{equation}
To estimate \eqref{*}, we use the following inequality. Let $c,K,R,\alpha,D>0$, then there exists a constant $C(c,K,\alpha)>0$ such that
\begin{equation}\label{A}
    \int_{R}^{\infty}u^{-K-1}e^{-cDu^{-\alpha}}du\leq C(c,K,\alpha)\left(R+D^{1/\alpha}\right)^{-K}.\tag{A}
\end{equation}
The proof is simple. We first note that $\textnormal{LHS}\leq \int_{R}^{\infty} u^{-K-1}\,du\sim R^{-K}$. By the change of variables $v=D^{1/\alpha}u^{-1}$, we also have $\textnormal{LHS}\sim D^{-K/\alpha}\int_{0}^{D^{1/\alpha}R^{-\alpha}}v^{K-1}e^{-cv^{\alpha}}\,dv\lesssim D^{-K/\alpha}$, this proves \eqref{A}. We now consider two cases for \eqref{*}. If $s\leq t< t+r$, then by \eqref{A},
\begin{equation}\label{*1}
    \eqref{*}\leq t^Ns^a\int_t^{\infty}u^{-N-a-1}e^{-c^\prime Du^{-1/(m-1)}}\lesssim\left(\frac{s}{t}\right)^{a}\left(1+\frac{d(B_1,B_2)}{t^{1/m}}\right)^{-m(N+a)}
\end{equation}
If $s>t$, then for $N>b$, it follows from \eqref{A} that
\begin{align}
    \eqref{*}&\leq t^N s^{-b}\int_{t}^su^{-N+b-1}e^{-c^{\prime}Du^{-1/(m-1)}}du+t^N s^a\int_{s}^{\infty}u^{-N-a-1}e^{-c^\prime Du^{-1/(m-1)}}du\notag\\
    &\lesssim\left(\frac{t}{s}\right)^b\left(1+\frac{d(B_1,B_2)}{t^{1/m}}\right)^{-m(N-b)}+\left(\frac{t}{s}\right)^N \left(1+\frac{d(B_1,B_2)}{t^{1/m}}\right)^{-m(N+a)}\notag\\
    &\leq\left(\frac{t}{s}\right)^b\left(1+\frac{d(B_1,B_2)}{t^{1/m}}\right)^{-m(N-b)}.\label{*2}
\end{align}
In the last inequality, we use $t/s<1$ and $N>b$. Since the same estimates as in \eqref{4.18}-\eqref{*2} also hold for $\Phi_{s,t}^{-}(L)$, and $\varphi(sL)=\Phi_{s,t}(L)e^{-tL/2}$ (this follows from the product rule for the holomorphic functional calculus; see e.g. \cite[Theorem 1.3.2]{H06}), we deduce that
\begin{equation}\label{4.22}
    \|\chi_{B_2}\varphi(sL)(tL)^Ne^{-tL}\chi_{B_1}\|_{p_0\rightarrow q_0}\lesssim|B_2|^{-1/p_0+1/q_0}\begin{cases}
        \left(\frac{s}{t}\right)^a\left(1+\frac{d(B_1,B_2)}{t^{1/m}}\right)^{-m(N+a)},\quad\quad0<s\leq t;\\\left(\frac{t}{s}\right)^{b}\left(1+\frac{d(B_1,B_2)}{t^{1/m}}\right)^{-m(N-b)},\quad\quad s>t.
    \end{cases}
\end{equation}

We now consider the $q$-variation. It follows from \eqref{4.12} that for any decreasing sequence $(s_j)_{j\in\mathbb{N}}\subset \mathbb{R}_+$,

\begin{align}
    \left(\sum_{j\in\mathbb{N}}|\varphi(s_j L)(tL)^N e^{-tL}f(x)-\varphi(s_{j+1}L)(tL)^N e^{-tL}f(x)|^q\right)^{1/q}&\leq\sum_{j\in\mathbb{N}}\left|\int_{s_{j+1}}^{s_j}L\varphi^{\prime}(sL)(tL)^Ne^{-tL}f(x)\,ds\right|\notag\\
    &\leq\int_{0}^{\infty}|L\varphi^{\prime}(sL)(tL)^N e^{-tL}f(x)|\,ds\notag
\end{align}
for a.e. $x\in X$. We then take the supremum over $(s_j)_{j\in\mathbb{N}}$ and obtain that
\begin{equation}\label{4.23}
    \widetilde{V}^q(\varphi(tL):t>0)(tL)^Ne^{-tL}f(x)\leq\int_{0}^{\infty}|L\varphi^{\prime}(sL)(tL)^N e^{-tL}f(x)|\,ds.
\end{equation}
Hence, it follows from \eqref{4.22} and Minkowski's inequality that
\begin{align}
    \|\chi_{B_2}\widetilde{V}^q(\varphi(tL):t>0)(tL)^Ne^{-tL}\|_{p_0\rightarrow q_0}&\leq\int_{0}^{\infty}\|\chi_{B_2}L\varphi^{\prime}(sL)(tL)^N e^{-tL}\chi_{B_1}\|_{p_0\rightarrow q_0}\,ds\notag\\
    &\lesssim|B_2|^{-1/p_0+1/q_0}\Bigg[\int_{0}^{t}\left(\frac{s}{t}\right)^a\left(1+\frac{d(B_1,B_2)}{t^{1/m}}\right)^{-m(N+a)}\frac{ds}{s}\notag\\
    &\quad+\int_{t}^{\infty}\left(\frac{t}{s}\right)^{b}\left(1+\frac{d(B_1,B_2)}{t^{1/m}}\right)^{-m(N-b)}\frac{ds}{s}\Bigg].\notag\\
    &\lesssim|B_2|^{-1/p_0+1/q_0}\left(1+\frac{d(B_1,B_2)}{t^{1/m}}\right)^{-m(N-b)}\notag\\
    &\lesssim|B_1|^{-1/p_0}|B_2|^{q_0}\left(1+\frac{d(B_1,B_2)}{t^{1/m}}\right)^{-m(N-b)+\nu/p_0}.
\end{align}
In the last inequality, we use that $|B_1|/|B_2|\lesssim(1+d(B_1,B_2)/t^{1/m})^{\nu}$. This proves the desired inequality for $N>((\nu+(\nu/p_0)+1)/m)+b$.
\end{proof}
We also have the following Cotlar-type inequality. The proof is the same as that of Lemma \ref{CI1}, using the commutativity of $\varphi(tL)$ and $e^{-sL}$.
\begin{lem}\label{CI2}
     Under the assumption of Theorem \ref{thm1}, for every $x\in X$ and $r>0$, it holds that
    $$\left(
\frac{1}{|B(x,r)|}
\int_{B(x,r)}
|\widetilde{V}^q(\varphi(tL):t>0)e^{-r^mL}f|^{q_0}\,d\mu
\right)^{1/q_0}
\lesssim
\inf_{y\in B(x,r)}M_{p_0}(\widetilde{V}^q(\varphi(tL):t>0)f)(y).$$
\end{lem}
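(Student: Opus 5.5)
The plan is to follow the proof of Lemma \ref{CI1}. The pointwise domination \eqref{4.8} is obtained in a new way, through commutativity and the positive dominating family $\{R_t\}$ from \eqref{DGGE}. After that, the annular decomposition \eqref{4.9} is reused verbatim.

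\textbf{Step 1: commuting the semigroup past the differences.} By the product rule for the holomorphic functional calculus, $\varphi(sL)e^{-r^mL}=e^{-r^mL}\varphi(sL)$ on $L^2(X)$ for every $s>0$. Take a finite decreasing sequence $(t_j)_{j=0}^M\subset\mathbb{Q}_+$. Then, almost everywhere,
$$\varphi(t_jL)e^{-r^mL}f-\varphi(t_{j+1}L)e^{-r^mL}f=e^{-r^mL}\bigl(\varphi(t_jL)f-\varphi(t_{j+1}L)f\bigr).$$
Using the domination $|e^{-r^mL}h|\le R_{r^m}|h|$ and the $\ell^q$-valued positivity inequality from the proof of Lemma \ref{CI1} (with rational coefficients $b_j$), we get
$$\Bigl(\sum_{j=0}^{M-1}|\varphi(t_jL)e^{-r^mL}f-\varphi(t_{j+1}L)e^{-r^mL}f|^q\Bigr)^{1/q}\le R_{r^m}\Bigl[\Bigl(\sum_{j=0}^{M-1}|\varphi(t_jL)f-\varphi(t_{j+1}L)f|^q\Bigr)^{1/q}\Bigr].$$
The right-hand side is bounded by $R_{r^m}(\widetilde V^q(\varphi(tL):t>0)f)$, by positivity of $R_{r^m}$.

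\textbf{Step 2: passing to the full supremum.} Next take the supremum over all such sequences. There are only countably many rational finite sequences, so a single null set serves for all of them. The representatives of $t\mapsto\varphi(tL)e^{-r^mL}f(x)$ chosen at the start of this subsection are continuous in $t$. Hence the supremum over rational sequences equals the full $\widetilde V^q$. This yields the analogue of \eqref{4.8}:
$$\widetilde V^q(\varphi(tL):t>0)e^{-r^mL}f\le R_{r^m}\bigl(\widetilde V^q(\varphi(tL):t>0)f\bigr)\quad\text{a.e.}$$
This step is the main technical point. We must check that the continuous representative of $\varphi(\cdot L)(e^{-r^mL}f)$ agrees with the one obtained from $e^{-r^mL}\varphi(\cdot L)f$ outside a null set independent of $t$. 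Both are continuous in $t$ and coincide in $L^2(X)$ for each fixed $t$, so they agree for rational $t$ off one null set, and then for all $t$ by continuity.

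\textbf{Step 3: averaging over annuli.} Set $g=\widetilde V^q(\varphi(tL):t>0)f$ and $B=B(x,r)$, and split $g=g\chi_{2B}+\sum_k g\chi_{A_k}$ with $A_k=2^{k+1}B\setminus 2^kB$. Cover each $A_k$ by $N_k\lesssim 2^{k\nu}$ balls of radius $r$ with bounded overlap, exactly as in Lemma \ref{CI1}. Apply \eqref{GGE} for $R_{r^m}$ on each piece; the exponential decay in $2^k$ beats any polynomial. This gives
$$|B|^{-1/q_0}\|\chi_BR_{r^m}(g\chi_{A_k})\|_{q_0}\lesssim 2^{-k(\gamma-\nu)}\inf_{y\in B}M_{p_0}g(y).$$
Summing over $k$ with $\gamma>\nu$, together with the local term $\langle g\rangle_{p_0,2B}\lesssim\inf_{B}M_{p_0}g$, gives the claimed inequality.
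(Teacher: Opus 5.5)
Your proposal is correct and follows the paper's proof. The paper also uses the product rule $\varphi(sL)e^{-r^mL}=e^{-r^mL}\varphi(sL)$ together with the positive dominating operator $R_{r^m}$ to obtain the analogue of \eqref{4.8}, and then repeats the annular covering estimate from Lemma \ref{CI1}; your handling of the null set (one for all rational times, then all $t$ by continuity of the representatives) is the detail the paper leaves implicit under ``the same argument as in the proof of Lemma \ref{CI1}''.
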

\begin{proof}
It follows from the product rule for the holomorphic functional calculus (see e.g. \cite[Theorem 1.3.2]{H06}) that for every $f\in L^2(X)$, $\varphi(sL)e^{-r^mL}f=(\varphi(sL)e^{-r^m L})f=e^{-r^m L}\varphi(sL)f$ in $L^2(X)$. Thus, by the same argument as in the proof of Lemma \ref{CI1}, we obtain that
    \begin{equation}\label{4.25}
         \widetilde{V}^q(\varphi(tL):t>0)e^{-r^m L}f(x)\leq R_{r^{m}}(\widetilde{V}^q(\varphi(tL):t>0)f)(x)\quad \text{a.e. } x\in X.
    \end{equation}
Let $g=\widetilde{V}^q(\varphi(tL):t>0)g$, then the same calculation as in the proof of Lemma \ref{CI1} gives the desired inequality.
\end{proof}
\begin{proof}[Proof of Theorem \ref{thm1}: $\varphi\in H^{\infty}_0$] Combining \eqref{4.15}, Lemma \ref{ODE2} and Lemma \ref{CI2}, we may apply Lemma \ref{spDo} to obtain $\widetilde{V}^q(\varphi(tL):t>0)\in S(p_0,q_0)$. The reduction at the beginning of this subsection then gives $V^q(\varphi(tL):t>0)\in S(p_0,q_0)$.
\end{proof}
\section{Sparse domination under Sobolev-type regularity}
In this section, we prove Theorem \ref{thm2}. The sparse domination of $V^q(e^{-tL}:t>0)$ follows directly from Section 4.1, since a non-negative self-adjoint operator is $0$-accretive and injectivity is not used in the argument in Sec 4.1. Hence, it remains to consider the $\Lambda^\beta$ part. We first apply a dyadic decomposition to $\varphi=\sum \varphi_j$. To sum the sparse bounds for the corresponding $q$-variation operator $V_j^q$, we need to keep track of the dependence of these bounds on the scale parameter. This dependence is not explicit in Lemma \ref{spDo}. We therefore follow the argument by Bernicot, Frey and Petermichl \cite{BFP16} and establish a quantitative sparse domination criterion.

We also note that, in this section, we may allow $q=2$, and the assumption \textnormal{(DGGE$_{p_q,q_0,m}$)} on $\{e^{-tL}\}_{t>0}$ may be weaken to \textnormal{(GGE$_{p_0,q_0,m}$)}. Indeed, the stronger assumption $q>2$ and \textnormal{(DGGE$_{p_q,q_0,m}$)} are only used in the semigroup part.

We next introduce some notation. Let $E_{L}(
\lambda)$ be the spectral measure associated with $L$, and $\Pi_0:=E_{L}(\{0\})$ be the orthogonal projection onto $\ker L$. Then 
\begin{equation}\label{5.1}
    L^2(X)=\ker L\oplus (\ker L)^{\perp}=\ker L\oplus\overline{\textnormal{Ran}\,L}=\Pi_0\, L^2(X)\oplus (I-\Pi_0) L^2(X).
\end{equation}
We also need two classes of elementary operators. For $N\in\mathbb{N}_+$  and $t>0$, recall that $Q_t:=c_N^{-1}(tL)^{N}e^{-tL}$ with $c_N=\int_0^{\infty}s^{N-1}e^{-s}\,ds$. We define $P_t^N:=\int_{t}^{\infty}Q_s^N\,ds/s$. Then, it follows by \eqref{spectralthm} that there exists a polynomial $p$ of degree $N-1$ with $p(0)=1$, such that $P_t^N=p(tL)e^{-tL}-\Pi_0$ (since the spectral multiplier $(t\lambda)^Ne^{-t\lambda}$ vanishes iff $\lambda=0$). A similar calculation also gives 
\begin{equation}\label{5.2}
    I-P_t^N=\int_0^t Q_s\frac{ds}{s}+\Pi_0,\quad P_t^N\rightarrow I-\Pi_0\ (t\rightarrow0)\quad\text{and}\quad P_t^N\rightarrow0\ (t\rightarrow\infty),
\end{equation}
where the convergence is in $L^2(X)$ for every $f\in L^2(X)$.
\subsection{A quantitative sparse domination criterion}
We first recall some basic notation from Section 2.1. For each $Q\in \mathcal{D}^b$ that $B(x_Q,c_0\delta^{k})\subset Q\subset B(x_Q,C_0\delta^k)$, we denote $B_Q:=B(x_Q, r_Q)$, then $r_Q\sim \textnormal{diam}\,Q\sim\delta^k$. Since $r_Q>\textnormal{diam}\,Q$ and $x_{Q}\in Q$, we have $Q\subset B_{Q}$. Fix an integer $l$ such that $2^l>\delta^{-1}$ We define $Q^*:=\{x\in X:d(x,Q)<2^l r_Q\}$. Denote the parent of $Q$ in the same dyadic system by $Q^a$. Then for each $x\in Q^a$, $d(x,x_Q)\leq\textnormal{diam}(Q^a)=\delta^{-1}r_{Q}$. For each $x\in 2^l B_Q$, $d(x,Q)\leq d(x,x_Q)\leq 2^l r_{Q}$. Hence, we have
\begin{equation}\label{Q*}
    Q^a\subset 2^lB_Q\subset Q^{*},\quad Q\in\mathcal{D}.
\end{equation}
Denote $\delta_{p_0}:=1/p_0-1/2$ and $\delta_{q_0}:=1/2-1/q_0$. We have the following quantitative sparse domination criterion.
\begin{thm}\label{sdc}
    Let $1\leq p_0<2<q_0\leq\infty$ and $m\geq 2$. Suppose $L$ is a non-negative self-adjoint operator with the semigroup satisfies \textnormal{(GGE)$_{p_0,q_0,m}$}. Let $\sigma>\nu\max\{1/p_0,1/q_0^{\prime}\}$. Let $H$ be a separable Hilbert space and let $T: L^2(X)\rightarrow L^2(X;H)$ be a linear operator such that $A:=\|T\|_{L^2(X)\rightarrow L^2(X;H)}<\infty$. Assume also that $T\Pi_0=0$ and $T{P}_{t}^N=(({P}_t^N+\Pi_0)\otimes I_H)T$ for $t>0$, $N\in\mathbb{N}$. Let $R\geq 1$. Suppose that there exist $D>0$ and $N_0\in\mathbb{N}_+$, such that for every $N>N_0$, there exists a constant $C_N>0$ for which 
    \begin{equation}\label{OD}
    \|\chi_Fv_{Rt^{1/m}}^{\delta_{q_0}} TQ_t^Nv_{Rt^{1/m}}^{\delta_{p_0}}\chi_E\|_{p_0\rightarrow q_0(H)}\leq C_ND\left(1+\frac{d(E,F)}{Rt^{1/m}}\right)^{-\sigma}\tag{OD}
    \end{equation}
    for all $t>0$ and measurable set $E,F\subset X$, each of which is a ball or an annulus. Then, for every $\varepsilon>0$, $Q_0\in\mathcal{D}$ and a pair of bounded functions $f,g$ supported in $Q_0^{*}$, there exist sparse families $\mathcal{S}_b\subset \mathcal{D}^b$, $b=1,...,K_{D}$, such that
    $$\int_{Q_0}\|Tf\|_H|g|\,d\mu\leq C_\varepsilon (D+AR^{\nu(1/p_0-1/q_0)+\varepsilon})\sum_{b=1}^{K_{D}}\Lambda_{\mathcal{S}_b;p_0,q_0}(f,g).$$
\end{thm}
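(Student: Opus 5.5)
We follow the stopping-time argument of Bernicot, Frey and Petermichl \cite[Section 5]{BFP16}, keeping track of the constants. The plan is to build the sparse family by iteration from $Q_0$. The key step is a local estimate: for every $Q\in\mathcal{D}$ and $f,g$ supported in $Q^*$, there is a disjoint family of maximal subcubes $P\subset Q$ with $\sum|P|\le\frac12|Q|$ such that
\begin{equation*}
\int_{Q\setminus\bigcup P}\|Tf\|_H|g|\,d\mu+\sum_P\Big|\int_P\|T(f\chi_{(P^*)^c})\|_H|g|\Big|\lesssim (D+AR^{\nu(1/p_0-1/q_0)+\varepsilon})|Q|\langle f\rangle_{p_0,Q^*}\langle g\rangle_{q_0',Q^*}.
\end{equation*}
Iterating on each $P$ with $f\chi_{P^*}$ produces a $\frac12$-sparse family. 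The resulting sparse forms over the dilated cubes $Q^*$ are then converted into forms over cubes of the $K_D$ grids via Theorem \ref{Dyadic} and the universal sparse family of Lemma \ref{universal}.

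The stopping cubes $P$ are the maximal dyadic subcubes of the level set
\[
\{M_{p_0}f>K\langle f\rangle_{p_0,Q^*}\}\cup\{M_{q_0'}g>K\langle g\rangle_{q_0',Q^*}\},
\]
with $K$ large, so that the weak-type bounds give $\sum|P|\le\frac12|Q|$. On the complement of $\bigcup P$ we use the $L^2$ bound $A$. We split $T=TP^N_{r^m}+T(I-P^N_{r^m})$ at the scale $r=r_Q$. For the part $TP^N_{r^m}$, the commutation hypothesis gives $TP^N_{r^m}=((P^N_{r^m}+\Pi_0)\otimes I_H)T$. Lemma \ref{ext}(ii), together with $P^N_t=p(tL)e^{-tL}-\Pi_0$, makes $P^N_{r^m}$ smoothing, mapping $L^{p_0}$-averages to $L^{q_0}$. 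The remaining part is $T(I-P^N_{r^m})=\int_0^{r^m}TQ^N_s\,ds/s$, using $T\Pi_0=0$ and \eqref{5.2}. This piece is handled by \eqref{OD} and will produce the $D$ term.

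For the nonlocal terms $\int_P\|T(f\chi_{(P^*)^c})\|_H|g|$, we decompose $f\chi_{(P^*)^c}$ into dyadic annuli $2^{k+1}B_P\setminus 2^kB_P$ and write $T=T(I-P^N_{r_P^m})+TP^N_{r_P^m}$.

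For the first piece, $\int_0^{r_P^m}TQ^N_s\,ds/s$, we apply \eqref{OD} with $E$ an annulus and $F=P$. The weights $v_{Rs^{1/m}}^{\delta}$ are removed using \eqref{doub}. This costs powers of $R$ and of $2^k$, namely $(2^kr_P/(Rs^{1/m}))^{\nu(\delta_{p_0}+\delta_{q_0})}$-type factors. These are absorbed because $\sigma>\nu\max\{1/p_0,1/q_0'\}$, which gives decay $2^{-k(\sigma-\nu/p_0)}$ after normalising $\langle f\rangle_{p_0,2^kB_P}\le K\langle f\rangle_{p_0,Q^*}$ through the stopping condition on the parent. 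This summation in $k$ and $s$ yields the factor $D$.

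For the second piece we use the commutation identity again. Now $\|T\cdot\|$ only enters through the $L^2$ bound, and the off-diagonal decay of $P^N_{r_P^m}$ from Lemma \ref{ext}(ii) handles the distance; this produces the term $A$.

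We expect the main obstacle to be the weight bookkeeping in \eqref{OD}. For small $s\ll r_P^m$ and $R\ge1$, the scale $Rs^{1/m}$ is mismatched with $r_P$. Converting $v_{Rs^{1/m}}$ to $|P|$ loses $R^{\nu(1/p_0-1/q_0)}$ in the $A$-part. Moreover, the integral $\int ds/s$ near $s\sim r_P^m/R^m$ only converges up to a logarithm, which we absorb into $R^\varepsilon$ at the cost of $C_\varepsilon$. To keep this loss under control, we will first try splitting the $s$-integral at $s=(r_P/R)^m$. Below that threshold we rely on \eqref{OD} decay, and above it we treat the operator as a single averaged smoothing operator bounded via $A$.
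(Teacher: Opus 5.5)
There is a genuine gap. Your stopping set is built only from $M_{p_0}f$ and $M_{q_0'}g$, and nothing in the argument turns the $L^2$ bound $A$ into control by $\langle f\rangle_{p_0}$.

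\emph{The term on $Q\setminus\bigcup P$.} Here the $L^2$ bound gives only $A\|f\|_{L^2(Q^*)}$. But $f$ may carry its $L^2$ mass inside $\bigcup P$, and for nonlocal $T$ the function $Tf$ still feels that mass on the complement, in particular next to $\partial P$, where \eqref{OD} gives no decay. Your split at scale $r_Q$ does not help.
\begin{itemize}
\item For $\int_0^{r_Q^m}TQ^N_sf\,ds/s$: applying \eqref{OD} to sets of size $\sim r_Q$ with $d(E,F)=0$ gives only $D\,(r_Q/(Rs^{1/m}))^{\nu(1/p_0-1/q_0)}\langle f\rangle_{p_0,Q^*}$. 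Its $ds/s$-integral diverges like a power at $s=0$, not like a logarithm. This is the diagonal part of $T$, and no $L^{p_0}\to L^{q_0}$ improvement is available for it.
\item For $TP^N_{r_Q^m}f=((P^N_{r_Q^m}+\Pi_0)\otimes I_H)Tf$: the smoothing yields $\inf_{y\in Q}M_s(\|Tf\|_H)(y)$. That is an average of $\|Tf\|_H$, not of $f$.
\end{itemize}

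\emph{Your ``$A$-term'' in the nonlocal part.} The same objection applies. $T(f\chi_{(P^*)^c})$ is not supported away from $P$, so the off-diagonal decay of $P^N_{r_P^m}$ gains nothing. Nor can you fall back on \eqref{OD} over $s\ge r_P^m$: once $Rs^{1/m}\gtrsim 2^kr_P$ it gives no decay in $s$, so $\int^\infty ds/s$ diverges.

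\emph{What is missing.} A $(p_0,q_0)$ sparse bound implies $L^p$ bounds for some $p<2$. So the proof must contain a Calder\'on--Zygmund step that produces a weak $(p_0,p_0)$ bound for a $T$-adapted maximal operator, and that operator must enter the stopping condition. The paper uses $T^{\#}_Nf(x)=\sup_{x\in Q}\langle\|T\widetilde P^N_{r_Q^m}f\|_H\rangle_{q_0,Q}$ with $\widetilde P^N_t=P^N_{t/R^m}$. Proposition \ref{Property} gives three facts:
\begin{itemize}
\item $\|Tf\|_H\le T^{\#}_Nf$ a.e.;
\item $\|T^{\#}_N\|_{2\to2}\lesssim AR^{\nu(\delta_{q_0}+\delta_s)}$, from the commutation relation plus $M_s$ with $s<2$;
\item weak type $(p_0,p_0)$ with constant $D+AR^{\nu(1/p_0-1/q_0+\delta_s)}$, via Auscher's criterion (Lemma \ref{weak}) with $F_r=\Pi_0+\widetilde P^N_{r^m}$.
\end{itemize}

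\emph{How the paper uses it.} The exceptional set is a level set of $M^{\mathcal D}_{p_0,Q_0}f$ and $T^{\#}_{Q_0}f/K$, so the complement is handled pointwise. On a stopping cube $P$, the term $T\widetilde P^N_{r_P^m}f$, with the full $f$, is controlled by the non-stopping condition at the parent $P^a$. The leftover $T(\widetilde P^N_{r_{P^a}^m}-\widetilde P^N_{r_P^m})f$ is handled by \eqref{OD} over a bounded range of scales. Only $T(I-\widetilde P^N_{r_P^m})(f\chi_{(P^*)^c})$ is treated as in your first piece, and that part of your proposal is fine.

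\emph{Your $R$-bookkeeping.} This also corrects it. The $R^\varepsilon$ is $R^{\nu\delta_s}$ with $s\uparrow 2$, not an absorbed logarithm. The factor $R^{\nu(1/p_0-1/q_0)}$ is $R^{\nu\delta_{q_0}}$ from $\|T^{\#}_N\|_{2\to2}$ times $R^{\nu\delta_{p_0}}$ from $\Gamma_2$.
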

\begin{rem}
    \textnormal{(i)} Under the assumption $T\Pi_0=0$, the commutation condition in Theorem follows from $Te^{-tL}=(e^{-tL}\otimes I_H)T$. By the spectral theorem, for every $N\in\mathbb{N}$ and $t>0$,
$$\lim_{h\rightarrow0}\left\|\frac{L^N e^{-(t+h)L}-L^N e^{-tL}}{h}+L^{N+1}e^{-tL}\right\|_{2\rightarrow2}\leq\lim_{h\rightarrow0}\sup_{\lambda\ge 0}\left|\frac{\lambda^Ne^{-(t+h)\lambda}-\lambda^Ne^{-t\lambda}}{h}+\lambda^{N+1}e^{-t\lambda}\right|=0,$$
the convergence follows by applying mean value theorem. Since $T$ is bounded, we may differentiate the identity $Te^{-tL}=(e^{-tL}\otimes I_H) T$ $k$ times to obtain $TL^{k}e^{-tL}=(L^k e^{-tL}\otimes I_H)T$. Since $P_{t}^N+\Pi_0=p(tL)e^{-tL}$ for some polynomial $p$, we obtain $TP_{t}^N=((P_t^N+\Pi_{0})\otimes I_H)T$.

We use this commutation condition in place of the Cotlar inequality \textnormal{(ii)} in Lemma \ref{spDo}. In that setting, commutation with the semigroup and the Gaussian estimate imply the Cotlar inequality. In Lemma \ref{CI1}, we use \textnormal{(DGGE$_{p_0,q_0,m}$)} to obtain the domination estimate \eqref{4.8} for the $q$-variation operator. The scalar operators obtained by taking the $V^q$ or $\Lambda^{\eta}$ norm do not in general commute with the semigroup. While, the linear operator $\{\varphi(sL)\}_{s>0}$ does commute with it. This is why we use a Hilbert space-valued argument here. Theorem \ref{sdc} allows us to prove the sparse bound for the $\Lambda^{\beta}$ part in Theorem \ref{thm2} under \textnormal{(GGE$_{p_0,q_0,m}$)}, without other domination assumption.
    
\textnormal{(ii)}Compared with Lemma \ref{spDo}, we state the criterion using off-diagonal estimate \eqref{OD} between balls and annulus, since for our $q$-variation, \eqref{OD} holds for arbitrary measurable sets $E,F$. This allows us to replace the decay condition $\sigma>\nu+1$ by $\sigma>\nu\max\{1/p_0,1/q_0^{\prime}\}$. 

    \textnormal{(iii)}Here in \eqref{OD}, if both sets $E,F$ are contained in some balls that $E\subset B_1=(x_1,r_1)$ and $F\subset B_2=B(x_2,r_2)$. Then, for any $x\in E$, we have $B_1\subset B(x,2r_1)$. By the doubling property, $|B_1|\leq|B(x,2r_1)|\lesssim|B(x,t^{1/m})|(1+(2r_1/t^{1/m}))^{\nu}$. Hence, $\|v_{t^{1/m}}^{-\delta_{p_0}}\chi_{E}\|_{\infty}\lesssim |B_1|^{-\delta_{p_0}}(1+(r_1/t^{1/m}))^{\nu\delta_{p_0}}$. Similarly, $\|v_{t^{1/m}}^{-\delta_{q_0}}\chi_F\|_{\infty}\lesssim|B_2|^{-\delta_{q_0}}(1+(r_2/t^{1/m}))^{\nu\delta_{q_0}}$. Combining these estimate with \eqref{OD}, we obtain that for every $t>0$,
    \begin{equation}\label{GGE5}
        \|\chi_F T\widetilde{Q}_t^N \chi_E\|_{L^{p_0}\rightarrow L^{q_0}(X;H)}\lesssim D|B_1|^{-\delta_{p_0}}|B_2|^{-\delta_{q_0}}\left(1+\frac{r_1}{t^{1/m}}\right)^{\nu\delta_{p_0}}\left(1+\frac{r_2}{t^{1/m}}\right)^{\nu\delta_{q_0}}\left(1+\frac{d(E,F)}{t^{1/m}}\right)^{-\sigma}.
    \end{equation}
\end{rem}
The proof of Theorem \ref{sdc} follows the main steps of Bernicot, Frey and Petermichl \cite{BFP16}, with several changes to obtain the stated dependence on $A$, $D$ and $R$ and to admit the vector valued operators. Fix a dyadic system $\mathcal{D}^{b_0}$ containing $Q_0$, and write $\mathcal{D}=\mathcal{D}^{b_0}$ for simplicity. Let $\widetilde{P}_{t}^N:=P_{t/R^m}^N$. For $f\in L^2(X)$, define the sharp maximal operator
$$T^{\#}_Nf(x):=\sup_{\substack{Q\in\mathcal D\\x\in Q}}\left\langle\|T\widetilde{P}_{r_{Q}^m}^Nf\|_H\right\rangle_{q_0,Q}.$$
\noindent{\textbf{Step 1: weak $(p_0,p_0)$ of $T^{\#}_N$.}} 

In this step, We show that $T^{\#}_N$ satisfies the following properties.
\begin{prop}\label{Property}

 \textnormal{(i)} For every $f\in L^2(X)$, we have almost everywhere $\|Tf(\cdot,x)\|_{H}\leq T^{\#}_Nf(x)$.\\
 \textnormal{(ii)} For each $p_0\leq s<2$, set $\delta_s=1/s-1/2$. There exists a constant $C_s>0$, such that $$\|T^{\#}_N\|_{2\rightarrow2}\leq C_{s} A R^{\nu(\delta_{q_0}+\delta_s)}.$$\\
 \textnormal{(iii)} For large $N$, the operator $T^{\#}_{N}f$ is of weak type $(p_0,p_0)$. More precisely, for $N>N_0$ and every $p_0\leq s<2$, there exists a constant $C_{s}^{\prime}$>0, such that
 $$\|T^{\#}_N\|_{{p_0}\rightarrow {p_0,\infty}}\leq C_{s}^{\prime}(D+AR^{\nu(1/p_0-1/q_0+\delta_s)}).$$
\end{prop}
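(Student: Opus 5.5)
The proposition has three parts, and I plan to prove them in order. Parts (i) and (ii) follow from the commutation hypothesis $T\widetilde P_t^N=((\widetilde P_t^N+\Pi_0)\otimes I_H)T$ together with the fact that $\widetilde P_t^N+\Pi_0=p(tL/R^m)e^{-tL/R^m}$ satisfies (GGE$_{p_0,q_0,m}$) at scale $t^{1/m}/R$. This uses Lemma \ref{deri} and the remark that a linear operator satisfying $L^{p_0}\to L^{q_0}$ off-diagonal bounds extends to $H$-valued functions with the same bounds, since $H$ is a Hilbert space. Part (iii) uses a Calder\'on--Zygmund decomposition, with (OD) controlling the bad part.

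For (i), set $F:=Tf\in L^2(X;H)$ and $S_s:=(\widetilde P^N_{s}+\Pi_0)\otimes I_H$. Since $T\Pi_0=0$, we have $T\widetilde P^N_{r_Q^m}f=S_{r_Q^m}F$. By \eqref{5.2}, $S_s F\to F$ in $L^2(X;H)$ as $s\to0$. I will upgrade this to a.e. convergence along the scales $r_Q\to0$ as follows.
\begin{itemize}
\item First treat $F$ in a dense class, for instance $F$ with $\|F\|_H$ bounded and compactly supported. For such $F$, the Gaussian off-diagonal decay gives $\langle\|S_{r_Q^m}F-F\|_H\rangle_{1,Q}\to0$ for a.e. $x\in Q$.
\item Pass to general $F$ via the maximal estimate $\sup_{Q\ni x}\langle\|S_{r_Q^m}F\|_H\rangle_{1,Q}\lesssim_R M_{2}(\|F\|_H)(x)$, which follows from the annular decomposition in (ii).
\end{itemize}
Then $\langle\|S_{r_Q^m}F\|_H\rangle_{q_0,Q}\ge\langle\|S_{r_Q^m}F\|_H\rangle_{1,Q}$. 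Combining this with Lebesgue differentiation of $\|F\|_H$ along cubes shrinking to $x$ gives $\|F(x)\|_H\le T^{\#}_Nf(x)$ a.e.

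For (ii), fix a cube $Q\ni x$ and split $F=\sum_{k\ge0}F\chi_{A_k}$, with $A_0=2^lB_Q$ and $A_k$ the dyadic annuli around $B_Q$. Cover $Q$ and each $A_k$ by $\lesssim R^{\nu}2^{k\nu}$ balls of radius $r_Q/R$, using the bounded-overlap covering from Lemma \ref{CI1}. Then apply (GGE$_{s,q_0,m}$) at scale $r_Q/R$; this is implied by Hölder's inequality from (GGE$_{p_0,q_0,m}$), as in the remark after the jump corollary. This should give
$$\langle\|S_{r_Q^m}F\|_H\rangle_{q_0,Q}\lesssim R^{\nu(\delta_{q_0}+\delta_s)}\inf_{Q}M_s(\|F\|_H).$$
The factor $R^{\nu\delta_{q_0}}$ comes from comparing $|B_Q|$ with the small balls on the output side, and $R^{\nu\delta_s}$ from the same comparison on the input side. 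The exponential decay in $k$ beats the covering numbers $2^{k\nu}$. Since $s<2$, the operator $M_s$ is bounded on $L^2$, and therefore $\|T^\#_N f\|_2\lesssim R^{\nu(\delta_{q_0}+\delta_s)}\|F\|_{L^2(H)}\le AR^{\nu(\delta_{q_0}+\delta_s)}\|f\|_2$.

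For (iii), fix $\alpha>0$ and perform the Calder\'on--Zygmund decomposition of $|f|^{p_0}$ at height $\alpha^{p_0}$: $f=g+\sum_i b_i$ with $b_i$ supported in dyadic cubes $Q_i$, $\langle b_i\rangle_{p_0,Q_i}\lesssim\alpha$ and $\|g\|_\infty\lesssim\alpha$. The terms are handled as follows.
\begin{itemize}
\item The good part $g$ is handled by (ii) and Chebyshev's inequality.
\item Write $b_i=\widetilde P^N_{r_i^m}b_i+(I-\widetilde P^N_{r_i^m})b_i$, where $r_i=r_{Q_i}$. The first pieces sum to a function with $\|\sum_i\widetilde P^N_{r_i^m}b_i\|_2^2\lesssim\alpha^{2-p_0}\|f\|_{p_0}^{p_0}$. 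This is the standard BFP duality argument using $M_{p_0}$, possibly with an extra $R$-power from the rescaling, which is absorbed into $AR^{\nu(1/p_0-1/q_0+\delta_s)}$. Then (ii) applies to this function as well.
\item For the second pieces, by \eqref{5.2} and $T\Pi_0=0$,
$$T(I-\widetilde P^N_{r_i^m})b_i=\int_0^{r_i^m/R^m}TQ_u^N b_i\,\frac{du}{u}.$$
I need to bound $T^\#_N$ of this for $x\notin\bigcup_i Q_i^*$. For a cube $Q\ni x$, commute to get
$$T\widetilde P^N_{r_Q^m}Q^N_u b_i=S_{r_Q^m}\,TQ^N_u b_i.$$
Estimate $\langle\cdot\rangle_{q_0,Q}$ using (OD) in the form \eqref{GGE5}, with $E=Q_i$ a ball and $F$ an annulus, composed with the Gaussian bounds of $S_{r_Q^m}$. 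Sum over annuli at distance $2^jr_i$ and integrate in $u$.
\end{itemize}

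The main obstacle is this composed off-diagonal estimate. One must distinguish the regimes $r_Q\le r_i$ and $r_Q>r_i$, where $Q$ must meet $Q_i$'s far region or be large. The bound must come out as $D\alpha\,2^{-j(\sigma-\nu/p_0)}$ times a geometric factor, with no power of $R$ beyond those already allowed. Here the hypothesis $\sigma>\nu\max\{1/p_0,1/q_0'\}$ is exactly what makes the $j$-sum and the $L^{p_0,\infty}$ summation over $i$ (via $\sum_i|Q_i|\lesssim\alpha^{-p_0}\|f\|_{p_0}^{p_0}$) converge. I would first try to bound $T^\#_N(\sum_i\cdot)$ pointwise outside $\bigcup Q_i^*$ by $D\alpha\sum_i\bigl(r_i/d(x,Q_i)\bigr)^{\sigma'}$ for some $\sigma'>\nu$, which integrates to the desired weak-type bound.
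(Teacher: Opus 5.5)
Your treatment of (i) and (ii) is fine, and so is your handling in (iii) of the good part and of the pieces $\widetilde P^N_{r_i^m}b_i$. In (i) you do not need a dense class at all: $T\widetilde P^N_{\delta^{mk}}f\to Tf$ in $L^2(X;H)$, together with the $L^2$-boundedness of $M^{\mathcal D}$, gives a.e. convergence along a subsequence of generations, which is all (i) requires. Also, Gaussian decay alone does not give the convergence for merely bounded, compactly supported $F$.

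\textbf{The gap is in the bad part of (iii).} You commute $\widetilde P^N_{r_Q^m}$ through $T$ and bound $\langle\|S_{r_Q^m}TQ^N_ub_i\|_H\rangle_{q_0,Q}$ by composing the Gaussian bounds of $S_{r_Q^m}$ with (OD) for $TQ^N_u$, then integrating over $u\in(0,r_i^m/R^m)$. This fails for the cubes $Q\ni x$ with $r_Q\gg R\,d(x,Q_i)$, which always enter the supremum defining $T^\#_N$. For such $Q$, $S_{r_Q^m}$ sees $TQ^N_ub_i$ near $Q_i$ with no off-diagonal gain. There, \eqref{GGE5} at the scale $Ru^{1/m}\ll r_i$ only gives $\|\chi_{2B_{Q_i}}TQ^N_u\chi_{Q_i}\|_{p_0\to q_0(H)}\lesssim D|B_{Q_i}|^{-(1/p_0-1/q_0)}(r_i/(Ru^{1/m}))^{\nu(1/p_0-1/q_0)}$, and the integral of this against $du/u$ diverges at $u=0$. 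Neither the $L^{q_0}$-boundedness nor the smoothing of $S_{r_Q^m}$ can repair this, because the norms have already been taken. What makes large cubes harmless is the factor $(r_i/r_Q)^{mN}$ coming from the cancellation between $\widetilde P^N_{r_Q^m}$ and $I-\Pi_0-\widetilde P^N_{r_i^m}$. A composition of norm bounds cannot see this factor.

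\textbf{The missing step} is to keep both operators together as functions of $L$; the paper does this while verifying Lemma \ref{weak} with $F_r=\Pi_0+\widetilde P^N_{r^m}$. One writes $T\widetilde P^N_{r_Q^m}(I-\Pi_0-\widetilde P^N_{r_i^m})=\int_{r_Q^m}^\infty\int_0^{r_i^m}T\widetilde Q^N_t\widetilde Q^N_s\,\frac{ds}{s}\frac{dt}{t}$ and uses $\widetilde Q^N_t\widetilde Q^N_s=c\,\frac{t^Ns^N}{(t+s)^{2N}}\widetilde Q^{2N}_{t+s}$. Then only $T\widetilde Q^{2N}_{t+s}$ at scales $(t+s)^{1/m}\ge r_Q$ appears, where the volume factors in \eqref{GGE5} are bounded. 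The kernel integral gives $(r_i/r_Q)^{mN}\lesssim 2^{-jmN}$ for $x\in A_j$ and $r_Q\ge 2^{j-1}r_i$. This is also why (OD) is assumed for all $N>N_0$: it is used with $2N$. The relevant split is therefore $r_Q$ against $d(x,Q_i)\sim 2^jr_i$, not against $r_i$.

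For the cubes with $r_Q<2^{j-1}r_i$, which lie in the enlarged annulus $\widetilde A_j$, your pointwise target $D\alpha\sum_i(r_i/d(x,Q_i))^{\sigma'}$ is not available when $q_0<\infty$. By (i), it would give a pointwise (i.e. $L^{p_0}\to L^\infty$) off-diagonal bound for $T(I-\widetilde P^N_{r_i^m})$, whereas (OD) only controls $L^{q_0}$-averages. Instead:
\begin{itemize}
\item dominate this part of the supremum by $M_{q_0}F_j$, where $F_j=\chi_{\widetilde A_j}\int_0^\infty\|T\widetilde Q^N_t(I-\Pi_0-\widetilde P^N_{r_i^m})b_i\|_H\,\frac{dt}{t}$;
\item estimate $\|F_j\|_{q_0}$ from (OD), again through the $2N$-product formula;
\item apply Kolmogorov's inequality to get an $L^2(A_j)$ bound of size $D|Q_i|^{1/2}2^{-j(\sigma-\nu\delta_{q_0})}\alpha$.
\end{itemize}
Summing these over $j$ against $(|2^{j}B_{Q_i}|/|B_{Q_i}|)^{1/2}$ is exactly where $\sigma>\nu/q_0'$ is used.
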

 Let $M^{\mathcal{D}}$ denote the dyadic Hardy-Littlewood maximal operator associated with $\mathcal{D}$. It is bounded on $L^2(X)$.
\begin{proof}[Proof of Proposition 5.2 \textnormal{(i)}] Fix $f\in L^2(X)$. Since $T\Pi_{0}=0$, it follows from \eqref{5.2} and the boundedness of $T$ that, $\|T\widetilde{P}_{\delta^{mk}}^Nf-Tf\|_{L^2(X;H)}\rightarrow0$ as $k\rightarrow\infty$. The $L^2$-boundedness of $M^{\mathcal{D}}$ gives
$$\|M^{\mathcal{D}}(\|T\widetilde{P}_{\delta^{mk}}^Nf-Tf\|_{H})\|_{L^2(X)}\lesssim\|T\widetilde{P}_{\delta^{mk}}^Nf-Tf\|_{L^2(X;H)}\rightarrow0.$$
Hence, there exists a subsequence $\{k_j\}_{j\in\mathbb{N}}$ such that $M^{\mathcal{D}}(\|T\widetilde{P}_{\delta^{mk_{j}}}^Nf-Tf\|_H)(x)\rightarrow0$ for a.e. $x\in X$. For such $x$, let $Q_k\in\mathcal{D}$ be the cube of generation k in $\mathcal{D}$ containing $x$, then $r_{Q_k}=\delta^k$. By the H\"{o}lder's inequality,
\begin{align*}
    \langle\|Tf\|_H\rangle_{1,Q_{k_j}}&\leq\langle\|T\widetilde{P}_{\delta^{mk_j}}^Nf\|_H\rangle_{1,Q_{k_j}}+\langle\|T\widetilde{P}_{\delta^{mk_{j}}}^Nf-Tf\|_H\rangle_{1,Q_{k_j}}\\
    &\leq\langle\|T\widetilde{P}_{\delta^{mk_j}}^Nf\|_H\rangle_{q_0,Q_{k_j}}+M^{\mathcal{D}}(\|T\widetilde{P}_{\delta^{mk_{j}}}^Nf-Tf\|_H)(x).
\end{align*}
    Letting $j\rightarrow\infty$ and applying the differentiation theorem, we obtain \textnormal{(i)}.
\end{proof}
For the proof of \textnormal{(ii)}, we first recall an extension theorem for Hilbert space. Let $R: L^{p_0}(X)\rightarrow L^q(X)$ be a bounded linear operator, then for every $F\in L^{p}(X;H)$, 
\begin{equation}\label{he}
\|(R\otimes I_H)F\|_{L^{q_0}(X;H)}\leq \|R\|_{p\rightarrow q}\|F\|_{L^{p_0}(X;H)}.
\end{equation}
In our case $1\leq p_0<2<q_0\leq \infty$, this follows simply from a density argument. Let $(e_k)$ be an orthonormal basis of $H$. We first prove \eqref{he} for finite sum $F=\sum_{k=1}^n e_k f_k$ using Minkowski's inequality. The estimate then extends to every $F\in L^{p_0}(X;H)$ by density. See \cite[Section 2]{HNVW16} for more general results. We need the following estimate for the proof of \textnormal{(ii)}.
\begin{lem}\label{ave1}
    Let $B=B(x_0,r)$ be a ball and $R\geq 1$. Then for every $F\in L^2(X;H)$ and a constant $c_1\leq a\leq c_2r$, where $c_1,c_2>0$, we have that
    $$\langle\|((\widetilde{P}_{a^m}^N+\Pi_0)\otimes I_H)F\|_H\rangle_{q_0,B}\lesssim R^{v(\delta_q+\delta_s)}\inf_{y\in B} M_{s}(\|F\|_H)(y),$$
    where $M_s$ denote the maximal operator and the implicit constant is independent of $B,R$ and $F$.
\end{lem}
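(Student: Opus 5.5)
The plan is to reduce the estimate to scalar off-diagonal bounds for a single operator and then sum over dyadic annuli around $B$. Set $t:=(a/R)^m$ and $\rho:=t^{1/m}=a/R$. By \eqref{spectralthm}, $\widetilde P^N_{a^m}+\Pi_0=p(tL)e^{-tL}=:S_t$ for a polynomial $p$ of degree $N-1$, i.e. a finite combination of $(tL)^ke^{-tL}$, $0\le k\le N-1$. By \eqref{he} it suffices to bound the scalar operator $S_t$ between weighted Lebesgue spaces and then apply it to $\|F\|_H$. I read the hypothesis on $a$ as $a\sim r$ (comparable to the radius, up to the constants $c_1,c_2$), so that $r/\rho\sim R$. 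This is the only place where the constraint on $a$ enters; if $a$ could be much smaller than $r$, the factor $R$ would have to be replaced by $r/\rho$ and the stated bound would not follow.

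\textbf{Step 1: a weighted $(s,q_0)$ off-diagonal bound for $S_t$.} Lemma \ref{ext}~(ii) with $z=t$ gives
$$\|\chi_F v_\rho^{\delta_{q_0}}e^{-tL}v_\rho^{\delta_{p_0}}\chi_E\|_{p_0\to q_0}\lesssim e^{-c(d(E,F)/\rho)^{m/(m-1)}}.$$
By \cite[Proposition 3.1]{B07}, $\|v_\rho^{\delta_{q_0}}e^{-tL}\|_{2\to q_0}\lesssim1$. Interpolating these two bounds (Stein interpolation with the analytic family of weights $v_\rho^{z}$) yields the analogous estimate with $(p_0,\delta_{p_0})$ replaced by $(s,\delta_s)$ for $p_0\le s<2$, with a smaller constant $c$. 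The factors $(tL)^k$ are handled exactly as in Lemma \ref{deri}: write $(tL)^ke^{-tL}$ via the Cauchy formula \eqref{Cauchy} over a small circle around $t$, and use Lemma \ref{ext}~(ii) for complex $z$ with $\mathrm{Re}\,z\sim|z|\sim t$. This gives
$$\|\chi_F v_\rho^{\delta_{q_0}}S_tv_\rho^{\delta_s}\chi_E\|_{s\to q_0}\lesssim e^{-c(d(E,F)/\rho)^{m/(m-1)}}.$$

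\textbf{Step 2: removing the weights.} For $x\in B$, doubling gives $|B|\lesssim(r/\rho)^\nu v_\rho(x)\sim R^\nu v_\rho(x)$, hence $\|\chi_B v_\rho^{-\delta_{q_0}}\|_\infty\lesssim R^{\nu\delta_{q_0}}|B|^{-\delta_{q_0}}$. Let $A_0:=2B$ and $A_k:=2^{k+1}B\setminus2^kB$ for $k\ge1$. For $x\in A_k$ we have $|2^{k+1}B|\lesssim(2^kr/\rho)^\nu v_\rho(x)$, so $\|\chi_{A_k}v_\rho^{-\delta_s}\|_\infty\lesssim(2^kR)^{\nu\delta_s}|2^{k+1}B|^{-\delta_s}$. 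Moreover $d(A_k,B)\gtrsim2^kr\sim2^kR\rho$ for $k\ge1$. Combining with Step 1, Hölder, and $\|\,\|F\|_H\|_{L^s(A_k)}\le|2^{k+1}B|^{1/s}\inf_BM_s(\|F\|_H)$, we obtain
$$|B|^{-1/q_0}\|\chi_BS_t(\chi_{A_k}\|F\|_H)\|_{q_0}\lesssim R^{\nu(\delta_{q_0}+\delta_s)}2^{k\nu\delta_s}\Big(\frac{|2^{k+1}B|}{|B|}\Big)^{1/2}e^{-c(2^kR)^{m/(m-1)}}\inf_BM_s(\|F\|_H).$$
Here the powers of $|B|$ and $|2^{k+1}B|$ combine as $|B|^{-1/q_0-\delta_{q_0}}=|B|^{-1/2}$ and $|2^{k+1}B|^{1/s-\delta_s}=|2^{k+1}B|^{1/2}$. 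The ratio of measures is $\lesssim2^{k\nu/2}$, so the polynomial growth in $2^k$ is absorbed by the exponential. Summing over $k\ge0$ gives the claimed bound; note that the constant does not deteriorate as $R$ grows, since $R\ge1$.

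\textbf{Main obstacle.} I expect the hardest step to be Step 1. Lemma \ref{ext}~(ii) is stated only at the exponent pair $(p_0,q_0)$, whereas we need the intermediate exponent $s$ together with the matching weight $v_\rho^{\delta_s}$. If interpolation with weights turns out to be awkward, the fallback is to avoid weights altogether. By Hölder, (GGE$_{p_0,q_0,m}$) implies (GGE$_{s,q_0,m}$) on balls of radius $\rho$, and this extends to $S_t$ via Lemma \ref{deri}. One then covers $B$ and each $A_k$ by balls of radius $\rho$ with bounded overlap, as in the proof of Lemma \ref{CI1}. The counting of these balls produces the same factors $R^{\nu\delta_{q_0}}$ and $(2^kR)^{\nu\delta_s}$ through doubling, so the rest of the argument goes through unchanged.
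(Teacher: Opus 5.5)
Your proposal is correct and follows the paper's proof essentially step by step. You write $\widetilde P^N_{a^m}+\Pi_0=p(tL)e^{-tL}$ with $t=(a/R)^m$, obtain weighted $(s,q_0)$ off-diagonal bounds (extended to $(tL)^ke^{-tL}$ via the Cauchy formula as in Lemma~\ref{deri}), strip the weights by doubling at the cost of $R^{\nu\delta_{q_0}}$ and $(2^kR)^{\nu\delta_s}$, and sum over annuli using \eqref{he}; note that \eqref{he} transfers the operator norm of $\chi_B S_t\chi_{A_k}$ to the $H$-valued extension, so you should not literally apply the non-positive operator $S_t$ to $\|F\|_H$. Your anticipated obstacle is not one: by H\"older, (GGE$_{p_0,q_0,m}$) implies (GGE$_{s,q_0,m}$), so Lemma~\ref{ext}~(ii) applies verbatim with $p_0$ replaced by $s$ (the paper instead cites \cite[Theorem 1.2]{BK05} here), and neither interpolation nor a covering argument is needed.
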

\begin{proof}[Proof of Lemma \ref{ave1}]
    Let $A_1(B):=4B$ and $A_j(B):=2^{j+1}B\,\backslash \,2^j B$ for $j\geq 2$. First we note that for $p_0\leq s<2$, H\"{o}lder's inequality gives \textnormal{(GGE)$_{p_0,q_0,m}$}$\Rightarrow$\textnormal{(GGE)$_{s,q_0,m}$}. Thus by \cite[Theorem 1.2]{BK05}, there exist $c>0$, such that for every $t>0$,
    \begin{equation}
    \|\chi_{B}v_{t^{1/m}}^{\delta_{q_0}}e^{-tL}v_{t^{1/m}}^{\delta_{s}}\chi_{A_j(B)}\|_{s\rightarrow q_0}\lesssim\textnormal{exp}\left(-c\left(\frac{d(B,A_j(B))}{t^{1/m}}\right)^{m/(m-1)}\right).\notag
    \end{equation}
    Using the same argument as in the proof of Lemma \ref{deri}, we can extend this estimate to $(tL)^Ne^{-tL}$ for all $N\in\mathbb{N}$. Since there exists a polynomial $p$ such that $P_t+\Pi_0=p(tL)e^{-tL}$, we obtain that for $j\geq 2$,
    \begin{equation}
        \|\chi_{B}v_{a/R}^{\delta_{q_0}}(\widetilde{P}_{a^m}^N+\Pi_0)v_{a/R}^{\delta_{s}}\chi_{A_j(B)}\|_{s\rightarrow q_0}\lesssim e^{-c^{\prime} (2^jR)^{m/(m-1)}}.\notag
    \end{equation}
    For $j=1$, the same estimate holds with the exponential factor replaced by 1. For every $y\in B$, we have $B\subset B(y,2r)$. By doubling and $a\sim r$, $|B|\leq|B(y,2r)|\lesssim|B(x,a/R)|R^{\nu}$, which gives $\|v_{a/R}^{\delta_{q_0}}\chi_{B}\|_{\infty}\gtrsim|B|^{\delta_{q_0}}R^{-\nu\delta_{q_0}}$. A similar calculation gives $\|v_{a/R}^{
    \delta_{s}
    }\chi_{A_j(B)}\|_{\infty}\gtrsim|2^{j+1}B|^{\delta_{s}}2^{-j\nu\delta_{s}}R^{-\nu\delta_{s}}$. Hence, for $j\geq 2$,
    \begin{equation}\label{5.6}
         \|\chi_{B}(\widetilde{P}_{a^m}^N+\Pi_0)\chi_{A_j(B)}\|_{s\rightarrow q_0}\lesssim R^{\nu(\delta_s+\delta_{q_0})} 2^{j\nu\delta_{s}}|B|^{-\delta_{q_0}}|2^{j+1}B|^{-\delta_s}e^{-c^{\prime} (2^jR)^{m/(m-1)}}.
    \end{equation}
    Consequently, for $j\geq 2$, it follows from \eqref{he} that
    \begin{align}
        \langle\|((\widetilde{P}_{a^m}^N+\Pi_0)\otimes I_H)(F\chi_{A_j(B)})\|_{H}\rangle_{q_0,B}&=\langle\|((\chi_{B}(\widetilde{P}_{a^m}^N+\Pi_0)\chi_{A_j(B)})\otimes I_H)(F\chi_{A_j(B)})\|_{H}\rangle_{q_0,B}\notag\\
        &\lesssim R^{\nu(\delta_{s}+\delta_{q_0})}2^{j\nu\delta_{s}}e^{-c^{\prime}(2^jR)^{m/(m-1)}}|B|^{-1/2}|2^{j+1}B|^{-\delta_s}\|F\chi_{A_j(B)}\|_{L^s(X;H)}\notag\\
        &\lesssim R^{\nu(\delta_{s}+\delta_{q_0})}2^{j\nu/s}e^{-c^{\prime}(2^jR)^{m/(m-1)}}\langle \|F\|_{H}\rangle_{s,2^{j+1}B}\notag\\
        &\leq R^{\nu(\delta_{s}+\delta_{q_0})}2^{j\nu/s}e^{-c^{\prime}(2^jR)^{m/(m-1)}}\inf_{y\in B} M_s(\|F\|_H)(y).\label{5.7}
    \end{align}
    For $j=1$, the same argument gives the estimate with out exponential factor $e^{-c^{\prime}(2^jR)^{m/(m-1)}}$. 
    Finally, summing over $j\geq 1$ and using $R\geq1$, we obtain the desired inequality.
\end{proof}
Proposition 5.2 \textnormal{(ii)} follows from Lemma \ref{ave1}.
\begin{proof}[Proof of Proposition 5.2 \textnormal{(ii)}]
    Fix $x\in X$ and $f\in L^2(X)$. For any dyadic cube $Q$ containing $x$, choose a ball $B(x_Q,r)$ such that $Q\subset B$, $r\sim r_Q$ and $|B|\sim|Q|$. Using the commutation relation above, we obtain that
    \begin{align}
        \langle\|T\widetilde{P}_{r_{Q}^m}^Nf\|\rangle_{q_0,Q}&\lesssim\langle\|((\widetilde{P}_{r_{Q}^m}^N+\Pi_0)\otimes  I_H)\,Tf\|\rangle_{q_0,B}\notag\\
        &\lesssim R^{\nu(\delta_s+\delta_{q_0})}M_s(\|Tf\|_H)(x),\label{5.8}
    \end{align}
    where the last inequality follows from Lemma \ref{ave1}. Hence, $T^{\#}_Nf(x)\lesssim R^{\nu(\delta_s+\delta_{q_0})}M_{s}(\|Tf\|_H)(x)$. Since $s<2$, the $L^2$-boundedness of $M_s$ gives that
    $$\|T^{\#}_Nf\|_{L^2(X)}\lesssim R^{\nu(\delta_s+\delta_{q_0})}\|Tf\|_{L^{2}(X;H)}\leq AR^{\nu(\delta_{s}+\delta_{q_0})}\|f\|_{L^2(X)}.$$
    This proves \textnormal{(ii)}.
\end{proof}
To prove \textnormal{(iii)}, we use the following lemma from \cite{A07}.
\begin{lem}\label{weak}\textnormal{(Theorem 2.1, \cite{A07})}
    Let $p_0\in[1,2)$. Suppose $T$ is a bounded sublinear operator operator on $L^2(X)$. Let $F_r$ ($r>0$) be a family of linear operators acting on $L^2(X)$. Assume that for $j\geq 2$
    $$\left(\frac{1}{|2^{j+1}B|}\int_{A_j(B)}|T(I-F_{r(B)})f|^2\,d\mu\right)^{1/2}\leq g_{1,j}(B)\langle f\rangle_{p_0,B},$$
    and for $j\geq 1$
    $$\left(\frac{1}{|2^{j+1}B|}\int_{A_j(B)}|F_{r(B)}f|^2\,d\mu\right)^{1/2}\leq g_{2,j}(B)\langle f\rangle_{p_0,B}$$
    for all ball $B$ with $r(B)$ the radius of $B$ and all $f$ supported on $B$. If
    $$\Gamma_1:=\sup_B\sum_{j\geq2}\frac{|2^{j+1}B|}{|B|}g_{1,j}(B)<\infty\quad\text{and}\quad\Gamma_2:=\sup_B\sum_{j\geq2}\frac{|2^{j+1}B|}{|B|}g_{2,j}(B)<\infty,$$
    then $T$ is of weak type $(p_0,p_0)$. More precisely, $\|T\|_{p_0\rightarrow p_{0},\infty}\lesssim \Gamma_{1}+\|T\|_{2\rightarrow2}(1+\Gamma_2)$.
\end{lem}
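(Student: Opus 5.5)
We would prove the statement by a Calder\'on--Zygmund decomposition of $f$ at height $\lambda$, treating the bad part through the splitting $b_i=(I-F_{r(B_i)})b_i+F_{r(B_i)}b_i$, as in \cite{A07}. We use the $L^2$ bound of $T$ for the good part and for the $F$-pieces, and the off-diagonal hypotheses for everything away from the bad balls. Throughout we treat the near term $j=1$ in the $F$-hypothesis as contributing a bounded constant $g_{2,1}(B)\lesssim 1$, which gets absorbed in $1+\Gamma_2$. (This is implicit in the statement; otherwise we add $\sup_B g_{2,1}(B)$ to $\Gamma_2$.)

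\emph{Step 1: decomposition.} Fix $\lambda>0$ and $f\in L^{p_0}\cap L^2$. Apply the Calder\'on--Zygmund decomposition on the space of homogeneous type to $|f|^{p_0}$ at height $\lambda^{p_0}$. This gives $f=g+\sum_i b_i$ with the following properties:
\begin{itemize}
\item $|g|\lesssim\lambda$ and $\|g\|_{p_0}\lesssim\|f\|_{p_0}$;
\item $\operatorname{supp} b_i\subset B_i$ and $\langle b_i\rangle_{p_0,B_i}\lesssim\lambda$;
\item $\sum_i|B_i|\lesssim\lambda^{-p_0}\|f\|_{p_0}^{p_0}$;
\item the balls $B_i$ have bounded overlap.
\end{itemize}
Since $T$ is sublinear,
$$|Tf|\le|Tg|+\Big|T\Big(\sum_iF_{r(B_i)}b_i\Big)\Big|+\sum_i|T(I-F_{r(B_i)})b_i|.$$
The good part is routine. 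By Chebyshev, $\|g\|_2^2\lesssim\lambda^{2-p_0}\|f\|_{p_0}^{p_0}$ and the $L^2$ bound, we get $|\{|Tg|>\lambda/3\}|\lesssim\|T\|_{2\to2}^2\lambda^{-p_0}\|f\|_{p_0}^{p_0}$.

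\emph{Step 2: the $F$-part, which we expect to be the main obstacle.} The functions $F_{r(B_i)}b_i$ are not localized and not orthogonal, so we estimate $\|\sum_iF_{r(B_i)}b_i\|_2$ by duality. For $h\in L^2$ we decompose $X=\bigcup_{j\ge1}A_j(B_i)$, apply Cauchy--Schwarz on each annulus, and use the second hypothesis:
$$\Big|\Big\langle \sum_iF_{r(B_i)}b_i,h\Big\rangle\Big|\le\sum_i\sum_{j\ge1}|2^{j+1}B_i|\,g_{2,j}(B_i)\,\lambda\,\langle h\rangle_{2,2^{j+1}B_i}.$$
Next we bound $\langle h\rangle_{2,2^{j+1}B_i}\le\inf_{B_i}M_2h$. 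Using $\sum_j|2^{j+1}B_i|g_{2,j}(B_i)\le(1+\Gamma_2)|B_i|$ and the bounded overlap of the $B_i$, the sum is at most $\lambda(1+\Gamma_2)\int_{\cup B_i}M_2h$. Kolmogorov's inequality and the weak $(2,2)$ bound of $M_2$ give
$$\int_{\cup B_i}M_2h\lesssim|\cup B_i|^{1/2}\|h\|_2.$$
Hence $\|\sum_iF_{r(B_i)}b_i\|_2^2\lesssim\lambda^2(1+\Gamma_2)^2|\cup B_i|$. Then Chebyshev together with $\|T\|_{2\to2}$ bounds this contribution by $\|T\|_{2\to2}^2(1+\Gamma_2)^2\lambda^{-p_0}\|f\|_{p_0}^{p_0}$. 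Since only the operator norm, not the square, should appear in the final constant, we run the distribution-function estimates in the normalized form (rescaling $T$ to norm one), so that the constants add up to $\Gamma_1+\|T\|_{2\to2}(1+\Gamma_2)$.

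\emph{Step 3: the $(I-F)$-part.} The exceptional set $\bigcup_i4B_i$ has measure $\lesssim\lambda^{-p_0}\|f\|_{p_0}^{p_0}$ by doubling. Off this set we use Chebyshev in $L^1$. For each $i$, Cauchy--Schwarz on $A_j(B_i)$ and the first hypothesis give
$$\int_{X\setminus4B_i}|T(I-F_{r(B_i)})b_i|\le\sum_{j\ge2}|2^{j+1}B_i|\,g_{1,j}(B_i)\,\langle b_i\rangle_{p_0,B_i}\lesssim\Gamma_1\lambda|B_i|.$$
Summing over $i$ gives $|\{x\notin\cup4B_i:\sum_i|T(I-F)b_i|>\lambda/3\}|\lesssim\Gamma_1\lambda^{-p_0}\|f\|_{p_0}^{p_0}$.

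Combining Steps 1--3 yields the weak type $(p_0,p_0)$ bound with constant $\lesssim\Gamma_1+\|T\|_{2\to2}(1+\Gamma_2)$. The bound extends from $L^{p_0}\cap L^2$ by density.
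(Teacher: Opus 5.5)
Your decomposition and the three estimates are Auscher's argument, which is exactly what the paper invokes, and as estimates they are correct. Your remark on $j=1$ is also right: the sum defining $\Gamma_2$ must start at $j=1$, as in \eqref{Gamma2}, where the paper itself sums from $j\geq1$. But do not resolve it by assuming $g_{2,1}(B)\lesssim 1$. In the application in Proposition~\ref{Property}~(iii), the $j=1$ term has size $R^{\nu\delta_{p_0}}$, so $\frac{|4B|}{|B|}g_{2,1}(B)$ has to be counted inside $\Gamma_2$.

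The genuine gap is the quantitative constant, which is the only thing this lemma adds to Auscher's theorem. With the Calder\'on--Zygmund height equal to the level $\lambda$, your Steps 1--3 give
\[
|\{|Tf|>\lambda\}|\lesssim \lambda^{-p_0}\|f\|_{p_0}^{p_0}\bigl(1+\Gamma_1+\|T\|_{2\rightarrow2}^2(1+\Gamma_2)^2\bigr).
\]
Rescaling $T$ to norm one replaces $\Gamma_1$ by $\Gamma_1/\|T\|_{2\rightarrow2}$, but it leaves $\Gamma_2$ unchanged, because the operators $F_r$ are not rescaled. You therefore only get $\|T\|_{2\rightarrow2}\bigl(1+\Gamma_1/\|T\|_{2\rightarrow2}+(1+\Gamma_2)^2\bigr)^{1/p_0}$. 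This grows like $(1+\Gamma_2)^{2/p_0}$ with $2/p_0>1$, so it is not $\lesssim K:=\Gamma_1+\|T\|_{2\rightarrow2}(1+\Gamma_2)$.

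The missing step is precisely the paper's modification of Auscher's proof: decouple the height from the level. To estimate $|\{|Tf|>\alpha\}|$, decompose $f$ at height $\lambda=\alpha/K$. Your bounds, each multiplied by $\|f\|_{p_0}^{p_0}$, then become:
\begin{itemize}
\item at most $\alpha^{-2}\|T\|_{2\rightarrow2}^2(1+\Gamma_2)^2\lambda^{2-p_0}$ for the good part and the $F$-part;
\item $\lambda^{-p_0}+\alpha^{-1}\Gamma_1\lambda^{1-p_0}$ for the $(I-F)$-part.
\end{itemize}
Each of these is $\lesssim K^{p_0}\alpha^{-p_0}\|f\|_{p_0}^{p_0}$, because $\Gamma_1\le K$ and $\|T\|_{2\rightarrow2}(1+\Gamma_2)\le K$. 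Equivalently, normalize $T$ by $K$ rather than by $\|T\|_{2\rightarrow2}$.

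The linear dependence on $K$ is not cosmetic. In Proposition~\ref{Property}~(iii) one has $\Gamma_2\lesssim R^{\nu\delta_{p_0}}$, and the factor $(1+\Gamma_2)^{2/p_0}$ would worsen the power of $R$ in Theorem~\ref{sdc}, and hence the regularity threshold in Theorem~\ref{thm2}.
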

This theorem was originally stated on $\mathbb{R}^n$, but the same proof applies to homogeneous spaces. We make one small change to the proof: we apply the Calder\'{o}n-Zygmund decomposition at height $\alpha/K$ instead of $\alpha$, where $K:=\Gamma_{1}+\|T\|_{2\rightarrow2}(1+\Gamma_2)$. This gives the weak bound in our statement, while the original proof only gives the weaker bound $\|T\|_{p_0\rightarrow p_0,\infty}\lesssim(1+K^2)^{1/p_0}$. We can now prove \textnormal{(iii)} of Proposition \ref{Property}.
\begin{proof}[Proof of Proposition \ref{Property} \textnormal{(iii)}]
We apply Lemma \ref{weak} with $F_{r(B)}:=\Pi_0+\widetilde{P}_{r(B)^m}^N$ for suitable $N\in\mathbb{N}$ to prove the result, we will choose $N$ later in the proof. Since \textnormal{(ii)} already gives the $L^{2}$ bound, it remains to verify the two average estimates. For simplicity, we write $r=r(B)$. We first prove the second estimate. 

Since \textnormal{(GGE$_{p_0,q_0,m}$)}$\Rightarrow$\textnormal{(GGE$_{p_0,2,m}$)}, it follows from \cite[Theorem 2.1]{BK05} and an argument similar to the proof of Lemma \ref{ave1} that 
$$\|\chi_{A_j(B)}(\widetilde{P}_{r^m}^N+\Pi_0)v_{r/R}^{\delta_{p_0}} \chi_{B}\|_{p_0\rightarrow2}\lesssim\textnormal{exp}\left(-c\left(\frac{d(A_{j}(B),B)R}{r}\right)^{m/(m-1)}\right).$$
Hence, for $j\geq 2$,
\begin{equation}\label{5.9}
\|\chi_{A_j(B)}(\widetilde{P}_{r^m}^N+\Pi_0) \chi_{B}\|_{p_0\rightarrow2}\lesssim R^{\nu\delta_{p_0}} e^{-c(2^j R)^{m/(m-1)}}|B|^{-\delta_{p_0}}.
\end{equation}
Thus, for $f$ supported in $B$, we have that
\begin{equation}\label{5.10}
|2^{j+1}B|^{-1/2}\|(\widetilde{P}_{r^m}^N+\Pi_0)f\|_{L^2(A_j(B))}\lesssim R^{\nu\delta_{p_0}}e^{-c(2^j R)^{m/(m-1)}}\frac{|B|^{1/2}}{|2^{j+1}B|^{1/2}}\langle f\rangle_{p_0,Q}.
\end{equation}
For $j=1$, the same estimate holds without the \textnormal{exp} factor. Summing over $j$ and using $R\geq1$, we obtain that
\begin{equation}\label{Gamma2}
    \Gamma_2\lesssim\sup_{B}\sum_{j\geq 1}R^{\nu\delta_{p_0}}\frac{|2^{j+1}B|^{1/2}}{|B|^{1/2}}e^{-c2^{jm/(m-1)}}\lesssim R^{\nu\delta}\sum_{j\geq 1}2^{j\nu/2}e^{-c2^{jm/(m-1)}}\lesssim R^{\nu\delta_{p_0}}.
\end{equation}

We next verify the first estimate. Fix $x\in A_{j}(B)$ and $f$ supported in $B$. We split the supremum defining $T^{\#}_Nf(x)$ into two parts, according to whether $r_{Q}\geq2^{j-1}r$ and $r_{Q}< 2^{j-1}r$. Denote these two parts by $T^{\#}_{N,l}f(x)$ and $T^{\#}_{N,s}f(x)$. Then $T^{\#}_Nf(x)\leq T^{\#}_{N,l}f(x)+ T^{\#}_{N,s}f(x)$. The difference in the estimates below mainly lies in the choice of the set containing $Q$ to apply \eqref{GGE5}. We start with the case $r_{Q}< 2^{j-1}r$. We prove that
\begin{equation}\label{(**)}
    \int_{0}^{\infty} \|\chi_{\widetilde{A}_j(B)}T \widetilde{Q}_t^N(I-\Pi_0-\widetilde{P}_{r^m}^N)f\|_{L^{q_0}(X;H)}\frac{dt}{t}\lesssim D|B|^{-\delta_{p_0}}|2^{j+2}B|^{-\delta_{q_0}}2^{-j(\sigma-\nu\delta_{q_0})}\|f\|_{L^{p_0}},\tag{$\ast\ast$}
\end{equation}
where $j\geq2$, $\widetilde{A}_j(B):=2^{j+2}B\,\backslash\,2^{j-1}B$ and $\widetilde{Q}_{t}^N:=Q^N_{t/R^m}$.
\begin{proof}[Proof of \eqref{(**)}]
Since $T$ is bounded and linear, by \eqref{5.2} we have that
\begin{equation}\label{5.12}
    T\widetilde{Q}_{t}^N(I-\Pi_0-\widetilde{P}_{r^m}^N)f=\int_{0}^{r^m}T\widetilde{Q}_{t}^N \widetilde{Q}_{s}^Nf\,\frac{ds}{s}\sim\int_{0}^{r^m}\frac{t^Ns^N}{(t+s)^{2N}}T\widetilde{Q}_{t+s}^{2N}f\,\frac{ds}{s}.
\end{equation}
    By Minkowski's inequality for vector valued function,
\begin{align}
    \textnormal{LHS of \eqref{(**)}}&\lesssim\int_{0}^{\infty}\int_{0}^{r^m} \frac{t^Ns^N}{(t+s)^{2N}}\|\chi_{\widetilde{A}_j(B)}T\widetilde{Q}_{t+s}^{2N}f\|_{L^{q_0}(X;H)}\frac{ds}{s}\frac{dt}{t}\notag\\
    &=\int_{0}^{\infty}\left(\int_{0}^{\min\{1,r^m/u\}}v^{N-1}(1-v)^{N-1}\,dv\right)\|\chi_{\widetilde{A}_j(B)}T\widetilde{Q}_{u}^{2N}f\|_{L^{q_0}(X;H)}u^{-1}\,du\notag\\
    &\lesssim\int_{0}^{\infty}\min\{1,r^{mN}/u^N\}\|\chi_{\widetilde{A}_j(B)}T\widetilde{Q}_{u}^{2N}(f\chi_B)\|_{L^{q_0}(X;H)}u^{-1}\,du.\label{5.13}
\end{align}
Here we used that change of variables $u=t+s$, $v=s/(t+s)$, together with the estimate $\int_{0}^a v^{N-1}(1-v)^{N-1}dv\leq\int_{0}^a v^{N-1}dv\sim a^N$. Moreover, by \eqref{GGE5} we have that
\begin{align}
    \|\chi_{\widetilde{A}_j(B)}T\widetilde{Q}_{u}^{2N}\chi_{B}\|_{p_0\rightarrow q_0(H)}&\lesssim D|B|^{-\delta_{p_0}}|2^{j+2}B|^{-\delta_{q_0}}\left(1+\frac{r}{u^{1/m}}\right)^{\nu\delta_{p_0}}\left(1+\frac{2^{j+2}r}{u^{1/m}}\right)^{\nu\delta_{q_0}}\left(1+\frac{2^jr}{u^{1/m}}\right)^{-\sigma} \notag\\
    &\sim D|B|^{-\delta_{p_0}}|2^{j+2}B|^{-\delta_{q_0}}\begin{cases}
        1\quad &\text{if}\ u^{1/m}\geq 2^j r;\\
        (2^jr u^{-1/m})^{\nu\delta_{q_0}-\sigma}\quad &\text{if}\ r\leq u^{1/m}<2^jr;\\
        2^{j(v\delta_{q_0}-\sigma)}(ru^{-1/m})^{\nu\delta_{p_0}+\nu\delta_{q_0}-\sigma}\quad &\text{if}\ 0<u^{1/m}<r.\notag
    \end{cases}
\end{align}
Combining these estimates and letting $w=ru^{-1/m}$, it follows that for $N>(\sigma-\nu\delta_{q_0})/m$,
\begin{align}
    \textnormal{LHS of \eqref{(**)}}&\lesssim D|B|^{-\delta_{p_0}}|2^{j+1}B|^{-\delta_{q_0}}\|f\|_{L^{p_0}}\Bigg(\int_{0}^{2^{-j}}w^{mN-1}dw\notag\\
    &\quad+\int_{2^{-j}}^{1}w^{mN-1}(2^jw)^{\nu\delta_{q_0}-\sigma}dw+\int_{1}^{\infty}2^{j(\nu\delta_{q_0}-\sigma)}w^{\nu(1/p_0-1/q_0)-\sigma-1}dw\Bigg)\notag\\
    &\lesssim D|B|^{-\delta_{p_0}}|2^{j+2}B|^{-\delta_{q_0}}2^{-j(\sigma-\nu\delta_{q_0})}\|f\|_{L^{p_0}}.\notag
\end{align}
This completes the proof of \eqref{(**)}.
\end{proof}
We now estimate $T^{\#}_{N,s}$. Recall that $x$ is a fixed point in $A_j(B)$. Let $Q\in \mathcal{D}$ be a dyadic cube containing $x$ with $r_{Q}<2^{j-1}r$. Thus, $\textnormal{diam}(Q)<r_{Q}<2^{j-1}Q$, so $Q\subset \widetilde{A}_j(B)$. By the definition of $P_t^N$,
\begin{equation}\label{5.14}
T\widetilde{P}_{r_{Q}^m}^N(I-\Pi_0-\widetilde{P}_{r^m}^N)f=\int_{r_{Q}^m}^{\infty}T\widetilde{Q}_{t}^N(I-\Pi_0-\widetilde{P}_{r^m}^N)f\,\frac
{dt}{t}.
\end{equation}
Consequently, for a.e. $y\in Q$,
\begin{equation}
    \|T\widetilde{P}_{r_{Q}^m}^N(I-\Pi_0-\widetilde{P}_{r^m})f(\cdot,y)\|_{H}\leq\left(\int_{0}^{\infty}\|T\widetilde{Q}_{t}^N(I-\Pi_0-\widetilde{P}_{r^m}^N)f(\cdot,y)\|_{H}\frac{dt}{t}\right)\chi_{\widetilde{A_j}}(y)=: F_j(y).\notag
\end{equation}
Taking averages over $Q$ and then the supremum, we obtain $T^{\#}_{N,s}(I-\Pi_0-\widetilde{P}_{r^m}^N)f(x)\leq M_{q_{0}}F_j(x)$. By Minkowski's inequality, \eqref{(**)} and Kolmogorov’s inequality (see e.g. \cite[Lemma 5.16]{D01}) for $M_{q_0}$, we have that
\begin{align}
    \|T^{\#}_{N,s}(I-\Pi_0-\widetilde{P}_{r^m}^N)f\|_{L^2(A_j(B))}&\lesssim |A_j(B)|^{1/2-1/q_0}\|F_j\|_{L^{q_0}}\notag\\
    &\leq |A_j(B)|^{\delta_{q_0}}\int_{0}^{\infty} \|\chi_{\widetilde{A}_j(B)}T \widetilde{Q}_t^N(I-\Pi_0-\widetilde{P}_{r^m}^N)f\|_{L^q(X;H)}\frac{dt}{t}\notag\\
    &\lesssim  D|2^{j+1}B|^{\delta_{q_0}}|B|^{-\delta_{p_0}}|2^{j+2}B|^{-\delta_{q_0}}2^{-j(\sigma-\nu\delta_{q_0})}\|f\|_{L^{p_0}(B)}\notag\\
    &\sim D|B|^{1/2}2^{-j(\sigma-\nu\delta_{q_0})}\langle f\rangle_{p_0,B}.\label{5.15}
\end{align}

We next estimate $T^{\#}_{N,l}$. Let $Q$ be a dyadic cube such that $x\in A_{j}(B)\cap Q$ and $r_Q\geq 2^{j-1}r$. By the definition of $\widetilde{P}_{t}^N$ and \eqref{5.12},
\begin{equation}\label{5.16}
T\widetilde{P}^N_{r_{a}^m}(I-\Pi_0-\widetilde{P}_{r^m})f\sim\int_{r_{Q}^m}^\infty\int_{0}^{r^m}\frac{t^Ns^N}{(t+s)^{2N}}T\widetilde{Q}_{t+s}^{2N}f\,\frac{ds}{s}\frac{dt}{t}.
\end{equation}
For any $y\in Q$, we have $d(x,y)\leq\textnormal{diam}(Q)<r_Q$, so $Q\subset B(x,r_Q)$. Moreover, since $x\in A_j(B)$ and $8r_Q\geq 2^{j+2}r$, we have $2^{j+1}B\subset B(x,8r_Q)$. Since $(t+s)^{1/m}\geq r_{Q}>r$, by \eqref{GGE5}, we estimate that
$$\|\chi_{Q}T\widetilde{Q}_{t+s}^{2N}\chi_{B}\|_{p_0\rightarrow q_0(H)}\leq\|\chi_{B(x,r_Q)}T\widetilde{Q}_{t+s}^{2N}\chi_{B}\|_{p_0\rightarrow q_0(H)}\lesssim|B|^{-\delta_{p_0}}|B(x,r_Q)|^{-\delta_{q_0}}.$$
Since $f$ is supported in $B$, Minkowski's inequality gives
\begin{align}
    \langle \|T\widetilde{P}_{r_{Q}^m}^N(I-\Pi_0-\widetilde{P}_{r^m}^N)f\|_{H}\rangle_{q_0,Q}&\lesssim|Q|^{-1/q_0}\int_{r_{Q}^m}^\infty\int_{0}^{r^m}\frac{t^Ns^N}{(t+s)^{2N}}\|\chi_{Q}T\widetilde{Q}_{t+s}^{2N}f\|_{L^{q_0}(X;H)}\,\frac{ds}{s}\frac{dt}{t}\notag\\
    &\lesssim D|B|^{-\delta_{p_0}}|B(x,r_{Q})|^{-1/2}\|f\|_{L^{p_0}(B)}\int_{r_{Q}^m}^\infty\int_{0}^{r^m}t^{-N-1}s^{N-1}\,dsdt\notag\\
    &\lesssim D|B|^{1/2}|2^{j+1}B|^{-1/2}r^{mN}r_{Q}^{-mN}\langle f\rangle_{p_0,B}\notag\\
    &\lesssim D|B|^{1/2}|2^{j+1}B|^{-1/2}2^{-jmN}\langle f\rangle_{p_0,B}.\label{5.17}
\end{align}
By taking supremum over cubes $Q$, we obtain that
$$T^{\#}_{N,l}(I-\Pi_0-\widetilde{P_{r^m}^N})f(x)\lesssim D|B|^{1/2}|2^{j+1}B|^{-1/2}2^{-jmN}\langle f\rangle_{p_0,B}$$
for $x\in A_j$. Hence,
\begin{equation}\label{5.18}
    \|T^{\#}_{N,l}(I-\Pi_0-\widetilde{P_{r^m}^N})f\|_{L^{2}(A_j(B))}\lesssim D|B|^{1/2}2^{-jmN}\langle f\rangle_{p_0,B}.
\end{equation}
Combining \eqref{5.18} with \eqref{5.15}, and choosing $N>(\sigma-\nu\delta_{q_0})/m$, we obtain that
\begin{equation}\label{Gamma1}
    \Gamma_1\lesssim\sup_BD\sum_{j\geq 2}  2^{-j(\sigma-\nu\delta_{q_0})}\frac{|2^{j+1}B|^{1/2}}{|B|^{1/2}}\lesssim D\sum_{j\geq 2}2^{-j(\sigma-(\nu/q^{\prime}))}\lesssim D,
\end{equation}
where the last inequality follows from our assumption in Theorem \ref{sdc} that $\sigma>\nu/q_0^{\prime}$. Finally, \eqref{Gamma1}, \eqref{Gamma2}, Proposition \ref{Property} \textnormal{(ii)} and Lemma \ref{weak} give the desired estimate for Proposition \ref{Property} \textnormal{(iii)}.
\end{proof}
\noindent\textbf{Step 2: sparse domination by recursion.}

We now prove Theorem \ref{sdc}. Recall the definition of $Q^{*}$ and the properties in \eqref{Q*}. As above, we work in a fixed dyadic grid $\mathcal{D}^{b_0}$ containing $Q_0$, and write $\mathcal{D}=\mathcal{D}^{b_0}$. We fix an integer $N>\max\{(\sigma-\nu\delta_{q_0})/m,N_0\}$, and write $T^{\#}:=T^{\#}_N$.
\begin{proof}[Proof of Theorem \ref{sdc}]
We start the recursion from $Q_0$. Let $f,g$ be bounded functions supported in $Q^*_0$. We write $K:=D+AR^{\nu(1/p_0-1/q_0+\delta_{s})}$. For large $C_0>0$ (which will soon be chosen), we define
$$E_{Q_0}:=\left\{x\in Q_0:\max\{M_{p_0,Q_0}^{\mathcal{D}}(f\chi_{Q^*}),T^{\#}_{Q_0}(f\chi_{Q^*})/K\}>C_0\langle f\rangle_{_{p_0,Q^*}}\right\},$$
where 
$$M_{p_0,Q_0}^{\mathcal{D}}f(x):=\sup_{\substack{x\in Q\in\mathcal{D}\\Q\subset Q_0}}\inf_{y\in Q}M_{p_0}f(y)\quad\text{and}\quad T^{\#}_{Q_0}f(x):=\sup_{\substack{x\in Q\in\mathcal D\\Q\subset  Q_0}}\left\langle\|T\widetilde{P}_{r_{Q}^m}^Nf\|_H\right\rangle_{q_0,Q}$$
are the $Q_0$-adapted version of $M_{p_0}$ and $T^{\#}$. By definition, $M_{p_0,Q_0}^{\mathcal{D}}f(x)\leq M_{p_0}f(x)$ and $T^{\#}_{Q_0}f(x)\leq T^{\#}f(x)$. Thus, the weak type $(p_0,p_0)$ of $M_{p_0}$ and Proposition \ref{Property} \textnormal{(iii)} give that
$$|E_{Q_0}|\lesssim C_{0}^{-p_0}\langle f\rangle_{p_0,Q_0^*}^{-p_0}\|f\|_{L^{p_0}(Q_0^*)}^{p_0}=C_0^{-p_0}|Q_0^*|\lesssim C_0^{-p_0}|Q_0|,$$
here the definition of $Q^*$ gives $|Q|\sim|B_Q|\sim|Q^*|$. Choosing a large $C_0$ we obtain $|E_{Q_0}|\leq|Q_{0}|/2$. Then we let $\{P_j\}_{j}$ be the maximal dyadic cubes in $E_{Q_0}$. By the definition of $M_{p_0,Q_0}^{\mathcal{D}}$ and $T^{\#}_{Q_0}$, every $x\in E_{Q_0}$ belongs a dyadic cube $Q_x\subset E_{Q_0}$. Hence, $E_{Q_0}=\bigcup_j P_j$, and the maximality implies that $\{P_j\}_{j}$ are pairwise disjoint, which indicates $\sum_{j}|P_j|=|E_{Q_0}|\leq |Q_0|/2$. 

\medskip
\noindent\textbf{Step 2.1: A recursive estimate.}

For every $Q\in\mathcal{D}$, define $T_{Q}f:=T(I-\widetilde{P}_{r_{Q}^m}^N)(f\chi_{Q^*})$. We show that there exist constant $C^{\prime}>0$, such that
\begin{equation}\label{Re}
    \int_{Q_0}\|T_{Q_0}f\|_{H}|g|\,d\mu\leq C^{\prime}K|Q_0|\langle f\rangle_{p_0,Q_0^*}\langle g\rangle_{q_{0}^{\prime},Q_0^*}+\sum_j\int_{P_j}\|T_Pf\|_{H}|g|\,d\mu.\tag{Re}
\end{equation}

By Proposition \ref{Property} \textnormal{(i)}, for a.e. $x\in Q_0\backslash E_{Q_0}$, 
$$\|T(f\chi_{Q_{0}^*})(\cdot,x)\|_{H}\leq T^{\#}_{Q_0}f(x)\lesssim K\langle f\rangle_{p_0,Q^*}.$$ 
Although Proposition \ref{Property} \textnormal{(i)} is stated for $T^{\#}$, its proof also applies to $T^{\#}_{Q_0}$, since all sufficiently small dyadic cubes containing $x$ are contained in $Q_0$. Thus, by H\"{o}lder's inequality for $g$, there exist a constant $C>0$, such that
\begin{equation}\label{5.20}
\int_{Q_0}\|Tf\|_{H}|g|\,d\mu\leq CK\langle f\rangle_{p_{0},Q_0^*}\langle g\rangle_{q_0^{\prime},Q_0^*}|Q_0|+\sum_j\int_{P_j}\|Tf\|_H|g|\,d\mu.
\end{equation}
Here we used $Q_0\subset Q_0^*$ and $|Q_0|\sim|Q_{0}^*|$, so $\langle g\rangle_{q_0^{\prime},Q_0}\lesssim\langle g\rangle_{q_0^{\prime},Q_0^*}$. Then, for every $P=P_j$, $Tf=T_{P}f+T\widetilde{P}_{r_{P}^m}^Nf+T(I-\widetilde{P}_{r_{P}^m}^N)(f\chi_{(P^*)^{c}})$. Thus, by H\"{o}lder's inequality,
\begin{equation}\label{5.21}
    \int_{P}\|Tf\|_{H}|g|\,d\mu\leq \int_{P}\|T_{P}f\|_{H}|g|\,d\mu+|P|\langle g\rangle_{q_{0}^{\prime},P}(\langle \|T\widetilde{P}_{r_{P}^m}^Nf\|\rangle_{q_0,P}+\langle \|T(I-\widetilde{P}_{r_{P}^m}^N)(f\chi_{(P^*)^c})\|_H\rangle_{q_0,P})
\end{equation}

The first term will be left for the recursion. We start with the third term. We write $A_k(B)=2^{k+1}B\,\backslash \,2^{k}B$ for $k\geq 1$ and $A_0(B)=2B$. By \eqref{Q*} we have $(P^*)^c\subset (2^l B_{P})^c=\bigcup_{k\geq l}A_{k}(B_P)$. It follows from \eqref{GGE5} that
\begin{align*}
\|\chi_{B_{P}}T\widetilde{Q}_{t}^N \chi_{A_{k}(B_P)}\|_{p_0\rightarrow q_{0}(H)}\lesssim D|B_{P}|^{-\delta_{q_0}}|2^{k+1}B_{P}|^{-\delta_{p_0}}(2^k r_{P}t^{-1/m})^{\nu\delta_{p_0}}(r_P t^{-1/m})^{\nu\delta_{q_0}}(2^kr_Pt^{-1/m})^{-\sigma}.
\end{align*}
Since $|2^{k+1}B_P|/|B_P|\lesssim 2^{\nu k}$, we have that
\begin{align}
    \langle\|T\widetilde{Q}_{t}^N(f\chi_{A_k(B_P)})\|_{H}\rangle_{q_0,B_P}&\lesssim D|B_P|^{-1/2}|2^{k+1}B_p|^{-\delta_{p_0}}2^{-k(\sigma-\nu\delta_{p_0})}(r_Pt^{-1/m})^{\nu(1/p_0-1/q_0)-\sigma}\|f\|_{L^{p_0}(A_k(B_P))}\notag\\
    &\lesssim D 2^{-k(\sigma-\nu/p_0)}(r_Pt^{-1/m})^{\nu(1/p_0-1/q_0)-\sigma}\langle f\rangle_{p_0,2^{k+1}B_P}.\label{5.22}
\end{align}
Note that $P\subset B_{P}$ and $|P|\sim|B_{P}|$ (see discussion before \eqref{Q*}). Since $T\Pi_0=0$, by \eqref{5.2} and Minkowski's inequality, we obtain that
\begin{align}
    \langle \|T(I-\widetilde{P}_{r_{P}^m}^N)(f\chi_{(P^*)^c})\|_{H}\rangle_{q_{0},P}&\lesssim\sum_{k\geq l} \langle \|T(I-\Pi_0-\widetilde{P}_{r_{P}^m}^N)(f\chi_{A_{k}(B_P)\cap (P^*)^c})\|_{H}\rangle_{q_{0},B_P}\notag\\
    &\lesssim\sum_{k\geq l}\int_0^{r_{P}^m}\langle\|T\widetilde{Q}_{t}^N(f\chi_{(P^*)^c}\chi_{A_k(B_P)})\|_{H}\rangle_{q_0,B_P}\frac{dt}{t}\notag\\
    &\lesssim D\sum_{k\geq l}2^{-k(\sigma-\nu/p_0)}\langle f\rangle_{p_0,2^{k+1}B_P}\int_{0}^{r_{P}^m}(r_Pt^{-1/m})^{\nu(1/p_0-1/q_0)-\sigma}\frac{dt}{t}\notag\\
    &\lesssim D\sum_{k\geq l}2^{-k(\sigma-\nu/p_0)}\inf_{y\in 2^{l+1}B_P} M_{p_0}f(y)\notag\\
    &\lesssim D\sum_{k\geq l}2^{-k(\sigma-\nu/p_0)}\inf_{y\in P^a} M_{p_0}f(y)\lesssim D\langle f\rangle_{p_0,Q_0^*}.\label{5.23}
\end{align}
Here we have assumed that $\sigma>\max\{\nu/p_0,\nu/q_0^{\prime}\}$. By the maximality of $P$, there exists $x_P\in P^a\,\backslash\, E_{Q_0}$. Since $P^a\subset Q_0$ (otherwise, we have $P=Q_0$, which contradicts $|P|\leq|Q_0|/2$), $\inf_{y\in P^a} M_{p_0}f(y)\leq M_{p_0,Q_0}^{\mathcal{D}}(x_P)\lesssim\langle f\rangle_{p_0,Q_0^*}$. Hence, by kolmogorov's inequality and the fact that $g$ is supported in $Q_0^*$,
\begin{align}
    \sum_j |P_j|\langle g\rangle_{q_0^\prime,P_j}\langle\|T(I-\widetilde{P}_{r_{P_j}^m}^N)(f\chi_{(P_j^*)^c})\|_H\rangle_{q_0,P_j}&\lesssim D\langle f\rangle_{p_0,Q_0^*}\sum_{j}|P_j|\inf_{y\in P_j} M_{q_0^{\prime}} g(y)\notag\\
    &\leq D\langle f\rangle_{p_0,Q_0^*}\int_{E_{Q_0}}M_{q_{0}^{\prime}}g(y)\,dy\notag\\
    &\lesssim D\langle f\rangle_{p_0,Q_0^*} |E_{Q_0}|^{1-1/q_0^{\prime}}\|g\|_{L^{q_0^{\prime}}}\notag\\
    &\lesssim D|Q_0|\langle f\rangle_{p_0,Q_0^*}\langle g\rangle_{q_{0}^{\prime},Q_0^*}.\label{5.24}
\end{align}

Then we estimate the second term in \eqref{5.21}. Since \textnormal{supp}$\,f\subset Q_0^*$, the maximality of $P$ gives $\langle\| T\widetilde{P}_{r_{P^a}^m}^N f\|_H\rangle_{q_0,P^a}\lesssim K\langle f\rangle_{p_0,Q_0^*}$. Moreover, by a similar argument as in \eqref{5.22}-\eqref{5.23}, we obtain that
\begin{align}
    \langle \|T(\widetilde{P}_{r_{P^a}^m}^N-\widetilde{P}_{r_{P}^m}^N)f\|_{H}\rangle_{q_0,P}&\lesssim\sum_{k\geq 0}\int_{r_{P}^m}^{r_{P^a}^m}\langle\|T\widetilde{Q}_{t}^N(f\chi_{A_k(B_P)})\|_H\rangle_{q_0,B_P}\frac{dt}{t}\notag\\
    &\lesssim D\inf_{y\in P^a} M_{p_0}f(y)\lesssim D\langle f\rangle_{p_0,Q_0^*}.\notag
\end{align}
Thus, by the same argument as in \eqref{5.24},
\begin{equation}
\sum_j|P_j|\langle g\rangle_{q_0^{\prime},P_j}\langle \|T\widetilde{P}_{r^m_{P_j}}^Nf\|\rangle_{q_0,P_j}\lesssim K|Q_0|\langle f\rangle_{p_0,Q_0^*}\langle g\rangle_{q_{0}^{\prime},Q_0^*}.\notag
\end{equation}
Combining this with \eqref{5.20}, \eqref{5.21} and \eqref{5.24}, we obtain that
\begin{equation}\label{5.25}
    \int_{Q_0}\|Tf\|_{H}|g|\,d\mu\leq C^{\prime}K|Q_0|\langle f\rangle_{p_0,Q_0^*}\langle g\rangle_{q_{0}^{\prime},Q_0^*}+\sum_j\int_{P_j}\|T_Pf\|_{H}|g|\,d\mu.
\end{equation}

Finally, we estimate $\|Tf-T_{Q_0}f\|_H=\|T\widetilde{P}_{r_{Q_0}^m}^Nf\|_{H}$. By the definition of $T^{\#}$, for every $x\in Q_0$, $T^{\#}f(x)\geq\langle\|T\widetilde{P}_{r_{Q_0}^m}^Nf\|_H\rangle_{q_0,Q_0}$. Thus, the weak type $(p_0,p_0)$ estimate in Proposition \ref{Property} \textnormal{(iii)} gives
$$|Q_0|\leq|\{x\in X:T^{\#}f(x)\geq\langle\|T\widetilde{P}_{r_{Q_0}^m}^Nf\|_H\rangle_{q_0,Q_0}\}|\lesssim\frac{K^{p_0}}{\langle\|T\widetilde{P}_{r_{Q_0}^m}^Nf\|_H\rangle_{q_0,Q_0}^{p_0}}\|f\|_{L^{p_0}}^{p_0}.$$
Hence, $\langle\|T\widetilde{P}_{r_{Q_0}^m}^Nf\|_H\rangle_{q_0,Q}\lesssim K|Q_0|^{-1/p_0}\|f\|_{L^{p_0}}\sim K \langle f\rangle_{p_0,Q_0^*}$, and
\begin{equation}\label{5.26}
\int_{Q_0}\|Tf-T_{Q_0}f\|_{H}|g|\,d\mu\lesssim K|Q_0|\langle f\rangle_{p_0,Q_0^*}\langle g\rangle_{q_0^{\prime},Q_0}\lesssim K|Q_0|\langle f\rangle_{p_0,Q_0^*}\langle g\rangle_{q_0^\prime,Q_{0}^*}.
\end{equation}
Combining this with \eqref{5.25}, we finish the proof of \eqref{Re}.

\medskip
\noindent\textbf{Step 2.2: Construction of the sparse family.}

We denote $\mathcal{S}_0:=\{Q_0\}$ and $\mathcal{S}_1:=\{P_j\}_j\subset Q_0$. We apply the argument above to each cube in $\mathcal{S}_1$ and then repeat this process. Let $\mathcal{S}_n=\{P_j^{(n)}\}_j$ be the family of cubes obtained at step $n$. By \eqref{Re} we have that

\begin{equation}\label{5.27}
    \int_{Q_0}\|T_{Q_0}f\|_{H}|g|\,d\mu\leq C^{\prime}K\sum_{\substack{P\in \mathcal{S}_k\\0\leq k\leq N-1}}|P|\langle f\rangle_{p_0,P^*}\langle g\rangle_{q_{0}^{\prime},P^*}+\sum_{P_j\in \mathcal{S}_N}\int_{P_j}\|T_{P_j}f\|_{H}|g|\,d\mu.
\end{equation}
Define $\mathcal{S}:=\bigcup_{i\in\mathbb{N}}\mathcal{S}_{i}$. For every $P\in \mathcal{S}_N$, we have that $\sum_{\substack{Q\in\mathcal{S}_{N+1}},Q\subset P}|Q|\leq({1}/{2})|P|$. Let $G_P:=P\,\setminus\bigcup_{\substack{Q\in\mathcal{S}_{N+1}},Q\subset P}Q$, then $\{G_P\}_{P\in\mathcal{S}}$ are pairwise disjoint and $|G_{P}|\geq (1/2)|P|$.
Thus, $\mathcal{S}$ is a $1/2$ sparse family, and $\sum_{P\in\mathcal{S}_N}|P|\leq |Q_0|/2^N$. Moreover, it follows from Proposition \ref{Property} \textnormal{(ii)} that
$$\int_{P}\|T_Pf\|_{H}|g|\,d\mu\leq\|T_Pf\|_{L^{2}(X;H)}\|g\|_{L^2(P)}\lesssim K\|f\|_{\infty}\|g\|_{\infty}|P^*|^{1/2}|P|^{1/2}\sim K|P|.$$
Hence, letting $N\rightarrow\infty$ in \eqref{5.27}, and combining the result with \eqref{5.26}, we obtain that
\begin{equation}
    \int_{Q_0}\|Tf\|_H|g|\,d\mu\leq C_1 K \sum_{P\in\mathcal{S}}|P|\langle f\rangle_{p_0,P^*}\langle g\rangle_{q_0^{\prime},P^*}.\notag
    \end{equation}
For every $P\in\mathcal{D}$, by the definition of $P^*$ in \eqref{Q*}, there exists a constant $c>0$ that $P^*\subset cB_P$. By Theorem \ref{Dyadic}, there exist $\overline{P}\in\bigcup_{b=1}^{K_{D}}\mathcal{D}^b$ that $P^*\subset \overline{P}$ and $|\overline{P}|\lesssim|P^*|$. Thus $|\overline{P}|\leq C|P|$ for some $C>1$. For each $P\in \mathcal{S}$, we fix a $\overline{P}$ for $P$. Define $\bar{\mathcal{S}}_b:=\{\overline{P}:P\in\mathcal{S},\overline{P}\in\mathcal{D}^b\}$, then for each $\overline{P}\in\bar{\mathcal{S}}_b$, we have $G_P\subset P\subset \overline{P}$ and $|G_P|\geq (1/2)|P|\geq(1/2C)|\overline{P}|$. Thus, $\bar{\mathcal{S}_b}$ is $1/(2C)$-sparse for every $b=1,...,K_D$. Moreover, we have that
\begin{align}
    \int_{Q_0}\|Tf\|_H|g|\,d\mu&\leq C_1K\sum_{b=1}^{K_D} \sum_{\overline{P}\in\bar{\mathcal{S}}_b}|\overline{P}|\langle f\rangle_{p_0,\overline{P}}\langle g\rangle_{q_0^{\prime},\overline{P}}.\label{5.28}
\end{align}
Finally, recall that here $K=D+AR^{\nu(1/p_0-1/q_0+\delta_s)}$ with $\delta_s=1/s-1/2$. For any $\varepsilon>0$, we choose $s<2$ sufficiently close to 2 that $\nu\delta_{s}<\varepsilon$. This completes the proof of Theorem \ref{sdc}.
\end{proof}

\subsection{Sparse bound for $q$-variation when $\varphi\in \Lambda^{\beta}$}
\noindent\textbf{Choice of representatives.}
 We first choose continuous representatives of $\varphi(tL)f$ for $f\in L^2(X)$. As noted in Section 4.1, even for every $t>0$ $\varphi(tL)f$ is well defined in $L^2(X)$, the $q$-variation may not be well defined pointwise. For $f\in L^2(X)$, it follows from \eqref{5.1} that there exist unique $f_0\in \ker L$ and $h\in \overline{\textnormal{Ran}\, L}$ such that $f=f_0+h$. Since $\varphi(0)=0$, we have that
 $$\varphi(tL)f_0=\int_{0}^{\infty}\varphi(t\lambda)\,dE_{L}(\{0\})f=0.$$ Thus, we choose the representative $\varphi(tL)f_0(x)\equiv0$ for all $t>0$ and $x\in X$.

 For $h\in \overline{\textnormal{Ran}\, L}$, let $g:=\varphi\circ\textnormal{exp}\in W^{\beta,2}$. Since $\beta>1/2$, we choose $g$ as a $C_0$-representative. Let $F(t):=g(t+\log L)h\in L^2(X)$. Then
 \begin{equation}\label{5.29}  \|F\|_{L^2(\mathbb{R};L^2(X))}^2=\int_{\mathbb{R}}\int_{(0,\infty)}|g(t+\log\lambda)|^2\,d\langle E_{L}(\lambda)h,h\rangle \,dt=\|g\|_{L^2}^2 \|h\|_{L^2(X)}^2.
 \end{equation}
Let $\widetilde{\mathcal{F}}$ be the Fourier transform on $L^2(\mathbb{R};L^2(X))$ and $\mathcal{F}$ be the Fourier transform on $L^2(\mathbb{R})$., we show that 
\begin{equation}\label{5.30}
    \widetilde{\mathcal{F}}F(\xi)=\mathcal{F}g(\xi)L^{i\xi}h\quad\text{in }L^2(\mathbb{R};L^2(X)).
\end{equation}
\begin{proof}[Proof of \eqref{5.30}]We argue by approximation. First we choose a sequence $g_n\in\mathcal{S}(\mathbb{R})\rightarrow g$ in $L^2(\mathbb{R})$. Define $h_n:=E_{L}([e^{-n},e^n])h$ and $F_n(t):=g_n(t+\log L)h_n$. With a similar calculation as in \eqref{5.29}, we have $$\|F_n(t)\|_{L^2(X)}\leq \|h\|_{L^2(X)}\sup_{|a|\leq n}|g_n(t+a)|\lesssim\|h\|_{L^2(X)}\sup_{|a|\leq n}(1+|t+a|)^{-2}\lesssim\|h\|_{L^2(X)}(1+|t|)^{-2},$$
the last inequality is due to $1+|t+a|\gtrsim n+1+|t+a|\geq 1+|t|$. Thus, $F_n\in L^1(\mathbb{R};L^2(X))\cap L^{2}(\mathbb{R};L^2(X))$, and
\begin{align*}
    \widetilde{\mathcal{F}}F_n(\xi)&=\int_{\mathbb{R}}e^{-it\xi}\int_{[e^{-n},e^n]}g_n(t+\log \lambda)\,dE_{L}(\lambda)h\,dt\\
    &=\int_{[e^{-n},e^n]}\int_{\mathbb{R}}e^{-it\xi}g_n(t+\log\lambda)\,dt\,dE_{L}(\lambda)h\\
    &=\mathcal{F}g_n(\xi)\int_{[e^{-n},e^n]}\lambda^{i\xi}\,dE_{L}(\lambda)h=\mathcal{F}g_n(\xi) L^{i\xi}h_n.
\end{align*}
Here the second identity follows from Fubini's theorem, since the integral is absolutely convergent. Then, it follows from \eqref{5.29} and $\|h_n\|_{L^2(X)}\leq\|h\|_{L^{2}(X)}$ that
$$\|\widetilde{\mathcal{F}}F_n-\widetilde{\mathcal{F}}F\|_{L^{2}(\mathbb{R};L^2(X))}\sim\|F_n-F\|_{L^2(\mathbb{R};L^{2}(X))}\leq\|g_n-g\|_{L^2}\|h_n\|_{L^2(X)}+\|g\|_{L^2}\|h_n-h\|_{L^2(X)}\rightarrow0.$$
Similarly, we have that
$$\|\mathcal{F}g_n(\cdot)L^{i(\cdot)}h_n-\mathcal{F}g(\cdot)L^{i(\cdot)}h\|_{L^{2}(\mathbb{R};L^2(X))}\lesssim\|g_n-g\|_{L^2}\|h_n\|_{L^2(X)}+\|g\|_{L^2}\|h_n-h\|_{L^2(X)}\rightarrow0.$$
By letting $n\infty$, we prove \eqref{5.30}.
\end{proof}
We fix a measurable representative of $F(t)$ as $F(t,x)$. By Fubini's theorem, we have $\int_{\mathbb{R}}|F(t,x)|^2\,dt<\infty$ for a.e. $x\in X$. Let $\widetilde{\mathcal{F}}F(\xi,x)$ be a measurable representative of $\widetilde{\mathcal{F}}F(\xi)$. Then for a.e. $x\in X$, we have $\widetilde{\mathcal{F}}F(\xi,x)=\mathcal{F}[F(\cdot,x)](\xi)$ in $L^2(\mathbb{R})$. We can first verify this identity for $F(t)=\sum_{k=1}^N \phi_k(t)h_k$ with $\phi_k\in\mathcal{S}(\mathbb{R})$ and $h_k\in L^2(X)$. The general case then follows by density. We refer the reader to \cite[Chapter 5]{HNVW16} for more background on the Fourier transform and Sobolev spaces of vector valued functions. Denote $\langle\xi\rangle:=(1+|\xi|^2)^{1/2}$, it follows from \eqref{5.30} that
\begin{equation}\label{5.31}
    \|F\|_{W^{\beta,2}(\mathbb{R};L^2(X))}^2=\int_{\mathbb{R}}\langle \xi\rangle^{2\beta}\|\widetilde{\mathcal{F}}F(\xi)\|_{L^2(X)}^2\,d\xi=\|h\|_{L^2(X)}^2\int_{\mathbb{R}}\langle \xi\rangle^{2\beta}|\mathcal{F}g(\xi)|^2\,d\xi=\|g\|_{W^{\beta,2}}^2\|h\|_{L^2(X)}^2<\infty.
\end{equation}
Thus, by Fubini's theorem we have that
\begin{equation}\label{5.32}
    \|F\|_{W^{\beta,2}(\mathbb{R};L^2(X))}^2=\int_{\mathbb{R}}\langle \xi\rangle^{2\beta}\|\widetilde{\mathcal{F}}F(\xi)\|_{L^2(X)}^2\,d\xi=\int_{X}\int_{\mathbb{R}}\langle\xi\rangle^{2\beta}|\mathcal{F}[F(\cdot,x)](\xi)|^2\, d\xi\,dx=\|F\|^2_{L^2(X;W^{\beta,2}(\mathbb{R}))},
\end{equation}
which implies that for a.e. $x\in X$, $F(\cdot,x)\in W^{\beta,2}(\mathbb{R})$. Since $\beta>1/2$, the Sobolev embedding gives a continuous representative $\widetilde{F}(\cdot,x)\in C_0(
\mathbb{R})$. For $t>0$, we choose $(\varphi(tL)h)(x):=\widetilde{F}(\log t,x)$. Finally, it remains to check that $\widetilde{F}(t,x)$ is a representative of $F(t)$ for every $t\in\mathbb{R}$. It follows from the choice of $\widetilde{F}$ that for a.e. $t\in\mathbb{R}$, $\widetilde{F}(t,\cdot)=F(t)$ in $L^2(X)$. We show that both $t\mapsto F(t)$ and $t\mapsto\widetilde{F}(t,\cdot)$ are continuous in $L^2(X)$, then they are equal for any $t\in\mathbb{R}$. Thus, $\widetilde{F}(t,x)$ is a representative of $F(t)$. For $F(t)$, if follows from the spectral theorem that
$$\|F(t+s)-F(t)\|_{L^2(X)}^2=\int_{(0,\infty)}|g(t+s+\log \lambda)-g(t+\lambda)|^2\,d\langle E_{L}(\lambda)h,h\rangle.$$
Recall that we take $g$ as a $C_0$-representative. As $s\rightarrow0$, we have $|g(t+s+\log\lambda)-g(t+\log\lambda)|^2\lesssim\|g\|_{\infty}^2$. Thus, by the dominated convergence theorem we have $F(t+s)\rightarrow F(t)$ in $L^2(X)$. For $\widetilde{F}$, since $\beta>1/2$, the Sobolev embedding gives that $\sup_{t\in\mathbb{R}}|\widetilde{F}(t,x)|\lesssim\|F(\cdot,x)\|_{W^{\beta,2}(\mathbb{R})}^2$. The right hand side is integrable over $X$ by \eqref{5.31} and $\eqref{5.32}$. Since $\widetilde{F}(\cdot,x)$ is continuous for a.e. $x\in X$, the dominated convergence theorem shows that $t\mapsto\widetilde{F}(t,\cdot)$ is continuous in $L^2(X)$. This completes the choice of representatives.

\medskip
We now come to the proof of Theorem \ref{thm2}. First, we state the following lemmas.
\begin{lem}\label{Fourierinverse}
    Let $L$ be a non-negative self-adjoint operator on a Hilbert space $\mathcal{H}$. Let $h\in W^{\alpha,2}(\mathbb{R})$ for some $\alpha>1/2$, and take its $C_0$-continuous representative. Then, for every $f\in \mathcal{H}$, we have
    $$h(L)f=\frac{1}{2\pi}\int_{\mathbb{R}}\mathcal{F}h(s)e^{isL}f\,ds,$$
    where the integral converges in $\mathcal{H}$ and $\mathcal{F}$ denotes the Fourier transform in $L^2(\mathbb{R})$. Especially, when $\mathcal{H}=L^2(X)$, there exist a representative $K(s,x)$ of $\mathcal{F}h(s)e^{isL}f$, such that 
    $$h(L)f(x)=\frac{1}{2\pi}\int_{\mathbb{R}}K(s,x)\,ds\quad\text{for a.e. }x\in X.$$ In the rest of the paper, we write $\mathcal{F}h(s)e^{isL}f(x)$ for $K(s,x)$.
  
\end{lem}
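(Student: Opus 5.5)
The plan is to reduce to the spectral theorem by a Fourier inversion on the real line, with $\alpha>1/2$ supplying integrability of $\mathcal{F}h$. Since $h\in W^{\alpha,2}(\mathbb{R})$ and $\alpha>1/2$, Cauchy--Schwarz gives
$$\int_{\mathbb{R}}|\mathcal{F}h(s)|\,ds\leq\Big(\int_{\mathbb{R}}\langle s\rangle^{-2\alpha}ds\Big)^{1/2}\|h\|_{W^{\alpha,2}}<\infty,$$
so $\mathcal{F}h\in L^1(\mathbb{R})$. The classical inversion formula then holds pointwise for the $C_0$-representative:
$$h(\lambda)=\frac{1}{2\pi}\int_{\mathbb{R}}\mathcal{F}h(s)e^{is\lambda}\,ds\quad\text{for every }\lambda\in\mathbb{R}.$$
Indeed, the right-hand side is continuous and agrees with $h$ a.e., and both sides are continuous, so they coincide everywhere. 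This is the only place where the choice of representative matters, and it is needed because $E_L$ may charge single points, for example $\{0\}$.

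Next, the map $s\mapsto \mathcal{F}h(s)e^{isL}f$ is strongly measurable, since $e^{isL}$ is a strongly continuous unitary group. Its norm is bounded by $|\mathcal{F}h(s)|\|f\|$, so the integral converges absolutely as a Bochner integral in $\mathcal{H}$. To identify it, I would pair with an arbitrary $g\in\mathcal{H}$ and write $\langle e^{isL}f,g\rangle=\int e^{is\lambda}\,d\langle E_L(\lambda)f,g\rangle$. The complex measure $\langle E_L f,g\rangle$ has total variation at most $\|f\|\|g\|$, so Fubini applies, with integrable majorant $|\mathcal{F}h(s)|$ with respect to $ds\times|d\langle E_Lf,g\rangle|$. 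This gives
$$\Big\langle\frac{1}{2\pi}\int\mathcal{F}h(s)e^{isL}f\,ds,g\Big\rangle=\int h(\lambda)\,d\langle E_L(\lambda)f,g\rangle=\langle h(L)f,g\rangle.$$
Since $g$ is arbitrary, the identity in $\mathcal{H}$ follows.

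For the pointwise statement on $\mathcal{H}=L^2(X)$, I would view $G(s):=\mathcal{F}h(s)e^{isL}f$ as an element of $L^1(\mathbb{R};L^2(X))$. The main technical step is to choose a jointly measurable representative $K(s,x)$ such that the Bochner integral is represented by $x\mapsto\int K(s,x)\,ds$. I would do this by approximation. Take simple functions $G_n\to G$ in $L^1(\mathbb{R};L^2(X))$, choosing them with $\sum\|G_{n+1}-G_n\|_{L^1(L^2)}<\infty$. Each simple function has an obvious jointly measurable representative. By Minkowski, $\int_X\big(\int\sum_n|G_{n+1}-G_n|\,ds\big)^2dx$ is controlled, or alternatively one can localize to finite-measure sets and use $L^1$. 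Hence $G_n(s,x)$ converges for a.e. $(s,x)$ to a limit $K$, and $\int K(s,x)\,ds=\lim\int G_n(s,x)\,ds$ for a.e. $x$. Finally, $\int G_n\,ds\to\int G\,ds$ in $L^2(X)$, so along a subsequence this convergence also holds a.e. Combined with the first part, this gives $h(L)f(x)=\frac{1}{2\pi}\int K(s,x)\,ds$ for a.e. $x$.
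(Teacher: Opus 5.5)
Your proof is correct and follows the paper's: Cauchy--Schwarz gives $\mathcal{F}h\in L^1(\mathbb{R})$, continuity upgrades a.e.\ Fourier inversion to an everywhere identity, and Fubini moves the $s$-integral through the spectral integral. Your weak version of that step, pairing with $g$ and using the scalar measures $\langle E_L f,g\rangle$, is a cleaner justification than the paper's operator-level interchange. For the pointwise statement the paper cites \cite[Proposition 1.2.25]{HNVW16}, while you build $K$ from a fast-converging simple-function approximation, which works. You should add one line: the same summability gives $G_n(s)\to G(s)$ in $L^2(X)$ for a.e.\ $s$, hence $K(s,\cdot)=G(s)$ for a.e.\ $s$, so $K$ really is a representative of $\mathcal{F}h(s)e^{isL}f$.
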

\begin{proof}[Proof of Lemma \ref{Fourierinverse}]
    By the H\"{o}lder's inequality, 
    $$\|\mathcal{F}h\|_{L^1}\leq \|\langle\cdot\rangle^{\alpha}\mathcal{F}h(\cdot)\|_{L^2}\|\langle\cdot\rangle^{-\alpha}\|_{L^2}\lesssim\|h\|_{W^{\alpha,2}}.$$
    Thus, $\widetilde{h}(\lambda):=(1/2\pi)\int_{\mathbb{R}}\mathcal{F}h(s) e^{is\lambda}\,ds$ is continuous in $\lambda$. Moreover, since $h\in W^{\alpha,2}(\mathbb{R})\subset L^{2}(
    \mathbb{R})$, we have $\mathcal{F}h\in L^{2}(\mathbb{R}
    )$. Since $\mathcal{F}h\in L^{1}(\mathbb{R})\cap L^2(\mathbb{R})$, Fourier inversion gives $h(\lambda)=\widetilde{h}(\lambda)$ for a.e. $\lambda\in\mathbb{R}$. Because both functions are continuous, they are equal for all $\lambda\in\mathbb{R}$. Then it follows from the spectral theorem that
    $$h(L)f=\int_{(0,\infty)}h(\lambda)\,dE_{L}(\lambda)f=\frac{1}{2\pi}\int_{(0,\infty)}\int_{\mathbb{R}}\mathcal{F}h(s)e^{is\lambda}\,ds\,dE_{L}(\lambda)f=\frac{1}{2\pi}\int_{\mathbb{R}}\mathcal{F}h(s)e^{isL}f\,ds.$$
    Here in the last identity we apply Fubini's theorem to change the order of integration. By the fact that $\mathcal{F}h\in L^1$, we can easily see that the integral converges absolutely.
    
    When $\mathcal{H}=L^2(X)$, we have $$\int_{\mathbb{R}}{\|\mathcal{F}h(s)e^{isL}f\|_{L^2(X)}}\,ds=\|f\|_{L^2(X)}\|\mathcal{Fh}\|_{L^{1}(\mathbb{R})}<\infty.$$
    Thus, it follows from \cite[Proposition 1.2.25]{HNVW16} that there exists a strongly measurable function $K(s,x)$, such that $K(s,\cdot)=\mathcal{F}h(s)e^{isL}f$ in $L^2(X)$ for a.e. $s$, and $$\left(\int_{\mathbb{R}}\mathcal{F}h(s)e^{isL}f
\,ds\right)(x)=\int_{\mathbb{R}}K(s,x)ds\quad\quad \text{for a.e. }x\in X.$$  Combining this with $h(L)f=(1/2\pi)\int_{\mathbb{R}}\mathcal{F}h(s)e^{isL}f\,ds$ in $L^2(X)$, we obtain the pointwise identity.
\end{proof}
\begin{lem}\label{Sobolevestimate}
    Let $h\in W^{\alpha,2}(\mathbb{R})$ for some $\alpha\geq 0$. For $l>0$, let $H_l(\lambda)=\lambda^N e^{-\lambda/2}h(\log(\lambda/l))$. Then for every integer $N\geq \alpha$, $H_{l}\in W^{\alpha,2}(\mathbb{R})$ and
    $$\int_{0}^{\infty}\|H_{l}\|_{W^{\alpha,2}(\mathbb{R})}^2\,\frac{dl}{l}\lesssim \|h\|_{W^{\alpha,2}(\mathbb{R})}^2,$$
    where the implicit constant only depends on $N$ and $\alpha$.
\end{lem}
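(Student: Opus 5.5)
The plan is to prove the estimate first for integer $\alpha=k$ by a direct computation, and then obtain non-integer $\alpha$ by complex interpolation. Throughout, $H_l$ is extended by $0$ to $\lambda\le 0$. The key structural fact is that the map
$$\mathcal{T}:h\mapsto (H_l)_{l>0}$$
is linear in $h$. The claim is therefore the boundedness
$$\mathcal{T}: W^{\alpha,2}(\mathbb{R})\to L^2\bigl((0,\infty),dl/l;W^{\alpha,2}(\mathbb{R})\bigr),$$
and it suffices to prove this at integer levels $\alpha=k\le N$.

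\textbf{Integer case.} Fix an integer $0\le k\le N$ and first take $h\in\mathcal{S}(\mathbb{R})$. On $(0,\infty)$, induction gives
$$\frac{d^j}{d\lambda^j}\bigl[h(\log\lambda-\log l)\bigr]=\lambda^{-j}\sum_{i\le j}c_{i,j}\,h^{(i)}(\log(\lambda/l)).$$
Also, $\frac{d^{r}}{d\lambda^r}(\lambda^Ne^{-\lambda/2})$ is a finite sum of terms $\lambda^{N-r'}e^{-\lambda/2}$ with $0\le r'\le r$. By Leibniz' rule, each $D^jH_l$ with $j\le k$ is then a finite sum of terms
$$\lambda^{a}e^{-\lambda/2}\,h^{(i)}(\log(\lambda/l)),\qquad a\ge N-j\ge 0,\quad i\le j.$$

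\textbf{Behaviour at $\lambda=0$.} Because $h$ is bounded together with its derivatives, each $D^jH_l$ with $j\le k-1$ is $O(\lambda^{N-j})\to0$ as $\lambda\to0^+$, since $N-j\ge1$. The zero extension therefore has no jump in its derivatives up to order $k-1$, so its distributional derivatives up to order $k$ coincide with the classical ones on $(0,\infty)$. This is exactly where $N\ge k$ is used.

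\textbf{The $dl/l$ integral.} For each term above, Fubini and the substitution $u=\log l$ give
$$\int_0^\infty\!\!\int_0^\infty \lambda^{2a}e^{-\lambda}\,|h^{(i)}(\log\lambda-\log l)|^2\,d\lambda\,\frac{dl}{l}=\|h^{(i)}\|_{L^2}^2\int_0^\infty\lambda^{2a}e^{-\lambda}\,d\lambda.$$
The last integral is finite since $a\ge0$. Summing over the finitely many terms yields
$$\int_0^\infty\|H_l\|_{W^{k,2}}^2\,\frac{dl}{l}\lesssim_{N,k}\|h\|_{W^{k,2}}^2 .$$
Density of $\mathcal{S}(\mathbb{R})$ in $W^{k,2}$, together with Fatou's lemma, extends the bound to all $h\in W^{k,2}$.

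\textbf{Fractional case.} Let $k<\alpha<k+1$. Since $N$ is an integer with $N\ge\alpha$, we have $N\ge k+1$, so $\mathcal{T}$ is bounded at both levels $k$ and $k+1$. We use the standard complex interpolation identities
$$[W^{k,2},W^{k+1,2}]_\theta=W^{k+\theta,2},\qquad [L^2(dl/l;X_0),L^2(dl/l;X_1)]_\theta=L^2(dl/l;[X_0,X_1]_\theta),$$
the second holding for Hilbert spaces $X_0,X_1$; see e.g. the vector-valued theory in \cite{HNVW16}. With $\theta=\alpha-k$ these give the claim for $W^{\alpha,2}$. Alternatively, one can work directly with the Fourier multiplier $\langle\xi\rangle^{\alpha}$ and the Hilbert-space structure. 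In either version the constant depends only on $N$ and $\alpha$.

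\textbf{Main obstacle.} The computation itself is routine. The two points needing care are the boundary regularity at $\lambda=0$, which is what forces $N\ge\lceil\alpha\rceil$ and explains the hypothesis that $N$ is an integer with $N\ge\alpha$, and the justification of vector-valued interpolation for non-integer $\alpha$. For the latter, we rely on the fact that all the spaces involved are Hilbert spaces, so the interpolation identities above are standard.
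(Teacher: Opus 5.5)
Your proposal is correct and follows essentially the same route as the paper. It does the integer-level Leibniz computation with the substitution $u=\log l$ (the paper's \eqref{5.33}), then complex interpolation between $W^{k,2}$ and $W^{k+1,2}$ and between $L^2(dl/l;W^{k,2})$ and $L^2(dl/l;W^{k+1,2})$. Your explicit handling of the zero extension at $\lambda=0$, and of why $N\ge k+1$ is available, covers points the paper leaves implicit.

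One small omission: your density/Fatou step, and interpolating only $h\mapsto(H_l)_{l>0}$, give $H_l\in W^{\alpha,2}(\mathbb{R})$ only for a.e. $l$. To get it for every $l>0$ as stated, add the fixed-$l$ bound $\|H_l\|_{W^{k,2}}\lesssim\|h\|_{W^{k,2}}$ (from the same terms via $\lambda^{2a+1}e^{-\lambda}\lesssim 1$, as in \eqref{5.34}), and interpolate the single map $h\mapsto H_l$ as the paper does.
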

\begin{proof}[Proof of Lemma \ref{Sobolevestimate}]
We first consider the case $\alpha=k\in\mathbb{N}$. Write $D=d/dx$. For $r\leq k$, we have
$$\frac{d^r}{d\lambda^r}h(\log(\lambda/l))=\lambda^{-r}(p_r(D)h)(\log(\lambda/b)),$$
where $p_r$ is a polynomial of degree $r$. Thus, for $j\leq k$, there exist constants $C_{a,b,r}>0$ such that
$$\frac{d^j}{d\lambda^j}H_l(\lambda)=\sum_{a+b+r=j}C_{a,b,r} \lambda^{N-a-r}e^{-\lambda/2}(p_r(D)h)(\log(\lambda/b)).$$
By the change of variable $x=\log l$, we have 
$$\int_{0}^{\infty}|(p_{r}(D)h)(\log\lambda-\log l)|^2\,\frac{dl}{l}=\|p_r(D)h\|_{L^2}^2\lesssim\|h\|_{W^{k,2}(\mathbb{R}
)}^2
.$$
Thus, it follows from $N\geq k\geq j\geq a+r$ that
\begin{equation}\label{5.33}
\int_{0}^{\infty}\left\|\frac{d^j}{d\lambda^j}H_l(\lambda)\right\|_{L^2}^2\,\frac{dl}{l}\lesssim\sum_{r=0}^j\|p_r(D)h\|_{L2}^2\sum_{a+b=j-r}\int_{0}^{\infty}\lambda^{2(N-a-r)}e^{-\lambda}\,d\lambda\lesssim\|h\|_{W^{k,2}(\mathbb{R})}^2.
\end{equation}
Summing over $0\leq j\leq k$, we obtain that
$$\int_{0}^{\infty}\|H_l\|_{W^{\alpha,2}(\mathbb{R})}^2\frac{dl}{l}\lesssim\|h\|_{W^{\alpha,2}(\mathbb{R})}.$$
Moreover, for fixed $l>0$ and $j\leq k$,
\begin{align}
\left\|\frac{d^j}{d\lambda^j}H_{l}\right\|_{L^2}^2&\lesssim\sum_{a+b+r=j}\int_{0}^{\infty}\lambda^{2(N-a-r)}e^{-\lambda}|(p_{r}(D)h)(\log\lambda-\log l)|^2 \,d\lambda\notag\\
&\lesssim\sum_{r=0}^j\int_{0}^{\infty}|(p_r(D)h)(\log\lambda-\log l)|^2\frac{d\lambda}{\lambda}\notag\\
&\lesssim\sum_{r=0}^j\|p_r(D)h\|_{L^2}^2\lesssim\|h\|_{W^{k,2}(\mathbb{R})}^2.\label{5.34}
\end{align}
Thus, $H_l\in W^{k,2}(\mathbb{R})$ and $\|H_l\|_{W^{k,2}(\mathbb{R})}\lesssim\|h\|_{W^{k,2}(\mathbb{R})}$.

For non-integer $\alpha$, denote $k=[\alpha]$ and choose $0\leq\theta\leq1$ such that $\alpha=(1-\theta) k+\theta(k+1)$. We apply a complex interpolation to the operator $h\rightarrow H_l$ between $W^{k,2}(\mathbb{R})\rightarrow W^{k,2}(\mathbb{R}
)$ and $W^{k+1,2}(\mathbb{R})\rightarrow W^{k+1,2}(\mathbb{R})$. By \cite[Section 6.4]{BL76} $[W^{k,2}(\mathbb{R}
),W^{k+1,2}(\mathbb{R})]_{\theta}=W^{\alpha,2}(\mathbb{R})$. Hence, we obtain $H_{l}\in W^{\alpha,2}(
\mathbb{R}
)$ and $\|H_l\|_{W^{\alpha,2}(\mathbb{R})}\lesssim\|h\|_{W^{\alpha,2}(\mathbb{R})}$. Moreover, by \cite[Theorem 5.1.2]{BL76} we also have 
$$[L^{2}(dl/l;W^{k,2}(
\mathbb{R}
)),L^{2}(dl/l;W^{k+1,2}(\mathbb{R}))]_{\theta}=L^2(dl/l;W^{\alpha,2}(\mathbb{R})).$$
Thus, by applying complex interpolation to $h\mapsto (H_l)_{l>0}$, we obtain the desired estimate.
\end{proof}
\begin{proof}[Proof of Theorem \ref{thm2}]
    Let $\chi\in C_{c}^{\infty}(\mathbb{R})$ be that $\chi\equiv1$ on $[-1,1]$, $\textnormal{supp}\,\chi\subset[-2,2]$, and $\chi$ is non-decreasing on $(-\infty,0]$ and non-increasing on $[0,\infty)$. Define $\psi_j:=\chi(2^{-j}(\cdot))-\chi(2^{-j+1}(\cdot))$ for $j\geq 1$ and $\psi_0=\chi$. Then $\sum_{j\geq0}\psi_{j}\equiv1$, and $\textnormal{supp}\,\psi_j\subset\{x\in \mathbb{R}: 2^{j-1}\leq|x|\leq 2^{j+1}\}$ for $j\geq 1$. For $\varphi\in \Lambda^{\beta}$ and $j\geq 0$, let $g:=\varphi\circ\textnormal{exp}\in W^{\beta,2}$ and $g_j=\mathcal{F}^{-1}[(\mathcal{F}g)\psi_j]$. Then $g_j\in W^{\gamma,2}$ for any $\gamma\geq0$ with $\|g_j\|_{W^{\gamma,2}}\sim2^{\gamma-\beta}\|g_j\|_{W^{\beta,2}}$, and
    \begin{equation}\label{5.35}
    \|g\|_{W^{\beta,2}(\mathbb{R})}\sim\left(\sum_{j\geq0}\|g_j\|_{W^{\beta,2}(\mathbb{R})}^2\right)^{1/2}\sim\left(\sum_{j\geq0}2^{2j(\beta-\gamma)}\|g_j\|^2_{W^{\gamma,2}(\mathbb{R})}\right)^{1/2}.
    \end{equation}
    Let $\varphi_j(\lambda)=g_j(\log\lambda)$ for $\lambda>0$. The Sobolev embedding gives that $\lim_{\lambda\rightarrow0}\varphi_j(\lambda)=0$, thus we define $\varphi_j(0)=0$. Fix $\eta>1/2$ ($\eta$ will be chosen at the end of the proof), we will apply Theorem \ref{sdc} with $H=\Lambda^{\eta}$, $T:f\mapsto\left(\varphi_j(sL)f\right)_{s>0}$ and $R=2^j$.

    \medskip
    \noindent\textbf{$\boldsymbol{T\Pi_0=0}$.} For $f\in L^2(X)$. By the spectral theorem, we have that for every $s>0$,
    \begin{equation}\label{5.36}
    \varphi_j(sL)\Pi_0f=\int_{0}^{\infty}\varphi_j(s\lambda)\,dE_{L}(\lambda)\,E_{L}(\{0\})f=\varphi_j(0)\Pi_0f=0.
    \end{equation}
    Thus, $\left(\varphi(sL)\right)_{s>0}\Pi_0=0$.

    \medskip
    \noindent\textbf{$\boldsymbol{TP_{t}^N=((P_t^N+\Pi_0)\otimes I_H)T}$.} By the spectral theorem, we have that for every $f\in L^2(X)$,
    \begin{equation}\label{5.37}
        \left(\varphi_j(sL)e^{-tL}f\right)_{s>0}=\left(e^{-tL}\varphi_j(sL)f\right)_{s>0}=(e^{-tL}\otimes I_{\Lambda^{\beta}})\left(\varphi_j(sL)f\right)_{s>0}.
    \end{equation}
    Thus, the desired result follows from Remark \textnormal{(i)} after Theorem \ref{sdc}.
    
    \medskip
    \noindent\textbf{$\boldsymbol{\|T\|_{L^2(X)\rightarrow L^{2}(X;H)}<\infty}$.} For $f\in L^2(X)$, denote $f_0=\Pi_0f\in\ker\,L$ and $h=f-f_0\in\overline{Ran\,L}=\ker L^{\perp}$. Then $\|f\|^2=\|f_0\|^2+\|h\|^2$. By \eqref{5.31}, \eqref{5.32} and \eqref{5.36}, we have that
    \begin{align}
        \|\varphi_j(sL)f\|_{L^{2}(X;\Lambda^{\eta})}&=\|g_j(s+\log L)\|_{L^2(X;W^{\eta,2}(\mathbb{R}))}=\|g_j(s+\log L)\|_{W^{\eta,2}(\mathbb{R};L^2(X)
        )}\notag\\
        &=\|g_j\|_{W^{\eta,2}(\mathbb{R})}\|h\|_{L^2(X)}\leq\|g_j\|_{W^{\eta,2}(\mathbb{R})}\|f\|_{L^2(X)}\label{5.38}.
    \end{align}

    \medskip
    \noindent\textbf{Proof of (\ref{OD}).} Fix $N\in\mathbb{N_+}$. Denote the Bessel potential $J_{\eta}g:=\mathcal{F}^{-1}(\mathcal{F}g(\cdot)\langle\cdot\rangle^{\eta})$. Since $(tL)^Ne^{-tL}f\in\textnormal{Ran}\, L$ for every $f\in L^2(X)$, it follows from \eqref{5.30} that
    \begin{align*}
    \langle\xi\rangle^\eta \mathcal{\widetilde{F}}\left[g_j(\cdot+\log L) Q_t^Nf\right](\xi)&=\langle\xi\rangle^\eta\mathcal{F}g_j(\xi)L^{i\xi}Q_t^Nf\\
    &=\mathcal{F}(J_\eta g_j)(\xi)L^{i\xi}Q_t^Nf\\
    &=\mathcal{\widetilde{F}}\left[(J_\eta g_j)(\cdot+\log L) Q_t^Nf\right](\xi)\quad\text{in } L^2(\mathbb{R};L^2(X)).
    \end{align*}
    Thus, the identity also holds in $L^2(\mathbb{R})$ for a.e. $x\in X$. Then by applying the identity $\widetilde{\mathcal{F}}F(\cdot,x)=\mathcal{F}[F(\cdot,x)]$ in $L^2$ for a.e. $x\in X$ (see argument before \eqref{5.31}), we obtain that
    $$\langle\xi\rangle^\eta \mathcal{F}\left[g_j(\cdot+\log L) Q_t^Nf(x)\right](\xi)=\mathcal{F}\left[(J_\eta g_j)(\cdot+\log L) Q_t^Nf(x)\right](\xi)\quad\text{in }L^2(\mathbb{R})\quad\text{for a.e. }x\in X.$$
    Let $\widetilde{H}_{r,j}(\lambda):=J_\eta g_j (\log \lambda-\log (t/r))\lambda^N e^{-\lambda/2}$ for $r>0$. By Lemma \ref{Sobolevestimate} we have $\widetilde{H}_{r,j}\in W^{\gamma,2}(\mathbb{R})$ for every $\gamma\geq 0$, and 
    \begin{equation}\label{5.39}
        \int_{0}^{\infty}\|\widetilde{H}_{r,j}\|_{W^{\gamma,2}(\mathbb{R})}^2\frac{dr}{r}=\int_{0}^{\infty}\|H_{(t/r)}\|_{W^{\gamma,2}(\mathbb{R})}^2\frac{dr}{r}\lesssim\|J_\eta g_j\|_{W^{\gamma,2}(\mathbb{R})}^2\sim2^{2\gamma j}\|g_j\|_{W^{\eta,2}(\mathbb{R})}^2.
    \end{equation}
    Consequently, for a.e. $x\in X$, it holds that
    \begin{align}
        \|\varphi_j(sL)Q_t^Nf(x)\|_{\Lambda^{\eta}}&\sim\|g_{j}(s+\log L)(tL)^Ne^{-tL}f(x)\|_{W^{\eta,2}(\mathbb{R})}\notag\\
        &=\left\|\mathcal{F}\left[(J_\eta g_j)(\cdot+\log L) (tL)^Ne^{-tL}f(x)\right]\right\|_{L^2(\mathbb{R})}\notag\\
        &\sim\left(\int_{\mathbb{R}}|J_\eta g_j(\xi+\log L)(tL)^Ne^{-tL}f(x)|^2\,d\xi\right)^{1/2}\notag\\
        &=\left(\int_{0}^{\infty}|J_\eta g_j (\log r+\log L)(tL)^Ne^{-tL}f(x)|^2\frac{dr}{r}\right)^{1/2}\notag\\
        &=\left(\int_{0}^{\infty}|\widetilde{H}_{r,j}(tL)e^{-tL/2}f(x)|^2\frac{dr}{r}\right)^{1/2}\label{5.40}
    \end{align}
By Lemma \ref{ext} \textnormal{(ii)}, we have that for every $M>0$,
\begin{align*}
\|\chi_{F}v_{t^{1/m}}^{\delta_{q_0}}e^{-t((1/2)-iu)L}v_{t^{1/m}}^{\delta_{p_0}}\chi_E\|_{p_0\rightarrow q_0}&\lesssim\textnormal{exp}\left(-c\left(\frac{d(E,F)}{t^{1/m}(1+|u|)}\right)^{m/(m-1)}\right)\\
&\lesssim\left(1+\frac{d(E,F)}{t^{1/m}}\right)^{-M}(1+|u|)^M.
\end{align*}
Here the implicit constant depend on $M$. Thus, by applying Minkowski's inequality and Lemma \ref{Fourierinverse}, we obtain that
\begin{align}
    \|\chi_{F}v_{t^{1/m}}^{\delta_{q_0}}\widetilde{H}_{r,j}(tL)e^{-tL/2}v_{t^{1/m}}^{\delta_{p_0}}(f\chi_{E})\|_{L^{q_0}}&\sim
    \left\|\int_{\mathbb{R}}\chi_{F}v_{t^{1/m}}^{\delta_{q_0}}(\mathcal{F}\widetilde{H}_{r,j})(u)e^{-t((1/2)-iu)L}\,v_{t^{1/m}}^{\delta_{p_0}}(f\chi_{E})\,du\right\|_{L^{q_0}}\notag\\
    &\leq\int_{\mathbb{R}}|(\mathcal{F}\widetilde{H}_{r,j})(u)|\|\chi_{F}v_{t^{1/m}}^{\delta_{q_0}}e^{-t((1/2)-iu)L}v_{t^{1/m}}^{\delta_{p_0}}(f\chi_E)\|_{L^{q_0}}\,du\notag\\
    &\lesssim\left(1+\frac{d(E,F)}{t^{1/m}}\right)^{-M}\|f\chi_{E}\|_{L^{p_0}}\int_{\mathbb{R}}|(\mathcal{F}\widetilde{H}_{r,j})(u)|(1+|u|)^M\,du.\label{5.41}
\end{align}
Fix $\kappa>1$, by H\'{o}lder's inequality,
\begin{align}
    \int_{\mathbb{R}}|(\mathcal{F}\widetilde{H}_{r,j})(u)|(1+|u|)^M\,du&\lesssim\left(\int_{\mathbb{R}}|(\mathcal{F}\widetilde{H}_{r,j})(u)|^2\langle u\rangle^{2M+\kappa}\,du\right)^{1/2}\left(\int_{\mathbb{R}}\langle u\rangle^{-\kappa}\,du\right)^{1/2}\notag\\
    &\lesssim\|\widetilde{H}_{r,j}\|_{W^{M+(\kappa/2),2}(\mathbb{R})}.\notag
\end{align}
Hence, it follows from \eqref{5.39}, \eqref{5.40} and Minkowski's inequality that
\begin{align}
   \|\chi_Fv_{t^{1/m}}^{\delta_{q_0}}\varphi_j(sL)Q_{t}^Nv_{t^{1/m}}^{\delta_{q_0}}(f\chi_{E})\|_{L^{q_0}(X;\Lambda^{\eta})}&\lesssim \|\chi_{F}v_{t^{1/m}}^{\delta_{q_0}}\widetilde{H}_{r,j}(tL)e^{-tL/2}v_{t^{1/m}}^{\delta_{p_0}}(f\chi_{E})\|_{L^{q_0}(X;L^2(dr/r))}\notag\\
    &\leq\|\chi_{F}v_{t^{1/m}}^{\delta_{q_0}}\widetilde{H}_{r,j}(tL)e^{-tL/2}v_{t^{1/m}}^{\delta_{p_0}}(f\chi_{E})\|_{L^{2}((dr/r);L^{q_0}(X))}\notag\\
    &\lesssim\left(1+\frac{d(E,F)}{t^{1/m}}\right)^{-M}\|f\chi_{E}\|_{L^{p_0}}\|\widetilde{H}_{r,j}\|_{L^2(dr/r;W^{M+(\kappa/2),2}(\mathbb{R}))}\notag\\
    &\lesssim2^{(M+(\kappa/2))}\left(1+\frac{d(E,F)}{t^{1/m}}\right)^{-M}\|f\chi_{E}\|_{L^{p_0}}\|g_j\|_{W^{\eta,2}(\mathbb{R})}.\label{5.42}
\end{align}
On the other hand, we have
    $$\chi_{F}v_{t^{1/m}}^{\delta_{q_0}}\varphi(sL)Q_{t}^Nv_{t^{1/m}}^{\delta_{p_0}}\chi_F=c_{N}^{-1}(\chi_Fv_{t^{1/m}}^{\delta_{q_0}}e^{-tL/4})\circ(\varphi(sL)(tL)^Ne^{-tL/2})\circ(e^{-tL/4}v_{t^{1/m}}\chi_{E}).$$
    Thus, by \eqref{5.31}, \eqref{5.32} with $\beta$ replaced by $\eta$, and \cite[Proposition 3.1 \textnormal{(ii)}]{B07}, it follows that
    \begin{align}
        &\quad\ \|\chi_{F}v_{t^{1/m}}^{\delta_{q_0}}\varphi_j(sL)Q_{t}^Nv_{t^{1/m}}^{\delta_{p_0}}\chi_F\|_{p_0\rightarrow q_0(\Lambda^{\eta})}\notag\\
        &\leq\|\chi_Fv_{t^{1/m}}^{\delta_{q_0}}e^{-tL/4}\| _{2(\Lambda^{\eta})\rightarrow q_0(\Lambda^{\eta})}\|\varphi_j(sL)(tL)^Ne^{-tL/2}\|_{2\rightarrow2(\Lambda^\eta)}\|e^{-tL/4}v_{t^{1/m}}\chi_{E}\|_{p_0\rightarrow2}\notag\\
        &\lesssim\|\chi_Fv_{t^{1/m}}^{\delta_{q_0}}e^{-tL/4}\|_{2\rightarrow q_0}\|g_j(s+\log L)(tL)^N e^{-tL/2}\|_{2\rightarrow2(W^{\eta,2}(\mathbb{R}))}\notag\\
        &\lesssim\|g_j\|_{W^{\eta,2}(\mathbb{R})}\|(tL)^Ne^{-tL}\|_{2\rightarrow2}\lesssim\|g_j\|_{W^{\eta,2}(\mathbb{R})}.\label{5.43}
    \end{align}
    For the second inequality, we used that $(tL)^Ne^{-tL}f=L[t^N L^{N-1}e^{-tL}f]\in\textnormal{Ran}\ L$ for every $f\in L^2(X)$. The last inequality follows from that $\sup_{\lambda\geq0} (t\lambda)^Ne^{-t\lambda}<\infty$ uniformly for all $t>0$. 

    We now fix a $\sigma>\nu\max\{1/p_0,1/q_0^{\prime}\}$. For $M>\sigma$, \eqref{5.42} and \eqref{5.43} give that
    \begin{align}
        \|\chi_{F}v_{t^{1/m}}^{\delta_{q_0}}\varphi_j(sL)Q_{t}^Nv_{t^{1/m}}^{\delta_{p_0}}\chi_F\|_{p_0\rightarrow q_0(\Lambda^{\eta})}&\lesssim\|g_j\|_{W^{\eta,2}(\mathbb{R})}\min\left\{1,2^{j(M+(\kappa/2))}\left(1+\frac{d(E,F)}{t^{1/m}}\right)^{-M}\right\}\notag\\
        &\leq\|g_j\|_{W^{\eta,2}(\mathbb{R})}\min\left\{1,2^{(\kappa\sigma)j/(2M)}\left(\frac{1}{2^j}+\frac{d(E,F)}{2^jt^{1/m}}\right)^{-\sigma}\right\}\notag\\
        &\lesssim\|g_j\|_{W^{\eta,2}(\mathbb{R})}2^{(\kappa\sigma)j/(2M)}\left(1+\frac{d(E,F)}{2^jt^{1/m}}\right)^{-\sigma}\label{5.44}
    \end{align}
    For the first inequality, we used $\min\{1,Y\}\leq\min\{1,Y^{\sigma/M}\}$. The second inequality follows by considering separately the cases $t^{1/m}\geq d(E,F)2^{-j}$ and $t^{1/m}<d(E,F)2^{-j}$. Next for any $\varepsilon>0$, choose $M$ sufficiently large such that $(\kappa\sigma)/(2M)<\epsilon$. Then by the doubling estimate $v_{2^jt^{1/m}}(x)\lesssim 2^{j\nu}v_{t^{1/m}}(x)$, we obtain that
    $$\|\chi_{F}v_{2^jt^{1/m}}^{\delta_{q_0}}\varphi_j(sL)Q_{t}^Nv_{2^jt^{1/m}}^{\delta_{p_0}}\chi_E\|_{p_0\rightarrow q_0(\Lambda^{\eta})}\lesssim 2^{j(\nu(1/p_0-1/q_0)+\varepsilon)}\|g_j\|_{W^{\eta,2}(\mathbb{R})}\left(1+\frac{d(E,F)}{2^jt^{1/m}}\right)^{-\sigma}.$$
    Combining this with \eqref{5.36}, \eqref{5.37} and \eqref{5.38}, we now apply Theorem \ref{sdc} to $T=(\varphi_j(sL))_{s>0}$ with $R=2^j$, $A\leq \|g_j\|_{W^{\eta,2}(\mathbb{R})}$ and $D=2^{j(\nu(1/p_0-1/q_0)+\varepsilon)}\|g_{j}\|_{W^{\eta,2}(\mathbb{R})}$. For bounded functions $f,g$ with compact support, by Lemma \ref{Dyadic} we choose a dyadic cube $Q_0\in\bigcup_{b=1}^{K_D} \mathcal{D}^b$ such that $\textnormal{supp}\ f\, \cup\, \textnormal{supp}\ g\subset Q_0\subset Q_0^*$. Thus, It follows from Lemma \ref{lambdaemb} that for every $j\geq0$, there exist sparse families $\mathcal{S}_{b,j}\subset \mathcal{D}^b$, $b=1,...,K_{D}$, such that
    \begin{equation}\label{5.45}
        |\langle V^{q}(\varphi_j(tL):t>0)f,g\rangle|\lesssim\langle\|\varphi_j(sL)f\|_{\Lambda^{\eta}},|g|\rangle\lesssim2^{j(\nu(1/p_0-1/q_0)+\varepsilon)}\|g_j\|_{W^{\eta,2}(\mathbb{R})}\sum_{b=1}^{K_D}\Lambda_{\mathcal{S}_{b,j};p_0,q_0}(f,g).
    \end{equation}
    Then by Lemma \ref{universal}, we can replace $\mathcal{S}_{b,j}$ here by sparse families $\mathcal{S}_b$ independent of $j$. Finally, for $\beta>\nu(1/p_0-1/q_0)+1/2$, we choose $\eta>1/2$ and $\varepsilon>0$ such that $\beta>\nu(1/p_0-1/q_0)+\eta+\epsilon$. It follows from \eqref{5.35} and the Cauchy-Schwarz inequality that
    \begin{align}
         |\langle V^{q}(\varphi(tL):t>0)f,g\rangle|&\leq\sum_{j\geq0}\, |\langle V^{q}(\varphi_j(tL):t>0)f,|g|\rangle|\notag\\
         &\lesssim\sum_{j\geq0}2^{j(\nu(1/p_0-1/q_0)+\eta+\epsilon-\beta)}\|g_{j}\|_{W^{\beta,2}(\mathbb{R})}\sum_{b=1}^{K_{D}}\Lambda_{\mathcal{S}_{b};p_0,q_0}(f,g)\notag\\
         &\leq\left(\sum_{j\geq0}2^{2j(\nu(1/p_0-1/q_0)+\eta+\epsilon-\beta)}\right)^{1/2}\left(\sum_{j\geq0}\|g_j\|_{W^{\beta,2}(\mathbb{R})}^2\right)^{1/2}\sum_{b=1}^{K_{D}}\Lambda_{\mathcal{S}_{b};p_0,q_0}(f,g).\notag\\
         &\lesssim\|g\|_{W^{\beta,2}(\mathbb{R})}\sum_{b=1}^{K_{D}}\Lambda_{\mathcal{S}_{b};p_0,q_0}(f,g).\notag
    \end{align}
    This completes the proof of Theorem \ref{thm2}.
\end{proof}
\begin{rem}
     Our initial proof applied the sparse domination criterion Lemma \ref{spDo} by Bernicot, Frey and Petermichl under a stronger condition $\beta>\nu+\nu/p_0+2$. Their proof in \textnormal{\cite{BFP16}} passes from off-diagonal estimates between two balls to estimates between a ball and an annulus. In our setting, this covering argument causes an additional loss of decay. In Lemma \ref{ext}, we therefore extend the complex-time generalized Gaussian estimates to arbitrary measurable sets $E,F$. These give off-diagonal estimates for $\varphi (sL) Q_{t}^N$ between such sets and weaken the condition on $\beta$ to $\beta>\sigma\max\{1/p_0,1/q_0^{\prime}\}+1$. Finally, we use a dyadic Fourier decomposition. For each dyadic piece, we have a new estimate $\eqref{5.43}$. Combining this with the usual estimate $\eqref{5.42}$ allows us to reduce the additional power of $(1+d(E,F)/2^jt^{1/m})$ to arbitrarily small number. This is the key point in our argument. For fixed $p$, the term $\nu(1/p_0-1/q_0)$ gives $L^p$-boundedness under the condition $\beta>\nu|1/p-1/2|$, which agrees with the sharp regularity condition for some examples (see Section 6 for more details). The additional $1/2$ comes from the requirement $\eta>1/2$ in the embedding $\Lambda^\eta\hookrightarrow V^q$. 
\end{rem}

\section{Sharpness of the regularity condition}
In this section, we show that the regularity condition in Theorem \ref{thm2} can not be lowered in general. We find a non-negative self-adjoint operator $L$ on $L^2(\mathbb{R})$. The semigroup will satisfies \textnormal{(DGGE$_{1,\infty,2}$)}, and thus \textnormal{(DGGE$_{p_0,q_0,2}$)} for every $1\leq p_0<2<q_0\leq\infty$. We show that for every $0\leq\alpha<1/p_0-1/q_0+1/2$, there exist a $\varphi\in\Lambda^{\alpha}$ such that $\varphi(L)\notin S(p_0,q_0)$. This conclude the sharpness for $q$-variational sparse bound. The counterexample in this section was generated by ChatGPT during discussions with the author. We checked the correctness and explains the argument below.

\noindent\textit{\textbf{The idea of the construction.} Following the method in \textnormal{\cite[Section 3]{BC20}}, We take $f=\chi_{E}$ and $g=\chi_{F}$, where $E$ and $F$ are intervals with $d(E,F)\sim R$. Then for every $Q$ in sparse family, the condition $Q\in E,F\neq \varnothing$ gives a uniform upper bound for $\Lambda_{\mathcal{S};p_0,q_0}(f,g)$ by $R^{1/q_0-1/p_0}$. For the operator, We construct a local operator with a simple eigenvalue at $1$. The integrals of its normalized eigenfunction $\Phi$ over $E$ and $F$ are bounded below, and a spectral gap allow us to write $\varphi(L)f=\langle \Phi f,f\rangle \Phi$}. Finally, we translate such local operators into disjoint intervals to build up a global operator that satisfies Gaussian dominations. The uniform boundedness principle gives a single multiplier for which the sparse bound fails.

\medskip

For $R\in2\pi\mathbb{N}$ with $R\geq100$ (we choose a sufficiently large constant), let $a_R=\pi-\arctan(R^{-1/2})$, $b_R=R+2a_{R}$. We consider the following form on $L^2(0,b_{R})$,
\begin{equation}\label{A1}
    \mathfrak{q}_{R}(u,v)=\int_{0}^{b_{R}} u^{\prime}(x)\overline{v^{\prime}(x)}\,dx+\sqrt{R}(u(a_{R})\overline{v(a_{R})}+u(a_{R}+R)\overline{v(a_{R}+R)})
\end{equation}
with $D(\mathfrak{q}_R)=W^{1,2}_0(0,b_{R})$. Point values are taken from the continuous representatives of $W^{1,2}$ functions. It follows by a simple calculation that $\mathfrak{q}_{R}$ is densely defined, closed, symmetric and non-negative. Thus, it defines a non-negative self-adjoint operator $L_{R}$ on $L^2(0,b_{R})$.

We next define the global operator. Let $R_{j}=2\pi(j+20)>100$, $s_1=0$ and $s_{j+1}=s_{j}+b_{R_j}$ for $j\geq1$. We write $J_{0}=(-\infty,0)$, $J_{j}=(s_j,s_{j+1})$, $c_{j,1}=s_{j}+a_{R_j}$ and $c_{j,2}=c_{j,1}+R_j$, which correspond to $a_R$ and $a_R+R$ in \eqref{A1}. Define a form on $L^{2}(\mathbb{R})$ that
\begin{equation}\label{A2}
\mathfrak{a}(u,v)=\int_{\mathbb{R}}u^{\prime}(x)\overline{v^{\prime}(x)}\,dx+\sum_{j\geq 1}\sqrt{R_j}(u(c_{j,1})\overline{v}(c_{j,1})+u(c_{j,2})\overline{v(c_{j,2})})
\end{equation}
with domain
$$D(\mathfrak{a})=\{u\in W^{1,2}(\mathbb{R}):u(s_j)=0\ (j\geq1),\ \sum_{j\geq1}\sqrt{R_j}(|u(c_{j,1})|^2+|u(c_{j,2})|^2)<\infty\}.$$
Thus, $\mathfrak{a}$ is densely defined, symmetric and non-negative on $L^2(\mathbb{R})$. We briefly check closedness. The form norm is $\|u\|_{\mathfrak{a}}^2=\|u\|^{2}_{W^{1,2}(\mathbb{R})}+\sum_j\sqrt{R_j}(|u(c_{j,1})|^2+|u(c_{j,2})|^2)$. If $(u_k)_k$ is a Cauchy sequence in this norm, then $u_k\rightarrow u$ in $W^{1,2}(\mathbb{R})$ and $(R_j^{1/4}u_{k}(c_{j,l}))_{j,l}\rightarrow z$ in $\ell^2$. By the Sobolev embedding $W^{1,2}(\mathbb{R})\hookrightarrow C_{0}(\mathbb{R})$, we have $u(s_j)=0$ and $z_{j,l}=R_{j}^{1/4}u(c_{j,l})$. Hence $u\in D(\mathfrak{a})$ and $u_k\rightarrow u$ in the form norm. This implies that $\mathfrak{a}$ is closed. Thus, $\mathfrak{a}$ defines a non-negative self-adjoint operator $L$ on $L^2(\mathbb{R})$. Formally, we may write
\begin{equation}\label{L}
L=-\partial_x^2+\sum_{j\geq1}\sqrt{R_j}(\delta_{c_{j,1}}+\delta_{c_{j,2}}),
\end{equation}
where $\delta_c$ denote the Dirac distribution that $\langle\delta_{c}u,v\rangle=u(c)v(c)$. We then check the semigroup assumption. For $u\in D(\mathfrak{a})$, we have $w=(\textnormal{Re}\,u)^{+}\in D(\mathfrak{a})$ and $\mathfrak{a}(w,w)\leq \mathfrak{a}(u,u)$. It follows from \cite[Theorem 2.7]{O05} that semigroup $\{e^{-tL}\}_{t>0}$ is positive. Let $\mathfrak{b}(u,v)=\int_{\mathbb{R}}u^{\prime}(x)\overline{v^{\prime}(x)}\,dx$ with $D(\mathfrak{b})=W^{1,2}(\mathbb{R})$ be the form associated with $-\partial_x^2$. Then for $u\in D(\mathfrak{a})$, $v\in W^{1,2}(\mathbb{R})$ with $0\leq v\leq u$, we have $v(s_j)=0$ and $\sum_{j,l}\sqrt{R_j}|v(c_{j,l})|^2<\infty$. Thus $v\in D(\mathfrak{a})$. This is the ideal condition in \cite[Proposition 2.23]{O05}. Moreover, for non-negative $u,v\in D(\mathfrak{a})$, we have 
$$\mathfrak{a}(u,v)-\mathfrak{b}(u,v)=\sum_{j\geq1}\sqrt{R_j}(u(c_{j,1})v(c_{j,1})+u(c_{j,2})v(c_{j,2}))\geq 0.$$
 The domination criterion \cite[Theorem 2.24]{O05} therefore gives the domination $|e^{-tL}f(x)|\leq e^{t\Delta_{\mathbb{R}}}|f|(x)$ a.e. $x\in\mathbb{R}$. The free heat semigroup is positive and contractive on $L^2(\mathbb{R})$. It also satisfies \textnormal{(GGE$_{1,\infty,2}$)}. Thus, we obtain \textnormal{(DGGE$_{1,\infty,2}$)} for $L$, which also implies \textnormal{(DGGE$_{p_0,q_0,2}$)} for every $1\leq p_0<2<q_0\leq \infty$.

 We collect some properties in the following proposition. Its proof will be given at the end of this section. For $j\geq1$, let $U_j: L^2(0,b_{R_j})\rightarrow L^2(J_j)$ be the translation $(U_jv)(x)=v(x-s_j)$, and let $L_j=U_j L_{R_j} U_{j}^{-1}$ be the translation of $L_R$ into interval $J_j$. Let $L_0$ be the Laplacian $-\partial_x^2$ on $J_0=(-\infty,0)$, which is associated with the form $\mathfrak{q}_0(u,v)=\int_{-\infty}^0u^{\prime}(x)\overline{v^{\prime}(x)}\,dx$ with $D(\mathfrak{q_0})=W^{1,2}_0(J_0)$.
\begin{prop}\label{propertyL}
Let $R\in2\pi\mathbb N$, $R\geq100$, the following propositions holds.\\

\noindent\textnormal{(i)} For $j\geq0$, let $\tau_j: L^2(J_j)\rightarrow L^2(\mathbb{R})$ be the extension by zero. Then for every bounded Borel function $\varphi:[0,\infty)\rightarrow \mathbb{C}$ and $f\in L^2(\mathbb{R})$, we have
    $$\varphi(L)f=\sum_{j\geq0}\tau_j \varphi(L_j)(f\,|_{J_j}).$$
\noindent \textnormal{(ii)} Define
\begin{equation}\label{eigenfunction}
u_R(x)=
\begin{cases}
\sin x,&0\leq x\leq a_R,\\
\sin a_R\cos(x-a_R),&a_R\leq x\leq a_R+R,\\
\sin(b_R-x),&a_R+R\leq x\leq b_R.
\end{cases}\notag
\end{equation}
Then $u_R\in D(L_R)$, $L_Ru_R=u_R$, and
$\ker(L_R-I)=\operatorname{span}\{u_R\}$, $\|u_R\|_2^2\sim1$. Let $\Phi_R=u_R/\|u_R\|_2$, $E_R=(\pi/4,3\pi/4)$ and $F_R=b_R-E_R$. Then
\begin{equation}\label{sh:mass}
\int_{E_R}\Phi_R=\int_{F_R}\Phi_R=\frac{\sqrt2}{\|u_R\|_2}.\notag
\end{equation}
\noindent\textnormal{(iii)} There is a constant $c_0>0$ such that
\begin{equation}
\textnormal{dist}\left(1,\sigma(L_R)\setminus\{1\}\right)\geq\frac{c_0}{R}.\notag
\end{equation}
\end{prop}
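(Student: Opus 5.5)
We prove the three parts separately; each is elementary once the form structure is written out.

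For \textnormal{(i)}, we use the orthogonal decomposition $L^2(\mathbb{R})=\bigoplus_{j\geq0}\tau_jL^2(J_j)$. Since every $u\in D(\mathfrak{a})$ vanishes at the points $s_j$, we have $u|_{J_j}\in W^{1,2}_0(J_j)$ for each $j$. The form then splits as $\mathfrak{a}(u,v)=\sum_{j\geq0}\mathfrak{q}_{(j)}(u|_{J_j},v|_{J_j})$. Here $\mathfrak{q}_{(0)}=\mathfrak{q}_0$, and $\mathfrak{q}_{(j)}$ is the translate of $\mathfrak{q}_{R_j}$ under $U_j$. Conversely, a family $(u_j)$ with $u_j\in D(\mathfrak{q}_{(j)})$ and $\sum_j\mathfrak{q}_{(j)}(u_j,u_j)+\|u_j\|_2^2<\infty$ glues to an element of $D(\mathfrak{a})$. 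This holds because the glued function is continuous, vanishes at each $s_j$, lies in $W^{1,2}(\mathbb{R})$, and has a finite weighted point sum. Hence $D(\mathfrak{a})=\{u:\ u|_{J_j}\in D(\mathfrak{q}_{(j)}),\ \sum_j\|u|_{J_j}\|_{\mathfrak{q}_{(j)}}^2<\infty\}$, so $\mathfrak{a}$ is the orthogonal sum of the forms $\mathfrak{q}_{(j)}$. The associated operator is therefore $L=\bigoplus_j L_j$, with $L_j=U_jL_{R_j}U_j^{-1}$ for $j\geq1$. The resolvents decompose accordingly, so the spectral measure is $E_L=\bigoplus_jE_{L_j}$. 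It follows that $\varphi(L)=\bigoplus_j\varphi(L_j)$ for every bounded Borel function $\varphi$, which is exactly the stated formula.

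For \textnormal{(ii)}, we first check that $u_R\in W^{1,2}_0(0,b_R)$. It is continuous: at $a_R$ both pieces equal $\sin a_R$. At $a_R+R$, the middle piece equals $\sin a_R\cos R=\sin a_R$ because $R\in2\pi\mathbb{N}$, and the right piece also equals $\sin a_R$. Moreover $u_R(0)=u_R(b_R)=0$. On each open piece $-u_R''=u_R$. The form identity $\mathfrak{q}_R(u_R,v)=\langle u_R,v\rangle$ for all $v\in W^{1,2}_0(0,b_R)$ reduces, after integrating by parts piecewise, to the jump conditions $u_R'(c^-)-u_R'(c^+)=\sqrt R\,u_R(c)$ at $c=a_R$ and $c=a_R+R$. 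At $a_R$ the jump is $\cos a_R-0=\cos a_R$, so the condition reads $-\cot a_R=\sqrt R$, i.e. $\tan a_R=-R^{-1/2}$. This is exactly the definition $a_R=\pi-\arctan(R^{-1/2})$, and the point $a_R+R$ is symmetric. Hence $u_R\in D(L_R)$ and $L_Ru_R=u_R$.

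For simplicity of the eigenvalue, any eigenfunction with eigenvalue $1$ is determined on $(0,a_R)$ by the Dirichlet condition up to a multiple of $\sin x$. Continuity and the jump condition then propagate it uniquely, so the eigenspace is one-dimensional. For the norm, $\sin^2a_R\sim R^{-1}$ and the middle interval has length $R$, so the middle piece contributes $\sim1$; the end pieces contribute $\sim1$ as well. Thus $\|u_R\|_2^2\sim1$. For the masses, $E_R\subset(0,a_R)$ because $a_R>3\pi/4$ for $R\geq100$, and $\int_{\pi/4}^{3\pi/4}\sin x\,dx=\sqrt2$; the interval $F_R$ gives the same value by symmetry.

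For \textnormal{(iii)}, which is the main obstacle, we study eigenvalues $\lambda=k^2$ near $1$. Following the construction above, a solution is $\sin(kx)$ on $(0,a_R)$ and $\sin(k(b_R-x))$ on $(a_R+R,b_R)$. In the middle we write $A\cos(k(x-a_R))+B\sin(k(x-a_R))$; the jump condition at $a_R$ determines $(A,B)$. The jump condition at $a_R+R$ then yields a secular equation $G(k)=0$, where $G$ is an explicit combination of $\sin(kR)$, $\cos(kR)$, $\sin(ka_R)$ and $\cos(ka_R)$, with coefficients involving $\sqrt R$ and $k$. The plan is to set $k=1+\epsilon$ with $|\epsilon|\leq c/R$ and expand. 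We would show that $G(1)=0$ and $|G'(1)|\gtrsim$ a power of $R$ that dominates $\sup_{|k-1|\le c/R}|G''|\cdot R^{-1}$, so $k=1$ is the only zero in that window. The delicate point is the interaction of the phase $kR$, which moves by $O(1)$ over the window, with the large coefficient $\sqrt R$. We would handle it by normalizing $G$ by $\sqrt R\sin a_R\sim1$, which turns the equation into essentially $\sin(\epsilon R)\cdot(\text{nonzero factor})+O(R^{-1/2})=0$. That isolates the zero $\epsilon=0$ in $|\epsilon|<c_0/R$ and gives the claimed spectral gap $\textnormal{dist}(1,\sigma(L_R)\setminus\{1\})\geq c_0/R$; the spectrum is discrete since the interval is bounded.
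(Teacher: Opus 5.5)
Parts (i) and (ii) are fine. In (i) you identify $\mathfrak a$ as the orthogonal sum of the forms $\mathfrak q_{(j)}$ and read off $L=\bigoplus_j L_j$. The paper instead only checks that $\tau_jD(L_j)\subset D(L)$ with $L\tau_j=\tau_jL_j$, and then uses that such an intertwining passes to bounded Borel functions. That avoids characterizing all of $D(\mathfrak a)$, but both routes are correct. In (ii) the jump condition as you wrote it has the wrong sign: the form gives $u'(c^-)-u'(c^+)=-\sqrt R\,u(c)$. The relation you then derive, $-\cot a_R=\sqrt R$, is nevertheless the right one.

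The genuine gap is in (iii), at its only nontrivial step: showing that $k=1$ is the only root of the secular equation in the window. Written as in the paper, the equation is $(A(k)^2-k^2)\sin(kR)+2kA(k)\cos(kR)=0$ with $A(k)=k\cot(ka_R)+\sqrt R$. The fact you do not identify is that $A(1)=0$ but
\[
A'(1)=\cot a_R-a_R\csc^2a_R=-\sqrt R-a_R(R+1)\approx-\pi R,
\]
because $\csc^2a_R=R+1$. So with $\theta=(k-1)R$ you get $A=-\pi\theta+O(|\theta|R^{-1/2})$, which is of size $O(1)$ across the window, and the equation reads
\[
(\pi^2\theta^2-1)\sin\theta-2\pi\theta\cos\theta+O(|\theta|R^{-1/2})=0 .
\]
The cross term $2kA\cos(kR)$ is therefore as large as the $\sin(kR)$ term, not an $O(R^{-1/2})$ correction. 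The root $\theta=0$ is isolated only because the two contributions have the same sign: the linear coefficient is $-(1+2\pi)$, with no cancellation.

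Your reduction to ``$\sin(\epsilon R)\cdot(\text{nonzero factor})+O(R^{-1/2})=0$'' does not capture this. Even granting it, an additive $O(R^{-1/2})$ error that does not vanish at $\epsilon=0$ only confines the roots to $|\epsilon|\lesssim R^{-3/2}$. It cannot exclude a second root there, so it yields no gap of size $c_0/R$.

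Your Taylor plan can be rescued, but not by a separation in powers of $R$. Both $|G'(1)|$ and $\sup_{|k-1|\le c/R}|G''|\cdot c/R$ are of order $R$ (up to powers of $c$). You need the actual value $G'(1)\approx-(1+2\pi)R$, which is exactly where $A'(1)$ enters, together with a small window constant.

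The paper avoids expansions altogether. On $|k-1|<\delta/R$ it shows that $\operatorname{sgn}A(k)=-\operatorname{sgn}(k-1)$ and that $|A(k)|\le 40R|k-1|<0.4<k$, so $A^2-k^2<0$. It also uses $\cos(kR)>0$ and $\operatorname{sgn}\sin(kR)=\operatorname{sgn}(k-1)$. Hence both terms of the secular equation have the same strict sign unless $k=1$.
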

Here \textnormal{(i)} shows that $L$ is the orthogonal sum of $L_j$ on the intervals $J_j$, where $L_j$ is a translated copy of $L_{R_j}$ for $j\geq 1$. The condition $u(s_j)=0$ ensure that these intervals do not interact. \textnormal{(ii)} and \textnormal{(iii)} provide the eigenfunction estimates and the spectral gap for each $L_{j}$. We first assume Proposition \ref{propertyL} and prove the existence of the counterexample.
\begin{thm}\label{counterexample}
    Let $L$ be the operator above. Let $1\leq p_0<2<q_0\leq\infty$. For every $0\leq\beta<1/p_0-1/q_0+1/2$, there exists $\varphi\in\Lambda^{\beta}$ such that $\varphi(L)\notin S(p_0,q_0)$.
\end{thm}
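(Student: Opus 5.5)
Assuming Proposition \ref{propertyL}, we argue by contradiction together with the uniform boundedness principle. For each $j\geq1$ we build a multiplier $\varphi_j$ that isolates the eigenvalue $1$ of $L_j$. Let $\theta\in C_c^\infty(\mathbb{R})$ with $\theta(0)=1$ and $\textnormal{supp}\,\theta\subset[-1,1]$, and set $g_j(x)=\theta(c_0^{-1}R_j(x))$ with $\varphi_j=g_j\circ\log$, so that $\varphi_j(\lambda)=\theta(c_0^{-1}R_j\log\lambda)$. Then $\varphi_j(1)=1$. Moreover $\varphi_j$ vanishes on $\sigma(L_j)\setminus\{1\}$: by Proposition \ref{propertyL} (iii) we have $|\log\lambda|\gtrsim c_0/R_j$ there, and after shrinking $c_0$ this gives $\varphi_j(\lambda)=0$. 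Since $g_j$ is an $L^2$-normalized-type dilation at scale $R_j^{-1}$, we get $\|\varphi_j\|_{\Lambda^\beta}=\|g_j\|_{W^{\beta,2}}\sim R_j^{\beta-1/2}$.

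By Proposition \ref{propertyL} (i),(ii), $\varphi_j(L)f=\tau_j\langle f|_{J_j},\Phi_j\rangle\Phi_j$ for $f$ supported in $J_j$, where $\Phi_j=U_j\Phi_{R_j}$. On the other intervals $J_k$, $k\neq j$, we have $\varphi_j(L_k)$ acting there as well, but we only test with $f=\chi_{E_j}$ and $g=\chi_{F_j}$, where $E_j=s_j+E_{R_j}$ and $F_j=s_j+F_{R_j}$ are both inside $J_j$. This gives
\[
\langle\varphi_j(L)f,g\rangle=\int_{E_j}\Phi_j\int_{F_j}\Phi_j\sim1 .
\]

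For the sparse side, fix any $\eta$-sparse family $\mathcal{S}$. Only cubes $Q$ meeting both $E_j$ and $F_j$ contribute, so $\ell(Q)\gtrsim R_j$, and for these $|Q|\langle f\rangle_{p_0,Q}\langle g\rangle_{q_0',Q}\lesssim|Q|^{1-1/p_0-1/q_0'}=|Q|^{1/q_0-1/p_0}$. The cubes of a fixed dyadic generation that meet $E_j$ number $O(1)$. Summing the geometric series over generations with $\ell(Q)\gtrsim R_j$, and using $1/q_0-1/p_0<0$, gives $\Lambda_{\mathcal{S};p_0,q_0}(f,g)\lesssim R_j^{1/q_0-1/p_0}$ uniformly in $\mathcal{S}$. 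Hence the best constant $C(\varphi)$ in the $S(p_0,q_0)$ bound for $\varphi(L)$ satisfies
\[
C(\varphi_j)\gtrsim R_j^{1/p_0-1/q_0}=R_j^{(1/p_0-1/q_0+1/2-\beta)}\|\varphi_j\|_{\Lambda^\beta}\cdot R_j^{\beta-1/2}\|\varphi_j\|_{\Lambda^\beta}^{-1}\sim R_j^{\gamma}\|\varphi_j\|_{\Lambda^\beta},
\]
where $\gamma=1/p_0-1/q_0+1/2-\beta>0$.

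To conclude, suppose every $\varphi\in\Lambda^\beta$ gave $\varphi(L)\in S(p_0,q_0)$. For fixed $f,g$ that are bounded with compact support, define $N_{f,g}(\varphi):=|\langle\varphi(L)f,g\rangle|/\sum_b\Lambda_{\mathcal{S}_b^{u}}(f,g)$, where $\mathcal{S}_b^u$ is the universal sparse family of Lemma \ref{universal} and is independent of $\varphi$. Each $N_{f,g}$ is continuous on $\Lambda^\beta$, since $\varphi\mapsto\langle\varphi(L)f,g\rangle$ is a bounded functional once $|\varphi(L)|$ is controlled in $L^2$. For $\beta>1/2$ this follows from the embedding into $L^\infty$. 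For $\beta\le1/2$ the functional is not obviously defined on all of $\Lambda^\beta$; in that case I would first extend via $\sum_j$ applied to a concrete series, as described below. The hypothesis says $\sup_{f,g}N_{f,g}(\varphi)<\infty$ for each $\varphi$, so Banach–Steinhaus gives $C(\varphi)\lesssim\|\varphi\|_{\Lambda^\beta}$, which contradicts $R_j^\gamma\to\infty$.

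The main obstacle is making this work in the low-regularity range $\beta\le1/2$, where $\varphi(L)$ need not be bounded on $L^2$. There I would avoid Banach–Steinhaus and directly construct $\varphi=\sum_k \epsilon_k\varphi_{j_k}/\|\varphi_{j_k}\|_{\Lambda^\beta}$ with $\sum\epsilon_k<\infty$ and $\epsilon_kR_{j_k}^\gamma\to\infty$. The pieces have disjoint spectral supports near $1$ in each block, and each $L_j$ has only the eigenvalue $1$ near $1$, so they act separately and $\varphi(L)$ is even bounded on $L^2$. Testing with $f=\chi_{E_{j_k}}$ and $g=\chi_{F_{j_k}}$ then isolates the $k$-th piece, since the other pieces annihilate the eigenvector $\Phi_{j_k}$.
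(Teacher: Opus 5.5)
Your argument for $\beta>1/2$ is correct and is essentially the paper's. You isolate the eigenvalue $1$ of $L_{R_j}$ with a bump of width $\sim 1/R_j$ in $\log\lambda$, using the gap in Proposition~\ref{propertyL}(iii). You then use $\|\varphi_j\|_{\Lambda^\beta}\lesssim R_j^{\beta-1/2}$, bound every sparse form at $(\chi_{E_j},\chi_{F_j})$ by $R_j^{1/q_0-1/p_0}$, and conclude with Banach--Steinhaus. The paper instead indexes the functionals by $j$, namely $T_j(m)=R_j^{1/p_0-1/q_0}\langle m(L)f_j,g_j\rangle$ on multipliers supported in $[1/2,2]$. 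Since your bound on $\Lambda_{\mathcal{S};p_0,q_0}(f_j,g_j)$ holds for every family of cubes, you do not need the universal sparse family either.

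The gap is in your treatment of $\beta\le 1/2$: the series $\varphi=\sum_k\epsilon_k\varphi_{j_k}/\|\varphi_{j_k}\|_{\Lambda^\beta}$ does not behave as you claim.
\begin{itemize}
\item The supports $\{|\log\lambda|\le c_0/R_{j_k}\}$ are nested intervals all containing $\lambda=1$, not disjoint. Since $\varphi_{j_i}(1)=\theta(0)=1$ for every $i$, each piece acts as the identity on $\Phi_{j_k}$ rather than annihilating it.
\item Testing with $(\chi_{E_{j_k}},\chi_{F_{j_k}})$ therefore does not isolate the $k$-th term. Every piece with $R_{j_i}\ge R_{j_k}$ contributes $\epsilon_i\|\varphi_{j_i}\|_{\Lambda^\beta}^{-1}\langle\chi_{E_{j_k}},\Phi_{j_k}\rangle\langle\Phi_{j_k},\chi_{F_{j_k}}\rangle$.
\item The pieces with $R_{j_i}<R_{j_k}$ have support wider than the gap $c_0/R_{j_k}$ of $L_{j_k}$, so in general they also see other eigenvalues of $L_{j_k}$. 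Their contribution would need a separate argument, e.g.\ off-diagonal decay of $\varphi_{j_i}(L)$ across the distance $d(E_{j_k},F_{j_k})\sim R_{j_k}$, and the proposal supplies none.
\item $\varphi(1)=\sum_k\epsilon_k\|\varphi_{j_k}\|_{\Lambda^\beta}^{-1}$ with $\|\varphi_{j_k}\|_{\Lambda^\beta}^{-1}\sim R_{j_k}^{1/2-\beta}$. So your claim that $\varphi(L)$ is bounded on $L^2$ requires a summability condition on $\epsilon_k$ that you did not impose.
\end{itemize}
None of this is necessary. Since $\Lambda^\alpha\subset\Lambda^\beta$ for $\beta\le\alpha$ and $1/p_0-1/q_0>0$, choose $\alpha\in(1/2,\,1/p_0-1/q_0+1/2)$ and run your $\beta>1/2$ argument at regularity $\alpha$. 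The resulting $\varphi\in\Lambda^\alpha\subset\Lambda^\beta$ already proves the theorem; this is exactly the reduction with which the paper's proof begins.
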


\begin{proof}
     Since $\Lambda^{\alpha}\subset \Lambda^{\beta}$ for $0\leq\beta\leq\alpha$, we may assume $\beta>1/2$ without loss of generality. For $j\geq 1$, set $E_j=s_j+E_{R_j}$, $F_{j}=s_j+F_{R_j}$ be the translation of $E_{R}$ and $F_{R}$ in Proposition \ref{propertyL}. Let $f_{j}=\chi_{E_j}$ and $g_{j}=\chi_{F_j}$. For each sparse family $\mathcal{S}\subset \mathcal{D}$ of dyadic cubes (interval), we consider $\Lambda_{\mathcal{S};p_0,q_0}(f_j,g_j)=\sum_{Q\in\mathcal{S}}\langle f_j\rangle_{p_0,Q}\langle g_j \rangle_{q_0^{\prime},Q}|Q|$. Only cube $Q$ meeting both $E_j$ and $F_j$ contribute to the sum. Since $R_j>100$ and $0\leq a_j\leq \pi$, we have $d(E_j,F_j)=R_j+2a_j-(3\pi/2)\sim R_j$. Thus every contributing cube $Q$ satisfies $\ell(Q)\gtrsim R_j$. For every $k\geq0$, number of elements in $\{Q\in\mathcal{D}:Q\cap E_{j}\neq\varnothing,\ Q\cap F_j\neq\varnothing,\ \ell(Q)\sim 2^k R_j\}$ is uniformly bounded. Hence, 
    $$\sum_{\substack{Q\in\mathcal{S}\\\ell(Q)\sim 2^k R_j}}\langle f_j\rangle_{p_0,Q}\langle g_j\rangle_{q_0^{\prime},Q}|Q|\lesssim|E_j|^{1/p_0}|F_j|^{1/q_0^{\prime}}(2^{k}R_j)^{1-1/p_0-1/q_0^{\prime}}\sim 2^{-k(1/p_0-1/q_0)}R_j^{1/q_0-1/p_0}.$$
    Summing over $k$ gives that
    \begin{equation}\label{A4}
        \Lambda_{\mathcal{S};p_0,q_0}(f_j,g_j)\lesssim R_j^{1/q_0-1/p_0}.
    \end{equation}
    Next we choose a $\chi\in C_c^{\infty}(-\varepsilon,\varepsilon)$ with $0<\varepsilon<\min\{c_0/4,1\}$ and $\chi(0)=1$. For $R\in 2\pi\mathbb{N}$ with $R\geq100$, define $\varphi_{R}(\lambda)=\chi(R\log\lambda)$ for $\lambda>0$ and $\varphi_{R}(0)=0$. Then
    \begin{equation}\label{A5}
    \|\varphi_R\|_{\Lambda^{\beta}}=\|\chi(R\cdot)\|_{W^{\beta,2}(
    \mathbb{R}
    )}=\left(\int_{\mathbb{R}}|\langle\xi\rangle^{\beta}\mathcal{F}\chi(\xi/R)R^{-1}|^2\,d\xi\right)^{1/2}\leq R^{\beta-(1/2)}\|\chi\|_{W^{\beta,2}(\mathbb{R})}.
    \end{equation}
    For every $\lambda\in\textnormal{supp}\,\varphi_{R}$, $|\log\lambda|<\varepsilon/R$, so $|\lambda-1|\leq e^{|\log\lambda|}|\log\lambda|<2\varepsilon/R<c_0/R$. Thus $\textnormal{supp}\,\varphi_R\subset [1/2,2]$, and it follows from Proposition \ref{propertyL} \textnormal{(ii)}, \textnormal{(iii)} that $\sigma(L_{R})\cap\textnormal{supp}\,\varphi_R=\{1\}$. Since $\varphi_R(1)=1$, the spectral theorem gives
    \begin{equation}
        \varphi_{R}(L_R)=\int_{\sigma(L_R)}\varphi_{R}(\lambda)\,dE_{L_{R}}(\lambda)=E_{L_{R}}(\{1\}),\notag
    \end{equation}
    which is the orthogonal projection onto $\ker(L_R-I)$. Hence, for every $f\in L^2(0,b_R)$, $f-\varphi_{R}(L_R)f\in\ker(L_R-I)^{\perp}$. Recall that $\Phi_R$ is the normalized eigenfunction. We have
    \begin{align*}
        \langle (\varphi_{R}(L_R)f-\langle f,\Phi_R\rangle \Phi_{R}),\Phi_R\rangle&=\langle \varphi_{R}(L_R)f-f,\Phi_{R}\rangle+\langle f,\Phi_R\rangle-\langle f,\Phi_R\rangle\|\Phi_R\|^2_{L^2(0,b_R)}=0.
    \end{align*}
   By Proposition \ref{propertyL} \textnormal{(ii)}, $\ker(L_R-I)=\textnormal{span}\{\Phi_R\}$. Therefore, $\varphi_{R}(L_R)f-\langle f,\Phi_R\rangle \Phi_{R}\in\ker(L_R-I)\cap \ker(L_R-I)^{\perp}$. Thus, 
    \begin{equation}\label{A6}
    \varphi_{R}(L_R)f=\langle f,\Phi_R\rangle \Phi_{R}.
    \end{equation}
    for every $f\in L^2(0,b_R)$. Set $\Phi_j=U_j \Phi_{R_j}$. Since $L_j=U_j L_{R_j} U_{j}^{-1}$, unitary invariance of the spectral calculus gives $\varphi_{R_j}(L_j)=U_j \varphi _{R_j}(L_{R_j})U_j^{-1}$ on $L^2(J_j)$. Thus, Proposition \ref{propertyL} \textnormal{(i)}, \textnormal{(ii)} and \eqref{A6} yield that
    \begin{equation}\label{A7}
    \langle\varphi_{R_j}(L)f_j,g_j\rangle=\langle\varphi_{R_j}(L_{j})f_j,g_j\rangle=\left(\int_{E_j}\Phi_j(x)\,dx\right)\left(\int_{F_j}\Phi_j(x)\,dx\right)=\frac{2}{\|u_{R_j}\|_{2}^2}\sim 1
    \end{equation}

    Now let $X_{\beta}:=\{m\in\Lambda^\beta:\textnormal{supp}\,m\subset[1/2,2]\}$ with $\|m\|_{X^{\beta}}=\|m\|_{\Lambda^{\beta}}$. The correspondence $m\rightarrow m\circ\textnormal{exp}$ gives the completeness. Thus $X_{\beta}$ is a Banach space. Since $\beta>1/2$ and $m\circ\textnormal{exp}$ support on $[-\log 2,\log2]$, Sobolev embedding gives
    $$\|m\|_{\infty}=\|m\circ\textnormal{exp}\|_{\infty}\lesssim\|m\circ\textnormal{exp}\|_{W^{\beta,2}(\mathbb{R})}=\|m\|_{\Lambda^\beta}.$$
    Here we use the continuous representative of each $m$ and set $m(0)=0$. For $j\geq1$, define 
    $$T_j: X_\beta\rightarrow \mathbb{C},\quad T_j(m)=R_j^{1/p_0-1/q_0}\langle m(L)f_j,g_j\rangle.$$
    Cauchy's inequality and the spectral theorem give that
    \begin{equation}\label{A8}
    |T_j(m)|\leq R_j^{1/p_0-1/q_0}\|m\|_{\infty}\|f_j\|_{2}\|g_j\|_{2}\lesssim R_j^{1/p_0-1/q_0}\|m\|_{\Lambda^\beta}.
    \end{equation}
    This implies that each $T_j$ is a bounded linear operator. Suppose that $m(L)\in S(p_0,q_0)$ for every $m\in X_\beta$. Then, it follows from \eqref{A4} that $\sup_{j\geq1}|T_j(m)|<\infty$ for every $m\in X_\beta$. The uniform boundedness principle implies that $\sup_{j\geq1}\|T_j\|<\infty$. However, by \eqref{A5} and \eqref{A7} we have $\|T_j\|\geq|T_j(\varphi_{R_j})|/ \|\varphi_{R_j}\|_{\Lambda^{\beta}}\gtrsim R_j^{1/p_0-1/q_0+1/2-\beta}.$
    Since $1/p_0-1/q_0+1/2-\beta>0$ and $R_j=2\pi(j+20)\rightarrow\infty$, we have $\|T_j\|\rightarrow\infty$, which leads to a contradiction. 
    \end{proof}

    Finally, we prove Proposition \ref{propertyL}.
    \begin{proof}[Proof of Proposition \ref{propertyL}] \textbf{Proof of \textnormal{(i)}.} Let $\mathfrak{q}_j$ be the form associated with $L_j$. For $j\geq1$, $\mathfrak{q}_j$ is a translation of \eqref{A1}. It follows from definition that for $u\in D(L_j)$, $\tau_j u\in D(\mathfrak{a})$. Moreover, for every $v\in D(\mathfrak{a})$, we have that
    $$\mathfrak{a}(\tau_j u,v)=\mathfrak{q}_j(u,v\,|_{J_j})=\langle L_ju,v\,|_{J_j}\rangle=\langle \tau_j L_j u,v\rangle,$$
    which implies $\tau_ju\in D(L)$ and $L\tau_ju=\tau_j L_j u$. Then \cite[Problem 4.6]{T14} gives $\tau_j \varphi(L_j)\subset \varphi(L)\tau_j$. Since $D(\varphi(L))=L^2(\mathbb{R})$ for bounded $\varphi$, we obtain $\tau_j \varphi(L_j)= \varphi(L)\tau_j$. Thus, 
    $$\varphi(L)f=\sum_{j\geq0}\varphi(L)\tau_j(f\,|_{J_j})=\sum_{j\geq0}\tau_j\varphi_j(L_j)(f\,|_{J_j}).$$ 
    \noindent \textbf{Proof of $\textnormal{(ii)}$.} We first check the desired properties for $u_R$. Since $R\in2\pi\mathbb{N}$, it follows from a direct calculation that $u_R$ is continuous and vanishes at both endpoints. It also satisfies $-u_R^{\prime\prime}=u_{R}$ on each subinterval. Thus $u_R\in D(\mathfrak{q}_R)=W^{1,2}_0(0,b_R)$. It follows from integration by parts that for every $v\in W^{1,2}_0(0,b_R)$,
    \begin{align}
        \int_{0}^{b_R}u_R^{\prime}(x)\overline{v^{\prime}(x)}\,dx&=-\int_{0}^{b_R}u_{R}^{\prime\prime}(x)\overline{v(x)}\,dx+u_{R}^{\prime}\overline{v}\,|_{0}^{a_{R}^-}+u_{R}^{\prime}\overline{v}\,|_{a_R^+}^{(a_R+R)^-}+u_{R}^{\prime}\overline{v}\,|_{(a_R+R)^+}^{b_R}\notag\\
        &=\int_{0}^{b_R} u_{R}(x)\overline{v(x)}\,dx+(u_{R}^{\prime}(a_R^-)-u_R^{\prime}(a_R^+))\overline{v(a_R)}+(u_R^{\prime}((a_R+R)^-)-u_R^{\prime}((a_R+R)^+)\overline{v(a_R+R)}\notag\\
        &=\int_{0}^{b_R} u_{R}(x)\overline{v(x)}\,dx-\sqrt{R}(u_R(a_R)\overline{v(a_R)}+u_R(a_R+R)\overline{v(a_R+R)})\label{A9}.
    \end{align}
    Thus, $\mathfrak{q}_R(u_R,v)=\langle u_R,v\rangle$, which implies $u_R\in D(L_R)$ and $L_R u_{R}=u_R$. Recall that $a_R=\pi-\arctan(R^{-1/2})$, so $\sin a_R=1/\sqrt{R+1}$ and $\cos a_R=-\sqrt{R}/\sqrt{R+1}$. Direct integration yields that
    $$\|u_R\|_{2}^2=a_R+\frac{\sqrt{R}}{R+1}+\frac{R}{2(R+1)}\sim 1$$
    for $R>100$. Moreover, $\int_{E_R}\Phi_R(x)\,dx=\|u_R\|_2^{-1}\int_{\pi/4}^{3\pi/4}\sin x\,dx=\|u_R\|_2^{-1}\sqrt{2}$ and $\int_{F_R}\Phi_{R}(x)\,dx=\int_{E_R}\Phi_{R}(b_R-x)\,dx=\int_{E_R}\Phi_{R}(x)\,dx$. It remains to verify $\textnormal{dim}(\ker(L_R-I))=1$.

    Let $w\in D(L_R)$ satisfy $L_Rw=w$. Then
    $\mathfrak{q}_R(w,v)=\langle w,v\rangle$ for every
    $v\in W^{1,2}_0(0,b_R)$. Taking $v\in C_c^\infty(I)$,
    where $I$ is any of the three subintervals, we have $(w^{\prime})^{\prime}=-w\in L^{2}(I)$ in the sense of distributions on $I$. Thus $w\in W^{2,2}(I)$ on each subinterval. Integration by parts as in \eqref{A9} gives
    \begin{equation}
    0=
    \bigl(w^\prime(a_R^-)-w^\prime(a_R^+)+\sqrt{R}w(a_R)\bigr)
    \overline{v(a_R)}+
    \bigl(w^\prime((a_R+R)^-)-w^\prime((a_R+R)^+)
    +\sqrt{R}w(a_R+R)\bigr)
    \overline{v(a_R+R)}.\notag
\end{equation}
Since $v$ can be chosen arbitrarily, we have
\begin{equation}\label{A10}
w^\prime(a_R^-)-w^\prime(a_R^+)+\sqrt{R}w(a_R)=0,\qquad w^\prime((a_R+R)^-)-w^\prime((a_R+R)^+)+\sqrt{R}w(a_R+R)=0.
\end{equation}
On $(0,a_R)$, the equation $-w^{\prime\prime}=w$ and the condition $w(0)=0$ imply that $w(x)=c\sin x$. Let $z=w-cu_{R}$, we have $z\equiv0$ on $(0,a_R)$. The continuity gives that $z(a_R^+)=0$ and $z^{\prime}(a_R^+)=z^\prime(a_R^-)+\sqrt{R}z(a_R)=0$. Thus, we have $z\equiv0$ on $(a_R,a_R+R)$. Repeating this argument at $a_R+R$ gives $z\equiv0$ on $(a_R+R,b_R)$. Hence $w=c u_{R}$ on $(0,b_R)$, which completes the proof of \textnormal{(ii)}.

\medskip

\noindent\textbf{Proof of \textnormal{(iii)}.} We first check that all points in spectrum of $L_R$ are eigenvalues. We refer to \cite{LSW10} for the argument in this part. Since $L_R$ is non-negative self-adjoint, the form norm of $L_R$ is defined as \begin{equation}\label{A11}
\|u\|_{Q(L_R)}=\left(\|u\|_{L^2(0,b_R)}^2+\|(L_R+I)^{1/2}u\|_{L^2(0,b_R)}^2\right)^{1/2}=\left(\mathfrak{q}_R(u,u)+2\|u\|_{L^2(0,b_R)}^2\right)^{1/2}
\end{equation}
with form domain $Q(L_R)=D(\mathfrak{q}_R)=W^{1,2}_0(0,b_R)$. Moreover, 
$$\|u\|_{W^{1,2}(0,b_R)}\leq\left(\|u\|_{L^2(0,b_R)}^2+\mathfrak{q}_R(u,u)\right)^{1/2}\sim\|u\|_{Q(L_R)}$$
for every $u\in Q(L_R)$. Thus the identity map $Q(L_R)\hookrightarrow W^{1,2}_0(0,b_R)$ is bounded. By the compact Sobolev embedding, the inclusion $W^{1,2}_0(0,b_R)\hookrightarrow L^2(0,b_R)$ is compact. Hence the composition $\textnormal{Id}:Q(L_R)\rightarrow L^2(0,b_R)$ is also compact. By \cite[Corollary 1.5]{LSW10}, $L_R$ has empty essential spectrum. Thus, the spectrum consists of isolated eigenvalues.

We now prove the spectral gap estimate in \textnormal{(iii)}. Since $L_R$ is non-negative, $\sigma(L_R)\subset [0,\infty)$. Let $\delta=10^{-2}$ (any sufficiently small constant would also do), and let $k^2$ be an eigenvalue of $L_R$ with $k>0$ and $|k-1|<\delta/R$. We will show that $k=1$. First choose a nonzero eigenfunction $w$. That is $w\in D(L_R)$ and $L_R w=k^2w$. As in the proof of \textnormal{(ii)}, we have $w\in W^{2,2}(I)$ and $-w^{\prime\prime}=k^2 w$ on each subinterval $I=(0,a_R)$, $(a_R,a_R+R)$ and $(a_R+R,b_R)$). Moreover, $w$ is continuous on $[0,b_R]$ and vanishes at both endpoints. Integration by parts as in \eqref{A9} gives the same estimate \eqref{A10}. Thus there exists $C,D\in\mathbb{C}$ that 
$$w(x)=
    \begin{cases}
        C\sin(kx), & 0<x<a_R,\\
        D\sin(k(b_R-x)), & a_R+R<x<b_R.
    \end{cases}$$
Here $C\neq 0$, else the same argument as in \textnormal{(ii)} would give $w=0$ on $(0,b_R)$. The endpoint condition \eqref{A10} gives that
\begin{equation}\label{A12}
w^{\prime}(a_R^+)=Ck\cos(ka_R)+\sqrt{R}C\sin (ka_R)= A(k)w(a_R),\ w^{\prime}((a_R+R)^-)=-A(k)w(a_R+R),
\end{equation}
where $A(k)=k\cot(ka_R)+\sqrt{R}$. Since $w(a_R)\neq0$, we may normalize $w$ so that $w(a_R)=1$. Solving $-w^{\prime\prime}=k^2 w$ on the middle interval $(a_R,a_R+R)$ with $w(a_R)=1$ and $w^{\prime}(a_R^+)=A(k)w(a_R)$ gives that
\begin{equation}
    w(x)=\cos(k(x-a_R))+\frac{A(k)}{k}\sin (k(x-a_R)),\qquad a_R\leq x\leq R+a_R.\notag
\end{equation}
Thus, at the endpoint $a_R+R$ we have $w(a_R+R)=\cos(kR)+k^{-1}A(k)\sin(kR)$ and $w^{\prime}((a_R+R)^{-})=-k\sin(kR)+A(k)\cos(kR)$. By \eqref{A12} we obtain
\begin{equation}\label{A13}
    (A(k)^2-k^2)\sin(kR)+2kA(k)\cos(kR)=0.
\end{equation}
We next derive some estimates for $A(k)$. Since $|ka_R-a_R|<\pi\delta/R$ and $\pi-a=\arctan (R^{-1/2})\geq R^{-1/2}/2$, we have $\pi/2<ka_R<\pi$ and 
$$\sin(ka_R)\geq\sin a_R-a_R|k-1|\geq\frac{1}{\sqrt{R+1}}-\frac{\pi\delta}{R}\geq\frac{1}{2\sqrt{R+1}}.$$ 
Thus, $A^{\prime}(k)=\cot(ka_R)-ka_R\sin^{-2}(ka_R)<0$. Since $A(1)=\cot a_{R}+R^{1/2}=0$, it follows that $A(k)<0$ for $k>1$ and $A(k)>0$ for $k<1$. Moreover, for every $k$ in the interval $|k-1|<\delta/R$ we have $k>1-\delta R^{-1}>0.99$, which imlies
$$|A^{\prime}(k)|\leq 2\sqrt{R+1}+\frac{\pi}{(2\sqrt{R+1})^{-2}}=2\sqrt{R+1}+4\pi(R+1)<40R.$$
Hence $|A(k)|=|A(k)-A(1)|\leq 40R|k-1|<0.4<k$, so $A(k)^2-k^2<0$. We now return to \eqref{A13}. Since $R\in 2\pi\mathbb{N}$ and $|(k-1)R|<\delta=10^{-2}$, we have $\cos(kR)=\cos((k-1)R)>0$. If $k>1$, then $\sin(kR)=\sin((k-1)R)>0$ and $A(k)<0$. Since we have verified $A(k)^2-k^2<0$, both terms on the left-hand side of \eqref{A13} are negative, which leads to a contradiction. If $k<1$, then $\sin(kR)<0$ and $A(k)>0$. In this case, both terms are positive, which is also a contradiction. Hence $k=1$.

Finally, let $\lambda\in\sigma(L_R)$ satisfy $|\lambda-1|<\delta/R$. Then $\lambda>0$ and $|\sqrt{\lambda}-1|\leq|\lambda-1|<\delta/R$. The preceding argument gives $\lambda=1$, which proves \textnormal{(iii)} with $c_0=\delta$.
\end{proof}

\section{Weighted consequences and applications}

In this section, we first establish weighted estimates for fixed $p$ under a weaker regularity condition on the multiplier. Then, we prove Corollary \ref{application} for the applications to Bochner-Riesz means and regularized Schr\"{o}dinger groups. Finally, we give examples of operators $L$ satisfying our assumptions.
\subsection{Weighted estimates for spectral multipliers}
As we noted in the remark after Theorem \ref{thm2}, for fixed $p\in(p_0,q_0)$, the regularity condition on $\beta$ can be improved by choosing suitable $p_1,q_1$. We now give the corresponding weighted estimates.
\begin{prop}\label{weightedes}
    Let $L$ be a non-negative self-adjoint operator on $L^2(X)$. Suppose that the semigroup $\{e^{-tL}\}_{t>0}$ satisfies \textnormal{(DGGE$_{p_0,q_0,m}$)} for some $m\geq2$ and $1\leq p_0<2<q_0\leq\infty$. Fix $p_0<p<q_0$, $q>2$ and $\beta>\nu|1/p-1/2|+1/2$. Let $\varphi(\lambda)=\varphi(0)e^{-\lambda}+\varphi_0(\lambda)$ for $\lambda>0$, where $\varphi_0\in\Lambda^{\beta}$. Then there exists $\varepsilon_0>0$, such that for every $w\in A_{\max\{1,p/2\}+\varepsilon_0}\cap RH_{1/((1-p/2)_++\varepsilon_0)}$, we have
    $$\|V^q(\varphi(tL):t>0)f\|_{L^{p}(w)}\leq C(|\varphi(0)|+\|\varphi_0\|_{\Lambda^\beta})\left([w]_{A_{\max\{1,p/2\}+\varepsilon_0}}[w]_{RH_{1/((1-p/2)_++\varepsilon_0)}}\right)^{\gamma_{p,\varepsilon_0}}\|f\|_{L^{p}(w)},$$
    where $\gamma_{p,\varepsilon_0}:=(\max\{1,p-1\}+\varepsilon_0)/(p\varepsilon_0)$.
    \end{prop}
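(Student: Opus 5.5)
The plan is to reduce Proposition \ref{weightedes} to Theorem \ref{thm2} and Lemma \ref{wei-lem} with a suitably chosen pair $(p_1,q_1)$ close to $(\min\{p,2\},\max\{p,2\})$. By H\"older's inequality, (DGGE$_{p_0,q_0,m}$) implies (DGGE$_{p_1,q_1,m}$) for every $p_0\le p_1<2<q_1\le q_0$. The same positive operators $R_t$ work, since the generalized Gaussian estimate for $R_t$ passes to smaller $q_1$ and larger $p_1$ on balls of radius $t^{1/m}$.

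We treat the case $p\le 2$ first. Set $p_1:=p/(1+\varepsilon_1)$ for a small $\varepsilon_1>0$, so that $p_0\le p_1<p$. Take $q_1:=2/(1-2\varepsilon_2)$ (i.e.\ $1/q_1=1/2-\varepsilon_2$), which is slightly larger than $2$. Then
\[
\nu(1/p_1-1/q_1)+1/2=\nu(1/p-1/2)+\nu\varepsilon_1/p+\nu\varepsilon_2+1/2 .
\]
This is $<\beta$ once $\varepsilon_1,\varepsilon_2$ are small, because $\beta>\nu|1/p-1/2|+1/2$. Theorem \ref{thm2} then gives $V^q(\varphi(tL):t>0)\in S(p_1,q_1)$ with constant $\lesssim |\varphi(0)|+\|\varphi_0\|_{\Lambda^\beta}$. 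The case $p>2$ is symmetric: take $p_1$ slightly below $2$ and $q_1$ slightly above $p$. In both cases $p_1<p<q_1$.

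Next apply Lemma \ref{wei-lem}(i) with $(p_1,q_1)$. It gives boundedness on $L^p(w)$ for $w\in A_{p/p_1}\cap RH_{(q_1/p)'}$, with constant $([w]_{A_{p/p_1}}[w]_{RH_{(q_1/p)'}})^{\alpha_1}$, where $\alpha_1=\max\{1/(p-p_1),(q_1-1)/(q_1-p)\}$.

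It remains to choose the parameters so that $A_{\max\{1,p/2\}+\varepsilon_0}\subset A_{p/p_1}$ and $RH_{1/((1-p/2)_++\varepsilon_0)}\subset RH_{(q_1/p)'}$, with comparable constants. For $p\le2$ we need $p/p_1=1+\varepsilon_1\ge 1+\varepsilon_0$; this holds with $\varepsilon_1=\varepsilon_0$. We also need $1/(q_1/p)'=1-p/q_1=1-p/2+p\varepsilon_2\le 1-p/2+\varepsilon_0$; this holds with $\varepsilon_2=\varepsilon_0/p$. For $p>2$, the choices $1/p_1=1/2+\varepsilon_0/p$ and $1-p/q_1=\varepsilon_0$ give $p/p_1=p/2+\varepsilon_0$ and $(q_1/p)'=1/\varepsilon_0$. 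Both inclusions are monotone: $A_r\subset A_s$ with $[w]_{A_s}\le[w]_{A_r}$ for $r\le s$, and $RH_{r}\subset RH_s$ with $[w]_{RH_s}\le [w]_{RH_r}$ for $s\le r$. Finally, we shrink $\varepsilon_0$ so that the $\beta$-condition above still holds.

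The step needing most care is computing the exponent $\alpha_1$ and checking that it is bounded by $\gamma_{p,\varepsilon_0}$. For $p\le2$, $p-p_1=p\varepsilon_0/(1+\varepsilon_0)$, so $1/(p-p_1)=(1+\varepsilon_0)/(p\varepsilon_0)$. Using $1-p/q_1=1-p/2+\varepsilon_0$, we get $(q_1-1)/(q_1-p)\le q_1/(q_1-p)=1/(1-p/q_1)$. This is bounded by the previous quantity for small $\varepsilon_0$, since $1-p/2+\varepsilon_0\ge \varepsilon_0\ge p\varepsilon_0/(1+\varepsilon_0)$ when $p\le 2$. For $p>2$, $(q_1-1)/(q_1-p)\le 1/(1-p/q_1)=1/\varepsilon_0$, and $1/(p-p_1)$ is of order $1/(p-2)$ up to $O(\varepsilon_0)$. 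Hence $\alpha_1\le(p-1+\varepsilon_0)/(p\varepsilon_0)$ for small $\varepsilon_0$, because $(p-1)/p\ge 1/2$. This matches $\gamma_{p,\varepsilon_0}$. Since the weight characteristics are $\ge1$, a smaller exponent only helps, so any slack in these inequalities can be absorbed.
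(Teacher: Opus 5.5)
Your reduction is the paper's. Your parameters satisfy $1/p_1=\max\{1/p,1/2\}+\varepsilon_0/p$ and $1/q_1=\min\{1/p,1/2\}-\varepsilon_0/p$, which is exactly the paper's choice. So $p/p_1=\max\{1,p/2\}+\varepsilon_0$ and $(q_1/p)'=1/((1-p/2)_++\varepsilon_0)$ hold with equality, and Theorem~\ref{thm2} together with Lemma~\ref{wei-lem}(i) applies.

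The gap is in the last step, the exponent comparison, and it does not go through as written. The estimate $(q_1-1)/(q_1-p)\le q_1/(q_1-p)=1/(1-p/q_1)$ discards exactly the factor $1-1/q_1$ that produces the numerator of $\gamma_{p,\varepsilon_0}$. What remains is too large:
\begin{itemize}
\item For $p>2$ it gives $1/\varepsilon_0$. This exceeds $\gamma_{p,\varepsilon_0}=\frac{p-1+\varepsilon_0}{p}\cdot\frac{1}{\varepsilon_0}$ for every $\varepsilon_0<1$. The observation $(p-1)/p\ge 1/2$ only yields $1/\varepsilon_0\le 2\gamma_{p,\varepsilon_0}$.
\item For $p\le 2$, your inequality $\varepsilon_0\ge p\varepsilon_0/(1+\varepsilon_0)$ is equivalent to $p\le 1+\varepsilon_0$ and is false otherwise. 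At $p=2$ your bound is $1/\varepsilon_0$ against $\gamma_{2,\varepsilon_0}=(1+\varepsilon_0)/(2\varepsilon_0)$, and no shrinking of $\varepsilon_0$ helps. For fixed $p<2$ you could rescue it by taking $\varepsilon_0$ small relative to $2-p$.
\end{itemize}
Since the weight characteristics are $\ge 1$, an overestimate of the exponent is not slack that can be absorbed. It only proves a weaker bound.

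The repair is to compute exactly, as the paper does, using
\[
\frac{q_1-1}{q_1-p}=\frac{1-1/q_1}{1-p/q_1}.
\]
For $p>2$ this equals $\frac{p-1+\varepsilon_0}{p\varepsilon_0}=\gamma_{p,\varepsilon_0}$ exactly. Meanwhile $1/(p-p_1)=\frac{p+2\varepsilon_0}{p(p-2+2\varepsilon_0)}$ stays bounded as $\varepsilon_0\to 0$, so it is dominated for small $\varepsilon_0$. For $p\le 2$ the ratio equals $\frac{1/2+\varepsilon_0/p}{1-p/2+\varepsilon_0}$. Cross-multiplying shows it is at most $\frac{1+\varepsilon_0}{p\varepsilon_0}=1/(p-p_1)$ if and only if $(2-p)(\tfrac12+\varepsilon_0)\ge 0$, which holds. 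With this replacement your argument is complete and coincides with the paper's proof.
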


    \begin{proof}[Proof of Proposition \ref{weightedes}]
        Choose $\varepsilon_0>0$ sufficiently small and define $p_1,q_1$ by
        \begin{equation}\label{6.1}
        \frac{1}{p_1}=\max\left\{\frac{1}{p},\frac{1}{2}\right\}+\frac{\varepsilon_0}{p},\quad\frac{1}{q_1}=\min\left\{\frac{1}{p},\frac{1}{2}\right\}-\frac{\varepsilon_0}{p}.
        \end{equation}
        Since $p_0<p,2<q_0$, we can choose $\varepsilon_0$ so that
        $$p_0\leq p_1<\min\{p,2\}\leq\max\{p,2\}<q_1\leq q_0$$ 
        and $2\varepsilon_0/p<[(\beta-(1/2))/\nu]-|1/p-1/2|$. Then, $1/p_1-1/q_1=|1/p-1/2|+2\varepsilon_0/p$, so $\beta>\nu(1/p_1-1/q_1)+1/2$. By H\"{o}lder's inequality, \textnormal{(DGGE$_{p_0,q_0,m}$)} implies \textnormal{(DGGE$_{p_1,q_1,m}$)}. Thus, by Theorem \ref{thm2} and Lemma \ref{wei-lem}, we obtain 
        \begin{equation}\label{6.2}
        \|V^q(\varphi(tL):t>0)f\|_{L^{p}(w)}\leq C(|\varphi(0)|+\|\varphi_0\|_{\Lambda^\beta})\left([w]_{A_{p/p_1}}[w]_{RH_{(q_1/p)^{\prime}}}\right)^{\widetilde{\gamma}}\|f\|_{L^{p}(w)}
        \end{equation}
        for every $w\in A_{p/p_1}\cap RH_{(q_1/p)^{\prime}}$, with $\widetilde{\gamma}=\max\{1/(p-p_1),(q_1-1)/(q_1-p)\}$. By the definition of $p_1,q_1$, we have $p/p_1=\max\{1,p/2\}+\varepsilon_0$ and $(q_1/p)^{\prime}=1/((1-p/2)_++\varepsilon_0)$. For $p\leq2$,
        $$\widetilde{\gamma}=\max\left\{\frac{1+\varepsilon_0}{p\varepsilon_0},\frac{\frac{1}{2}+\frac{\varepsilon_0}{p}}{1-\frac{p}{2}+\varepsilon_0}\right\}=\frac{1+\varepsilon_0}{p\varepsilon_0}=\gamma_{p,\varepsilon_0}.$$
        For $p>2$, 
        $$\widetilde{\gamma}=\max\left\{\frac{p+2\varepsilon_0}{p^2-2(1-\varepsilon_0)p},\frac{p-1+\varepsilon_0}{p\varepsilon_0}\right\}=\frac{p-1+\varepsilon_0}{p\varepsilon_0}=\gamma_{p,\epsilon_0}.$$
       This completes the proof.
    \end{proof}
    
    \begin{rem}
       The weighted estimates in Corollary \ref{wei-cor} and Proposition \ref{weightedes}
       also have vector-valued versions. Let $q>2$ and $\varphi_k(\lambda)=\varphi_k(0)e^{-\lambda}+\phi_k(\lambda)$ for $k\in\mathbb{N}$. Assume that
       \begin{itemize}
        \item In the setting of Theorem \ref{thm1}, there exist $\alpha,\beta>0$ that $\phi_k\in \Psi_{\alpha,\beta}(\Sigma_{\mu}
        )$, and $\sup_{k}\|\phi_k\|_{\Psi_{\alpha,\beta}(\Sigma_\mu)}+|\varphi_k(0)|<\infty$.
        \item In the setting of Theorem \ref{thm2}, $\sup_k \|\phi_k\|_{\Lambda^{\beta}}+|\varphi_k(0)|<\infty$. 
        \end{itemize}
        Under these assumptions, the scalar weighted estimates are uniform in $k$. Then it follows from the extrapolation theorem of Auscher and Martell \textnormal{\cite[Theorem 4.9]{AM07}} \textnormal{(see also Section 5 of \cite{AM07} for the extension to spaces of homogeneous type)} that
    \begin{equation}\label{vector}
    \left\|
\left(\sum_k[V^q(\varphi_k(tL):t>0)f_k]^r\right)^{1/r}\right\|_{L^p(w)}
\lesssim\left\|\left(\sum_k|f_k|^r\right)^{1/r}\right\|_{L^p(w)}
\end{equation}
for every $p,r\in(p_0,q_0)$ and $w\in A_{p/p_0}\cap RH_{(q_0/p)^{\prime}}$. The same argument applies to the fixed-$p$ estimates in Proposition \ref{weightedes}. For fixed $p_0<p<q_0$, under the uniform condition $\sup_k \|\phi_k\|_{\Lambda^{\beta}}+|\varphi_k(0)|<\infty$ \textnormal{(}$\beta>\nu|1/p-1/2|+1/2$\textnormal{)}, the estimate \eqref{vector} holds for $p_1<r<q_1$ and $w\in A_{\max\{1,p/2\}+\varepsilon_0}\cap RH_{1/((1-p/2)_++\varepsilon_0)}$, where $p_1,q_1$ are defined in \eqref{6.1}.
    \end{rem}
We next prove the Corollary \ref{application}. Recall that the corresponding multipliers for Bocher-Riesz means and regularized Schr\"{o}dinger groups are
$$\widetilde{B}^\delta(\lambda)=(1-\lambda)_+^\delta,
\qquad
\widetilde{S}^s(\lambda)=(1+\lambda)^{-s}e^{i\lambda}.$$
Since both of them take the value $1$ at zero, we consider $\widetilde{B}^\delta-e^{-\lambda}$ and
$\widetilde{S}^s-e^{-\lambda}$. We verify that both multipliers belong to the Sobolev class in Theorem \ref{thm2}.
\begin{lem}\label{Application}
    Let $\beta\geq0$. Then $B^{\delta}(\lambda):=(1-\lambda)_+^{\delta}-e^{-\lambda}\in\Lambda^{\beta}$ for $\delta>\beta-(1/2)$, and $S^s(\lambda
    ):=(1+\lambda)^{-s}e^{i\lambda}-e^{-\lambda}\in\Lambda^{\beta}$ for $s>\beta$.
\end{lem}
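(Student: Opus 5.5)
We pass to the logarithmic variable. With $x=\log\lambda\in\mathbb{R}$ we must show that
\[
G_B(x):=(1-e^x)_+^{\delta}-e^{-e^x}\quad\text{and}\quad G_S(x):=(1+e^x)^{-s}e^{ie^x}-e^{-e^x}
\]
lie in $W^{\beta,2}(\mathbb{R})$. We split each function with a smooth partition of unity $1=\chi_-+\chi_0+\chi_+$ on $\mathbb{R}$. Here $\chi_-$ is supported in $(-\infty,-1]$, $\chi_+$ in $[1,\infty)$, and $\chi_0$ in $[-2,2]$. We then treat the three pieces separately. Multiplication by a fixed smooth function with bounded derivatives is bounded on $W^{\beta,2}$, so it suffices to control each localized piece.

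\textbf{Near $x=-\infty$ (small $\lambda$).} Here both $(1-\lambda)^\delta$ and $(1+\lambda)^{-s}e^{i\lambda}$ are real-analytic at $\lambda=0$ with value $1$, and so is $e^{-\lambda}$. Hence each difference equals $\lambda h(\lambda)$ with $h$ analytic near $0$. In the $x$ variable this is $e^x h(e^x)$. Every $x$-derivative is a finite sum of terms $e^{kx}h^{(j)}(e^x)$ with $k\geq1$, and all of these decay exponentially as $x\to-\infty$. Thus $\chi_-G\in W^{k,2}$ for every $k$, and by interpolation for every $\beta$.

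\textbf{Near $x=+\infty$ (large $\lambda$).} For $B^\delta$, the term $(1-\lambda)_+^\delta$ vanishes for $\lambda>1$, and $e^{-e^x}$ is super-exponentially small with all its derivatives. For $S^s$, the term $e^{-e^x}$ is again harmless. It remains to handle $F(x)=(1+e^x)^{-s}e^{ie^x}$. Each $x$-derivative brings down a factor of size at most $e^x$ from the phase, so $|F^{(k)}(x)|\lesssim e^{(k-s)x}$ for $x\geq1$. Consequently $\chi_+F\in W^{k,2}$ whenever $k<s$. To reach a non-integer $\beta<s$ we use complex interpolation on the analytic family $z\mapsto(1+e^x)^{-z}e^{ie^x}\chi_+(x)$, with $\operatorname{Re}z=s_0$ on one boundary line and $\operatorname{Re}z=s_1$ on the other. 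A family of the same type, with $\chi_+$ replaced by $\chi_-$ or $\chi_0$, is harmless, so the small-$\lambda$ and middle regions are not affected. Alternatively, one can write $e^{ie^x}=\frac{d}{dx}(e^{ie^x})/(ie^x)$ and integrate by parts. This exhibits the scale $e^{x}$ as the frequency and gives $\|\chi_+F\|_{W^{\beta,2}}^2\sim\int^\infty e^{2(\beta-s)x}\,dx<\infty$ exactly when $\beta<s$. This is consistent with the condition $s>\beta$ in the statement.

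\textbf{Middle region and the main obstacle.} In the middle region everything is smooth except, for $B^\delta$, the singularity $(1-e^x)_+^\delta$ at $x=0$. This is the main step. Near $x=0$ we have $1-e^x=-x\,a(x)$ with $a$ smooth and $a(0)=1$, so $(1-e^x)_+^\delta=(-x)_+^\delta a(x)^\delta$. It therefore suffices to show that $\chi_0(x)(-x)_+^\delta\in W^{\beta,2}(\mathbb{R})$ for $\delta>\beta-1/2$. We use the Fourier transform. The homogeneous distribution $(-x)_+^\delta$ has transform comparable to $|\xi|^{-1-\delta}$ at high frequency, and cutting off with $\chi_0$ only adds a Schwartz error. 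Hence $|\mathcal{F}(\chi_0(-x)_+^\delta)(\xi)|\lesssim\langle\xi\rangle^{-1-\delta}$, and $\int\langle\xi\rangle^{2\beta-2-2\delta}\,d\xi<\infty$ exactly when $\delta>\beta-1/2$. If one prefers to avoid distributional transforms, an equivalent route is the Slobodeckij seminorm, computed directly by scaling. This yields the same threshold. Summing the three regions gives both memberships in $\Lambda^\beta$.
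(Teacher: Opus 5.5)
Your proof is correct. For the Bochner--Riesz multiplier it takes a genuinely different route from the paper; for the Schr\"odinger multiplier it is essentially the paper's argument.

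\textbf{Paper's route.} The paper never localizes. It works in the $\lambda$ variable with the integer-order bound $\|f\|_{\Lambda^n}^2\lesssim\sum_{r\le n}\int_0^\infty|\lambda^r\partial_\lambda^r f|^2\,d\lambda/\lambda$. It proves $\|B^w\|_{\Lambda^n}\lesssim(1+|\textnormal{Im}\,w|)^{n+1}$ for complex exponents with $\textnormal{Re}\,w>n-1/2$; the threshold comes from requiring $\int_0^1\lambda^{2r-1}(1-\lambda)^{2(\textnormal{Re}\,w-r)}\,d\lambda<\infty$. It then reaches non-integer $\beta=k+\theta$ by complex interpolation of the family $e^{(z-\theta)^2}B^{\delta-\theta+z}$ between $\Lambda^k$ and $\Lambda^{k+1}$. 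This requires checking continuity and analyticity of that family as a $\Lambda^k$-valued map.

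\textbf{Your route.} You split $x=\log\lambda$ into three regions and handle the singularity at $\lambda=1$ directly, through the Fourier decay $\langle\xi\rangle^{-1-\delta}$ of $\chi_0(x)(-x)_+^\delta$. This yields every real $\beta$ at once, with no interpolation and no analytic-family bookkeeping. It also shows where each threshold comes from: $\lambda\to0$ imposes nothing, $\lambda=1$ imposes $\delta>\beta-1/2$ (the $L^2$-Sobolev regularity of $x_+^\delta$), and $\lambda\to\infty$ imposes $s>\beta$.

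\textbf{Two details to tighten.}
\begin{enumerate}
\item Your ``Schwartz error'' is not Schwartz globally. State it as follows: $(1-\chi_0)(-x)_+^\delta$ is smooth with $|\partial_x^k[(1-\chi_0)(-x)_+^\delta]|\lesssim\langle x\rangle^{\delta-k}$, so its transform is $O(|\xi|^{-N})$ for $|\xi|\ge1$. Low frequencies are covered because $\chi_0(-x)_+^\delta\in L^1$ has a bounded transform.
\item In the Schr\"odinger interpolation, insert a Gaussian factor such as $e^{(z-s)^2}$, as the paper does. Without it, the $W^{k,2}$ norms of $\chi_+(1+e^x)^{-z}e^{ie^x}$ grow like $(1+|\textnormal{Im}\,z|)^k$ on the boundary lines. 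Also fix the lines concretely, e.g.\ $\textnormal{Re}\,z=s-\theta>k$ and $\textnormal{Re}\,z=s-\theta+1>k+1$.
\end{enumerate}

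\textbf{Trade-off.} The paper's approach is uniform: one computation covers both multipliers and all regions. Yours gives a more elementary and more explanatory treatment of the Bochner--Riesz singularity.
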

\begin{proof}
    We start with the Bochner-Riesz multiplier. First, we consider the case $\beta=n\in\mathbb{N}$. By the change of variables $\lambda=e^{x}$ and direct differentiation, we have
    \begin{align}
        \|f\|_{\Lambda^{n}}^2=\|f\circ\textnormal{exp}\|_{W^{n,2}(\mathbb{R})}^2&\lesssim\sum_{r=0}^{n}\int_{0}^{\infty}|[(\lambda\partial_{\lambda})^rf](\lambda)|^2\,\frac{d\lambda}{\lambda}\notag\\
        &\lesssim\sum_{r=0}^n\int_{0}^{\infty}|\lambda^{r}(\partial_{\lambda}^rf)(\lambda)|^2\,\frac{d\lambda}{\lambda}.\label{6.4}
    \end{align}
    For $w\in\mathbb{C}$, we fix $\textnormal{Re}\, w>n-(1/2)$ and consider the function $B^w(\lambda):=(1-\lambda)_+^w-e^{-\lambda}$. For $r=0$, Taylor's formula gives
    $$B^w(\lambda)=(1-w\lambda+\textnormal{O}(\lambda^2))-(1-\lambda+\textnormal{O}(\lambda^2))=(1-w)\lambda+\textnormal{O}(\lambda^2) \quad \text{as}\quad \lambda\rightarrow0.$$
   Thus, it follows that $\int_0^\infty |B^w(\lambda)|^2\lambda^{-1}\,d\lambda\lesssim(1+|\textnormal{Im}\ w|)^2$, where the implicit constant may depend on $\textnormal{Re}\, w$. For $1\leq r\leq n$
    $$|\lambda^r\partial_{\lambda}^r B^w(\lambda)|\lesssim\lambda^re^{-\lambda}+\left(\prod_{l=0}^{r-1}|w-l|\right)\lambda^r(1-\lambda)^{w-r}\quad\text{for }0\leq\lambda\leq1.
    $$
    Hence, we obtain that for $\textnormal{Re}\, w>n-(1/2)$, $\int_{0}^{\infty}|\lambda^r(\partial_{\lambda}^r\,B^w )(\lambda)|^2\lambda^{-1}\,d\lambda\lesssim(1+|\textnormal{Im}\, w|)^{2n}$. Combining these estimates with \eqref{6.4}, we have $\|B^w\|_{\Lambda^{n}}\lesssim(1+|\textnormal{Im}\, w|)^{n+1}$. The implicit constants are uniform when $\textnormal{Re}\,w$ ranges over $[a,b]$ for $n-1/2< a\leq b<\infty$.
    
    For non-integer $\beta$, we prove the statement by complex interpolation. Suppose $\beta=k+\theta$, where $k\in\mathbb{N}$ and $0<\theta<1$. Then for $\delta>\beta-(1/2)$, define 
    $$F_{B}:\{z\in\mathbb{C}:0\leq\textnormal{Re}\ z\leq1\}\rightarrow \Lambda^{k},\qquad F_B(z):= e^{(z-\theta)^2}B^{\delta-\theta+z}.$$
    Since $\Lambda^{k+1}\subset \Lambda^k$, we have $\Lambda^{k}=\Lambda^{k}+\Lambda^{k+1}$. Moreover, the estimate $\textnormal{Re}\ (\delta-\theta+z)>k-(1/2)$ implies that $F_B(z)\in \Lambda^k$. The function $F_B$ is analytic in $\{z\in \mathbb{C}: 0<\textnormal{Re}\,z<1\}$ and continuous on $\{z\in \mathbb{C}: 0\leq\textnormal{Re}\,z\leq1\}$. The continuity follows from 
    $\|B^{w+h}-B^w\|_{\Lambda^k}\rightarrow0$ as $h\rightarrow0$. Indeed, fix $w$ with $\textnormal{Re}\,w>k-1/2$ and choose $0<a<\textnormal{Re}\,w-k+1/2$. For $|h|<a$, we have
    \begin{equation}\label{6.5}
    |\partial_{\lambda}^r (B^{w+h}-B^w)(\lambda)|=\left|h\int_{0}^1\partial_{w}\partial_{\lambda}^r B^{w+\theta h}(\lambda)\,d\theta\right|\leq|h|\sup_{0\leq\theta\leq1}|\partial_{w}\partial_{\lambda}^r B^{w+\theta h}(\lambda)|.
    \end{equation}
    By the calculation above, we have
    $$|\partial_{w}\partial_{\lambda}^r B^{w+\theta h}(\lambda)|\leq| p_{r}^{\prime}(w+\theta h)(1-\lambda)^{w+\theta h-r}|+|p_r(w+\theta h)(1-\lambda)^{w+\theta h-r}\log(1-\lambda)|,$$
    where $p_r(w)=\prod_{l=0}^{r-1}(w-l)$ for $r\geq1$ and $p_0(w)=1$. Both $|p_r(w+\theta h)|$ and $|p^{\prime}_r(w+\theta h)|$ are uniformly bounded for $0\leq\theta\leq 1$ and $|h|<a$, and $\textnormal{Re}\,(w+\theta h)\geq \textnormal{Re}\,w-a$. Thus, 
    $$|\lambda^r\partial_{\lambda}^r (B^{w+h}-B^{w})(\lambda)|\leq C|h|\lambda^r[(1-\lambda)^{\textnormal{Re}\,w-a-r}+|\log(1-\lambda)|(1-\lambda)^{\textnormal{Re}\,w-a-r}].$$
    Since $\textnormal{Re}\,w>a+k-(1/2)\geq a+r-(1/2)$, combining this with \eqref{6.4} we obtain $\|B^{w+h}-B^h\|_{\Lambda^k}\lesssim |h|\rightarrow0$. Together with the continuity of $e^{(z-\theta)^2}$, this gives the continuity of $F_B$. Analyticity follows from a similar argument by checking that
    $$\lim_{h\rightarrow0}\left\|\frac{B^{w+h}-B^w}{h}-\partial_w B^{w}\right\|_{\Lambda^k}=0\qquad\text{as }t\rightarrow0.$$
    We omit the calculation here. The integer estimate implies 
    \begin{equation}\label{6.6}
        \|F_B(ia)\|_{\Lambda^{k}}\lesssim e^{\theta^2-a^2}(1+|a|)^{k+1}\leq C_1\qquad\text{and}\qquad\|F_B(1+ib)\|_{\Lambda^{k+1}}\lesssim e^{(1-\theta)^2-b^2}(1+|b|)^{k+2}\leq C_2.
    \end{equation}
    Since the integer estimate above is uniform for $\textnormal{Re}\,w\in[\delta-\theta,\delta-\theta+1]$, we also have $$\|F_B(u+it)\|_{\Lambda^{k}}\lesssim e^{u^2-t^2}(1+|t|)^{k+1}\leq C_3$$
    uniformly for $0<u<1$ and $t\in\mathbb{R}$. Therefore, $B^{\delta}=F_B(\theta)\in[\Lambda^{k},\Lambda^{k+1}]_{\theta}$. By \cite[Section 6.4]{BL76}, we have $[W^{k,2}(\mathbb{R}),W^{k+1,2}(\mathbb{R})]_{\theta}=W^{\beta,2}(\mathbb{R})$, and the map $f\rightarrow f\circ\textnormal{exp}$ identifies $\Lambda^s$ with $W^{s,2}(\mathbb{R})$ for $s=k,k+1$. Thus, we also have $[\Lambda^{k},\Lambda^{k+1}]_{\theta}=\Lambda^{\beta}$ (see \cite[Theorem 6.4.2]{BL76}). Hence, we obtain $B^{\delta}\in \Lambda^{\beta}$.

    The same argument applies to the regularized Schr\"{o}dinger multiplier. For the case $\beta\in\mathbb{N}$, let $w\in\mathbb{C}$ and $S^W(\lambda):=(1+\lambda)^{-w}e^{i\lambda}-e^{-\lambda}$. When $\lambda\rightarrow0$, we have
    $$S^w(\lambda)=(1-w\lambda+\textnormal{O}(\lambda^2))(1+i\lambda+\textnormal{O}(\lambda^2))-(1-\lambda+\textnormal{O}(\lambda^2))=(1+i-w)\lambda+\textnormal{O}(\lambda^2).$$
    For $1\leq r\leq n$, 
    $$\partial_{\lambda}^r S^w(\lambda)=e^{i\lambda}\sum_{j=0}^r i^{r-j}(-1)^j\left(\prod_{l=0}^{j-1}(w+l)\right)(1+\lambda)^{-w-j}-(-1)^re^{-\lambda}.$$
    Thus, we have that
    \begin{equation}\label{6.7}
    |\lambda^r \partial_{\lambda}^{r}S^w(\lambda)|\lesssim\lambda^r e^{-\lambda}+(1+|w|)^r\lambda^r(1+\lambda)^{-\textnormal{Re}\,w}.
    \end{equation}
    Since $\int_{0}^{\infty}\lambda^{2r-1}(1+\lambda)^{-2\textnormal{Re}\,w}\,d\lambda<\infty$ when $\textnormal{Re}\ w>r$, it follows from \eqref{6.4} that $\|S^w\|_{\Lambda^n}\lesssim (1+|\textnormal{Im}\,w|)^{n+1}$ for $\textnormal{Re}\,w>n$. The implicit constant is uniform when $\textnormal{Re}\,w\in[a,b]$ for $n<a\leq b<\infty$. For non-integer $\beta=k+\theta$, we apply the same complex interpolation argument with 
    $F_{S}(z):=e^{(z-\theta)^2}S^{s-\theta+z}$, $0\leq\textnormal{Re}\,z\leq1.$ This gives $S^s=F_{S}(\theta)\in\Lambda^\beta$ for $s>\beta$.
\end{proof}

Corollary \ref{application} then follow from Proposition \ref{weightedes}.

\begin{rem}
    Chen, Duong, Li and Yan \textnormal{\cite{CDLY20}} proved the sharp fixed-time estimate
    $\|(I+L)^{-s}e^{itL}\|_{L^{p}\rightarrow L^p}\lesssim(1+|t|)^s$ for $s\geq s_0=\nu|1/2-1/p|$. Our condition $s>s_0+(1/2)$ for the variational estimate of regularized Sch\"{o}dinger group $\{(I+tL)^{-s}e^{itL}\}_{t>0}$ may not be optimal for fixed $p_0<p<q_0$. However, the variational estimates can require more regularity than fixed-time cases.
    
    Consider the harmonic oscillator $L=-\Delta+|x|^2$ on $\mathbb{R}^n$. This is a non-negative self-adjoint operator on $L^2(X)$, and the semigroup satisfies $|e^{-tL}f|\leq e^{t\Delta}|f|$ \textnormal{(}see Subsection 6.2 below\textnormal{)}, so it satisfies \textnormal{(DGGE$_{1,\infty,2}$)}. Let $h(x)=e^{-|x|^2/2}$, then $Lh=nh$, and the spectral theorem gives
    $$(I+tL)^{-s}e^{itL}h=(1+nt)^{-s}e^{int}h.$$
    For $t_k=k\pi/n$, we have $(I+t_kL)^{-s}e^{it_kL}h(x)=(-1)^k(1+k\pi)^{-s}h(x)$. Thus, 
    $$V^q((I+tL)^{-s}e^{itL}:t>0)h(x)\geq|h(x)|\left(\sum_{k=1}^\infty|(1+(k+1)\pi)^{-s}+(1+k\pi)^{-s}|^q\right)^{1/q}.$$
    Therefore, the $q$-variational boundedness requires $\sum_{k=1}^\infty k^{-sq}<\infty$. Thus, $s>1/q$ is necessary in this example. In particular, when $p=2$, $1/2$ in our condition can not be lowered if we allow the variational bound holds for every $q>2$.   
\end{rem}
\subsection{Examples of $L$}
We finally give several examples of operators that their semigroups satisfy the assumptions of our main theorems. Recall that if $\{e^{-tL}\}_{t>0}$ is positive contraction semigroup on $L^2(X)$ satisfying \textnormal{(GGE$_{p_0,q_0,m}$)}, then \textnormal{(DGGE$_{p_0,q_0,m}$)} holds with $R_t=e^{-tL}$.

\paragraph{(1) Schr\"{o}dinger operators with non-negative potentials.} 
This is a standard example with Gaussian bounds. For $0\leq V\in L^1_{loc}(\mathbb{R}^n)$, consider the Schr\"{o}dinger operator
$$L=-\Delta+V$$
given by the associated form $\mathfrak{a}(u,v):=\int_{\mathbb{R}^n}\nabla u\cdot\overline{\nabla v}\,dx+\int_{\mathbb{R}^n}Vu\overline{v}\,dx$ with $D(\mathfrak{a})=\{u\in W^{1,2}(\mathbb{R}^n):V^{1/2}u\in L^2(\mathbb{R}^N)\}.$ It is non-negative and self-adjoint. By the Feynman-Kac formula, we have $0\leq e^{-tL}f\leq e^{t\Delta}f$ for $f\geq0$. Thus, $\{e^{-tL}\}_{t>0}$ is a positive contraction on $L^2(\mathbb{R}^n)$ satisfying \textnormal{(GGE$_{1,\infty,2}$)}, which proves \textnormal{(DGGE$_{1,\infty,2}$)}. This includes the harmonic oscillator $-\Delta+|x|^2$ in the Remark above.

\paragraph{(2) Schr\"{o}dinger operators with negative potentials}
For negative potentials, generalized Gaussian estimates may hold only for a restricted range. Let $n\geq 3$ and let $V\geq0$ be strongly subcritical, that is for some $\varepsilon>0$,
$$\int_{\mathbb{R}^n}V|u|^2\,dx\leq \frac{1}{1+\varepsilon}\int_{\mathbb{R}^n}|\nabla u|^2\,dx$$
for all $u\in W^{1,2}(\mathbb{R}^n)$. We consider the Schr\"{o}dinger operator 
$$L=-\Delta-V$$
defined by the form $\mathfrak{a}(u,v)=\int_{\mathbb{R}^n}\nabla u\cdot\overline{\nabla v}\,dx
-\int_{\mathbb{R}^n}Vu\overline v\,dx$ with $D(\mathfrak{a})=W^{1,2}(\mathbb{R}^n)$. It follows from \cite{A11} that this form gives a non-negative self-adjoint operator, and $-L$ generates an contraction semigroup on $L^2(X)$. Moreover, for every real valued $u\in D(\mathfrak{a})$, we have $|u|\in D(\mathfrak{a})$ and 
$$\mathfrak{a}(|u|,|u|)=\int_{\mathbb{R}^n}|\nabla|u||^2\,dx-\int_{\mathbb{R}^n}V|u|^2\,dx
 \leq\mathfrak {a}(u,u).$$
Thus, the semigroup $\{e^{-tL}\}_{t>0}$ is positive (see \cite[Proposition 2.5, Theorem 2.7]{O05}). It remains to check the generalized Gaussian estimate. Let $p_0=2n/[(n-2)(1-\sqrt{1-1/(1+\varepsilon)})]>2$. By \cite{A11}, the semigroup satisfies $L^p-L^2$ off-diagonal estimates $$\|\chi_{F}e^{-tL}\chi_{E}\|_{p\rightarrow2}\leq C t^{-(n/2)(1/p-1/2)}\,\textnormal{exp}\left(-c{d(E,F)^2}{t^{-1}}\right)$$
for closed sets $E,F$ and $p_0^{\prime}<p\leq 2$. By self-adjointness, we also have
$$\|\chi_{F}e^{-tL}\chi_{E}\|_{2\rightarrow q}\leq C t^{-(n/2)(1/2-1/q)}\,\textnormal{exp}\left(-c{d(E,F)^2}{t^{-1}}\right)$$
for $2\leq q<p_0$. Now we fix $p_0^{\prime}<p<2$ and let $B_1,B_2$  be two balls of radius $t^{1/2}$. If $d(B_1,B_2)=0$, the estimates above give that
$$\|\chi_{B_2}e^{-tL}\chi_{B_1}\|_{p\rightarrow p^{\prime}}\leq\|\chi_{B_2}e^{-tL/2}\|_{2\rightarrow p^{\prime}}\|e^{-tL/2}\chi_{B_1}\|_{p\rightarrow 2}\lesssim t^{-(n/2)(1/p-1/p^{\prime})}=|B_1|^{-1/p}|B_2|^{1/p^{\prime}}.$$
Otherwise if $d(B_1,B_2)>0$, let $G:=\{x: d(x,B_1)<d(B_1,B_2)/2\}$. Then $d(B_2,G)=d(B_1,G^c)= d(B_1,B_2)/2$. Since 
$$\chi_{B_2}e^{-tL}\chi_{B_1}=\chi_{B_2}e^{-tL/2}\chi_{G}e^{-tL/2}\chi_{B_1}+\chi_{B_2}e^{-tL/2}\chi_{G^c}e^{-tL/2}\chi_{B_1},$$ 
the off-diagonal estimates yield
\begin{align*}
\|\chi_{B_2}e^{-tL}\chi_{B_1}\|_{p\rightarrow p^{\prime}}&\leq\|\chi_{B_2}e^{-tL/2}\chi_{G}\|_{2\rightarrow p^{\prime}}\|\chi_{G}e^{-tL/2}\chi_{B_1}\|_{p\rightarrow2}+\|\chi_{B_2}e^{-tL/2}\chi_{G^c}\|_{2\rightarrow p^{\prime}}\|\chi_{G^c}e^{-tL/2}\chi_{B_1}\|_{p\rightarrow2}\\
&\lesssim t^{-(n/2)(1/p-1/p^{\prime})}\textnormal{exp}(-c^{\prime}d(E,F)^2 t^{-1})=|B_1|^{-1/p}|B_2|^{1/p^{\prime}}\textnormal{exp}(-c^{\prime}d(E,F)^2 t^{-1}).
\end{align*}
Thus, the semigroup satisfies \textnormal{(GGE$_{p,p^{\prime},2}$)}. Since the semigroup is positive and contractive on $L^2(\mathbb {R}^n)$, it also satisfies \textnormal{(DGGE$_{p,p^{\prime},2}$)}.

\paragraph{(3) Elliptic operators with real coefficients.} This class includes non-self-adjoint operators. Let $A$ be an $n\times n$ matrix of real and $L^{\infty}$ valued coefficients defined on $\mathbb{R}^n$. We assume the ellipticity condition: there exists $0<\lambda\leq\Lambda<\infty$ such that
$$\textnormal{Re}\,(A(x)\xi\cdot\overline{\xi})\geq\lambda|\xi|^2\quad\text{and}\quad |A(x)\xi\cdot\overline{\eta}|
 \leq\Lambda|\xi||\eta|$$
for all $\xi,\eta\in\mathbb{C}^n$ and a.e. $x\in\mathbb{R}^n$. We consider the the operator 
$$L=-\textnormal{div}(A\,\nabla)$$
defined by the form $\mathfrak{a}(u,v)=\int_{\mathbb{R}^n}A\nabla u\cdot\overline{\nabla v}\,dx$ with $D(\mathfrak{a})=W^{1,2}(\mathbb{R}^n)$. It follows from \cite{AM06} that $L$ is injective and $\omega$-accretive for some $0\leq\omega<\pi/2$. They also noted that $-L$ generates is a contraction semigroup on $L^2(\mathbb{R}^n)$. For real coefficients, the heat kernel is non-negative and satisfies the pointwise Gaussian upper bound. Thus, the semigroup $\{e^{-tL}\}_{t>0}$ satisfies \textnormal{(DGGE$_{p_0,q_0,m}$)}.

\paragraph{(4) Magnetic Schr\"{o}dinger operators.} The domination condition is nontrivial in this example. Let $a=(a_1,...,a_n)$ be a real vector with $a_j\in L^2_{loc}(\mathbb{R}^n)$, and let $0\leq V\in L^{1}_{loc}(\mathbb{R}^n)$. We consider the magnetic Schr\"{o}dinger operator
$$L=-(\nabla-ia)^2+V,$$
which is defined by the form $\mathfrak{a}(u,v)=\int_{\mathbb {R}^n}D_a u\cdot\overline{D_a v}\,dx
+\int_{\mathbb {R}^n}Vu\overline v\,dx$, where $D_a=\nabla-ia$ and $D(\mathfrak{a})=\{u\in L^2(\mathbb{R}^n):D_a u\in L^2(\mathbb{R}^n;\mathbb{C}^n), \ V^{1/2}u\in L^2(\mathbb{R}^n)\}$. As shown in \cite{DOY06}, this form is densely defined, closed, symmetric and non-negative. Hence $L$ is a non-negative self-adjoint operator on $L^2(\mathbb{R}^n)$. By the diamagnetic inequality \cite[Section 2]{DOY06}, we have
$|e^{-tL}f|\leq e^{t\Delta}|f|$ for $t>0$. Now take $R_t=e^{t\Delta}$. The heat semigroup is positive and contractive on $L^2(\mathbb{R}^n)$ and satisfies \textnormal{(GGE$_{1,\infty,2}$)}. Thus, $\{e^{-tL}\}_{t>0}$ satisfies \textnormal{(DGGE$_{1,\infty,2}$)}.

\paragraph{(5) Laplace-Beltrami operators} We next consider an example beyond the Euclidean setting. We follow the setting and notation of Grigor'yan and
Saloff-Coste \cite[Section 2]{GSC05}. Let $M$ be a complete connected Riemannian manifold without boundary, equipped with the geodesic distance $d$ and a doubling Riemannian measure $\mu$. Suppose $M$ satisfies the $L^2$ Poincar\'{e} inequality. We consider the Laplace-Beltrami operator $$L=-\Delta_M$$ defined by the form $\mathfrak {a}(u,v)
 :=\int_M g(\nabla u,\overline{\nabla v})\,d\mu.$ The form is densely defined, closed, symmetric and non-negative. Thus, $L$ is a non-negative and self-adjoint operator on $L^2(M)$. The spectral theorem gives the contraction. By \cite[Theorem 2.7]{GSC05}, the heat kernel is positive and satisfies the pointwise Gaussian estimate on both sides. This implies that the semigroup $\{e^{t\Delta_{M}}\}_{t>0}$ satisfies \textnormal{(DGGE$_{1,\infty,2}$)}.

\addcontentsline{toc}{section} {\protect\numberline{}Acknowledgements and AI Declaration}
\section*{Acknowledgements}
\indent The author is grateful to his supervisor, Professor Y. Tsutsui for guidance and helpful discussions throughout this work.

\section*{AI Declaration}
\indent The author developed the main methods for Theorem \ref{thm1} and \ref{thm2} and wrote the initial drafts of the proof. ChatGPT was used to refine some of the arguments and results. We also used ChatGPT to improve the English, search for related literature, locate existing results that are used as some of Lemmas here, and assist with calculations. We list some of the important parts as below.
\begin{itemize}
    \item The counterexample and its proof in Section 5 were generated entirely by ChatGPT through discussion.
    \item The proof of Lemma \ref{Application} and the remark following it were generated by ChatGPT. The examples and explanations in Section 6.2 were also developed with assistance from AI.
    \item ChatGPT pointed out the need to choose suitable representatives and suggested the choices used at the beginning of Section 3.1, 3.2 and 4.2.
    \item ChatGPT provided the stronger estimate in
    Lemma \ref{ext}\textnormal{(ii)}. The author first proved a weaker version that was sufficient for the argument in this paper.
    \item ChatGPT suggested the vector-valued formulation of Theorem \ref{sdc}. It also helped remove the domination
    assumption for the Sobolev-type multipliers in Theorem \ref{thm2}.
\end{itemize}
The final manuscript was written and revised by the author with AI assistance mentioned above. We have checked all mathematical arguments and references, and take full responsibility for the content of this paper.


\begin{thebibliography}{99}
\begin{sloppypar}

\bibitem{ADM96}{D. Albrecht, X. T. Duong and A. McIntosh,
``\textit{Operator theory and harmonic analysis}''
in \textit{Instructional Workshop on Analysis and Geometry, Part III (Canberra, 1995)}, Proc. Centre Math. Appl. Austral. Nat. Univ., vol. 34, Australian National University, Canberra, 1996, pp. 77--136.}

\bibitem{AM06}{P. Auscher and J. M. Martell,
``\textit{Weighted norm inequalities, off-diagonal estimates and elliptic operators. Part III: Harmonic analysis of elliptic operators}''
J. Funct. Anal. \textbf{241} (2006), no. 2, 703--746.}

\bibitem{AM07}{P. Auscher and J. M. Martell,
``\textit{Weighted norm inequalities, off-diagonal estimates and elliptic operators. Part I: General operator theory and weights}''
Adv. Math. \textbf{212} (2007), no. 1, 225--276.}

\bibitem{A07}{P. Auscher,
``\textit{On necessary and sufficient conditions for $L^p$-estimates of Riesz transforms associated to elliptic operators on $\mathbb{R}^n$ and related estimates}''
Mem. Amer. Math. Soc. \textbf{186} (2007), no. 871, xviii+75 pp.}

\bibitem{A11}{J. Assaad,
``\textit{Riesz transforms associated to Schr\"odinger operators with negative potentials}''
Publ. Mat. \textbf{55} (2011), no. 1, 123--150.}

\bibitem{BL76}{J. Bergh and J. L\"ofstr\"om,
``\textit{Interpolation spaces. An introduction}''
Grundlehren der mathematischen Wissenschaften, vol. 223, Springer-Verlag, Berlin-New York, 1976.}

\bibitem{B89}{J. Bourgain,
``\textit{Pointwise ergodic theorems for arithmetic sets}''
Inst. Hautes \'{E}tudes Sci. Publ. Math. \textbf{69} (1989), 5--41.}

\bibitem{B03}{S. Blunck,
``\textit{A H\"ormander-type spectral multiplier theorem for operators without heat kernel}''
Ann. Sc. Norm. Super. Pisa Cl. Sci. (5) \textbf{2} (2003), no. 3, 449--459.}

\bibitem{BK03}{S. Blunck and P. C. Kunstmann,
``\textit{Calder\'on-Zygmund theory for non-integral operators and the $H^\infty$ functional calculus}''
Rev. Mat. Iberoam. \textbf{19} (2003), no. 3, 919--942.}

\bibitem{BK05}{S. Blunck and P. C. Kunstmann,
``\textit{Generalized Gaussian estimates and the Legendre transform}''
J. Operator Theory \textbf{53} (2005), no. 2, 351--365.}

\bibitem{B07}{S. Blunck,
``\textit{Generalized Gaussian estimates and Riesz means of Schr\"odinger groups}''
J. Aust. Math. Soc. \textbf{82} (2007), no. 2, 149--162.}

\bibitem{BFP16}{F. Bernicot, D. Frey and S. Petermichl,
``\textit{Sharp weighted norm estimates beyond Calder\'on-Zygmund theory}''
Anal. PDE \textbf{9} (2016), no. 5, 1079--1113.}

\bibitem{BBL17}{C. Benea, F. Bernicot and T. Luque,
``\textit{Sparse bilinear forms for Bochner-Riesz multipliers and applications}''
Trans. Lond. Math. Soc. \textbf{4} (2017), no. 1, 110--128.}

\bibitem{BMSW18}{J. Bourgain, M. Mirek, E. M. Stein and B. Wr\'obel,
``\textit{On dimension-free variational inequalities for averaging operators in $\mathbb R^d$}''
Geom. Funct. Anal. \textbf{28} (2018), no. 1, 58--99.}

\bibitem{BC20}{D. Beltran and L. Cladek,
``\textit{Sparse bounds for pseudodifferential operators}''
J. Anal. Math. \textbf{140} (2020), no. 1, 89--116.}

\bibitem{BRS24}{D. Beltran, J. Roos and A. Seeger,
``\textit{Multi-scale sparse domination}''
Mem. Amer. Math. Soc. \textbf{298} (2024), no. 1491, v+104 pp.}

\bibitem{CW71}{R. R. Coifman and G. Weiss,
``\textit{Analyse harmonique non-commutative sur certains espaces homog\`enes. \'Etude de certaines int\'egrales singuli\`eres}''
Lecture Notes in Mathematics, vol. 242, Springer-Verlag, Berlin-New York, 1971.}

\bibitem{C90}{M. Christ,
``\textit{A $T(b)$ theorem with remarks on analytic capacity and the Cauchy integral}''
Colloq. Math. \textbf{60/61} (1990), no. 2, 601--628.}

\bibitem{COSY16}{P. Chen, E. M. Ouhabaz, A. Sikora and L. Yan,
``\textit{Restriction estimates, sharp spectral multipliers and endpoint estimates for Bochner-Riesz means}''
J. Anal. Math. \textbf{129} (2016), no. 1, 219--283.}

\bibitem{CDLY20}{P. Chen, X. T. Duong, J. Li and L. Yan,
``\textit{Sharp endpoint $L^p$ estimates for Schr\"odinger groups}''
Math. Ann. \textbf{378} (2020), no. 1--2, 667--702.}

\bibitem{CLSY20}{P. Chen, S. Lee, A. Sikora and L. Yan,
``\textit{Bounds on the maximal Bochner-Riesz means for elliptic operators}''
Trans. Amer. Math. Soc. \textbf{373} (2020), no. 6, 3793--3828.}

\bibitem{CLWY26}{P. Chen, X. Lin, L. Wu and L. Yan,
``\textit{On maximal functions generated by H\"ormander-type spectral multipliers}''
J. Funct. Anal. \textbf{290} (2026), no. 2, Paper No. 111238.}

\bibitem{D97}{E. B. Davies,
``\textit{Limits on $L^p$ regularity of self-adjoint elliptic operators}''
J. Differential Equations \textbf{135} (1997), no. 1, 83--102.}

\bibitem{D01}{J. Duoandikoetxea,
``\textit{Fourier analysis}''
Graduate Studies in Mathematics, vol. 29, American Mathematical Society, Providence, RI, 2001.}

\bibitem{DOY06}{X. T. Duong, E. M. Ouhabaz and L. Yan,
``\textit{Endpoint estimates for Riesz transforms of magnetic Schr\"odinger operators}''
Ark. Mat. \textbf{44} (2006), no. 2, 261--275.}

\bibitem{DK23}{L. Deleaval and C. Kriegler,
``\textit{Maximal H\"ormander functional calculus on $L^p$ spaces and UMD lattices}''
Int. Math. Res. Not. IMRN (2023), no. 6, 4643--4694.}

\bibitem{DK24}{L. Deleaval and C. Kriegler,
``\textit{$q$-variational H\"ormander functional calculus and Schr\"odinger and wave maximal estimates}''
Math. Z. \textbf{307} (2024), no. 2, Paper No. 25, 39 pp.}

\bibitem{FN19}{D. Frey and Z. Nieraeth,
``\textit{Weak and strong type $A_1$-$A_\infty$ estimates for sparsely dominated operators}''
J. Geom. Anal. \textbf{29} (2019), no. 1, 247--282.}

\bibitem{GSC05}{A. Grigor'yan and L. Saloff-Coste,
``\textit{Stability results for Harnack inequalities}''
Ann. Inst. Fourier (Grenoble) \textbf{55} (2005), no. 3, 825--890.}

\bibitem{H06}{M. Haase,
``\textit{The functional calculus for sectorial operators}''
Operator Theory: Advances and Applications, vol. 169, Birkh\"auser Verlag, Basel, 2006.}

\bibitem{HK12}{T. Hyt\"{o}nen and A. Kairema,
``\textit{Systems of dyadic cubes in a doubling metric space}''
Colloq. Math. \textbf{126} (2012), no. 1, 1--33.}

\bibitem{H12}{T. P. Hyt\"onen,
``\textit{The sharp weighted bound for general Calder\'on-Zygmund operators}''
Ann. of Math. (2) \textbf{175} (2012), no. 3, 1473--1506.}

\bibitem{HNVW16}{T. Hyt\"onen, J. van Neerven, M. Veraar and L. Weis,
``\textit{Analysis in Banach spaces. Vol. I. Martingales and Littlewood-Paley theory}''
Ergebnisse der Mathematik und ihrer Grenzgebiete (3), vol. 63, Springer, Cham, 2016.}

\bibitem{JSW08}{R. L. Jones, A. Seeger and J. Wright,
``\textit{Strong variational and jump inequalities in harmonic analysis}''
Trans. Amer. Math. Soc. \textbf{360} (2008), no. 12, 6711--6742.}

\bibitem{L76}{D. L\'epingle,
``\textit{La variation d'ordre $p$ des semi-martingales}''
Z. Wahrscheinlichkeitstheorie und Verw. Gebiete \textbf{36} (1976), no. 4, 295--316.}

\bibitem{LSV02}{V. Liskevich, Z. Sobol and H. Vogt,
``\textit{On the $L^p$-theory of $C_0$-semigroups associated with second-order elliptic operators. II}''
J. Funct. Anal. \textbf{193} (2002), no. 1, 55--76.}

\bibitem{L04}{S. Lee,
``\textit{Improved bounds for Bochner-Riesz and maximal Bochner-Riesz operators}''
Duke Math. J. \textbf{122} (2004), no. 1, 205--232.}

\bibitem{LSW10}{D. Lenz, P. Stollmann and D. Wingert,
``\textit{Compactness of Schr\"odinger semigroups}''
Math. Nachr. \textbf{283} (2010), no. 1, 94--103.}

\bibitem{LMX12}{C. Le Merdy and Q. Xu,
``\textit{Strong $q$-variation inequalities for analytic semigroups}''
Ann. Inst. Fourier (Grenoble) \textbf{62} (2012), no. 6, 2069--2097.}

\bibitem{LRS12}{S. Lee, K. M. Rogers and A. Seeger,
``\textit{Improved bounds for Stein's square functions}''
Proc. Lond. Math. Soc. (3) \textbf{104} (2012), no. 6, 1198--1234.}

\bibitem{L13}{A. K. Lerner,
``\textit{On an estimate of Calder\'on-Zygmund operators by dyadic positive operators}''
J. Anal. Math. \textbf{121} (2013), 141--161.}

\bibitem{LM13}{A. K. Lerner and K. Moen,
``\textit{Mixed $A_p-A_\infty$ estimates with one supremum}''
Studia Math. \textbf{219} (2013), no. 3, 247--267.}

\bibitem{LRS14}{S. Lee, K. M. Rogers and A. Seeger,
``\textit{Square functions and maximal operators associated with radial Fourier multipliers}''
in \textit{Advances in analysis: the legacy of Elias M. Stein}, Princeton Math. Ser., vol. 50, Princeton University Press, Princeton, NJ, 2014, pp. 273--302.}

\bibitem{LM16}{M. T. Lacey and D. Mena Arias,
``\textit{The sparse $T1$ theorem}''
Houston J. Math. \textbf{43} (2017), no. 1, 111--127.}

\bibitem{L17}{M. T. Lacey,
``\textit{Sparse bounds for spherical maximal functions}''
J. Anal. Math. \textbf{139} (2019), no. 2, 613--635.}

\bibitem{M80}{A. Miyachi,
``\textit{On some Fourier multipliers for $H^p(\mathbb{R}^n)$}''
J. Fac. Sci. Univ. Tokyo Sect. IA Math. \textbf{27} (1980), no. 1, 157--179.}

\bibitem{M81}{A. Miyachi,
``\textit{On some singular Fourier multipliers}''
J. Fac. Sci. Univ. Tokyo Sect. IA Math. \textbf{28} (1981), no. 2, 267--315.}

\bibitem{M86}{A. McIntosh,
``\textit{Operators which have an $H^{\infty}$ functional calculus}''
in \textit{Miniconference on operator theory and partial differential equations (North Ryde, 1986)}, Proc. Centre Math. Anal. Austral. Nat. Univ., vol. 14, Australian National University, Canberra, 1986, pp. 210--231.}

\bibitem{M88}{J. Marschall,
``\textit{Pseudodifferential operators with coefficients in Sobolev spaces}''
Trans. Amer. Math. Soc. \textbf{307} (1988), no. 1, 335--361.}

\bibitem{O05}{E. M. Ouhabaz,
``\textit{Analysis of heat equations on domains}''
London Mathematical Society Monographs Series, vol. 31, Princeton University Press, Princeton, NJ, 2005.}

\bibitem{S70}{E. M. Stein,
``\textit{Topics in harmonic analysis related to the Littlewood-Paley theory}''
Annals of Mathematics Studies, vol. 63, Princeton University Press, Princeton, NJ, 1970.}

\bibitem{S76}{E. M. Stein,
``\textit{Maximal functions. I. Spherical means}''
Proc. Nat. Acad. Sci. U.S.A. \textbf{73} (1976), no. 7, 2174--2175.}

\bibitem{SV94}{G. Schreieck and J. Voigt,
``\textit{Stability of the $L^p$-spectrum of generalized Schr\"odinger operators with form small negative part of the potential}''
in \textit{Functional analysis (Essen, 1991)}, Lecture Notes in Pure and Appl. Math., vol. 150, Dekker, New York, 1994, pp. 95--105.}

\bibitem{SYY14}{A. Sikora, L. Yan and X. Yao,
``\textit{Sharp spectral multipliers for operators satisfying generalized Gaussian estimates}''
J. Funct. Anal. \textbf{266} (2014), no. 1, 368--409.}

\bibitem{T02}{T. Tao,
``\textit{On the maximal Bochner-Riesz conjecture in the plane for $p<2$}''
Trans. Amer. Math. Soc. \textbf{354} (2002), no. 5, 1947--1959.}

\bibitem{T14}{G. Teschl,
``\textit{Mathematical methods in quantum mechanics. With applications to Schr\"odinger operators}''
2nd ed., Graduate Studies in Mathematics, vol. 157, American Mathematical Society, Providence, RI, 2014.}

\end{sloppypar}
\end{thebibliography}
\end{document}